\pgfplotsset{compat=1.15}
\definecolor{grey}{rgb}{0.8,0.8,0.8}
\newcommand{\C}{\mathcal{C}}
\newcommand{\M}{\mathbf{M}}
\newcommand{\bbP}{\mathbb{P}}
\newcommand{\Q}{\mathbb{Q}}
\newcommand{\R}{\mathbb{R}}
\renewcommand{\S}{\mathbf{S}}
\newcommand{\Z}{\mathbb{Z}}
\DeclareMathOperator{\moduleCategory}{\mathsf{mod}} \renewcommand{\mod}{\moduleCategory}
\DeclareMathOperator{\proj}{\mathsf{proj}}
\DeclareMathOperator{\inj}{\mathsf{inj}}
\DeclareMathOperator{\ind}{\mathsf{ind}}
\DeclareMathOperator{\add}{\mathsf{add}}
\newcommand{\Ext}{\operatorname{Ext}\nolimits}
\newcommand{\Hom}{\operatorname{Hom}\nolimits}
\newcommand{\End}{\operatorname{End}\nolimits}
\newcommand{\op}{{\mathsf{op}}}
\newcommand{\RHom}{\mathbf{R}\strut\kern-.2em\operatorname{Hom}\nolimits}
\newcommand{\Image}{\operatorname{Im}\nolimits}
\newcommand{\Kernel}{\operatorname{Ker}\nolimits}
\mathchardef\hy="2D
\newcommand{\SM}{(\mathbf{S}, \mathbf{M})}
\newcommand{\bfA}{\mathbf{A}}
\newcommand{\bfC}{\mathbf{C}}
\newcommand{\Ctcc}{\mathbf{C}_{\mathrm{2si}}}
\newcommand{\Cocc}{\mathbf{C}_{\mathrm{1si}}}
\newcommand{\Ccc}{\mathbf{C}_{\mathrm{cc}}}
\newcommand{\Cnc}{\mathbf{C}_{\mathrm{nc}}}
\newcommand{\ctilt}{\mathrm{c}\hy\mathrm{tilt}}
\renewcommand{\epsilon}{\varepsilon}
\newtheorem{theorem}{Theorem}[section]
\newtheorem*{theorem*}{Theorem}
\newtheorem{lemma}[theorem]{Lemma}
\newtheorem{proposition}[theorem]{Proposition}
\newtheorem{corollary}[theorem]{Corollary}
\theoremstyle{definition}
\newtheorem{definition}[theorem]{Definition}
\newtheorem{example}[theorem]{Example}
\newtheorem*{conjecture*}{Conjecture}
\theoremstyle{remark}
\newtheorem{remark}[theorem]{Remark}
\numberwithin{equation}{section}
\newcommand{\wti}[1]{\widetilde{#1}}
\pgfplotsset{compat=1.15}
\definecolor{grey}{rgb}{0.75,0.75,0.75}
\definecolor{darkgreen}{rgb}{0,0.7,0}
\definecolor{darkblue}{rgb}{0,0,0.7} % darkblue color
\newcommand{\dfn}[1]{\textsl{\color{darkblue} #1}}
\newcommand{\mt}[1]{\mathtt{#1}}
\newcommand{\biinf}[1]{{}^\infty{#1}^\infty}
\newcommand{\crosswn}[7]{
\left\{\begin{array}{rr|}{#1} = & {#2}\\ {#5} = & {#6} \end{array}
{\phantom{a}\underline{#3}\phantom{a}} \begin{array}{|l} {#4} \\ {#7} \end{array}\right.
}
\newcommand{\xcap}[2][1]{
\filldraw[thick, fill=white, draw=black, shift={(#2)},scale=#1] (0,0) circle (0.4);
\draw[shift={(#2)},scale=#1] (45:0.4) -- +(225:0.8) (-45:0.4) -- +(-225:0.8);
}
\newcommand{\Mtwo}[2][1]{
\draw[thick, shift={(#2)}]  (0,0) circle (2);
\xcap{#2}[#1]
\fill[darkgreen, shift={(#2)}] (0,-2) circle (4pt);
\fill[darkgreen, shift={(#2)}] (0,2) circle (4pt);
}
\newcommand{\mkpt}[1]{\fill[darkgreen,shift={(#1)}] (0,0) circle (3pt);}
\newcommand{\Mtwofan}[1]{
\draw[thick,white!70!black,shift={(#1)}] (0,-2) -- +(0,4);
\draw[thick,white!70!black,shift={(#1)}] (0,-2) .. controls (-1.5,-1) and (-1,0) .. (0,0.1) .. controls (1,0) and (1.5,-1) .. (0,-2);
\Mtwo{#1}
}
\newcommand{\Atwotwo}[1]{
    %Boundary components
    \draw[thick,shift={(#1)}]  (0,0) circle (2);
    \filldraw[thick, fill=white!70!black,shift={(#1)}]  (0,0) circle (0.75);
    %Arcs
    \draw[thick, white!70!black,shift={(#1)}] (0,-2) --  (0,-0.75);
    \draw[thick, white!70!black,shift={(#1)}] (0,2) -- (0,0.75);
    \draw[thick, white!70!black,shift={(#1)}] (0,-2) .. controls (2,-1) and (1.5,2) .. (0,0.75);
    \draw[thick,white!70!black,shift={(#1)}] (0,-2) .. controls (-2,-1) and (-1.5,2) ..  (0,0.75);
    %Marked points
    \fill[darkgreen,shift={(#1)}] (0,-2) circle (3pt);
    \fill[darkgreen,shift={(#1)}] (0,2) circle (3pt);
    \fill[darkgreen,shift={(#1)}] (0,-0.75) circle (3pt);
    \fill[darkgreen,shift={(#1)}] (0,0.75) circle (3pt);
}
\newcommand{\sbm}[1]{{\let\amp=&\left[\begin{smallmatrix}#1\end{smallmatrix}\right]}}
\begin{document}

\title[Non-orientable surfaces and cluster categories]{Marked non-orientable surfaces and cluster categories via symmetric representations}

\author{V\'eronique Bazier-Matte}
\address{Université Laval, Québec (Québec), G1V 0A6, Canada  }
\email{veronique.bazier-matte.1@ulaval.ca}

\author{Aaron Chan}
\address{Graduate School of Mathematics, Nagoya University, Furocho, Chikusaku, Nagoya 464-8602, Japan}
\email{aaron.kychan@gmail.com}

\author{Kayla Wright}
\address{University of Minnesota, Twin Cities, Minneapolis, MN}
\email{kaylaw@umn.edu}

\keywords{cluster category, cluster algebra, marked surface, non-orientable surface}
\subjclass[2020]{primary 16G99; secondary 13F60, 16G20, 57K20, 57M50}
\thanks{}

\begin{abstract}
We initiate the investigation of representation theory of non-orientable surfaces.
As a first step towards finding an additive categorification of Dupont and Palesi's quasi-cluster algebras associated marked non-orientable surfaces, we study a certain modification on the objects of the cluster category associated to the orientable double covers in the unpunctured case. More precisely, we consider symmetric representation theory studied by Derksen-Weyman and Boos-Cerulli Irelli, and lift it to the cluster category. This gives a way to consider `indecomposable orbits of objects' under a contravariant duality functor. Hence, we can assign curves on a non-orientable surface $\SM$ to indecomposable symmetric objects. Moreover, we define a new notion of symmetric extension, and show that the arcs and quasi-arcs on $\SM$ correspond to the indecomposable symmetric objects without symmetric self-extension. Consequently, we show that quasi-triangulations of $\SM$ correspond to a symmetric analogue of cluster tilting objects.
\end{abstract}

\maketitle

\tableofcontents

\section{Introduction}
\subsection{Background}
Cluster algebras are certain commutative algebras, first axiomatized by Fomin and Zelevinsky \cite{FZ02}, that comes with a distinguished set of generators called \emph{cluster variables} organised in overlapping sets called \emph{clusters}. 
Since the birth of cluster algebras, connections between cluster theory and many mathematical fields have been discovered, such as combinatorics, dynamical systems, knot theory, mirror symmetry, etc.  The motivation of this article originates from the connection between cluster algebras arising from surface topology and representation theory \cite{GSV05, FG06, FST08, Lab09, ABCP10, BZ11}.

More precisely, the cluster algebras in this setting - usually called \emph{surface cluster algebras} -  comes from the coordinate ring of a certain Teichm\"uller space associated to an \emph{orientable} marked surface with triangulation.  The cluster variables are in correspondence with \emph{arcs} on the surface, i.e. self non-crossing curves that connect marked points; clusters are in correspondence with maximal sets of pairwise non-crossing arcs, i.e. triangulations. The combinatorial nature of these correspondence have been  fruitful to reveal the algebraic structure of surface cluster algebras; see, for example, been studied in \cite{MSW11, MSW13}. 

It is natural to ask for an extension to \emph{non-orientable} surfaces since one still have a coordinate ring for a Teichm\"{u}ller space associated to the surface. Their idea is to enlarge the set of arcs to include what they call \emph{quasi-arcs}.
A quasi-arc is a self-non-intersecting 1-sided closed curves, meaning that its cylindrical neighbourhood forms a M\"{o}bius strip. The analogue of clusters is now given by \emph{quasi-triangulations}, which are maximal sets of pairwise non-crossing arcs and quasi-arcs.  Thus far, a few classical results from Fomin-Shapiro's surface cluster algebras have been extended; see \cite{DP15, Wil19, Wil20}.

We are interested in the role of representation theory in the quasi-cluster structure associated to marked non-orientable surfaces, in particular the \emph{additive categorification} \cite{BMRRT, CCS06, CC06} of quasi-cluster algebras.
In the classical case, this is given by a triangulated category $\C$ called Amiot's \emph{cluster category}. The category $\C=\C_{Q,W}$ is determined by a \emph{quiver with potential} (QP) $(Q,W)$ associated to a chosen initial cluster -- or triangulation in the surface case \cite{DWZ10,Lab09}. It turns out that the indecomposable \emph{rigid} objects -- i.e. those without self-extensions -- in $\C_{Q,W}$ correspond to cluster variables, and the so-called \emph{cluster tilting} objects correspond to clusters; see, for example, \cite{Kel10}. In practice, one can analyse $\C_{Q,W}$ through the module category $\mod J_{Q,W}$ \cite{KZ08} of the \emph{Jacobian algebra} associated to $(Q,W)$.

In the surface cluster algebra case, it was shown in \cite{ABCP10} that $J_{Q,W}$ falls under a prominent class of algebras in representation theory called \emph{gentle algebras} \cite{AS87}. The modules over gentle algebras are well-understood \cite{BR} and can be calculated combinatorially.  In particular, an indecomposable module is either a \emph{string} module, which is described by (Dynkin) `type $\mathbb{A}$' combinatorics, or a \emph{band} module, which is described by (extended Dynkin) type $\tilde{\mathbb{A}}$ combinatorics along with a parameter $\lambda \in \Bbbk^\times$ from the underlying field $\Bbbk$. Br\"{u}stle and Zhang \cite{BZ11} used this to study the cluster category $\C_{Q,W}$ by showing the following dictionary (see Section \ref{sec:cluster cat basic} for details). Note that by \cite{KY11} $\C_{Q,W}$ is independent of the choice of initial triangulation, and so we can denote it by $\C_{\SM}$ where $\SM$ is the marked surface of interest.

\newcommand{\toppad}{\rule[.5ex]{0pt}{2.5ex}}
\newcommand{\botpad}{\rule[-1.5ex]{0pt}{2.6ex}}
\begin{center}
\begin{tabular}{c|c|c}
\hline
\toppad
& {\bf Topology} & {\bf Cluster category} 
\botpad \\
\hline  \hline 
%\toppad
%triangulated surface & QP with gentle J
%\botpad \\ 
%\hline 
\toppad
\hypertarget{corresp.A}{(A)} & curve $\gamma$ connecting marked points & indecomposable string object $\gamma$
\botpad \\
\hline
\toppad
\hypertarget{corresp.B}{(B)} & closed curve $\omega$ & indecomposable band objects \\
&with a parameter $\lambda$ in $\Bbbk^\times$ & $(\omega,\lambda)$
\botpad \\
\hline
\toppad
\hypertarget{corresp.C}{(C)} & arcs & rigid indecomposable objects
\botpad \\
\hline 
\toppad
\hypertarget{corresp.D}{(D)} & triangulations & cluster tilting objects
\botpad\\
\hline 
\end{tabular}
\end{center}

\subsection{Our goal}
We aim to extends the above dictionary to the case of non-orientable surfaces. To do this, we rely on the fact that any non-orientable surface always has an orientable double cover. By lifting a triangulation $T$ of a non-orientable marked surface $\SM$ to a triangulation $\wti{T}$ on the orientable double cover $\wti{\SM}$, we can then canonically associate a QP $(Q,W)$ and hence a cluster category $\C_{\wti{\SM}}=\C_{Q,W}$.  The associated Deck transformation group is generated by an orientation-reversing automorphism $\sigma_\S$, which gives rise to an (arrow-reversing) \emph{involution} $\sigma$ on the QP. We show that this gives rise to a contravariant duality $\nabla$ on both $\C_{\wti{\SM}}$ and $\mod J_{Q,W}$ (Proposition \ref{prop:lift nabla}). 

Next, we want a categorical way - such as an `orbit category $\C_{\wti{\SM}}/\nabla$' - to treat a $\nabla$-orbit of objects as indecomposable.  We have to address two problems here. Firstly, the classical orbit construction requires a covariant autoequivalence; however, $\nabla$ is \emph{contravariant}. As a consequence, we need a different theory to deal with $\nabla$-orbits. Secondly, a primitive closed curve $\wti{\omega}$ on $\wti{\SM}$ with $\sigma_\S(\wti{\omega})=\wti{\omega}$ is a lift of two different closed curves, say $\omega,\omega'$, on $\SM$. Indeed, one of $\omega,\omega'$ is 1-sided (has a unique lift) and the other is 2-sided (admits a 2-sheeted cover); see discussion at the end of Section \ref{sec:topology}. In view of the dictionary \hyperlink{corresp.B}{(B)}, a good orbit construction should induce a partition of $\{(\wti{\omega},\lambda)\in \C_{\wti{\SM}}\mid \lambda\in\Bbbk^\times\}$ into two subsets, with one corresponds to $\omega$ and the other to $\omega'$.

To this end, we employ the \emph{symmetric representation theory}, developed by Derksen and Weyman \cite{DW02} as well as Boos and Cerulli Irelli \cite{BCI21}, associated to algebras with duality -- such as $(J_{Q,W},\sigma)$ in our setting. A \emph{symmetric representation} is an ordinary representation equipped with some extra data that forces each dual pair $(\alpha,\sigma(\alpha))$ of arrows of $Q$ to act adjointly, see Section \ref{sec:symm repn}. Although symmetric representations do not form an additive category, there is still a natural notion \emph{indecomposability}.  It was shown in \cite{DW02,BCI21} (see Proposition \ref{prop:e-indec}) that every indecomposable symmetric representation $X$ is uniquely determined by the $\nabla$-orbit of an indecomposable (ordinary) module $M$ in one of the following forms.

\begin{center}
\begin{tabular}{ll}
(1) Split type: & $X=M\oplus\nabla M$ for $M\ncong \nabla M$.  \\
(2) Ramified type: & $X=M\oplus \nabla M$ for $M\cong \nabla M$.   \\
(3) 1-sided type: & $X=M$ for $M\cong \nabla M$.
\end{tabular}
\end{center}

Conversely, every indecomposable module $M$ give rise to exactly one of these indecomposable symmetric representations. This trichotomy closely resembles the behaviour of curves on $\SM$, with split and ramified type corresponds to curves admiting a 2-sheeted cover whereas 1-sided type corresponds to 1-sided (closed) curves.
Indeed, by fully classifying indecomposable symmetric representations over $J_{Q,W}$ (Theorem \ref{thm:indec e-reps}), we have that for a closed curve $\wti{\omega}$ on $\wti{\SM}$ with $\sigma_\S(\wti{\omega})=\wti{\omega}$, there is a unique parameter $\lambda\in\Bbbk^\times$ so that $(\widetilde{\omega},\lambda)$ gives rise to a 1-sided indecomposable symmetric representation.  In other words, the notion of indecomposable symmetric representations does satisfy our desired criteria of good $\nabla$-orbit on the level of $\mod J_{Q,W}$.

We next define the notion of \emph{symmetric objects} (respectively \emph{indecomposable symmetric objects}) (Definition \ref{def:e-obj}) in the category $\C_{\wti{\SM}}$, which allows us to lift symmetric (respectively indecomposable symmetric) representations from $\mod J_{Q,W}$ to $\C_{\wti{\SM}}$. Combining with the dictionary \hyperlink{corresp.A}{(A)} and \hyperlink{corresp.B}{(B)} from \cite{BZ11}, this allows us to write down the following correspondences.

\begin{theorem}\label{thm:main1}{\rm (Theorem \ref{thm:curve corresp})}
There are the following bijections between curves on $\SM$ and indecomposable symmetric objects of the cluster category $\C_{\wti{\SM}}$.
\begin{align*}
\xymatrix@R=3pt{
{\left\{\begin{array}{c}\text{curves connecting}\\\text{marked points}\end{array}\right\}}
 \ar@{<->}[r]^(0.4){1:1} & 
{\left\{\begin{array}{c}\text{indecomposable symmetric objects}\\\text{of split string type}\end{array}\right\}},\\
{\left\{\begin{array}{c}\text{primitive one-sided}\\\text{closed curves}\end{array}\right\}}
\ar@{<->}[r]^(0.4){1:1} & 
{\left\{\begin{array}{c}\text{indecomposable symmetric objects}\\\text{of one-sided primitive band  type}\end{array}\right\}}.
}
\end{align*}
\end{theorem}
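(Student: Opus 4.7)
The plan is to combine the Br\"ustle--Zhang dictionary rows \hyperlink{corresp.A}{(A)} and \hyperlink{corresp.B}{(B)}, which identifies curves on the orientable double cover $\wti{\SM}$ with the indecomposable string and band objects of $\C_{\wti{\SM}}$, with the trichotomy of indecomposable symmetric representations given by Proposition \ref{prop:e-indec} and its explicit classification in Theorem \ref{thm:indec e-reps}. The bridge between the topological and representation-theoretic sides is the compatibility
\[
\text{curve } \sigma_\S(\wti{c}) \quad \longleftrightarrow \quad \text{module } \nabla(M_{\wti{c}}),
\]
which is essentially Proposition \ref{prop:lift nabla} once one checks, by direct computation on string and band data, that the Br\"ustle--Zhang assignment intertwines the Deck involution on curves with the duality $\nabla$ on $\C_{\wti{\SM}}$. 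Granted this intertwining, $\sigma_\S$-orbits of curves correspond to $\nabla$-orbits of indecomposable objects, and by Definition \ref{def:e-obj} the latter are precisely what parametrises the indecomposable symmetric objects.

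For the first bijection, any curve on $\SM$ connecting marked points is two-sided in the unpunctured setting (its regular neighbourhood is a disc rather than a M\"obius strip), so its preimage under the double cover is a pair $\{\wti{\gamma},\sigma_\S(\wti{\gamma})\}$ of two distinct curves on $\wti{\SM}$. Applying \hyperlink{corresp.A}{(A)} and the intertwining produces a pair of non-isomorphic indecomposable string objects with $M_{\wti{\gamma}}\not\cong\nabla M_{\wti{\gamma}}$. Case~(1) of Proposition \ref{prop:e-indec} then lifts this $\nabla$-orbit to a unique indecomposable symmetric object of split string type in $\C_{\wti{\SM}}$, and conversely every such symmetric object arises from a $\nabla$-orbit which descends to a unique curve on $\SM$.

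For the second bijection, a primitive one-sided closed curve $\omega$ on $\SM$ has a unique lift $\wti{\omega}$ on $\wti{\SM}$ satisfying $\sigma_\S(\wti{\omega})=\wti{\omega}$. Row \hyperlink{corresp.B}{(B)} attaches to $\wti{\omega}$ the $\Bbbk^\times$-family $\{(\wti{\omega},\lambda)\}$ of indecomposable band objects, on which $\nabla$ acts. By the classification in Theorem \ref{thm:indec e-reps}, already flagged in the discussion preceding its statement, there is a unique $\lambda_0\in\Bbbk^\times$ for which $(\wti{\omega},\lambda_0)$ underlies an indecomposable symmetric object of one-sided primitive band type; any other $\nabla$-fixed parameters yield ramified-type objects corresponding instead to the two-sided companion curve $\omega'$ of $\omega$ described at the end of Section \ref{sec:topology}. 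The assignment $\omega\mapsto(\wti{\omega},\lambda_0)$ is therefore the desired bijection, with inverse obtained by pushing the underlying band curve down through the cover.

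The main technical obstacle is establishing the $\sigma_\S$-$\nabla$-equivariance of the Br\"ustle--Zhang correspondence on band objects: one must track how the contravariant $\nabla$ transforms the Jordan-block datum of a band module (the block being transposed and the scalar suitably inverted or dualised) and match this against how $\sigma_\S$ reparametrises the closed curve $\wti{\omega}$. Identifying the unique scalar $\lambda_0$ giving a 1-sided type rather than a ramified one is then exactly the content of Theorem \ref{thm:indec e-reps}, whose classification machinery is indispensable for pinning down this specific parameter.
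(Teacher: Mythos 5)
Your proposal is correct and follows essentially the same strategy as the paper: first establish that the Deck involution $\sigma_\S$ on the double cover corresponds to the duality $\nabla$ on $\C_{\wti{\SM}}$ and on $\mod J$ (Proposition~\ref{prop:lift nabla} together with the explicit Lemmas~\ref{lem:nabla on strings} and \ref{lem:cc dual}), so that $\epsilon$-indecomposable objects are classified by $\nabla$-orbits of indecomposables; then read off the split string case from the fact that every non-closed curve on $\SM$ has exactly two lifts exchanged by $\sigma_\S$, and the 1-sided band case from the unique $\sigma_\S$-fixed lift $\wti{\omega}$ together with the explicit $\lambda$ pinned down by Theorem~\ref{thm:indec e-reps}. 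One small caveat: the phrase ``two-sided'' is not really the right mechanism for curves with endpoints (sidedness is a notion for closed curves) — the preimage of an arc under the unramified double cover is a pair simply because path lifting produces exactly two lifts; but since your conclusion is correct and you correctly use the classification of 1-sided $\epsilon$-indecomposables to extract the unique parameter $\lambda_0=\epsilon(-1)^{\mathrm{len}(\wti{\omega})/2}$, the proof is sound.
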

We note that the set on the right in the second row is discrete, as opposed to a continuous family in \hyperlink{corresp.B}{(B)}.  We omit the statement for the 2-sided closed curves due to technicalities; see Theorem \ref{thm:curve corresp} for details.

Our next goal is to find a categorical criteria for indecomposable symmetric objects that characterises self-non-crossing property of a curve on $\SM$ - these are encoded by vanishing of extensions of objects in the classical case.
As an analogue, we define \emph{symmetric extensions} (Definition \ref{def:e-ext}) for symmetric objects.  Using this, we further define the notion of \emph{symmetric rigid objects} and \emph{symmetric cluster tilting objects}, which yields the following correspondences extending \hyperlink{corresp.C}{(C)} and \hyperlink{corresp.D}{(D)}.

\begin{theorem}{\rm (Theorem \ref{thm:categorification})}
For a marked non-orientable unpunctured surface $\SM$ with orientable double cover $\wti{\SM}$, the correspondences in Theorem \ref{thm:main1} restricts to
\begin{align*}
\xymatrix@1{
{\left\{\begin{array}{c}\text{arcs and quasi-arcs}\\\text{of } \SM\end{array}\right\}}
 \ar@{<->}[r]^(0.4){1:1} & 
 {\left\{\begin{array}{c}\text{indecomposable symmetric}\\\text{symmetric rigid objects of }\C_{\wti{\SM}}\end{array}\right\}}.
}
\end{align*}
This induces the following correspondence
\begin{align*}
\xymatrix@1{
{\left\{\begin{array}{c}\text{quasi-triangulations}\\\text{of } \SM\end{array}\right\}}
 \ar@{<->}[r]^(0.47){1:1} & 
 {\left\{\begin{array}{c}\text{symmetric cluster tiling}\\\text{ objects of }\C_{\wti{\SM}}\end{array}\right\}}.
}
\end{align*}
\end{theorem}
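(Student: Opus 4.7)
The plan is to derive both bijections by combining Theorem~\ref{thm:main1} with the Brüstle--Zhang dictionary \hyperlink{corresp.C}{(C)}--\hyperlink{corresp.D}{(D)} applied on the double cover $\wti{\SM}$, and by tracking how symmetric extensions $\Epxt$ behave under $\sigma_\S$-equivariant curves. The first bijection reduces to the claim: under Theorem~\ref{thm:main1}, a curve on $\SM$ is self-non-crossing (i.e.\ an arc or a quasi-arc) if and only if its associated indecomposable symmetric object $X$ satisfies $\Epxt(X,X)=0$.

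I would handle the two types of indecomposable symmetric objects separately. For a curve $\gamma$ connecting marked points of $\SM$, the preimage in $\wti{\SM}$ is a $\sigma_\S$-orbit $\{\wti{\gamma},\sigma_\S(\wti{\gamma})\}$ of arcs (possibly a single arc in the ramified case), and the associated symmetric object is of split or ramified string type. By \cite{BZ11}, self-crossings of $\gamma$ on $\SM$ are in bijection with $\sigma_\S$-orbits of crossings among the preimages, and each such orbit contributes a $\sigma$-invariant subspace of the ordinary self-$\Ext^1$ of the underlying module. By the definition of symmetric extensions (Definition~\ref{def:e-ext}), these $\sigma$-invariant subspaces are exactly what constitute $\Epxt(X,X)$, so $\gamma$ is self-non-crossing iff $\Epxt(X,X)=0$. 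For a primitive one-sided closed curve $\omega$ on $\SM$, the lift $\wti{\omega}$ is a $\sigma_\S$-invariant two-sided closed curve on $\wti{\SM}$, and at the distinguished parameter $\lambda$ at which $X=(\wti{\omega},\lambda)$ defines a one-sided indecomposable symmetric object (Theorem~\ref{thm:indec e-reps}), one checks that the self-$\Ext^1$ in $\C_{\wti{\SM}}$, although nonzero, carries no $\sigma$-invariant vector; hence $\Epxt(X,X)=0$, matching the topological fact that quasi-arcs are self-non-crossing.

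For the second bijection, I would argue by pairwise vanishing plus maximality. The same crossing-versus-symmetric-extension dictionary, applied to pairs of distinct curves on $\SM$, should show that $\gamma,\gamma'$ are non-crossing iff $\Epxt$ between their corresponding indecomposable symmetric objects vanishes in both directions. Summing over a quasi-triangulation $T$ then gives a symmetric rigid object $X_T=\bigoplus_{\gamma\in T}X_\gamma$. Maximality on the two sides is reconciled by a rank/cardinality count: the cardinality of a quasi-triangulation of $\SM$ (via Dupont--Palesi) equals the number of indecomposable symmetric summands in a symmetric cluster tilting object of $\C_{\wti{\SM}}$, which in turn is controlled by the classical count of cluster tilting objects on $\wti{\SM}$ under the $\sigma_\S$-equivariance constraint.

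The principal obstacle will be the 1-sided band case of the crossing-versus-$\Epxt$ dictionary. One must track carefully how the involution $\sigma$ acts on the combinatorial description of $\Ext^1$ between string and band objects of \cite{BZ11}, and verify two delicate points: that at the distinguished parameter the self-$\Ext^1$ of a 1-sided band object contains no $\sigma$-fixed element, and that symmetric extensions between a 1-sided object and a split/ramified object count precisely those crossings that descend to $\SM$ rather than any additional crossings internal to the double cover. Assembling this analysis, together with the cardinality match for quasi-triangulations, will complete the proof.
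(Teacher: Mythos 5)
Your plan for Theorem~\ref{thm:main1} $\Rightarrow$ Theorem~\ref{thm:categorification} rests on a misreading of Definition~\ref{def:e-ext}: a symmetric extension is a morphism $f\in\Hom_\C(X,X'[1])$ (for $X'$ an $\epsilon$-factor of $X$) satisfying the quadratic vanishing condition $f\circ\psi_X\circ\nabla(f)=0$, not a ``$\sigma$-invariant subspace of $\Ext^1$''. The set of symmetric extensions is not even a subspace --- it is closed under scalars but not under addition, as the paper's own remark after Definition~\ref{def:e-ext} emphasises. Consequently your claim for the one-sided band case, namely that ``the self-$\Ext^1$ carries no $\sigma$-invariant vector; hence $\Epxt(X,X)=0$'', does not match the actual criterion: the paper's Lemma~\ref{lem:1-sided e-rigid} shows that for a simple $\sigma$-stable primitive $\omega$ the space $\Ext^1_\C((\omega,\lambda),\rho)$ is one-dimensional, and the \emph{nonzero} generator $f$ has $f\psi_X\nabla(f)\neq 0$; this nonvanishing is what makes $(\omega,\lambda)$ $\epsilon$-rigid, and it is established by an explicit computation with Crawley-Boevey/Krause canonical bases (Theorems~\ref{thm:Crawley Boevey string map}, \ref{thm:Krause band map}) and the dual of an almost crossing, not by a $\sigma$-eigenvector argument. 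The converse direction (Lemma~\ref{lem:1-sided with self-intersection non-rigid}) requires constructing, from a self-crossing of $\omega$, an explicit commutative $\epsilon$-extension diagram of string/band modules; this is the technical heart of the proof and cannot be dismissed as ``tracking how $\sigma$ acts on $\Ext^1$''.

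You also omit two necessary cases. First, the one-sided $\epsilon$-indecomposables of the form $(\omega^2,\lambda)$, which Theorem~\ref{thm:indec e-reps} classifies alongside the $(\omega,\lambda)$ type, must be shown never $\epsilon$-rigid (Lemma~\ref{lem:1sided non-eps-rigid}); your proposal does not account for why these do not correspond to quasi-arcs. Second, ramified $\epsilon$-indecomposables must be ruled out (Lemma~\ref{lem:split ramified rigid}(2)). For the split string case the paper's argument is simpler than the $\sigma$-orbit-of-crossings picture you sketch: one shows directly that a split $\epsilon$-indecomposable is $\epsilon$-rigid if and only if it is ordinarily rigid, and then invokes Proposition~\ref{prop::Ext vanishing}(a). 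Finally, for the second bijection you propose a cardinality/rank count, but the paper needs none: the pairwise condition in the definition of $\epsilon$-cluster-tilting is ordinary $\Ext^1$ vanishing, so Proposition~\ref{prop::Ext vanishing}(b) immediately identifies maximal collections of pairwise compatible $\epsilon$-rigid objects with quasi-triangulations. Your outline identifies the right reduction (rigidity should detect self-non-crossing) but does not reach the correct formulation of the rigidity criterion, and therefore cannot carry out the delicate band computations that constitute most of the work.
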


\subsection{Future Direction}
A more precise additive categorification of a cluster algebra $\mathcal{A}$ asks not just correspondences between rigid/cluster tilting objects of a category $\C$ and cluster variables/clusters of $\mathcal{A}$, but also existence of a  \emph{cluster character} \cite{Palu08, Pla11, Pla18} (a.k.a. Caldero-Chapton map) $\chi: \mathrm{ob}\,\C \to \mathcal{A}$ that satisfies additional properties.
Having the correspondence is only a first step; in a future project, we will look at analogue of cluster characters for quasi-cluster algebras associated to non-orientable surfaces.

\subsection{Structure of the paper}
Our paper is structured as follows.
In Section \ref{sec:topology}, we review the necessary topological theory we use in this article; namely, non-orientable surfaces, quasi-arcs, quasi-triangulations, double cover. In Section \ref{sec:cluster cat basic}, we review the algebraic prerequisites.  
This includes QP's associated to orientable triangulated surfaces, the arising gentle Jacobian algebra, and cluster category. In Section \ref{sec:involutionandduality}, we describe a certain type of involutions on the QP's we use. 
This gives rise to a contravariant duality on the cluster categories and the module categories. 
In subsection \ref{subsec:duality functor}, we justify that such a duality categorifies the orientation-reversing automorphism on the double cover defining the non-orientable surface of interest. 
In Section \ref{sec:symm repn}, by thoroughly analysing the possible symmetric structure on the modules of the arising Jacobian algebras, we prove our first main result (Theorem \ref{thm:indec e-reps}) -- the classification of indecomposable symmetric representations. In the final Section \ref{sec:categorify}, we lift symmetric representation theory from the module category to the cluster category, which yields the correspondence (Theorem \ref{thm:curve corresp}) between curves and symmetric objects.  
Moreover, we define symmetric analogue of rigid objects and cluster tilting objects, which lead us to the final main result (Theorem \ref{thm:categorification}), namely, the categorification of quasi-triangulations of non-orientable surfaces.
\medskip

\subsubsection{Conventions}  Throughout, we assume any underlying field $\Bbbk$ is algebraically closed.  For any ($\Bbbk$-)algebra $\Lambda$ is assumed to be finite-dimensional unless otherwise specified.  The category of (finitely generated) $\Lambda$-modules is denoted by $\mod\Lambda$.  The full subcategory of finitely generated projective $\Lambda$-modules is denoted by $\proj\Lambda$.  For an additive category $\C$ and any object $X\in \C$, denote by $\add_{\C}(C)$, or simply $\add(C)$ if there is no confusion, the additive closure of $C$ in $\C$.

\subsubsection{Acknowledgements} The authors would like to thank Pierre-Guy Plamondon for connecting us together and inspiring us to investigate the algebraic side of quivers arising from non-orientable surfaces. We would also like to thank Osamu Iyama for the insightful comments towards the editing and presentation of the paper. Additionally, we thank Nancy Scherich, Yann Palu, and Toshiya Yurikusa for helpful conversations. The first named author was supported by the Postdoctoral Research Scholarship B3X from Fonds de Recherche du Québec - Nature et Technologies. The second named author is supported by JSPS Grant-in-Aid for Research Activity Start-up program 19K23401.

\section{Marked surfaces}\label{sec:topology}
In this section, we review the necessary topological notions that will be used throughout the paper, namely, the nuances of arcs and triangulations of non-orientable surfaces. 

By a \dfn{surface}, we mean a compact, connected, real $2$-dimensional, (possibly non-orientable) surface $\S$ with non-empty boundary $\partial \S$.  
Let $\M$ be a set of \dfn{marked points}, that is, a finite discrete set of points in $\partial\S$ such that $\M\cap \partial\neq\emptyset$ for each connected component $\partial$ of $\partial\S$.  Such a pair $\SM$ is called a \dfn{marked surface}.  We will assume marked surfaces can be non-trivially triangulated, i.e. $\SM$ is not a monogon, digon, or a triangle.

Suppose that $\SM$ is a marked \textit{non-orientable} surface.
Up to homeomorphism, $\S$ is the connected sum of $k$ projective planes $\R\bbP^2$.  The number $k$ is called the \dfn{non-orientable genus}, or simply just the genus if no confusion arises. Note that any non-orientable surface $N_k$ of genus $k$ admits an orientable double cover $\wti{N_k}$ of genus $k-1$. 

In order to visualize these surfaces, recall that $\R\bbP^2$ is the quotient of the 2-sphere $S^2$ under the antipodal map, which leads to a practical representation by drawing a \dfn{crosscap} $\bigotimes$.  Figure \ref{Fig::crosscap} shows how attaching a crosscap to a disk gives a surface homeomorphic to the M\"{o}bius strip.  More generally, $N_k$ can be drawn as a disk with $k$ crosscaps attached, \textit{or equivalently}, an orientable genus $g\geq 0$ surface with $k-2g \geq 1$ crosscaps attached.

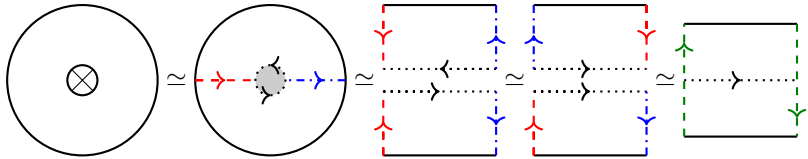
\begin{figure}[!htbp]\centering
\begin{tikzpicture}[scale=0.5]
\tikzset{
	capCut/.style={thick,dotted,->},
	redCut/.style={thick,red,dashed,->},
	blueCut/.style={thick,blue,dash dot,->},
}
\draw[thick]  (0,0) circle (2);\xcap{0,0}\node at (2.5,0) {$\simeq$};
\draw[thick]  (5,0) circle (2);
\fill[gray!40!white] (5,0) circle (0.4);
\draw[capCut] (5,-0.4) arc (-90:90:0.4);
\draw[capCut] (5,0.4) arc (90:270:0.4);
\draw[redCut] (3,0) -- +(1.6,0) (3,0) -- +(0.8,0);
\draw[blueCut] (7,0) -- +(-1.6,0)  (6.3,0) -- +(0.1,0);
\node at (7.5,0) {$\simeq$};

\draw[thick] (8,2) -- +(3,0) (8,-2) -- +(3,0);
\draw[capCut] (8,0.3) -- +(3,0) (11,0.3) -- +(-1.5,0);
\draw[redCut] (8,2) -- +(0,-1.7) (8,2) -- +(0,-0.9);
\draw[blueCut] (11,2) -- +(0,-1.7) (11,1.1) -- +(0,0.1);
\draw[capCut] (8,-0.3) -- +(3,0) (8,-0.3) -- +(1.5,0);
\draw[redCut] (8,-2) -- +(0,1.7) (8,-2) -- +(0,0.9);
\draw[blueCut] (11,-2) -- +(0,1.7) (11,-1.1) -- +(0,-0.1);
\node at (11.5,0) {$\simeq$};

\begin{scope}[shift={(4,0)}]
\draw[thick] (8,2) -- +(3,0) (8,-2) -- +(3,0);
\draw[capCut] (8,0.3) -- +(3,0) (8,0.3) -- +(1.5,0);
\draw[redCut] (11,2) -- +(0,-1.7) (11,2) -- +(0,-0.9);
\draw[blueCut] (8,2) -- +(0,-1.7) (8,1.1) -- +(0,0.1);
\draw[capCut] (8,-0.3) -- +(3,0) (8,-0.3) -- +(1.5,0);
\draw[redCut] (8,-2) -- +(0,1.7) (8,-2) -- +(0,0.9);
\draw[blueCut] (11,-2) -- +(0,1.7) (11,-1.1) -- +(0,-0.1);
\node at (11.5,0) {$\simeq$};
\end{scope}

\draw[thick] (16,1.5) -- +(3,0) (16,-1.5) -- +(3,0);
\draw[thick,black!50!green,dashed,->] (16,1) -- +(0,0.5) (16,-1.5) -- +(0,2.5);
\draw[thick,black!50!green,dashed,->] (19,-1) -- +(0,-0.5) (19,1.5) -- +(0,-2.5);
\draw[capCut] (17.5,0) -- +(1.5,0) (16,0) -- +(1.5,0);
\end{tikzpicture}
\caption{Identifying crosscap-attached disk with the M\"obius strip.} \label{Fig::crosscap}\end{figure}

Unless otherwise specified, a \dfn{curve} in $\SM$ is (the image of) a continuous function $\gamma:I\to \S$ such that
\begin{itemize}
\item either $\gamma$ is a \dfn{curve with endpoints} (in $\M$), i.e. $I\simeq [0,1]$ with $\gamma\cap \partial \S=\{I(0), I(1)\}\subset \M$ and $\gamma$ does not cut out a monogon containing no marked point when $I(0)=I(1)$,

\item or $\gamma$ is a \dfn{closed curve}, i.e. $I\simeq S^1$ with $\gamma\cap \partial \S=\emptyset$ and $\gamma$ non-contractible.
\end{itemize}
We always work with curves up to isotopy relative its endpoints (whenever `endpoints' make sense).  In particular, two curves bounding a marked point are considered distinct.We will use the following notation:
\begin{align*}
\Cnc\SM &:= \{ \text{(isotopy classes of ) curves with endpoints in }\M \} \\
\Ccc\SM &:= \{ \text{(isotopy classes of ) closed curves in }\SM \} \\
\bfC\SM &:= \Cnc\SM\sqcup\Ccc\SM
\end{align*}

A \dfn{boundary arc} is a curve that is isotopic to an interval on $\partial\S$ with endpoints in $\M$.  A \dfn{regular arc} $\gamma$ is a curve in $\SM$ with endpoints in $\M$ that is {\it not a boundary arc} and has no self-intersections (i.e. \dfn{simple}), except possibly at its endpoints.  Denote by $\bfA\SM$ the set of regular arcs in $\SM$.

Every curve on $\SM$ can be drawn as a curve on an orientable surface with crosscaps, such that when it hit a crosscap, it needs to come out from the antipodal point of this crosscap.  See, for example, how the `arc' given by the (concatenation of) red-and-blue (dashed, then dash-dotted) line in Figure \ref{Fig::crosscap} represents the vertical (green) oriented curve) on the M\"{o}bius strip on the far-right.  

\begin{example}\label{example: regulararc}
\begin{figure}[!htbp]
\centering
\begin{tikzpicture}[scale=0.6]
\begin{scope}[shift={(-5,0)}]
\draw[very thick,blue] (0,0) .. controls (0,1.5) and (1,0.5) .. (1,0) 
	.. controls (1,-1) and (-1,-1) .. (-1,0) node[black,left] {$\alpha$} 
	.. controls (-1,0.5) and (-1,1) .. (0,2);
\draw[very thick,blue] (0,-2) -- +(0,2);
\Mtwo{0,0}
\end{scope}

\begin{scope}
\draw[very thick,blue] (0,-2) .. controls (-1.5,-1) and (-1,0) .. (0,0) .. controls (1,0) and (1.5,-1) .. (0,-2);
\node at (-1,0) {$\beta$};
\Mtwo{0,0}
\end{scope}

\begin{scope}[shift={(10,0)}]
\draw[very thick,blue]  (0,0) ellipse (1.2 and 1.2);
\node at (-1.6,0) {$\kappa$};
\Mtwo{0,0}
\end{scope}

\begin{scope}[shift={(5,0)}]
\draw[very thick,blue] (0,-0.4) .. controls (0,-1) and (1,-0.5) .. (1,0)
	.. controls (1,0.5) and (0,1) .. (0,0.4);
\node at (1.6,0) {$\omega$};
\Mtwo{0,0}
\end{scope}
\end{tikzpicture}\caption{Examples of non-closed and closed curves.}\label{fig:eg1}
\end{figure}
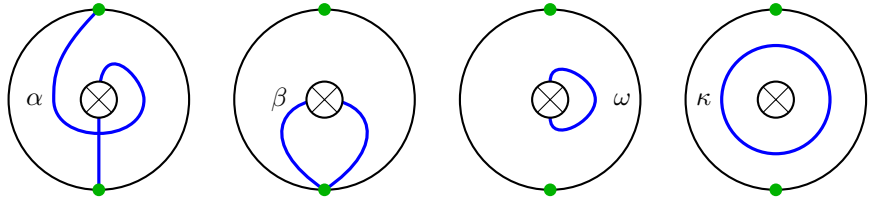
Consider the curves $\alpha,\beta,\gamma$ in Figure \ref{fig:eg1}.
We have $\alpha, \beta \in \Cnc\SM$ and $\omega,\omega' \in \Ccc\SM$.  Note that $\alpha$ is not a regular arc as it contains a self-intersection, whereas $\beta$ is a regular arc as the only self-intersection is at its endpoints.
\end{example}

\begin{example}\label{example: C(S,M)}
There are five regular arcs in the M\"obius strip with two marked points.  See Figure \ref{fig:flip-M2s}.
\end{example}

Recall that a closed curve on $\SM$ is \dfn{2-sided} if local orientation is preserved when traversing along itself; \dfn{1-sided}, otherwise.  This is equivalent to say that it has a 2-sheeted cover or a unique lift on the orientable double cover; see the discussion at the end of this section. A simple closed curve is 1-sided if and only if the number of crosscaps it goes through is odd \cite{oddcrosscapresult}.
Denote by $\Cocc\SM$ the set of 1-sided closed curves and by $\Ctcc\SM$ the set of 2-sided closed curves. So we have $\Ccc\SM = \Cocc\SM\sqcup\Ctcc\SM$.

The dotted curve in the far-right of Figure \ref{Fig::crosscap} shows the \dfn{core} of a M\"{o}bius strip, i.e. a 1-sided simple closed curve whose neighbourhood is homeomorphic to a M\"{o}bius strip.  We call such a curve a \dfn{quasi-arc}.  The simplest pictorial representation of a quasi-arc is given by drawing a closed curve through a crosscap once; see Figure \ref{fig:quasi-arc}.

\begin{figure}[!htbp]
\centering
\begin{tikzpicture}[scale=0.6]
\draw[thick]  (0,0) circle (2);\xcap{0,0}\node at (3,0) {$\simeq$};
\draw[thick,red] (0,-0.4) .. controls (0,-1) and (1,-0.5) .. (1,0)
	.. controls (1,0.5) and (0,1) .. (0,0.4);

\draw[thick] (4,1.5) -- +(3,0) (4,-1.5) -- +(3,0);
\draw[dashed,->] (4,1) -- +(0,0.5) (4,-1.5) -- +(0,2.5);\draw[dashed,->] (7,-1) -- +(0,-0.5) (7,1.5) -- +(0,-2.5);
\draw[thick,red] (7,0) .. controls (6.5,-0.5) and (5.5,-0.5) .. (5.5,0) .. controls (5.5,0.5) and (4.5,0.5) .. (4,0);
\node at (8,0) {$\simeq$};

\begin{scope}[shift={(5,0)}]
\draw[thick] (4,1.5) -- +(3,0) (4,-1.5) -- +(3,0);
\draw[dashed,->] (4,1) -- +(0,0.5) (4,-1.5) -- +(0,2.5);\draw[dashed,->] (7,-1) -- +(0,-0.5) (7,1.5) -- +(0,-2.5);
\draw[red,thick] (4,0) --+(3,0);
\end{scope}
\end{tikzpicture}
\caption{Quasi-arc on the M\"{o}bius strip}\label{fig:quasi-arc}
\end{figure}
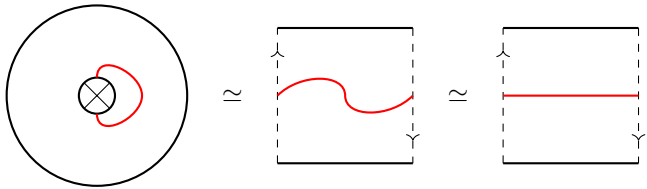

Quasi-arcs can take more complicating forms.  For example, Figure \ref{fig:3xcaparc} shows a quasi-arc that goes through three crosscaps on $(\R\bbP^2)^{\#3}$ with a disk removed. This can be turned back into the more familiar form by using a different representation of the same surface, as a sphere attached with 3 crosscaps is homeomorphic to a torus attached with 1 crosscap.
\begin{figure}[!htbp]
\centering
\begin{tikzpicture}[scale=0.55]
\draw[thick]  (0,0) circle (2);
\draw[very thick,blue] (0,0) circle (1.2);
\xcap{-40:1.2}\xcap{90:1.2}\xcap{-140:1.2}
%\fill[darkgreen] (0,-2) circle (4pt);
%\fill[darkgreen] (0,2) circle (4pt);
\draw  (6,0) ellipse (3 and 2);
\draw (7.5,-0.05) arc (30:150:1.75 and 1);
\draw (4,0.2) arc (-150:-30:2.3 and 1);
\draw[very thick,blue] (4.47,-0.7) circle (0.4);
\draw[color=black, thick, fill=gray] (8,-0.7) circle (0.3);
\xcap[0.7]{4.2,-0.7}
\node at (2.5,0) {$=$};
\node at (-0.5,0.4) {$\alpha$};
\node at (5.2,-1) {$\alpha$};
\end{tikzpicture}\caption{Quasi-arc that goes through three crosscaps}\label{fig:3xcaparc}
\end{figure}
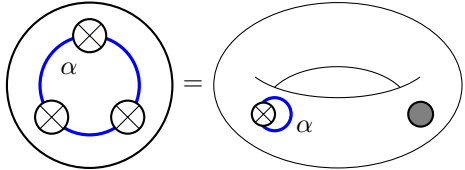

From now on, by an \dfn{arc}, or \dfn{internal arc} if we want to emphasis its nature, we mean a regular arc or a quasi-arc.  Denote by $\bfA^\otimes\SM$ the set of all arcs.  Next, we would like to talk about \emph{triangulations} (and their extended notion) formed by curves of $\SM$.  This requires the notion of \emph{non-crossing}.

Recall that two curves (in particular, two arcs) $\gamma, \gamma'$ in a (orientable or non-orientable) marked surface $\SM$ are \dfn{non-crossing} if, up to isotopy, they do not intersect one another except at their endpoints.  More formally, define
\begin{align*}
\text{int}(\gamma,\gamma') & := \min \{ \alpha\cap \alpha' \mid \alpha \simeq \gamma \text{ and } \alpha' \simeq \gamma'\}
\end{align*}
\noindent where $\alpha$ (respectively $\alpha'$) ranges over curves isotopic to $\gamma$ (respectively $\gamma'$), and define also
\begin{align*}
\text{cross}(\gamma, \gamma') & := \begin{cases}
\text{int}(\gamma,\gamma')\setminus\{\gamma(0),\gamma(1)\}, & \text{if both $\gamma, \gamma'$ are not closed};\\
\text{int}(\gamma,\gamma'), & \text{otherwise}.
\end{cases}   
\end{align*}
Then, $\gamma$ and $\gamma'$ being non-crossing is the same as saying  $\text{cross}(\gamma, \gamma')=\emptyset$.  

With the crosscap representation of non-orientable surfaces, two arcs may appear to have a crossing at the crosscap, but they actually do not cross in reality. 

\begin{definition}\label{definition :: quasitriangulation}
A \dfn{quasi-triangulation} (respectively \dfn{triangulation}) of $\SM$ is a maximal collection $T\subset \bfA^\otimes\SM$ (respectively $T\subset \bfA\SM$) of pairwise non-crossing arcs (respectively regular arcs).
\end{definition}

See Figure \ref{fig:flip-M2s} for all the triangulations and quasi-triangulations of the M\"{o}bius strip with 2 marked points $\mathcal{M}_2$.

\begin{figure}[!htbp]
\centering
\begin{tikzpicture}[scale=0.5]
\begin{scope}[shift={(0,0)}]
\draw[very thick,blue] (0,-2) .. controls (-2.5,-1) and (-1.5,1) .. (0,1) .. controls (1.5,1) and (2.5,-1) .. (0,-2);
\draw[very thick,orange]  (0,-0.3) ellipse (0.75 and 0.4);
\Mtwo{0,0}
\end{scope}
\begin{scope}[shift={(-6,0)}]
\draw[very thick,blue] (0,-2) .. controls (-2.5,-1) and (-1.5,1) .. (0,1) .. controls (1.5,1) and (2.5,-1) .. (0,-2);
\draw[very thick,orange] (0,-2) .. controls (-1.5,-1) and (-1,0) .. (0,0) .. controls (1,0) and (1.5,-1) .. (0,-2);
\Mtwo{0,0}
\end{scope}
\begin{scope}[shift={(-12,0)}]
\draw[very thick,blue] (0,-2) -- +(0,4);
\draw[very thick,orange] (0,-2) .. controls (-1.5,-1) and (-1,0) .. (0,0) .. controls (1,0) and (1.5,-1) .. (0,-2);
\Mtwo{0,0}
\end{scope}

\begin{scope}[shift={(0,-6)},yscale=-1]
\begin{scope}[shift={(0,0)}]
\draw[very thick,blue] (0,-2) .. controls (-2.5,-1) and (-1.5,1) .. (0,1) .. controls (1.5,1) and (2.5,-1) .. (0,-2);
\draw[very thick,orange]  (0,-0.3) ellipse (0.75 and 0.4);
\Mtwo{0,0}
\end{scope}
\begin{scope}[shift={(-6,0)}]
\draw[very thick,blue] (0,-2) .. controls (-2.5,-1) and (-1.5,1) .. (0,1) .. controls (1.5,1) and (2.5,-1) .. (0,-2);
\draw[very thick,orange] (0,-2) .. controls (-1.5,-1) and (-1,0) .. (0,0) .. controls (1,0) and (1.5,-1) .. (0,-2);
\Mtwo{0,0}
\end{scope}
\begin{scope}[shift={(-12,0)}]
\draw[very thick,blue] (0,-2) -- +(0,4);
\draw[very thick,orange] (0,-2) .. controls (-1.5,-1) and (-1,0) .. (0,0) .. controls (1,0) and (1.5,-1) .. (0,-2);
\Mtwo{0,0}
\end{scope}
\end{scope}

%edge of flip graphs
\draw (-9.5,0) -- +(1,0) (-3.5,0) -- +(1,0) (-9.5,-6) -- +(1,0) (-3.5,-6) -- +(1,0);
\draw (-12,-2.5) -- +(0,-1) (0,-2.5) -- +(0,-1);

%legend
\draw [decorate,decoration={brace,amplitude=10pt}]
(-14.5,2.5) -- (-3.5,2.5) node [midway,above,yshift={10}] {Triangulations};
\draw [decorate,decoration={brace,amplitude=10pt}]
(-2.5,2.5) -- (2.5,2.5) node [midway,above,yshift={10}] {Quasi-triangulations};
\end{tikzpicture}\caption{Flip graph of $\mathcal{M}_2$}\label{fig:flip-M2s}
\end{figure}
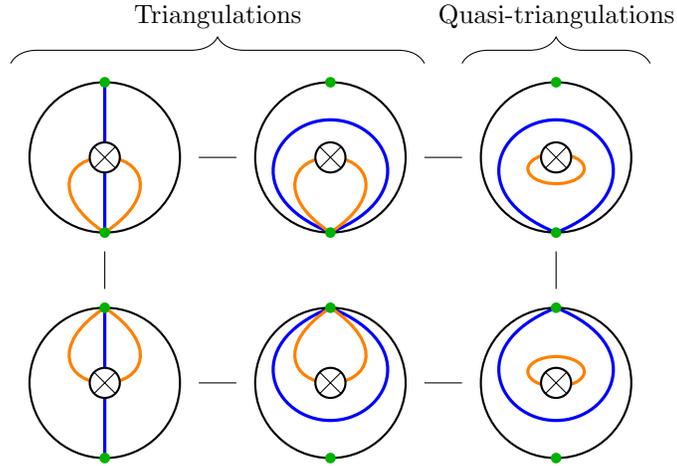

\begin{definition}\label{def:double cover}
Let $\SM$ be a marked unpunctured (not necessarily orientable) surface.
A(n orientable) \dfn{double cover} of $\SM$ is a double cover $p:\wti{\S}\to \S$ of surfaces with orientable $\wti{\S}$ and Deck transformation group $\{1,\sigma\}$ for some orientation-reversing automorphism $\sigma:\wti{\S}\to\wti{\S}$, such that $p^{-1}(\partial\S) = \partial \wti{\S}$ and  $\wti{\SM}=(\wti{\S},\wti{\M}:=p^{-1}(\M))$ is an orientable marked (unpunctured) surface.
In such a case, if $T$ is a triangulation of $\SM$, then we call the preimage $\wti{T}:= p^{-1}(T)$ of $T$ the double cover of $T$.
\end{definition}

We will omit the covering map $p$ and just say that $(\wti{\SM},\sigma)$, or even just $\wti{\SM}$, is an orientable double cover of $\S$. In Figure \ref{fig:fan}, we have $\S=\mathcal{M}_{2n}$ the M\"{o}bius strip with $2n$ marked points on the left-hand side, and its double cover -- an annulus with $2n$ marked points on each boundary component -- on the right-hand side.  We show also a triangulation $T=\{1,2\ldots, n\}$ on the M\"{o}bius strip and its double cover $\wti{T}=\{1,1', 2,2', \ldots, n,n'\}$.  We call this the \dfn{fan triangulation} of $\mathcal{M}_{2n}$.

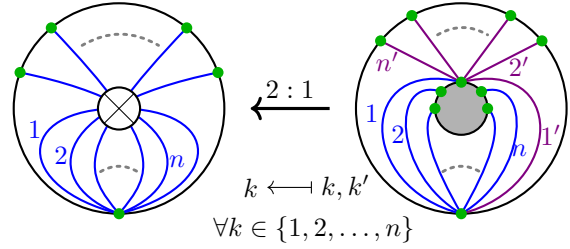
\begin{figure}[!htbp]
\centering
\begin{tikzpicture}[scale=0.7]
    %Boundary components
    \draw[thick]  (0,0) circle (2);
    \filldraw[thick, fill=white!70!black]  (0,0) circle (0.5);
    %Arcs
    %\draw[thick, blue!50!red] (0,0.5) .. controls +(140:.8) and +(30:.8) .. (160:2); \mkpt{160:2}
    %\draw[thick, blue!50!red] (0,0.5) .. controls +(120:.8) and +(-15:.8) .. (130:2); \mkpt{130:2}
    \draw[thick, blue!50!red] (0,0.5) -- (140:2); \mkpt{140:2}
    \draw[thick, blue!50!red] (0,0.5) -- (118:2); \mkpt{118:2}
    \draw[thick, blue!50!red,xscale=-1] (0,0.5) -- (140:2); \mkpt{40:2}
    \draw[thick, blue!50!red,xscale=-1] (0,0.5) -- (118:2); \mkpt{62:2}
    %\draw[thick, blue!50!red,xscale=-1] (0,0.5) .. controls +(140:.8) and +(30:.8) .. (160:2); \mkpt{20:2}
    %\draw[thick, blue!50!red,xscale=-1] (0,0.5) .. controls +(120:.8) and +(-15:.8) .. (130:2); \mkpt{50:2}
\draw[line width= 1.1pt,
      dash pattern=on 1.1pt off 2.2pt,
      line cap=round, black!50!white]  (-0.45,1.4) to[bend left] (0.5,1.4);
    %\draw[thick, blue] (0,-2) --   (0,-0.5);
    %\draw[thick, blue] (0,2) --  (0,0.5);
    \draw[thick, blue] (0,-2) .. controls +(150:1.5) and (155:1.5) .. (140:0.5); \mkpt{140:0.5}
    \draw[thick, blue] (0,-2) .. controls +(120:1) and (-160:0.9) .. (180:0.5); \mkpt{180:0.5}
    \draw[thick, blue, xscale=-1] (0,-2) .. controls +(150:1.5) and (155:1.5) .. (140:0.5); \mkpt{40:0.5}
    \draw[thick, blue, xscale=-1] (0,-2) .. controls +(120:1) and (-160:0.9) .. (180:0.5); \mkpt{0:0.5}
\draw[line width= 1.1pt,
      dash pattern=on 1.1pt off 2.2pt,
      line cap=round, black!50!white] (-0.35,-1.2) to[bend left] (0.35,-1.2);
    
    \draw[thick, blue!50!red] (0,-2) .. controls (2,-1.5) and (2,1) ..  (0,0.5);
    \draw[thick, blue] (0,-2) .. controls (-2,-1.5) and (-2,1) ..  (0,0.5);
    %Marked points
    \mkpt{0,-2};\mkpt{0,0.5};

    \draw[very thick, ->] (-2.5,0) -- node[midway,above] {$2:1$} +(-1.5,0);
%NoS side
\begin{scope}[shift={(-6.5,0)}]
\draw[thick]  (0,0) circle (2);
\draw[thick,blue] (-90:2) .. controls +(20:1.5) and (0:1.5) .. (160:2); \mkpt{160:2}
\draw[thick,blue] (-90:2) .. controls +(50:1) and (-40:1.2) .. (130:2); \mkpt{130:2}
\draw[thick,blue,xscale=-1] (-90:2) .. controls +(20:1.5) and (0:1.5) .. (160:2); \mkpt{20:2}
\draw[thick,blue,xscale=-1] (-90:2) .. controls +(50:1) and (-40:1.2) .. (130:2); \mkpt{50:2}
\draw[thick,blue] (0,-2) .. controls (-2,-1.5) and (-2,0) .. (0,0) .. controls (2,0) and (2,-1.5) .. (0,-2);
\xcap{(0,0)}
\draw[line width= 1.1pt,
      dash pattern=on 1.1pt off 2.2pt,
      line cap=round, black!50!white] (-0.35,-1.2) to[bend left] (0.35,-1.2);
\draw[line width= 1.1pt,
      dash pattern=on 1.1pt off 2.2pt,
      line cap=round, black!50!white] (-0.7,1.2) to[bend left] (0.75,1.2);
\mkpt{0,-2};
\end{scope}
\node[blue,thick] at (-8.1,-0.4) {$1$};
\node[blue,thick] at (-7.6,-0.9) {$2$};
\node[blue,thick] at (-5.4,-1) {$n$};

\node[blue,thick] at (-1.7,-0.1) {$1$};
\node[blue,thick] at (-1.2,-0.5) {$2$};
\node[blue,thick] at (1.1,-0.8) {$n$};
\node[blue!50!red,thick] at (-1.4,0.9) {$n'$};
\node[blue!50!red,thick] at (1.1,0.8) {$2'$};
\node[blue!50!red,thick] at (1.7,-0.5) {$1'$};

\node (src) at (-2.2,-1.5) {$k,k'$};\node (tgt) at (-4,-1.5) {$k$};
\draw[|->] (src) -- (tgt);
\node at (-2.8,-2.3) {$\forall k\in\{1,2,\ldots, n\}$};
\end{tikzpicture}
\caption{Fan triangulation of M\"{o}bius strip and its double cover}\label{fig:fan}
\end{figure}

Recall that a closed curve $\omega\in\Ccc\SM$ is \dfn{primitive} if  for it cannot be written as $\kappa^w$ for some $\kappa\in \Ccc\SM$ with $r>1$.
Therefore, we have a partition $\Ccc\SM=\sqcup_{r\geq 1}\Ccc^r\SM$ where $\Ccc^r\SM$ are closed curves of the form $\omega^r$ for a primitive $\omega\in \Ccc^1\SM$.
Likewise, we write $\Cocc^1\SM$ and $\Ctcc^1\SM$ for the sets of primitive 1-sided and 2-sided closed curves respectively.
We can characterises the whether a closed curve $\omega\in\Ccc^1\SM$ is 1- or 2-sided from its lift in $\wti{\SM}$ as follows:
\begin{align*}
\omega\in \Cocc^1\SM & \Leftrightarrow p^{-1}(\omega) \text{ is a single closed curve on $\wti{\SM}$,}\\
\omega\in \Ctcc^1\SM & \Leftrightarrow p^{-1}(\omega) = \wti{\omega}\sqcup \sigma(\wti{\omega})\text{ for some lift }\wti{\omega}\in\Ccc\wti{\SM}.
\end{align*}
In particular, if $\wti{\omega}:=p^{-1}(\omega)$ is the unique lift of $\omega\in\Cocc^1\SM$, then there is also a 2-sided closed curve $\kappa$ such that $p^{-1}(\kappa)$ is (isotopic to) the disjoint union of \emph{two copies of $\wti{\omega}$}.
An example of such a pair $(\omega,\kappa)$ is already shown in Example \ref{fig:eg1}.

\section{Quiver with potential, Jacobian algebra, and cluster category}\label{sec:cluster cat basic}

In this section, we review the notion of quivers with potential associated to an orientable surface. We discuss two algebraic objects one can associate to such a quiver with potential: the cluster category and the Jacobian algebra. In this, we review string and band modules arising from arcs and closed curves on a surface.

\subsection{QP and Jacobian algebra}
Suppose $\SM$ is a marked \emph{orientable} surface and $T$ is a triangulation on $\SM$.
Recall from \cite{Lab09} that one can associate to $T$ is \emph{quiver with potential} (QP) $(Q_T,W_T)$ given by 
\begin{itemize}
\item The quiver $Q_T$ has vertices being the internal arcs of $T$ and arrows \emph{clockwise} rotation of arcs around marked points.

\item Each internal triangle $\triangle$ yields an oriented cycle $a_\triangle b_\triangle c_\triangle$ in $Q_T$ that is unique up to cyclic permutation.  Then the potential $W_T$ is given by the sum of all these oriented cycles over all internal triangles of $T$.
\end{itemize}

\begin{example}\label{example:qpeasy}
Consider the triangulation of $\mathcal{M}_2$ in the middle of the top row of Figure \ref{fig:flip-M2s}. The double cover is given by the triangulation $T$ of the annulus with four arcs as shown in Figure \ref{fig:doublecoverqpeasy}. 

\begin{figure}[!htbp]
\centering
\begin{subfigure}[!htbp]{0.4\textwidth}
    \centering
    \begin{tikzpicture}[scale=0.8]
        %Boundary components
        \draw[thick]  (0,0) circle (2);
        \filldraw[thick,fill=white!80!black]  (0,0) circle (0.75);
        %Arcs
        \draw[very thick, red] (0,-2) .. controls (2.5,-1) and (1.2,1.5) .. (0,0.75);
        \node[red] at (1.25,.75) {$4$};
        \draw[very thick,orange] (0,-2) .. controls (-2.5,-1) and (-1.2,1.5) .. (0,0.75);
        \node[orange] at (-1.25,.75) {$2$};
        \draw[very thick,cyan] (0,.75) .. controls (-1.5,0.75) and (-1.5,-1.25) .. (0,-1.25) .. controls (1.5,-1.25) and (1.5,.75) .. (0,.75);
        \node[cyan] at (0,-1.5) {$3$};
        \draw[very thick,blue] (0,-2) .. controls (-2.95,-1) and (-1.75,1.75) .. (0,1.75) .. controls (1.75,1.75) and (2.95,-1) .. (0,-2);
        \node[blue] at (0,1.5) {$1$};
        %Marked points
        \fill[darkgreen] (0,-2) circle (3pt);
        \fill[darkgreen] (0,2) circle (3pt);
        \fill[darkgreen] (0,-0.75) circle (3pt);
        \fill[darkgreen] (0,0.75) circle (3pt);
    \end{tikzpicture}
    \caption{Triangulation $T$}
    \label{fig:doublecoverqpeasy}
    \end{subfigure}
    \begin{subfigure}[!htbp]{0.4\textwidth}
    \centering $Q_T = $
    \begin{tikzcd}
         & 1 \arrow[dl, swap, "\alpha_1"] \\
         2  \arrow[rr, "\alpha_2", shift left] \arrow[rr, "\beta_2", shift right, swap] & & 4 \arrow[ul, "\alpha_3", swap]  \arrow[dl, "\beta_1"] \\
         & 3 \arrow[ul, "\beta_3"]
       \end{tikzcd}
    
	$W_T = \alpha_1 \alpha_2 \alpha_3 + \beta_3\beta_2\beta_1$
    \caption{Quiver with potential $(Q_T,W_T)$}
    \label{fig:three sin x}
    \end{subfigure}
    \caption{A triangulation of annulus and its associated quiver with potential.}
    \label{fig:three graphs}
\end{figure}
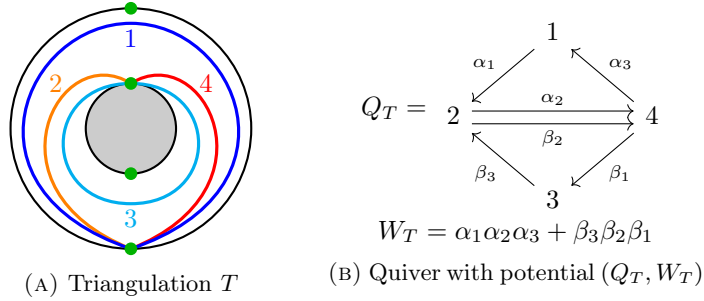
\end{example}

A QP gives rise to an algebra $J_{Q,W}$ called \dfn{Jacobian algebra}. In the case when $(Q,W)=(Q_T,W_T)$ for some triangulation $T$, then  \cite{ABCP10} showed that the arising Jacobian algebra $J_T:=J_{Q_T,W_T}$ belongs to a special class of finite-dimensional (basic) algebras called \dfn{gentle algebras} (see Definition \ref{def:gentle}); note that having $\M\subset \partial\S$ is crucial.  Its quiver-and-relations, i.e. the pair $(Q,R)$ of a quiver $Q$ and a set $R$ of linear combinations of paths in $Q$ such that $J_T\cong \Bbbk Q/(R)$, can be written in a more practical form:
\[Q=Q_T, \quad \text{ and }\quad R=\{ \text{length 2 paths in internal triangles of }T\}.
\]
\begin{example}\label{example:jacobianalg}
Using the QP from Example \ref{example:qpeasy}, we obtain the following ideal 
$$R= \langle \alpha_1\alpha_2, \alpha_2\alpha_3, \alpha_3\alpha_1, \beta_2\beta_1, \beta_3\beta_2, \beta_1\beta_3 \rangle.$$ This gives the Jacobian algebra $J_T = \Bbbk Q/R$ which has a basis given by 
$$\{\epsilon_1, \epsilon_2, \epsilon_3, \epsilon_4, \alpha_1, \alpha_2, \alpha_3, \beta_1, \beta_2, \beta_3, \alpha_1\beta_2, \alpha_2\beta_1,\beta_3\alpha_2,\beta_2\alpha_3\}.$$
\end{example}

\subsection{Cluster category associated to a QP}
By a result of Amiot \cite[Thm 3.6]{Ami09}, for any QP $(Q,W)$ with a finite-dimensional Jacobian algebra $J_{Q,W}$, then one can associate to it a Krull-Schmidt Hom-finite 2-Calabi-Yau $\Bbbk$-linear triangulated category, denoted by $\C_{(Q,W)}$, called the (Amiot's) \dfn{cluster category}.
For an introductory exposition on the cluster category, see \cite{Amilectureseries}.  When $(Q,W)=(Q_T,W_T)$ for some triangulation $T$, we will use a simpler notation $\C_T:=\C_{(Q,W)}$.

Let us now focus on the case when $(Q,W)=(Q_T,W_T)$ for some triangulation $T$ of marked orientable surface $\SM$.
Denote by $\ind\C$ the set of isomorphism classes of indecomposable objects in $\C$, whose relation with surface combinatorics is first explained in \cite{ABCP10} and \cite{BZ11}.  Then we have a bijection 
\begin{align}\label{eq:curve corresp}
 \big(\Ccc\SM\times \Bbbk^\times\big)\sqcup \Cnc\SM \leftrightarrow \ind\C.
\end{align}

\begin{example}
Let $\omega$ be the unique closed curve on the triangulation of the annulus given in Figure \ref{fig:doublecoverqpeasy}. For some $\lambda \in \Bbbk^{\times}$, this curve corresponds to the indecomposable module $V_\lambda = \xymatrix@1{\Bbbk \ar@<.5ex>[r]^{1}\ar@<-.5ex>[r]_{\lambda} & \Bbbk}$.
\end{example}

Henceforth, we use the same notation for curves with endpoints and the corresponding indecomposable objects in $\C_T$; likewise for the \dfn{coloured closed curves} $(\gamma,\lambda)\in \Ccc\SM\times\Bbbk^\times$.  

\begin{definition}
Let $\C$ be a triangulated category with shift functor $[1]$.  Denote by $\Ext_\C^1(X,Y):=\Hom_\C(X,Y[1])$ for any $X,Y\in\C$.

An object $T\in \C$ is called \dfn{cluster-tilting} if
\begin{itemize}
\item it is \dfn{rigid}, i.e. $\Ext_\C^1(T,T)=0$,
\item and $T^{\perp_1}:= \Kernel\Ext_\C^1(T,-)$ and ${}^{\perp_1} T:=\Kernel\Ext_\C^1(-,T)$ coincides with $\add(T)\subset \C$.
\end{itemize}

A cluster-tilting object is said to be \dfn{basic} if its indecomposable direct summands are pairwise distinct.  Denote by $\ctilt(\C)$ the set of (isomorphism classes of) basic cluster-tilting objects in $\C$.
\end{definition}

Suppose $T$ is a triangulation of an orientable marked surface $\SM$.  It turns out that 
\[
\Gamma:=\bigoplus_{\gamma\in T}\gamma[-1] \in \C_T
\] 
(and hence also $T$ itself) is a cluster-tilting object.  In fact, more generally, we have the following commutative diagram
\[
\xymatrix@C=60pt@R=12pt{ 
\bfA\SM \ar@{<->}[r]^{1:1\phantom{abcdefgabcdefg}} & \{\text{indecomposable rigid objects of }\C\} \\
\{\text{arcs of }T\} \ar@{<->}[r]^{1:1\phantom{abcdefgabcdefg}} \ar@{}[u]|{\cup} & \{\text{indecomposable direct summands of } \Gamma[1] \}.\ar@{}[u]|{\cup}
}
\]

By Koenig and Zhu's result \cite{KZ08}, the functor $\Hom_{\C_{(Q,W)}}(\Gamma,-):\C_{(Q,W)}\to \mod J_{Q,W}$ induces an equivalence 
\[
M(-):=\Hom_{\C_{(Q,W)}}(\Gamma,-) : \C_{(Q,W)}/[\Gamma[1]] \to \mod J_{Q,W}
\]
where $\C_{(Q,W)}/[\Gamma[1]]$ denotes the additive quotient of $\C$ by the ideal of morphisms factoring through $\Gamma[1]$.  
Practically, this means that indecomposable objects of $\C_{(Q,W)}$ are `given' by those of $\mod J_{Q,W}$ along with the indecomposable direct summands of $\Gamma[1]$.

Let us now describe the explicit structure of the indecomposable $J_T$-modules for a triangulation $T$ of a marked orientable unpunctured surface.
There are two types of such modules, called \emph{strings} and \emph{bands}.

\subsection{String modules vs curves with endpoints}\label{subsec:string intro}
Consider first the case of a curve with endpoints $\gamma:[0,1]\to \S$ \emph{that is not an arc of $T$}.  We have the following set of crossings between $\gamma$ and arcs of $T$
\[
\mathrm{cross}(\gamma,T) = \{\gamma(t_0), \gamma(t_1), \ldots, \gamma(t_c)\}, \text{ arranged so that }0< t_0< t_1 <\cdots t_{c}<1.
\]
Let $\gamma_i := \gamma|_{(t_{i-1},t_{i})}$ for $i=1,2,\ldots,c$.  Since each $\gamma_i$ is an angle of an (internal) triangle of $T$, this can be identified with an arrow $\alpha_i \in Q=Q_T$.  Note that $\alpha_i$ may not have the same orientation as the (oriented) segment $\gamma_i$, in which case, we write $\alpha_i\simeq\gamma_i^{-1}$; otherwise, $\alpha_i\simeq \gamma_i$.  The underlying vector space of the module $M(\gamma)$ is given by
\[
M(\gamma) \stackrel{\text{v.sp}}{=} \bigoplus_{i=0}^c \Bbbk \gamma(t_i) \text{ with $\alpha_i$ action: }
\begin{cases}
\Bbbk \gamma(t_{i-1}) \xrightarrow{1} \Bbbk\gamma(t_i), & \text{ if $\alpha_i\simeq\gamma_i$};\\
\Bbbk \gamma(t_{i}) \xrightarrow{1} \Bbbk \gamma(t_{i-1}), & \text{ if $\alpha_i\not\simeq\gamma_i$},
\end{cases}
\]
and with $e_\tau$-action, for primitive idempotent $e_\tau\in J_T$ corresponding to the arc $\tau$, on $\gamma(t_i)$ is given by identity if $\gamma(t_i)\in \tau$; by zero otherwise.

As we can see, a string module $M(\gamma)$ can be encoded purely by combinatorial means, namely, the sequence $\gamma_i$'s.  This is what people call \dfn{string combinatorics} in the representation theory of gentle (or generally, special biserial) algebras, which we will describe more properly in the following.

We consider the elements of $Q_0\sqcup Q_1\sqcup Q_1^{-1}$ as \dfn{letters}, where $Q_1^{-1}$ is the set of formal inverses $\alpha^{-1}$ of arrows $\alpha\in Q_1$.  We call a letter $\alpha$ a \dfn{directed arrow} if $\alpha\in Q_1$,  an \dfn{inverse arrow} if $\alpha\in Q_1^{-1}$, and \dfn{trivial} if $\alpha\in Q_0$.
It is customary to use the trivial path $e_x$ as letter for $x\in Q_0$.
For a directed arrow $\alpha\in Q_1$, its inverse $\alpha^{-1}$ has source $s(\alpha^{-1}):=t(\alpha)$ and target $t(\alpha^{-1})=s(\alpha)$.
The inverse $(\alpha^{-1})^{-1}$ of an inverse arrow $\alpha^{-1}$ is just $\alpha$, and the inverse of a trivial letter is itself.
A word $w$ is called a \dfn{walk} if it is either trivial, i.e. $w=e_i \in Q_0$, or $w=w_1\cdots w_\ell$ with letters $w_i$'s such that the following hold:
\begin{itemize}
\item $t(w_{i})=s(w_{i+1})$ for all $i$;
\item if both $w_i, w_{i+1}$ are directed (respectively inverse), then $w_iw_{i+1}\notin R$ (respectively $w_{i+1}^{-1}w_i^{-1}\notin R$);
\item if $w_i$ and $w_{i+1}$ are in different direction, then $w_i\neq w_{i+1}^{-1}$.
\end{itemize}
Inverting a letter extends to a reflection operation on the set of walks, and the induced equivalence classes are called \dfn{string}.  

Curves with endpoints can be identified with strings.  Indeed, for such a curve $\gamma$, the segment $\gamma_i=\gamma|_{(t_i,t_{i+1})}$ defines a letter $w_i$ in the corresponding walk; note that $\gamma_1,\ldots, \gamma_c$ suffices to determine $\gamma$ as the remaining starting interval of $\gamma$ are uniquely determine by going from the arc containing $\gamma(t_0)$ to the opposite marked point of the triangle, and likewise for the ending interval.
Note that the trivial strings $e_x$ for $x\in Q_0$ correspond to the curve that crosses $T$ only once at arc $x\in T$.
Now, the module $M(\gamma)$ for a string $\gamma$ (equivalently, curve with endpoints) is called a \dfn{string module}.

\begin{example}
Consider the triangulation $T$ of the surface $\SM$ in Example \ref{example:qpeasy}.
In Figure \ref{fig:non-closed curve}, we show a (non-closed) curve $\gamma$ on $\SM$.
\begin{figure}[!htbp]
\centering
\begin{tikzpicture}[scale=0.75]
        %Boundary components
        \draw[thick]  (0,0) circle (2);
        \filldraw[thick,fill=white!80!black]  (0,0) circle (0.75);
        %Arcs
        \draw[thick, white!70!black] (0,-2) .. controls (2.5,-1) and (1.2,1.5) .. (0,0.75);
        \node[white!70!black] at (1.25,.85) {$4$};
        \draw[thick,white!70!black] (0,-2) .. controls (-2.5,-1) and (-1.2,1.5) .. (0,0.75);
        \node[white!70!black] at (-1.25,.85) {$2$};
        \draw[thick,white!70!black] (0,.75) .. controls (-1.5,0.75) and (-1.5,-1.25) .. (0,-1.25) .. controls (1.5,-1.25) and (1.5,.75) .. (0,.75);
        \node[white!70!black] at (0,-1.05) {$3$};
        \draw[thick,white!70!black] (0,-2) .. controls (-2.95,-1) and (-1.75,1.75) .. (0,1.75) .. controls (1.75,1.75) and (2.95,-1) .. (0,-2);
        \node[white!70!black] at (0,1.5) {$1$};
% curve
\draw[thick,blue]  plot[smooth, tension=.7] coordinates {(0,-0.75) (0.9,-0.5) (0.5,1.1) (-1.3,0.6) (-0.5,-1.5) (1.6,-0.3) (0,2)};
\node[thick, blue] at (-0.4,1.4) {$\gamma$};
        %Marked points
        \fill[darkgreen] (0,-2) circle (3pt);
        \fill[darkgreen] (0,2) circle (3pt);
        \fill[darkgreen] (0,-0.75) circle (3pt);
        \fill[darkgreen] (0,0.75) circle (3pt);
%module
\node at (4,0.2) {$M(\gamma)=$};
\node (v1) at (7.5,1.5) {$\Bbbk$};
\node (v2) at (5.5,0) {$\Bbbk$};
\node (v4) at (9.5,0) {$\Bbbk^{2}$};
\node (v3) at (7.5,-1.5) {$\Bbbk$};
\draw[->]  (v1) -- node[above]{\footnotesize$0$} (v2);
\draw[->] (v3) -- node[below]{\footnotesize$0$}(v2);
\draw[->]  (v4) -- node[right,pos=.6]{\footnotesize\phantom{a}$[0,1]$}(v3);
\draw[->] (v4) -- node[right,pos=.6]{\footnotesize\phantom{a}$[1,0]$}(v1);
\draw[->] (v2.25) --node[above]{\footnotesize $\begin{bsmallmatrix}
1\\0\end{bsmallmatrix}$} (v4.166);
\draw[->] (v2.-25) --node[below]{\footnotesize $\begin{bsmallmatrix}
0\\1\end{bsmallmatrix}$} (v4.-166);
\node at (10.5,0.2) {$=$};
\node at (10.5,0.2) {$=$};
\node[scale=0.7] at (11.5,0.2) {{$\arraycolsep=2.5pt\begin{array}{ccccc}
 & & 2 & & \\
 & 4 & & 4 &\\ 
1 & & & & 3\\
\end{array}$}};
\end{tikzpicture}\caption{A non-closed curve and its string module}\label{fig:non-closed curve}
\end{figure}
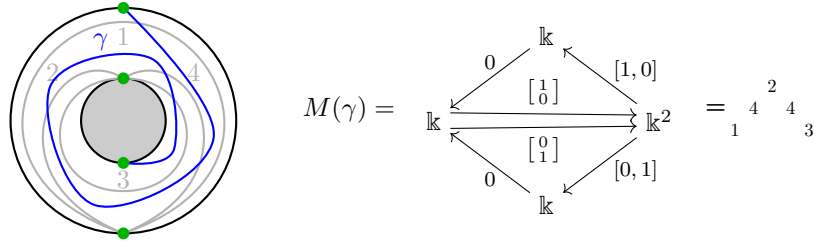
Orient $\gamma$ so it starts with the top marked point.
Then it crosses $T$ in the order of $1, 4 , 2 , 4, 3$ before reaching its other endpoint.
The string correspond to this is $\alpha_3^{-1}\beta_2^{-1}\alpha_2\beta_1$ which gives the  string module $M(\gamma)$ shown on the right of Figure \ref{fig:non-closed curve}.
\end{example}

\subsection{Band modules vs closed curves}\label{subsec:band intro}
Suppose now $\gamma:S^1\to \S$ is a (non-contractible) closed curve.
Write $\gamma = \omega^n$ for some \dfn{primitive} closed curve $\omega$ on $\S$, i.e. $\gamma$ is homotopic the concatenation of $n$ copies of a `shorter' closed curve $\omega$, and $\omega$ itself cannot be written as concatenation of a shorter closed curve.
Similar to the previous case, write
\begin{align}\label{eq:omega cross T}
&\mathrm{cross}(\omega,T) = \{\omega(z_0), \omega(z_1), \ldots, \omega(z_{c-1})\}\\
&\text{ with }z_i=\exp(2t_{i+1}\pi \sqrt{-1})\text{ and }0=t_1 <\cdots t_{c}<1.\notag 
\end{align}
Removing the intersections yields intervals $\omega_1,\ldots, \omega_c$ of $\omega$ which can be identified with arrows $\alpha_1,\ldots, \alpha_c \in Q$
By rotating the pieces if necessary, we assume that $\alpha_1\simeq \omega_1$ and $\alpha_c\not\simeq \omega_c$.
For $\lambda \in \Bbbk^\times$, denote by $J_n(\lambda)$ the Jordan block of size $n$ with eigenvalue $\lambda$.  Then we can define the indecomposable $J_T$-module $M_\lambda(\gamma):=M((\gamma,\lambda))$ associated to the indecomposable object $(\gamma=\omega^n,\lambda)\in \Ccc\SM\times \Bbbk^\times \subset \ind\C$ by 
\[
M_\lambda(\omega^n)  \stackrel{\text{v.sp}}{=} \bigoplus_{i=0}^{c-1} V_i, \text{ with }V_i=\bigoplus_{j=1}^{n} \Bbbk \omega(z_i)^{(j)},\]
\[\text{defining the $\alpha_i$ action by }
\begin{cases}
V_{i-1} \xrightarrow{1} V_i, & \text{ if $\alpha_i\simeq\gamma_i$};\\
V_{i} \xrightarrow{1} V_{i-1}, & \text{ if $\alpha_i\not\simeq\gamma_i$ and $i\neq c$};\\
V_{0} \xrightarrow{J_n(\lambda)} V_{c-1}, & \text{ if $i=c$}.\\
\end{cases}
\]
and having primitive idempotent $e_\tau\in J_T$ corresponding to $\tau\in T$ acts by identity on $V_i$ if $\omega(z_i)\in \tau$; by zero otherwise.

These indecomposable modules are called \dfn{band modules}.  Like string modules, they can be encoded completely by string combinatorics.  Consider a walk $w=w_1\cdots w_c$ with $s(w_1)=t(w_c)$, we can \dfn{rotate} it to form a new word $w_2\cdots w_cw_1$.  If this new word is also a walk, then the equivalence class of $w$ under compositions of reflections and rotations is called a \dfn{band}.  Similar to strings, bands correspond to closed curves on $\SM$.  
Unless otherwise specified, we will assume the representative $w=w_1\cdots w_c$ we take from the equivalence class has $w_1$ directed and $w_c$ inverse, which matches our convention of indexing the segments of closed curves in the previous paragraph.
Concatenation of strings is the natural operation inherited from concatenation of words; in particular, notations $w^n$ mean self-concatenating $n$ times.  A band $w$ is \dfn{primitive} if $w\neq u^n$ for any subword $u$ of $w$ that is also a band; hence, primitive bands correspond to a primitive closed curve.

\begin{example}\label{eg:band eg}
Consider the fan triangulation of the Möbius strip with two marked points (left of the top row of Figure \ref{fig:flip-M2s}).  It has an orientable double cover (see Definition \ref{def:double cover}) by an annulus as shown in Figure \ref{fig:doublecover} (this is a special case of Figure \ref{fig:fan}), where the preimage $\wti{T}:=\{1,2,1',2'\}$ of the triangulation $\{\overline{1},\overline{2}\}$ and the associated QP $(Q_{\wti{T}},\emptyset)$ are as shown.

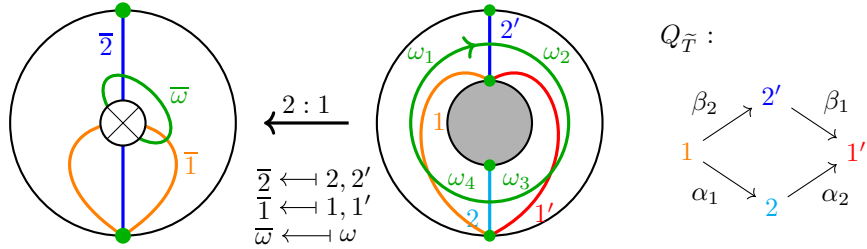
\begin{figure}[!htbp]
\centering
%\begin{subfigure}[!htbp]{0.5\textwidth}
   \begin{tikzpicture}[scale=0.75]
    %Boundary components
    \draw[thick]  (0,0) circle (2);
    \filldraw[thick, fill=white!70!black]  (0,0) circle (0.75);
    %Arcs
    \draw[very thick, cyan] (0,-2) -- node [near start, left] {$2$}  (0,-0.75);
    \draw[very thick, blue] (0,2) -- node [near start, right] {$2'$} (0,0.75);
    \draw[very thick, red] (0,-2) .. controls (2,-1) and (1.2,1.5) .. node [pos=0.12,right] {$1'$} (0,0.75);
    \draw[very thick,orange] (0,-2) .. controls (-2,-1) and (-1.2,1.5) .. node [midway, right] {$1$} (0,0.75);
    %Marked points
    \fill[darkgreen] (0,-2) circle (3pt);
    \fill[darkgreen] (0,2) circle (3pt);
    \fill[darkgreen] (0,-0.75) circle (3pt);
    \fill[darkgreen] (0,0.75) circle (3pt);
    %Loop
    \draw[very thick, green!65!black, decoration={markings, mark=at position 0.3 with {\arrow{<}}}, postaction={decorate}] (0,0) circle (1.4)
    node at (115:-1.15){$\omega_3$}
    node at (-135:-1.7){$\omega_2$}
    node at (-45:-1.7){$\omega_1$}
    node at (65:-1.15){$\omega_4$};
    
\begin{scope}[shift={(-6.5,0)}]
    \draw[very thick,blue] (0,-2) -- node [pos=.85,left] {$\overline{2}$} +(0,4);
    \draw[very thick,orange] (0,-2) .. controls (-1.5,-1) and (-1,0) .. (0,0) .. controls (1,0) and (1.5,-1) .. (0,-2);
    \node[orange] at (1.2,-0.7) {$\overline{1}$};
    \draw[rotate=-45,very thick,  green!65!black] (0,0.35) ellipse (.75 and 0.4);
    \node[thick,  green!65!black] at (1,0.5) {$\overline{\omega}$};
    \Mtwo{0,0}
\end{scope}

    \draw[very thick, ->] (-2.5,0) -- node[midway,above] {$2:1$} +(-1.5,0);
    \node (v1) at (-2.5,-1) {$2,2'$}; \node (v2) at (-4,-1) {$\overline{2}$};
    \node (v3) at (-2.5,-1.5) {$1,1'$}; \node (v4) at (-4,-1.5) {$\overline{1}$};
    \node (v5) at (-2.5,-2) {$\omega$}; \node (v6) at (-4,-2) {$\overline{\omega}$};
    \draw[|->]  (v1) -- (v2);    \draw[|->]  (v3) -- (v4);    \draw[|->]  (v5) -- (v6);

\draw node[orange] (q1) at (3.5,-0.5) {$1$} node[cyan] (q2) at (5,-1.5) {$2$} 
	node[blue] (q2') at (5,0.5) {$2'$} node[red] (q1') at (6.5,-0.5) {$1'$};
\draw[->] (q1) -- node[midway,below left] {$\alpha_1$} (q2);
\draw[->] (q1) -- node[midway,above left] {$\beta_2$} (q2');
\draw[->] (q2) -- node[midway,below right] {$\alpha_2$} (q1');
\draw[->] (q2') -- node[midway,above right] {$\beta_1$} (q1');
\node at (3.5,1.5) {$Q_{\widetilde{T}}$ : };
\end{tikzpicture}
%\caption{Primitive closed curve in $\wti{T}$ the double cover of a triangulation $T$}
%\label{fig:9a}
%\end{subfigure}
%\begin{subfigure}[!htbp]{0.35\textwidth}
%    $\wti{T}=\{1,2,1',2'\}$
%    \centering $Q_{\wti{T}} =
%    \begin{tikzcd}
%     & 2' \arrow[dr, "\beta_1"] \\
%     1  \arrow[ur, "\beta_2"] \arrow[dr, "\alpha_1", swap] & & 1'  \\
%     & 2 \arrow[ur, "\alpha_2", swap]
%     \end{tikzcd}$
%$W_{\wti{T}}=0$
%\caption{Quiver with potential $(Q_{\wti{T}},W_{\wti{T}})$}
%    \label{fig:9b}
%    \end{subfigure}
\caption{A primitive closed curve $\omega=\omega_1\omega_2\omega_3\omega_4$ in the double cover of the fan triangulation of the Möbius strip with two marked points and the quiver with (zero) potential associated to the orientable double cover.}
\label{fig:doublecover}
\end{figure}

There is a unique primitive closed curve $\omega = \omega_1\omega_2\omega_3\omega_4 = b_2b_1a_2^{-1}a_1^{-1}$ on the double cover that represent the preimage of the unique quasi-arc $\overline{\omega}$.  We have a band module over $J_{\wti{T}}$ given by  $M_{\lambda}(\omega^n)$ for each $n\geq 1$ and $\lambda\in \Bbbk^\times$.  Explicitly, the case when $n\in\{1,2\}$ can be written as follows.

\[ M_\lambda(\omega) =  \begin{tikzcd}[row sep= small, column sep= small]
     & \Bbbk \arrow[dr, "1"] \\
     \Bbbk  \arrow[ur, "1"] \arrow[dr, "{\lambda}", swap] & & \Bbbk  \\
     & \Bbbk \arrow[ur, "1", swap]
\end{tikzcd}, \quad M_\lambda(\omega^2) = \begin{tikzcd}[row sep=small, column sep=small]
     & \Bbbk^2 \arrow[dr, "1"] \\
     \Bbbk^2  \arrow[ur, "1"] \arrow[dr, "{\sbm{\lambda\amp 0\\1\amp\lambda}}", swap] & & \Bbbk^2  \\
     & \Bbbk^2 \arrow[ur, "1", swap]
\end{tikzcd}\,. \]
\end{example}

We recall the following description of Hom-spaces between indecomposable objects.

\begin{proposition}\label{prop::Homs}{\rm \cite[Lem 3.3]{Palu08}, \cite[Cor 5.4]{BZ11}, \cite[Prop 4.3]{AIR14}}
Let $\C$ be a Hom-finite Krull-Schmidt 2-CY triangulated category with cluster-tilting object $T$ with $\Lambda:=\End_\C(T)$, and $X,Y$ be objects of $\C$.
Write $X=X'\oplus U[1]$ and $Y=Y'\oplus V[1]$ so that $U,V\in \add (T)$ and $X',Y'$ has no direct summand in $\add (T[1])$.
Then the following hold.
\begin{enumerate}[(a)]
\item $M(X'[1])\cong \tau M(X')$ (and likewise for $Y'$).
\item There is an exact sequence \[
0\to D\Hom_{\Lambda}(M(Y'), \tau M(X')) \to  \Ext_{\C}^1(X',Y') \to \Hom_{\Lambda}(M(X'),\tau M(Y')) \to 0.
\]

\item There is a bifunctorial isomorphism 
\[\Ext_\C^1(X,Y) \simeq  \Ext_\C^1(X',Y') \oplus \Hom_\Lambda(M(U),M(Y'))\oplus\Hom_\Lambda(M(V), M(X')).\]
\end{enumerate}
\end{proposition}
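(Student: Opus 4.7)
The strategy is to reduce each part to a computation using the standard toolkit of cluster-tilting theory: resolutions of objects of $\C$ by $\add T$, the Koenig--Zhu equivalence $\C/[T[1]] \simeq \mod\Lambda$, and 2-Calabi-Yau duality $\Ext^1_\C(A,B) \cong D\Ext^1_\C(B,A)$.

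For (a), the plan is to start from a minimal $\add T$-presentation triangle $T_1 \xrightarrow{f} T_0 \to X' \to T_1[1]$, which exists because $T$ is cluster-tilting. Applying $\Hom_\C(T,-)$ and using rigidity of $T$ yields the projective presentation $P_1 \to P_0 \to M(X') \to 0$ of $M(X')$ in $\mod\Lambda$. Applying instead $\Hom_\C(T,-[2])$ to the same triangle, and rewriting $\Hom_\C(T, T_i[2]) \cong D\Hom_\C(T_i,T) = \nu P_i$ via 2-CY and the equivalence $\add T \simeq \proj\Lambda$, one obtains $M(X'[1]) \cong \Kernel(\nu P_1 \to \nu P_0)$. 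This kernel is precisely $\tau M(X')$ as defined through the transpose.

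For (b), I would apply the cohomological functor $\Hom_\C(-, Y'[1])$ to the same triangle. The resulting long exact sequence sandwiches $\Ext^1_\C(X',Y')$ between a cokernel of a map involving $\Hom_\C(T_i,Y')$ on one side, and a kernel of a map involving $\Ext^1_\C(T_i,Y')$ on the other. The kernel side is identified, using the projective presentation of $M(X')$ together with (a), with $\Hom_\Lambda(M(X'),\tau M(Y'))$. For the cokernel side, I would rewrite $\Hom_\C(T_i, Y'[1]) \cong D\Hom_\C(Y', T_i[1])$ via 2-CY duality, and then run the mirror $\add T$-coresolution argument for $Y'$ to recognise the cokernel as $D\Hom_\Lambda(M(Y'),\tau M(X'))$. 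Gluing the two ends of the long exact sequence then yields the required short exact sequence.

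Finally, for (c), I would split $\Ext^1_\C(X, Y)$ into four bifunctorial summands coming from $X = X' \oplus U[1]$ and $Y = Y' \oplus V[1]$. The term $\Ext^1_\C(U[1], V[1]) \cong \Ext^1_\C(U,V)$ vanishes by rigidity of $T$. The term $\Ext^1_\C(U[1], Y') \cong \Hom_\C(U, Y')$ is identified with $\Hom_\Lambda(M(U), M(Y'))$ since $U \in \add T$ and no morphism from $\add T$ factors through $\add T[1]$. The term $\Ext^1_\C(X', V[1])$ is handled analogously using 2-CY duality together with $\add T \simeq \proj\Lambda$, yielding $\Hom_\Lambda(M(V), M(X'))$. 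Combined with part (b) for the $\Ext^1_\C(X',Y')$ summand, this completes the decomposition. I expect the main obstacle to lie in (b): running the dual $\add T$-coresolution argument for $Y'$ and keeping track of bifunctoriality of the short exact sequence throughout the repeated use of 2-Calabi-Yau duality.
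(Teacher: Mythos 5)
This proposition is quoted in the paper (with citations to Palu, Br\"ustle--Zhang, and Adachi--Iyama--Reiten) but not proved there, so there is no internal argument to compare against; your proposal follows the standard argument one finds in those sources, and the overall strategy — minimal $\add T$-presentation triangles, the Koenig--Zhu equivalence $\C/[T[1]]\simeq\mod\Lambda$, and 2-Calabi--Yau duality — is the right one. Your treatment of (a) is correct: from $T_1\to T_0\to X'\to T_1[1]$ with $T_0\to X'$ and $T_1\to\mathrm{cone}$ minimal $\add T$-approximations, applying $\Hom_\C(T,-)$ gives a minimal projective presentation $P_1\to P_0\to M(X')\to 0$, while in degree two the identification $\Hom_\C(T,T_i[2])\cong D\Hom_\C(T_i,T)\cong\nu P_i$ exhibits $M(X'[1])$ as $\Kernel(\nu P_1\to\nu P_0)=\tau M(X')$ (minimality is what prevents spurious injective summands here, so you were right to insist on it).

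Two places need tightening. In (b), once you apply $\Hom_\C(-,Y'[1])$ to the presentation triangle, the cokernel side of the long exact sequence consists of the spaces $\Hom_\C(T_i,Y')$, not $\Hom_\C(T_i,Y'[1])$ as you wrote (those are the kernel-side terms you already identified with $\Hom_\Lambda(P_i,\tau M(Y'))$ via (a)). The correct 2-CY rewriting is $\Hom_\C(T_i,Y')\cong D\Hom_\C(Y',T_i[2])$; since $\Hom_\C(T',T_i[1])=0$ by rigidity, nothing factors through $\add T[1]$ and $\Hom_\C(Y',T_i[2])\cong\Hom_\Lambda(M(Y'),M(T_i[2]))\cong\Hom_\Lambda(M(Y'),\nu P_i)$, so the dualised kernel is $D\Hom_\Lambda(M(Y'),\tau M(X'))$. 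Note that this uses the same presentation of $X'$ throughout — you do not need a separate $\add T$-coresolution of $Y'$, and your phrase ``mirror coresolution argument for $Y'$'' suggests more work than is actually required. In (c), the computation $\Ext^1_\C(X',V[1])\cong D\Hom_\C(V,X')\cong D\Hom_\Lambda(M(V),M(X'))$ that you (implicitly) carry out produces the summand \emph{with} a dual $D$, not $\Hom_\Lambda(M(V),M(X'))$ as in the displayed statement; the dimensions agree but the variances in $V$ and $X'$ do not, so if ``bifunctorial'' is taken at face value then the $D$ cannot simply be dropped. This discrepancy is inherited from the statement as printed, but you should flag it rather than silently matching the claimed form.
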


\begin{proposition}\label{prop::Ext vanishing}
For the cluster category $\C=\C_T$ associated to surface triangulation, the following hold.
\begin{enumerate}[(a)]
\item For any $\gamma\in \Cnc\SM$, $\gamma$ is rigid if and only if $\gamma\in \bfA\SM$.

\item For any curves $\gamma,\delta$ that are self-non-crossing, any $X\in\C$ with underlying curve $\gamma$ and any $Y\in\C$ with underlying curve $\delta$, $\Ext_\C^1(X,Y)=0$ if and only if $\mathrm{cross}(\gamma,\delta)=\emptyset$.
\end{enumerate}
\end{proposition}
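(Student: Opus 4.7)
The strategy is to reduce the computation of $\Ext^1_\C(X,Y)$ to combinatorial Hom/Ext spaces in $\mod J_T$ via Proposition \ref{prop::Homs}, and then invoke the dictionary (from \cite{BR, BZ11, ABCP10}) between such spaces for string and band modules over the gentle algebra $J_T$ and the transversal intersections of the corresponding curves on $\SM$.

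For the reduction, given $X,Y\in\C$ with underlying curves $\gamma,\delta$, I would decompose $X = X'\oplus U[1]$ and $Y = Y'\oplus V[1]$ as in Proposition \ref{prop::Homs}, so that $U,V\in\add(\Gamma)$ consist of (shifts of) arcs of $T$ while $X',Y'$ correspond under $M$ to string or band modules determined by those segments of $\gamma,\delta$ that actually cross $T$. Proposition \ref{prop::Homs}(c) then splits $\Ext^1_\C(X,Y)$ into three summands: an $\Ext^1$-term between $X'$ and $Y'$, which by part (b) of the same proposition is controlled by $\Hom_{J_T}(M(X'),\tau M(Y'))$ and its $\tau$-dual, together with two Hom-terms of the form $\Hom_{J_T}(M(U),M(Y'))$ and $\Hom_{J_T}(M(V),M(X'))$ coming from the $\add(\Gamma)$-summands.

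The key combinatorial input is that, for a gentle algebra, bases of $\Hom$- and $\Ext^1$-spaces between string/band modules are given by graph maps and overlap (kiss) extensions, each of which corresponds to one transversal intersection point of the underlying curves in the interior of $\SM$. Since the $\tau$-translate on strings/bands is realised geometrically by the elementary rotation of the endpoints of the corresponding curve around the marked points, each of the three summands above translates into an explicit count of points in $\mathrm{cross}(\gamma,\delta)$ (or $\mathrm{cross}(\gamma,\gamma)$ in the self-extension case), with the contributions from $U$ and $V$ accounting precisely for crossings at segments of $\gamma,\delta$ lying along arcs of $T$. Part (b) then drops out: $\mathrm{cross}(\gamma,\delta)=\emptyset$ forces every summand to vanish, and conversely any interior crossing produces a nonzero basis element in one of them. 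For part (a), take $Y=X$ and $\delta=\gamma$: a curve $\gamma\in\bfA\SM$ that lies in $T$ is a direct summand of the cluster-tilting object $\Gamma[1]$ and hence rigid, a curve $\gamma\in\bfA\SM\setminus T$ corresponds to a string module without self-kiss (hence rigid in $\mod J_T$ and, via the reduction, in $\C$), and a curve $\gamma\in\Cnc\SM\setminus\bfA\SM$ has a genuine interior self-intersection which produces a nonzero self-extension.

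The main obstacle, and the place where care is needed, is the bookkeeping at the boundary: a curve segment ending at a marked point of $\M$ formally `meets' every arc of $T$ incident to that point, and one must verify that these artificial meetings are exactly the ones absorbed into the $\Hom_{J_T}(M(U),M(Y'))$ and $\Hom_{J_T}(M(V),M(X'))$ summands of Proposition \ref{prop::Homs}(c), so that only honest transversal intersections in the interior of $\SM$ survive. A secondary technical point is that the intersection count is insensitive to the band parameter $\lambda\in\Bbbk^\times$, so the combinatorial dictionary applies uniformly to all coloured closed curves with the same underlying $\omega$.
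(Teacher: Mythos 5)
The paper's own proof of this proposition is a one-liner: part (b) is simply cited as \cite[Prop 5.3]{BZ11}, and (a) is said to follow from (b). Your proposal instead reconstructs the underlying argument: reduce $\Ext^1_\C(X,Y)$ to module-level Hom/Ext spaces over the gentle algebra $J_T$ via Proposition \ref{prop::Homs}, and invoke the dictionary (graph maps and overlap extensions) between such spaces and transversal crossings. This is sound, and it is in fact essentially the content of the cited Brüstle–Zhang proposition, so what you have written is less a different approach and more an unpacking of what the citation hides. That said, two points in your sketch deserve emphasis. First, you correctly flag the boundary bookkeeping — checking that the $\Hom_{J_T}(M(U),M(Y'))$ and $\Hom_{J_T}(M(V),M(X'))$ summands from Proposition \ref{prop::Homs}(c) exactly absorb the spurious ``meetings'' of $\gamma$ and $\delta$ along arcs of $T$ incident to shared marked points; this is indeed the only genuinely delicate step. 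Second, your claim that each basis element corresponds to one transversal intersection is slightly stronger than needed and not quite exact: as this very paper notes later (citing \cite{CS17}), not every intersection point is detected by an overlap/graph-map pair — there are ``crossings in an arrow'' and ``crossings in a 3-cycle'' that require a separate argument to produce a nonzero extension. For a vanishing-iff-no-crossing statement this still goes through, but the argument is not a clean dimension count and needs that extra case analysis. Finally, note that your direct treatment of part (a) — splitting into $\gamma\in T$, $\gamma\in\bfA\SM\setminus T$, and $\gamma$ self-crossing — is actually cleaner than the paper's ``(a) follows from (b),'' since the hypotheses of (b) as stated require $\gamma$ self-non-crossing, which is precisely what fails in the ``only if'' direction of (a); what the paper really means is that (a) follows from the more general statement in \cite{BZ11}.
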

\begin{proof}
(b) is \cite[Prop 5.3]{BZ11} and (a) follows from (b).
\end{proof}

\section{Involution and duality}\label{sec:involutionandduality}

In this section, we associate to the orientable double cover of a surface to a QP with \emph{involution}.  We then further enhance this  to a contravariant duality functor $\nabla$ on the cluster category.  This provides the setup in the following sections, where we utilise the \emph{symmetric representation theory} first studied in \cite{DW02,BCI21} to write down a dictionary between the surface combinatorics and phenomena in the cluster categories.

\subsection{Symmetric QP and relation to non-orientable surfaces}\label{subsec:involution intro}
From now on, we assume $\Bbbk$ is an algebraically closed field with characteristic different from 2.

\begin{definition}
A \dfn{symmetric quiver} is a pair $(Q,\sigma)$ of a quiver $Q$ equipped with an involutive anti-automorphism (or simply \dfn{involution}) $\sigma$, i.e. $\sigma(i\xrightarrow{\alpha}j) = \sigma(i)\xleftarrow{\sigma(\alpha)}\sigma(j)$ for all arrows $\alpha\in Q_1$ with $\sigma^2=\mathrm{id}$.  This is equivalent to saying that $\sigma$ defines an algebra isomorphism, which we denote by $\sigma$ again by abusing notation, $\sigma:\Bbbk Q^\op \to \Bbbk Q$.
A \dfn{symmetric QP} is a tuple $(Q,W,\sigma)$ such that $(Q,\sigma$ is a symmetric quiver, $(Q,W)$ is a QP, and $\sigma(W)=W$.
\end{definition}

\begin{example}\label{eg:symmetric QP} 
\begin{enumerate}[(1)]
\item The type $\mathbb{A}_n$ quiver $1\xrightarrow{a_1}2 \xrightarrow{a_2} \cdots \xrightarrow{a_{n-1}} n$ with $\sigma(i)=n+1-i$ and $\sigma(a_i)=a_{n+1-i}$.
    
\item The Kronecker quiver $\xymatrix@1{1\ar@<.5ex>[r]^{a}\ar@<-.5ex>[r]_{a'} & 1'}$ has two choices of involutions - one of them fixes the arrows while the other swap $a$ and $a'$.
    
\item Consider the following QP $(Q,W)$ in Example \ref{example:qpeasy}. Take $\sigma(1) = 3$ and $\sigma(2) = 4$, then this defines an involution $\sigma$ on $(Q,W)$.  This induces an algebra isomorphism on the Jacobian algebras as $\sigma(W) = \sigma(\alpha_3)\sigma(\alpha_2)\sigma(\alpha_1) + \sigma(\beta_12)\sigma(\beta_2)\sigma(\beta_3) = \beta_3\beta_2\beta_1 + \alpha_1\alpha_2\alpha_3 = W$.

\item Suppose $(Q,W)$ is a QP.  Take $\wti{Q}:=Q\sqcup Q^\op$ and $\wti{W}:=W+W^\op$, where $W^\op$ is the linear combination of cycles on $Q^\op$ given by reversing the cycles in $W$.  Let $\sigma$ be the automorphism on $Q$ that swaps $i\in Q_0$ with the corresponding $i\in Q^\op_0$, and $\alpha\in Q_1$ with $\alpha^\op\in Q_1^\op$.  Then $(\wti{Q},\wti{W},\sigma)$ is a symmetric QP.
\end{enumerate}
\end{example}

\begin{definition}
A \dfn{fixed-point-free symmetric QP}, or \dfn{FF-symmetric QP} for short, is a symmetric QP $(Q,W,\sigma)$ such that both $\sigma|_{Q_0}$ and $\sigma|_{Q_1}$ are fixed-point-free.
\end{definition}

\begin{example}\label{eg:Kronecker0}
The smallest example of a FF-symmetric QP is the Kronecker quiver (Example \ref{eg:symmetric QP} (2)) with $\sigma$ swapping both vertices and swapping both arrows.  
More generally, for the quiver
\[
\mathbb{A}_{n,n}:= \quad \vcenter{\xymatrix@R=8pt@C=40pt{
 & n' \ar[r]^{b_{n-1}} & \cdots \ar[r]^{b_2} & 2' \ar[rd]^{b_1} & \\
1 \ar[ru]^{b_n} \ar[rd]_{a_1} & & & & 1', \\
 & 2 \ar[r]_{a_2} & \cdots \ar[r]_{a_{n-1}} & n  \ar[ru]_{a_n} & \\
}}
\]
we have $(\mathbb{A}_{n,n}, 0, \sigma)$, where $\sigma$ swaps the unprimed and the primed, defines a FF-symmetric QP.  This quiver corresponds to the fan triangulations of the M\"{o}bius strip shown in Figure \ref{fig:fan}.  

Both Example \ref{eg:symmetric QP} (3) and (4) are examples of FF-symmetric QP.
\end{example}

Let us now review the a characterisation of QP's arising from orientable marked unpunctured surfaces.

\begin{definition}\label{def:gentle}
Let $(Q,W)$ be a QP, and $R$ be the set of monomials that appear in the cyclic derivative $\partial_\alpha W$ with respect to some $\alpha\in Q_1$.
We say that $(Q,W)$ is \dfn{gentle} if \begin{enumerate}
\item $(Q,W)$ is \dfn{Jacobi-finite}, i.e. $J_{Q,W}$ is finite-dimensional;

\item for each $v\in Q_0$, there are at most 2 in-coming arrows and at most 2 out-going arrows;

\item For all $\alpha\in Q_1$, there is at most one $\beta\in Q_1$ such that $\beta\alpha \in R$ and at most one $\beta'\in \Q_1$ such that $\beta'\alpha\notin R$;

\item For all $\alpha\in Q_1$, there is at most one $\gamma\in Q_1$ such that $\alpha\gamma \in R$ and at most one $\gamma'\in \Q_1$ such that $\alpha\gamma'\notin R$.
\end{enumerate}
This is, by definition, equivalent to saying that the Jacobian algebra $J_{Q,W}$ is a (finite-dimensional) gentle algebra.
\end{definition}

\begin{proposition}\label{prop:ABCP corresp} {\rm \cite{ABCP10}}
For any marked unpunctured orientable surface $\SM$ equipped with a triangulation $T$, $(Q_T,W_T)$ is a gentle QP.
\end{proposition}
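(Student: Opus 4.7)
The approach is to verify the four conditions of Definition~\ref{def:gentle} one at a time, translating the geometric features of the triangulation $T$ into algebraic features of $(Q_T,W_T)$. The dictionary is: vertices of $Q_T$ are internal arcs of $T$; arrows arise from corners of internal triangles (the clockwise rotation between two internal arcs at a marked point); and the set $R$ of monomials occurring in cyclic derivatives of $W_T$ consists precisely of the length-$2$ paths $\beta\gamma$ whose two arrows live inside a common internal triangle.

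First I would verify conditions (2)--(4), which are all local. For (2), every internal arc $\tau$ is the common edge of exactly two triangles (as $\SM$ is unpunctured), and each such triangle contributes, via its corners at the endpoints of $\tau$, at most two arrows at $\tau$; a careful check of the clockwise-rotation convention shows exactly one is incoming and one is outgoing per triangle, yielding at most $2$ in and $2$ out at each vertex. For (3) and (4), given an arrow $\alpha:i\to j$ arising from a corner inside a triangle $\Delta$, the unique arrow $\beta:j\to k$ with $\alpha\beta\in R$ is determined by the next side of $\Delta$ in its 3-cycle, while the other arrow out of $j$ (if it exists) belongs to the second triangle incident to $j$ and hence satisfies $\alpha\beta'\notin R$; a symmetric argument handles the precomposition condition.

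The main obstacle is condition (1), Jacobi-finiteness of $J_{Q_T,W_T}$. Once (2)--(4) are established, $J_{Q_T,W_T}$ is a string algebra, and I would prove finite-dimensionality by showing that every nonzero path modulo $R$ has length uniformly bounded in terms of $T$. Such a path corresponds to a walk in $Q_T$ that rotates around marked points, where any two consecutive corners must sit in \emph{distinct} triangles (else we incur a relation). Using the unpunctured hypothesis $\M\subset\partial\S$, the ``fan'' of triangles around any marked point $p$ is cut off on both sides by boundary arcs, so rotating at $p$ terminates after at most as many steps as the number of triangles incident to $p$; combined with the finiteness of $T$ itself, this yields the desired bound. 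Alternatively, and most efficiently, one can appeal directly to the explicit gentle-algebra presentation of $J_T$ established in~\cite{ABCP10}, from which finite-dimensionality is immediate.
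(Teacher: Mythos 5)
The paper gives no proof of this proposition; it is cited directly from \cite{ABCP10}, so there is no internal argument to compare against. Your verification of conditions (2)--(4) of Definition~\ref{def:gentle} from the local structure of the triangulation is correct: each internal arc is a side of exactly two triangles, each of which contributes (via its $3$-cycle) at most one incoming and one outgoing arrow at that vertex, and the relation set $R$ is exactly the set of length-$2$ subpaths of those $3$-cycles, which delivers (3) and (4) by inspecting the two triangles incident to the middle arc.

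On condition (1), however, the argument as written has a gap. You say that a nonzero path ``rotates around marked points,'' that the fan at any one marked point $p$ terminates at the boundary, and then conclude by ``combining with the finiteness of $T$.'' But finiteness of $T$ by itself does not bound path lengths: a priori a path could hop from one marked point to another and re-enter previously visited triangles, so the fan-termination bound at a single point does not immediately globalize. What is needed --- and what is implicit in \cite{ABCP10} --- is the sharper statement that a nonzero path modulo $R$ rotates around a \emph{single} marked point. This follows from the clockwise-rotation convention together with the conditions you have just established: if $\alpha : a \to b$ is the corner of a triangle $\Delta$ at $p$ and $\beta : b \to c$ is the unique arrow with $\alpha\beta \notin R$, then the continuation must leave $\Delta$ across the edge $b$; but the clockwise rotation from $b$ at the \emph{other} endpoint of $b$ re-enters $\Delta$ (giving the relation $\alpha\beta' \in R$), so the only admissible continuation is the clockwise rotation from $b$ at $p$ itself, which lands in the triangle adjacent to $\Delta$ across $b$. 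Hence the path stays in the fan at $p$ and your boundary-termination argument, using $\M \subset \partial\S$, then closes. Your fallback of quoting the gentle presentation of $J_T$ from \cite{ABCP10} directly is of course also fine and matches what the paper does.
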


Keeping in the mind the relation between non-orientable surface and its double covering, we can extend Proposition \ref{prop:ABCP corresp} as follows.

\begin{proposition}\label{prop:FFS-QP corresp}
For a marked unpunctured (not necessarily orientable) surface $\SM$ equipped with a triangulation $T$.  Let $(\wti{\SM},\sigma)$ be an orientable double cover of $\SM$, and $\wti{T}$ be the associated double cover of $T$.
Then we have a fixed-point-free symmetric gentle QP $(Q_{\wti{T}},W_{\wti{T}}, \sigma)$ where $\sigma$ is induced by the restriction of $\sigma_{\wti{\S}}$ to $\wti{T}$.
\end{proposition}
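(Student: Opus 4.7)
The plan is to verify the three required properties of $(Q_{\wti{T}}, W_{\wti{T}}, \sigma)$ one by one: gentleness, being a symmetric QP (anti-involution preserving the potential), and fixed-point-freeness on $Q_0$ and $Q_1$.

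\textbf{Step 1 (gentleness).} Since $\wti{\SM}$ is an orientable marked unpunctured surface (by definition of a double cover in Definition~\ref{def:double cover}) and $\wti{T}$ is a triangulation of $\wti{\SM}$, Proposition~\ref{prop:ABCP corresp} immediately gives that $(Q_{\wti{T}}, W_{\wti{T}})$ is a gentle QP.

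\textbf{Step 2 (defining $\sigma$ on the quiver).} Because $\wti{T} = p^{-1}(T)$, the Deck transformation $\sigma_{\wti{\S}}$ permutes the arcs of $\wti{T}$, giving the required map on $(Q_{\wti{T}})_0 = \wti{T}$. For arrows, I will use the explicit description from Section~\ref{sec:cluster cat basic}: an arrow $\alpha: \tau_1 \to \tau_2$ at a marked point $m \in \wti{\M}$ encodes the clockwise rotation from $\tau_1$ to $\tau_2$ around $m$. Because $\sigma_{\wti{\S}}$ is orientation-reversing, applying it to this local picture converts a clockwise rotation at $m$ into a clockwise rotation at $\sigma_{\wti{\S}}(m)$ going from $\sigma_{\wti{\S}}(\tau_2)$ to $\sigma_{\wti{\S}}(\tau_1)$. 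Declaring $\sigma(\alpha)$ to be this arrow yields $\sigma(\alpha): \sigma(\tau_2) \to \sigma(\tau_1)$, confirming that $\sigma$ is an arrow-reversing quiver automorphism; $\sigma^2 = \mathrm{id}$ follows from $\sigma_{\wti{\S}}^2 = \mathrm{id}$.

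\textbf{Step 3 (invariance of the potential).} The potential $W_{\wti{T}}$ is the sum, over internal triangles $\wti{\Delta}$ of $\wti{T}$, of the clockwise 3-cycle $a_{\wti{\Delta}} b_{\wti{\Delta}} c_{\wti{\Delta}}$ taken up to cyclic permutation. Since $\sigma_{\wti{\S}}$ permutes the set of internal triangles, and since by Step 2 the anti-automorphism $\sigma$ sends the clockwise 3-cycle at $\wti{\Delta}$ to the 3-cycle at $\sigma_{\wti{\S}}(\wti{\Delta})$ read in reverse order (which is again that triangle's cycle, up to cyclic rotation, as potentials are taken modulo cyclic equivalence), we get $\sigma(W_{\wti{T}}) = W_{\wti{T}}$.

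\textbf{Step 4 (fixed-point-freeness).} The non-trivial element $\sigma_{\wti{\S}}$ of the Deck transformation group of a connected double cover has no fixed points. For a vertex $\tau \in \wti{T}$: if $\sigma_{\wti{\S}}(\tau) = \tau$, then either both endpoints of $\tau$ are fixed (impossible since $\sigma_{\wti{\S}}|_{\wti{\M}}$ is fixed-point-free) or they are swapped (which forces a fixed point in the interior of $\tau$, again impossible). Hence $\sigma$ is fixed-point-free on $(Q_{\wti{T}})_0$. For an arrow $\alpha$ at marked point $m$: by Step 2, $\sigma(\alpha)$ is an arrow at $\sigma_{\wti{\S}}(m) \neq m$, so $\sigma(\alpha) \neq \alpha$ (marked points are part of the defining data of an arrow, so two arrows at different marked points are distinct even when their source and target arcs coincide).

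\textbf{Expected obstacle.} The statement is essentially a bookkeeping argument, so no single step is genuinely hard. The only place requiring some care is Step 2/3: one must make sure that the orientation-reversing nature of $\sigma_{\wti{\S}}$ meshes correctly with the convention that arrows in $Q_T$ come from clockwise rotations, so that the induced map on arrows is genuinely arrow-reversing (rather than, say, producing a second clockwise arrow in the same direction), and that the reversed cycle produced from a triangle is cyclically equivalent to the clockwise cycle of the image triangle.
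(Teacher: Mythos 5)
Your proof is correct and follows the same approach as the paper: gentleness comes from Proposition~\ref{prop:ABCP corresp} applied to the orientable cover, and fixed-point-freeness comes from the fact that the non-trivial Deck transformation $\sigma_{\wti{\S}}$ acts freely on $\wti{\S}$, hence fixes no arc. The paper's own proof is considerably terser---it simply observes that $\sigma_{\wti{\S}}$ fixes only closed curves (hence acts freely on arcs) and invokes Proposition~\ref{prop:ABCP corresp}, leaving the verification of the arrow-reversing/potential-preserving structure (your Steps~2--3) and of fixed-point-freeness on $Q_1$ (end of your Step~4) implicit---but the underlying ideas coincide.
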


\begin{remark}
One can consider relaxing surfaces equipped with `marked unpunctured points with triangulations' to surfaces equipped with `a pair of dual cellular dissections' in the sense of \cite{PPP19}.  Then `symmetric gentle QP' can be replaced by locally gentle algebra with involution (equivalently, locally gentle quiver with symmetric structure in the obvious sense).   The proof of this generalisation is analogous to the one presented below and we omit them for simplicity as these settings is beyond the scope of this script.
\end{remark}
\begin{proof}
Since $\sigma:\wti{\S}\to\wti{\S}$ only fixes some (or none if $\S$ is already orientable) of the closed curves on $\wti{\SM}$, it acts transitively on the set $\bfA\wti{\SM}$ of arcs, which means that the induced $\sigma$ on $Q_{\wti{T}}$ is a fixed-point free involution.  The claim then follows from Proposition \ref{prop:ABCP corresp}.
\end{proof}

\begin{example}
(1)  Let $\SM$ be the M\"{o}bius strip with 1 marked point.  This is a unique triangulation whose double cover $\wti{T}$ defines the Kronecker quiver with fixed-point free involution as described in Example \ref{eg:symmetric QP} (2).

(2) The $\mathbb{A}_{n,n}$ symmetric QP in Example \ref{eg:Kronecker0} is associated to the orientable double cover of `fan triangulation' on the M\"{o}bius strip as shown in Figure \ref{fig:fan}.
\end{example}

\subsection{Duality functor associated to involution}
\label{subsec:duality functor}
Suppose $(Q,W,\sigma)$ is a symmetric QP.  Let $\Lambda:=J_{Q,W}$ be the associated Jacobian algebra.
We denote by $-\sigma$ the algebra map $\Lambda^\op \to \Lambda$ given by $\alpha_1\alpha_2\cdots \alpha_\ell \mapsto (-1)^\ell \sigma(\alpha_\ell)\sigma(\alpha_{\ell-1})\cdots \sigma(\alpha_1)$ for all paths $\alpha_1\cdots \alpha_\ell$ of length $\ell$.
Consider the $\Lambda$-$\Lambda^\op$-bimodule ${}_1\Lambda_{-\sigma}$ whose underlying space is $\Lambda$ with the natural left $\Lambda$-action and with the right $\Lambda^\op$-action given by the $m\cdot a:= -\sigma(a)m$ for all $a\in\Lambda^\op$, $m\in {}_1\Lambda_{-\sigma}$.
Note that subtle choice of putting the minus twist on $\sigma$ comes from the use of `symmetric representations' (see next section) of these algebras.
Anyway, the bimodule is invertible of order 2 and defines an equivalence $-\otimes_{\Lambda}{}_1\Lambda_{-\sigma}:\mod \Lambda \to \mod \Lambda^\op$.  Composing with the $\Bbbk$-linear dual $(-)^*:=\Hom_{\Bbbk}(-,\Bbbk)$ yields a contravariant equivalence
\[ \nabla: \mod\Lambda \xrightarrow{\sim} \mod\Lambda \text{ such that } \nabla^2\cong\mathrm{Id}. \]
We call this the \dfn{duality} (associated to $\sigma$).
Note that if we denote by $M_\alpha$ the transformation on $M\in \mod\Lambda$ representing $\alpha\in Q_1$, then $(\nabla M)_{\sigma(\alpha)}=-M_{\alpha}^*$.

\begin{example}
    Consider the module $M = M_{\lambda}(\omega)$ from Example \ref{eg:band eg}.
    Then,
    \[ \nabla M =  \begin{tikzcd} [column sep=small, row sep = small]
     & \Bbbk \arrow[dr, "{-\lambda}"] \\
     \Bbbk  \arrow[ur, "-1"] \arrow[dr, "-1", swap] & & \Bbbk  \\
     & \Bbbk \arrow[ur, "-1", swap]  \end{tikzcd}. \]
\end{example}

We have the following observation.

\begin{proposition}\label{prop:nabla facts}
The following hold.
\begin{enumerate}[(a)]
\item We have an equivalence $\nabla:\proj\Lambda\xrightarrow{\sim} \inj\Lambda$ sends $e\Lambda$ to $D(\Lambda \sigma(e))$.  In particular, there is a natural isomorphism $\nabla\circ \nu^{\pm}\cong \nu^{\mp}\circ \nabla$, where $\nu := -\otimes_\Lambda D\Lambda$ is the \dfn{Nakayama functor}.

\item For any $M\in\mod \Lambda$, denote by $P_M^\bullet$ the minimal projective presentation of $M$, and $I_M^\bullet$ the minimal injective copresentation of $M$.
Then we have $P_{\nabla M}^\bullet = \nabla(I_{M}^\bullet)$ and $I_{\nabla M}^\bullet = \nabla(P_M^\bullet)$.

\item {\rm \cite[Prop 3.4]{DW02}} There is a natural isomorphism $\nabla\circ \tau^{\pm} \cong \tau^{\mp}\circ\nabla$, where $\tau$ denotes the \dfn{Auslander-Reiten translation} (AR-translation for short).
\end{enumerate}
\end{proposition}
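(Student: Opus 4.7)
The three parts share a common theme: each reduces to unwinding the bimodule definition of $\nabla$ together with the behaviour of $\sigma$. My plan is to prove (a) by direct calculation, and then obtain (b) and (c) as formal consequences.

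For (a), I would first establish the object-level identity $\nabla(e\Lambda)\cong D(\Lambda\sigma(e))$. Unpacking the definition gives $\nabla(e\Lambda) = D\bigl(e\Lambda\otimes_\Lambda {}_1\Lambda_{-\sigma}\bigr)$, and the map $x\mapsto \sigma(x)$ provides a natural isomorphism $e\Lambda\otimes_\Lambda {}_1\Lambda_{-\sigma} \xrightarrow{\sim}\Lambda\sigma(e)$: it is a bijection of underlying vector spaces (since $\sigma$ is an involution and $\sigma(ex)=\sigma(x)\sigma(e)$ lands in $\Lambda\sigma(e)$), and the $-\sigma$-twisted right $\Lambda^{\op}$-action on the source transports to the natural left $\Lambda$-action on the target because $\sigma$ is an anti-automorphism and the sign is absorbed consistently. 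Taking the $\Bbbk$-dual yields the injective module claimed. For the natural isomorphism $\nabla\circ\nu\cong\nu^{-1}\circ\nabla$, I would use the tensor/Hom expressions
\[
\nabla\nu(M) = D\bigl(M\otimes_\Lambda D\Lambda\otimes_\Lambda {}_1\Lambda_{-\sigma}\bigr),\qquad \nu^{-1}\nabla(M)\cong D\bigl(M\otimes_\Lambda{}_1\Lambda_{-\sigma}\otimes_\Lambda D\Lambda\bigr),
\]
where the latter invokes tensor-Hom adjunction, and reduce the claim to the bimodule isomorphism $D\Lambda\otimes_\Lambda {}_1\Lambda_{-\sigma} \cong {}_1\Lambda_{-\sigma}\otimes_\Lambda D\Lambda$, which can be verified by tracking how $\sigma$ swaps the two non-standard actions on elementary tensors.

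Part (b) is then a formal consequence: since $\nabla$ is an exact contravariant equivalence sending indecomposable projectives to indecomposable injectives by (a), applying $\nabla$ to a minimal projective presentation $P_1\to P_0\to M\to 0$ yields an exact sequence $0\to\nabla M\to \nabla P_0\to \nabla P_1$ of injectives. Minimality is preserved because the equivalence exchanges the radical/socle conditions characterising minimality on the two sides; hence $\nabla(P_M^\bullet)$ is the minimal injective copresentation of $\nabla M$. The identity $P_{\nabla M}^\bullet = \nabla(I_M^\bullet)$ is symmetric.

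For (c), I would use the standard description $\tau M = \Kernel(\nu P_1\to \nu P_0)$ coming from a minimal projective presentation. Applying $\nabla$ to the defining exact sequence $0\to\tau M\to \nu P_1\to \nu P_0$ and invoking $\nabla\nu\cong\nu^{-1}\nabla$ from (a) produces an exact sequence
\[
\nu^{-1}\nabla P_0\to\nu^{-1}\nabla P_1\to \nabla\tau M\to 0.
\]
By (b), $\nabla P_0\to \nabla P_1$ is the minimal injective copresentation of $\nabla M$, so the displayed sequence is exactly the one defining $\tau^{-}\nabla M$, yielding $\nabla\tau\cong\tau^{-}\nabla$; the reverse identity follows by symmetry (and can also be extracted from \cite{DW02}). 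The main obstacle I anticipate lies in (a): the sign twist from $-\sigma$ together with the left/right module bookkeeping must be handled carefully, and the bimodule isomorphism underpinning the Nakayama commutativity requires precision to avoid sign errors; once these conventions are pinned down, (b) and (c) are essentially bookkeeping.
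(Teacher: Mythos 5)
Your proposal is correct and follows essentially the same route as the paper: (a) is unwound directly from the bimodule definition of $\nabla$ together with the algebra isomorphism $\sigma:\Lambda^{\op}\to\Lambda$, (b) is the formal consequence that an exact contravariant equivalence taking projectives to injectives carries minimal projective presentations to minimal injective copresentations, and (c) follows by applying $\nabla$ to the kernel/cokernel description of $\tau$ and $\tau^-$ and invoking (a) and (b). The paper's proof is a terse one-liner for each part; your version just spells out the bookkeeping.
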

\begin{proof}
(a) is just stating the algebra isomorphism $\sigma:\Lambda^\op \to \Lambda$ categorically.  (b) follows from (a).  (c) follows by combining (a) and (b), as $\tau M$ is the kernel of $\nu(P_M^\bullet)$ and $\tau^- M$ is the cokernel of $\nu(I_M^\bullet)$.
\end{proof}

We can lift this duality to the cluster category.  This is the first justification of $\nabla$ being a categorification of the defining involution of a non-orientable surface from its double cover.
Recall from Section \ref{sec:cluster cat basic} that there is a a canonical projection $M(-):\C_{(Q,W)}\to \mod J_{(Q,W)}$.

\begin{proposition} \label{prop:lift nabla}
Suppose $(Q,W,\sigma)$ is a Jacobi-finite symmetric QP.
Then $\sigma$ induces a contravariant exact duality $\nabla$ on $\C=\C_{Q,W}$ so that we have a commutative diagram
\[
\xymatrix{
\C \ar[r]^{\nabla} \ar[d]_{M(-)}& \C \ar[d]^{M(-)}\\
\mod J \ar[r]^{\nabla} & \mod J.
}
\]
\end{proposition}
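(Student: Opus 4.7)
The plan is to work inside Amiot's realisation $\C = \mathrm{per}(\Gamma)/\mathcal{D}_{\mathrm{fd}}(\Gamma)$, where $\Gamma = \Gamma_{Q,W}$ denotes the complete Ginzburg dg algebra. I will lift $\sigma$ to a dg anti-involution $\sigma_\Gamma$ on $\Gamma$, use it to build a contravariant endofunctor on $\mathrm{per}(\Gamma)$, and descend to the Verdier quotient.

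Recall that $\Gamma$ is generated by the idempotents $e_i$ and arrows $\alpha\in Q_1$ in degree $0$, dual arrows $\alpha^*$ in degree $-1$, and loops $t_i$ in degree $-2$, with Ginzburg differential encoding $\partial_\alpha W$ and $\sum_\alpha[\alpha,\alpha^*]$. I extend $\sigma$ to $\Gamma$ by
\[
\sigma_\Gamma(e_i)=e_{\sigma(i)},\quad \sigma_\Gamma(\alpha)=\sigma(\alpha),\quad \sigma_\Gamma(\alpha^*)=-\sigma(\alpha)^*,\quad \sigma_\Gamma(t_i)=-t_{\sigma(i)}.
\]
A direct computation using $\sigma(W)=W$ and the arrow-reversing nature of $\sigma$ verifies that these signs make $\sigma_\Gamma$ a graded algebra anti-homomorphism commuting with the differential and squaring to the identity, i.e.\ a dg algebra isomorphism $\sigma_\Gamma:\Gamma^\op\xrightarrow{\sim}\Gamma$ with $\sigma_\Gamma^2=\mathrm{id}$, restricting to the original $\sigma$ on $H^0(\Gamma)=J$.

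Next, form the invertible $\Gamma$-$\Gamma$-bimodule $B:={}_1\Gamma_{-\sigma_\Gamma}$, generalising the $J$-$J$-bimodule used to define $\nabla$ on $\mod J$, and set $\widetilde{\nabla}:=\RHom_\Gamma(-,B)$ on $\mathrm{per}(\Gamma)$. This is a contravariant triangulated autoequivalence with $\widetilde{\nabla}^2\simeq\mathrm{Id}$, and the bimodule $3$-Calabi-Yau property of $\Gamma$ ensures it also preserves the subcategory $\mathcal{D}_{\mathrm{fd}}(\Gamma)\subset\mathrm{per}(\Gamma)$, on which it is identified -- up to a shift -- with the simpler functor $D\circ\sigma_\Gamma^*$, where $D=\RHom_\Bbbk(-,\Bbbk)$. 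Consequently $\widetilde{\nabla}$ descends to a contravariant exact duality $\nabla$ on the Verdier quotient $\C$ satisfying $\nabla^2\cong\mathrm{Id}$.

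To verify the commutative diagram, note that the cluster-tilting $\Gamma\in\C$ is $\nabla$-invariant up to permutation of summands by $\sigma$, so $\nabla$ preserves $\add_\C(\Gamma[1])$ and induces a contravariant functor on the Koenig-Zhu quotient $\C/[\Gamma[1]]\xrightarrow{M}\mod J$. Both this induced functor and the $\nabla$ of Section \ref{subsec:duality functor} are computed from the same data -- namely, twisting the right action by $-\sigma$ on $J=H^0\Gamma$ followed by $\Bbbk$-duality -- and thus coincide naturally in $X\in \C$. The main obstacle is the first step: choosing the signs on $\alpha^*$ and $t_i$ so that $\sigma_\Gamma$ is genuinely graded anti-multiplicative and commutes with the Ginzburg differential. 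Once this sign verification is in place, the remainder of the argument reduces to standard functorial manipulations with dg algebras and Verdier quotients.
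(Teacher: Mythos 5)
Your overall strategy (lift $\sigma$ to a dg (anti-)automorphism of the Ginzburg dga, twist, and descend to the Verdier quotient) matches the paper's proof, but two of the choices you make break the argument.

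First, the sign on the dual arrows is wrong. The Ginzburg differential satisfies $d(\alpha^*)=\partial_\alpha W$ and $d(t_i)=e_i\bigl(\sum_\alpha[\alpha,\alpha^*]\bigr)e_i$. With the ordinary convention $\sigma_\Gamma(\alpha)=\sigma(\alpha)$ (no sign on degree~$0$), one computes from $\sigma(W)=W$ that $\sigma(\partial_\alpha W)=\partial_{\sigma(\alpha)}W$. Setting $\sigma_\Gamma(\alpha^*)=\sigma(\alpha)^*$ (as the paper does) then gives $d(\sigma_\Gamma(\alpha^*))=\partial_{\sigma(\alpha)}W=\sigma_\Gamma(\partial_\alpha W)=\sigma_\Gamma(d(\alpha^*))$, and a similar relabelling argument (using $\sigma_\Gamma(t_i)=-t_{\sigma(i)}$) makes the $t_i$-check close. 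With your choice $\sigma_\Gamma(\alpha^*)=-\sigma(\alpha)^*$, both of these checks acquire an extra $-1$: you get $d(\sigma_\Gamma(\alpha^*))=-\sigma_\Gamma(d(\alpha^*))$ and likewise $d(\sigma_\Gamma(t_i))=-\sigma_\Gamma(d(t_i))$. So $\sigma_\Gamma$ is not a chain map (over a field of characteristic $\neq 2$, with $\partial_\alpha W\neq 0$), and the later steps cannot get off the ground. This is not a stylistic sign choice; you explicitly flag the sign verification as the main obstacle, and the sign you offer is the one that fails.

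Second, your functor is $\widetilde\nabla:=\RHom_\Gamma(-,B)$, whereas the paper defines $\nabla:=D(-\otimes_\Gamma {}_1\Gamma_{-\sigma})$. These are not the same, and the difference matters. By the bimodule $3$-Calabi--Yau property, $\RHom_\Gamma(-,\Gamma)$ agrees with $D(-)$ on $\mathsf{D}_{\mathrm{fd}}(\Gamma)$ only up to a shift by $[-3]$; you even write ``up to a shift'' yourself. Once you pass to the quotient $\C$ this shift is nontrivial (in a $2$-CY category $[1]$ is not isomorphic to the identity), so the descended duality will not agree with the $\nabla$ on $\mod J$ under $M(-)$ without an explicit shift correction—and you do not make or justify one. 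The paper avoids this entirely by using the $\Bbbk$-linear dual of a twist rather than $\RHom$ into $\Gamma$. To salvage your route you would need either to insert the correct shift, or to verify carefully that on the quotient the shift disappears; neither is done.

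Both issues are fixable, but as written the proof has a genuine gap at exactly the step you yourself identify as the main obstacle.
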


\begin{proof}
Recall that $\C$ is defined as the quotient $\mathsf{per}(\Gamma)/\mathsf{D}_{\mathrm{fd}}(\Gamma)$, where $\Gamma$ is the Ginzburg dg-algebra associated to a certain quiver with potential $(Q,W)$, $\mathsf{per}(\Gamma)$ the perfect derived category, and $\mathsf{D}_{\mathrm{fd}}(\Gamma)$ the full subcategory of the derived category supported at totally finite-dimensional (dg) $\Gamma$-modules.
Here, $W$ is the sum of the form $\alpha\beta\gamma$, one for each internal triangle of $\wti{T}$ where $\alpha\beta\gamma$ is (a choice of) a length 3 path given by bouncing inside the triangle.
Recall also that the underlying graded algebra structure of $\Gamma$ is given by  $\Bbbk \overline{Q}$ where $\overline{Q}$ is the quiver with the same set of vertices as $Q$, and the set of arrows is $Q_1\sqcup Q_1^*\sqcup Q_0$ with $Q_0$ representing the set of loops $t_i$ of degree $-2$ for $i\in Q_0$ and $Q_1^*:=Q_1^\op$ is the set of `dual arrows' $\alpha^*$ of degree $-1$ (for each $\alpha\in Q_1$).  The differential of $\Gamma$ is given by $d(\alpha)=0$ and $d(\alpha^*)=\partial_\alpha W$ for all $\alpha\in Q_1$, and $d(t_i)=e_i(\sum_\alpha [\alpha,\alpha^*])e_i$ for all $i\in Q_0$.

By abuse of notation, define $\sigma:\Gamma\to \Gamma^\op$ by  extending that of the original one $\sigma:\Bbbk Q\to \Bbbk Q^\op$.  So we have $\sigma(\alpha^*)=\sigma(\alpha)^*$ for all $\alpha^*\in Q_1^*$ and $\sigma(t_i)=-t_{\sigma(i)}$.  This clearly defines an isomorphism of graded algebra.  We claim that $\sigma$ is also a chain map (and so $\sigma$ is a dga isomorphism).  Indeed, it is clear that $\sigma d(Q_1)=0=d(\sigma(Q_1))$.  For $\alpha^*\in Q_1^*$, we have
\[d(\sigma(a^*))=d(\sigma(a)^*)=\partial_{\sigma(\alpha)}(W)= \sigma(\partial_\alpha (W))=\sigma d(a^*)\]
by the assumption of $(Q,W,\sigma)$ being symmetric.  For $t_i$'s, we have
\begin{align*}
d(\sigma(t_i)) &=-d(t_{\sigma(i)}) \\
&= -e_{\sigma(i)}\left(\sum_{a} [a,a^*]\right)e_{\sigma(i)}\\
&= -\sigma(e_i)\sigma\left(-\sum_{a}[a^*,a]\right) \sigma(e_i) = \sigma d(t_i).
\end{align*}
Define $\nabla := D(-\otimes_\Gamma {}_1\Gamma_{-\sigma})$, where ${}_1\Gamma_{-\sigma}$ is the $\Gamma$-$\Gamma^\op$-dg-bimodule given by $\Gamma$ equipped with the natural left $\Gamma$-action and the right $\Gamma^\op$-action is given by the dga isomorphism $\Gamma^\op\to\Gamma$ sending $a$ to $-\sigma(a)$.  Both $(-\otimes_\Gamma {}_1\Gamma_{-\sigma})$ and $D(-)$ are exact equivalence that restricts to equivalence on the respective bounded derived categories.  Hence, this induces the require equivalence on $\C = \mathsf{per}(\Gamma)/\mathsf{D}_{\mathrm{fd}}(\Gamma)$.

Note also that restricting $\nabla$ to $\add(\Gamma[1])\subset\mathsf{per}(\Gamma)$ yields an equivalence $\nabla:\add(\Gamma[1])\xrightarrow{\sim}\add(\Gamma[-1])$.  Since $\C$ is 2-CY, $\nabla$ is an equivalence on the full subcategory $\add(\Gamma[1])\subset\C$, where, by abusing notation, $\Gamma$ here is the image of $\Gamma\in\mathsf{per}(\Gamma)$ in $\C$.  
Thus, we have an induced duality $\nabla:\C/[\Gamma[1]]\xrightarrow{\sim}\C/[\Gamma[1]]$.  
Note that $\Gamma$ is the cluster-tilting object of $\C$ with endomorphism ring $J$, so it remains to see that the induced duality coincide with the one naturally defined on $\mod J$ via $\sigma_J:J\to J$.  Indeed, as $\sigma_J = \sigma_\Gamma \circ\pi$ for $\pi$ the natural projection $\Gamma\to H^0(\Gamma)=J$, $\nabla$ agrees as functor on $\add(J)\xrightarrow{\sim}\add(DJ)$, which extends to the module category $\mod J$.
\end{proof}
\begin{remark}
Note that the exactness here means that $\nabla\circ[\pm 1] \simeq [\mp 1]\circ \nabla$ and a triangle $(X\xrightarrow{f}Y\xrightarrow{g}Z\xrightarrow{h}X[1])\in \C$ is sent to $(\nabla(X[1])\xrightarrow{\nabla(h)}\nabla(Z)\xrightarrow{\nabla(g)}\nabla(Y)\xrightarrow{\nabla(f)}\nabla(X))$.
\end{remark}

\section{Symmetric representations vs curves}\label{sec:symm repn}

We view the duality $\nabla$ as the categorification of orientation-reversing automorphism $\sigma=\sigma_\S$ on a triangulated surface $\S$.
To categorify curves - in particular, quasi-arcs - of a non-orientable surface arising as $\S/\sigma$, we need to makes sense of indecomposability for the $\nabla$-orbit of an indecomposable object.  Classical orbit category construction does not work well in this setting as $\nabla$ is a contravariant duality; fortunately, a resolution called \emph{symmetric representation} has recently been suggested in the literature \cite{DW02,BCI21}. As in the previous section, we assume the underlying field $\Bbbk$ is of characteristic not equal to 2.

\subsection{Symmetric representations}
\begin{definition}\label{def:e-rep}
Suppose now that the characteristic of the underlying field $\Bbbk$ is not 2 and $\Lambda \cong \Bbbk Q/I$ is an algebra equipped with an involution $\sigma$ such that $\sigma(I)=I$ (such as the Jacobian algebra of a symmetric QP).
Let $\epsilon\in\{+1,-1\}$.  By an \dfn{$\epsilon$-form} on a $\Bbbk$-vector space $V$ we mean a bilinear form $\langle-,-\rangle$ that is symmetric when $\epsilon=+1$, and skew-symmetric when $\epsilon=-1$. 
An \dfn{$\epsilon$-representation} $(M,\langle-,-\rangle)$ of $(\Lambda,\sigma)$ is a $\Lambda$-module equipped with a bilinear form on $M$ such that
\begin{enumerate}[(1)]
\item $\langle-,-\rangle$ is a non-degenerate $\epsilon$-form;
\item $\langle-,-\rangle|_{M_i\times M_j}\neq 0$ implies $j=\sigma(i)$, where $M_i:= Me_i$ for the primitive idempotent $e_i$ corresponding to $i\in Q_0$;
\item $\langle v\alpha ,w\rangle+\langle v,w\sigma(\alpha)\rangle=0$ for all arrow $(\alpha:i\to j)\in Q_1$ and all $v\in M_i, w\in M_{\sigma(j)}$.
\end{enumerate}
We also consider $\epsilon$-representations to be \dfn{symmetric representation} if we do not want to emphasise the parity of $\epsilon$.
\end{definition}

\begin{example} \label{ex::e-rep}
Consider the quiver from Example \ref{example:qpeasy} and the module $M$ given by
 \[ M = \begin{tikzcd}[column sep=small, row sep = small]
     & \Bbbk^2 \arrow[dl, swap, "\begin{bsmallmatrix}0\\1\end{bsmallmatrix}"] \\
     \Bbbk  \arrow[rr, "0", shift left] \arrow[rr, "0", shift right, swap] & & \Bbbk \arrow[ul, "\begin{bsmallmatrix}0~1\end{bsmallmatrix}", swap]  \arrow[dl, "\begin{bsmallmatrix}0~-1\end{bsmallmatrix}"] \\
     & \Bbbk^2 \arrow[ul, "\begin{bsmallmatrix}0\\-1\end{bsmallmatrix}"]
    \end{tikzcd} \cong 
    {\arraycolsep=2pt\begin{array}{ccc}
 1 & & 3 \\ & 2 & \end{array}} \;\oplus \; {\arraycolsep=2pt\begin{array}{ccc}
  & 4 &  \\ 1& &3 \end{array}}
  \]
Define a symmetric form $\langle-,-\rangle$ on $M$ as follows.

\[ \langle x,y\rangle |_{M_i \times M_j} = \begin{cases}
    xy, & \text{if $(i,j)=(2,4)$ or $(4,2)$;}\\
    ad+bc, & \text{if $(i,j)=(1,3)$ or $(3,1)$ with } x =\begin{pmatrix} a \\ b \end{pmatrix}, y = \begin{pmatrix} c \\ d \end{pmatrix}; \\
    0, & \text{otherwise}.
\end{cases}\]
 It is routine to check that $(M, \langle-,-\rangle)$ is a $\epsilon$-representation for $\epsilon=+1$.
\end{example}

\begin{remark}
Instead of the more compact terminology `$\epsilon$-representations', \cite{DW02} calls them orthogonal (when $\epsilon=+1$) and symplectic (when $\epsilon=-1$) representations since the underlying vector space equipped with the form is an orthogonal/symplectic vector space.  We follows \cite{BCI21} practice.  We also remark that \cite{BCI21} uses only the complex number instead of arbitrary algebraically closed field of non-2 characteristic; the results we need from them can be argued in the latter more general setting.
\end{remark}

The notion of direct sum of $\epsilon$-representations is well-defined by naturally extending that of ordinary representations, i.e. the collection of matrices $(M_\alpha)_{\alpha\in Q_1}$ cannot be block-decomposed in a uniform way.  This allows one to talk about the notion of \dfn{$\epsilon$-indecomposability}. Notably, the Krull-Schmidt theorem applies in this context too; that is, every $\epsilon$-representation can be written as direct sum of the $\epsilon$-indecomposables in a unique way.  As far as the application within this article is concerned, the following result from \cite[2.7]{DW02}, \cite[2.10]{BCI21} suffices to act as a substitute of the proper definition of indecomposable $\epsilon$-representation.

\begin{proposition}\label{prop:e-indec}
Let $M$ be an indecomposable $\epsilon$-representation.  Then precisely one of the following three cases occur:
\begin{enumerate}[(a)]
\item $M$ is \dfn{1-sided}, i.e. $M$ is indecomposable as a $\Lambda$-module.

\item $M$ is \dfn{ramified}, i.e. $M\cong L\oplus \nabla L$ as $\Lambda$-module for some indecomposable $\Lambda$-module $L\cong \nabla L$.

\item $M$ is \dfn{split}, i.e. $M\cong L\oplus \nabla L$ as $\Lambda$-module for some indecomposable $\Lambda$-module $L\ncong \nabla L$.
\end{enumerate}
Moreover, for an indecomposable $\Lambda$-module $L$, it gives rise to exactly one of the three types of indecomposable $\epsilon$-representation of the form $M$ above.
\end{proposition}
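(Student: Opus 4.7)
\emph{Proof plan.}
The plan is to first translate the $\epsilon$-form data on $M$ into an isomorphism $\phi : M \xrightarrow{\sim} \nabla M$ of $\Lambda$-modules satisfying $\nabla\phi = \epsilon\phi$ under the natural identification $\nabla^2 \cong \mathrm{id}$: non-degeneracy of $\langle-,-\rangle$ becomes invertibility of $\phi$, condition (2) in Definition \ref{def:e-rep} makes $\phi$ respect the vertex decomposition, and condition (3) encodes $\Lambda$-linearity. In this language, an $\epsilon$-representation is a self-dual $\Lambda$-module $(M,\phi)$ with sign $\epsilon$, and direct sums / decompositions are transparent.

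Next I would perform an orbit decomposition. Write $M = \bigoplus_i L_i^{m_i}$ into $\Lambda$-indecomposables; then $\nabla M \cong \bigoplus_i (\nabla L_i)^{m_i}$, and the existence of the iso $\phi$ forces the multiset $\{[L_i]\}$ to be stable under $\nabla$ with $m_i = m_{\nabla i}$. Since $\Hom_\Lambda(L_i,\nabla L_j) = 0$ whenever $\nabla L_j \not\cong L_i$, the map $\phi$ must be block-diagonal with respect to the partition of summands into $\nabla$-orbits. This shows $M$ splits as an orthogonal sum of $\epsilon$-subrepresentations, one per $\nabla$-orbit, so by $\epsilon$-indecomposability we may assume $M$ is concentrated on a single orbit.

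I would then split into two cases. For a free orbit $\{[L],[\nabla L]\}$ with $L\not\cong\nabla L$, we have $M=L^m\oplus(\nabla L)^m$ and the form reduces to an invertible matrix $A\in M_m(\Hom_\Lambda(L,\nabla L))$ modulo the $\mathrm{GL}_m(\End_\Lambda(L))$-action; a standard change-of-basis argument diagonalises $A$, exhibiting $M$ as $\epsilon$-iso to $m$ copies of $L\oplus\nabla L$, so $\epsilon$-indecomposability forces $m=1$, the split case. For a fixed orbit $\{[L]\}$ with $L\cong\nabla L$, fix an iso $\psi:L\to\nabla L$; applying $\nabla$ and using $\nabla^2\cong\mathrm{id}$ yields a canonical sign $\eta_L\in\{\pm 1\}$ such that any self-duality on $L$ is $\eta_L$-symmetric. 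The form on $M=L^m$ then becomes an $\eta_L\epsilon$-Hermitian matrix over the local ring $\End_\Lambda(L)$ (Hermitian with respect to a natural bar involution induced by $\psi$); classical normal-form theory over a local ring with residue field $\Bbbk$ of characteristic $\neq 2$ shows that such forms decompose into blocks of size 1 when $\eta_L\epsilon=+1$ (yielding $m=1$, the 1-sided case) and blocks of size 2 when $\eta_L\epsilon=-1$ (yielding $m=2$ with $M\cong L\oplus L\cong L\oplus \nabla L$, the ramified case). Finally, for the converse I would run each construction in reverse: for each indecomposable $L$ the three conditions $L\not\cong\nabla L$, $L\cong\nabla L$ with $\eta_L=\epsilon$, and $L\cong\nabla L$ with $\eta_L=-\epsilon$ are mutually exclusive and exhaustive, and each case admits an explicit canonical $\epsilon$-form.

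The main obstacle is the fixed-orbit case: one must verify that the sign $\eta_L$ is well-defined independently of the choice of $\psi$, and that the Hermitian normal-form theory genuinely applies over the (possibly non-commutative) local ring $\End_\Lambda(L)$. Both rest on the facts that $\End_\Lambda(L)$ is local with residue field $\Bbbk$ and that $2$ is invertible, but a careful lift-and-diagonalise argument modulo the radical is needed to produce the concrete decomposition into $1{\times}1$ or hyperbolic $2{\times}2$ blocks.
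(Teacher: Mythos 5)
The paper does not actually prove Proposition~\ref{prop:e-indec}; it is quoted verbatim from \cite[2.7]{DW02} and \cite[2.10]{BCI21}, so there is no ``paper proof'' to compare against. Judged on its own merits, your reconstruction has the right skeleton --- translate the $\epsilon$-form into an isomorphism $\phi\colon M\xrightarrow{\sim}\nabla M$ with $\nabla\phi=\epsilon\phi$, reduce to a single $\nabla$-orbit, then use the local endomorphism ring and the parity sign $\eta_L$ to separate the free-orbit (split), fixed-orbit form-admitting ($1$-sided), and fixed-orbit non-form-admitting (ramified) cases --- and the free-orbit and fixed-orbit analyses are essentially sound (modulo the notational slip that the pairing of $L^m$ with $(\nabla L)^m$ lands in $M_m(\End_\Lambda L)$ rather than $M_m(\Hom_\Lambda(L,\nabla L))$).

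However, there is a genuine gap in the reduction to a single $\nabla$-orbit. You assert ``$\Hom_\Lambda(L_i,\nabla L_j)=0$ whenever $\nabla L_j\not\cong L_i$,'' which amounts to claiming that there are no nonzero morphisms between non-isomorphic indecomposables; this holds only for semisimple algebras and fails already for very small symmetric gentle quivers. For instance, take $Q\colon 1\xrightarrow{a_1}2\xrightarrow{a_2}3$ with $\sigma(i)=4-i$ and $\sigma(a_1)=a_2$: then $S_2$ is $\nabla$-fixed, the indecomposable $\begin{smallmatrix}2\\3\end{smallmatrix}$ lies in the distinct orbit $\{\begin{smallmatrix}1\\2\end{smallmatrix},\begin{smallmatrix}2\\3\end{smallmatrix}\}$, yet $\Hom_\Lambda(S_2,\nabla\begin{smallmatrix}2\\3\end{smallmatrix})=\Hom_\Lambda(S_2,\begin{smallmatrix}1\\2\end{smallmatrix})=\Bbbk\neq 0$. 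Consequently $\phi$ is \emph{not} automatically block-diagonal over $\nabla$-orbits, and the orthogonal splitting has to be produced rather than read off. The standard ways to fill this hole are: (i) pass to $E:=\End_\Lambda(M)$, note that $\phi$ induces an anti-involution $f\mapsto\phi^{-1}\nabla(f)\phi$ on $E$, and lift $*$-invariant idempotents through $\operatorname{rad}E$ (using $\operatorname{char}\Bbbk\neq 2$ to average), which forces the $*$-action to be transitive on the Wedderburn blocks of $E/\operatorname{rad}E$ and hence constrains $M$ to one $\nabla$-orbit; or (ii) the ``peel off one Lagrangian/anisotropic summand at a time'' argument: pick an indecomposable summand $L\hookrightarrow M$, expand $\mathrm{id}_L=\sum_j(\operatorname{pr}_L\phi^{-1}\iota_{\nabla M_j})(\operatorname{pr}_{\nabla M_j}\phi\iota_L)$ over a fixed decomposition, use that $\End_\Lambda(L)$ is local to find a $j_0$ with both factors invertible, so $M_{j_0}\cong\nabla L$, then show the form restricted to $L\oplus M_{j_0}$ is nondegenerate and split it off orthogonally. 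Either route recovers the orbit reduction without the false Hom-vanishing; once that is in place, the remainder of your plan goes through.
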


\begin{remark}
Note that what is called `1-sided' here is called `type I' in \cite{DW02,BCI21}.
\end{remark}

From now on, we will often omit $\langle-,-\rangle$ from the notation as Proposition \ref{prop:e-indec} implies that the $\epsilon$-representation structure is determined by underlying module structure. 

If we already know the classification of indecomposable ordinary modules, then Proposition \ref{prop:e-indec} gives us a way to classify all indecomposable $\epsilon$-representations.  Namely, for each indecomposable $L\in \mod \Lambda$, we first check whether or not that module is self-dual. If it is not self-dual, then we have a split $\epsilon$-indecomposable $M=L\oplus\nabla L$.  Otherwise, we check whether one can equip an $\epsilon$-form $\langle-,-\rangle$ so that $(L,\langle-,-\rangle)$ defines a $\epsilon$-indecomposable.  If this is the case, then we have a 1-sided $\epsilon$-indecomposable on the spot; otherwise, $L\oplus \nabla L$ can be given a structure of a ramified $\epsilon$-indecomposable.

\begin{example}
Let us take $\epsilon=1$.  Consider the quiver from Example \ref{example:qpeasy} and the $\epsilon$-representation $M$ from Example \ref{ex::e-rep}.  As ordinary module we have $M = M' \oplus \nabla M'$ with the ordinary indecomposable module.

\[  M'=  \begin{tikzcd}[column sep=small, row sep = small]
     & \Bbbk \arrow[dl, "1", swap] \\
     \Bbbk  \arrow[rr, shift left] \arrow[rr, shift right, swap] & & 0 \arrow[ul]  \arrow[dl] \\
     & \Bbbk \arrow[ul,"1"]
    \end{tikzcd} = {\arraycolsep=2pt\begin{array}{ccc}
 1 & & 3 \\ & 2 & \end{array}} \]
\end{example}

\begin{example}
Consider the quiver from Example \ref{example:qpeasy} again and also the band $\omega:=\alpha_2\beta_2^{-1}$, which defines a family of indecomposable band $\Lambda$-module $M_{\lambda}(\omega)$ with $\lambda\in \Bbbk^\times$.  We have
\[
M_\lambda(\omega) = \begin{tikzcd}[column sep=small, row sep = small]
     & 0 \arrow[dl] \\
     \Bbbk  \arrow[rr, "{[1]}", shift left] \arrow[rr, "{[\lambda]}", shift right, swap] & & \Bbbk \arrow[ul]  \arrow[dl] \\
     & 0 \arrow[ul]
    \end{tikzcd} \]
and
\[ 
\nabla M_\lambda(\omega) = \begin{tikzcd}[column sep=small, row sep = small]
     & 0 \arrow[dl] \\
     \Bbbk  \arrow[rr, "{[-\lambda]}", shift left] \arrow[rr, "{[-1]}", shift right, swap] & & \Bbbk \arrow[ul]  \arrow[dl] \\
     & 0 \arrow[ul]
    \end{tikzcd} \cong \begin{tikzcd}[column sep=small, row sep = small]
     & 0 \arrow[dl] \\
     \Bbbk  \arrow[rr, "{[1]}", shift left] \arrow[rr, "{[\lambda^{-1}]}", shift right, swap] & & \Bbbk \arrow[ul]  \arrow[dl] \\
     & 0 \arrow[ul]
    \end{tikzcd} = M_{\lambda^{-1}}(\omega),
\]
so 1-sided and ramified $\epsilon$-indecomposables appear only when $\lambda \in\{+1,-1\}$. Define a bilinear form $\langle-,-\rangle$ on the underlying vector space given by $\begin{pmatrix}0 & 1\\ -\lambda & 0 \end{pmatrix}$.  Then this satisfies Definition \ref{def:e-rep} (3) only if $\epsilon=-\lambda \in \{+1,-1\}$.

Let $N=M_{-\epsilon}(\omega)$, $L'=M_{\epsilon}(\omega)$, and $L=L'\oplus \nabla L$, i.e.
 \[  N= \begin{tikzcd}[column sep=small, row sep = small]
     & 0 \arrow[dl] \\
     \Bbbk  \arrow[rr, "{[1]}", shift left] \arrow[rr, "{[-\epsilon]}", shift right, swap] & & \Bbbk \arrow[ul]  \arrow[dl] \\
     & 0 \arrow[ul]
    \end{tikzcd}
    \text{, }\;\;
    L' = \begin{tikzcd}[column sep=small, row sep = small]
     & 0 \arrow[dl] \\
     \Bbbk  \arrow[rr, "{[1]}", shift left] \arrow[rr, "{[\epsilon]}", shift right, swap] & & \Bbbk \arrow[ul]  \arrow[dl] \\
     & 0 \arrow[ul]
    \end{tikzcd}
    \text{ and }
    L= \begin{tikzcd}
     & 0 \arrow[dl] \\
     \Bbbk^2  \arrow[rr, "{\sbm{1\amp 0\\0\amp-\epsilon}}", shift left] \arrow[rr, "{\sbm{\epsilon\amp 0\\0\amp-1}}", shift right, swap] & & \Bbbk^2 \arrow[ul]  \arrow[dl] \\
     & 0 \arrow[ul]
    \end{tikzcd}.
    \]
Then one can find an appropriate bilinear form $\langle-,-\rangle$ so that $N$ defines a 1-sided $\epsilon$-indecomposable, whereas $L$ defines a ramified $\epsilon$-indecomposable.
\end{example}

\begin{comment}
\begin{example}
Consider the Kronecker algebra $\Lambda=\Bbbk \mathbb{A}_{1,1}$, which corresponds to the triangulation of the annulus with one marked point on each boundary component. Let $\omega$ be the unique simple closed curve up to isotopy and let $\epsilon \in \{\pm 1\}$ be arbitrary. Consider the module $M_\lambda(\omega)$ for $\lambda\in\{\pm 1\}$.

Define a bilinear form via the matrix $\begin{pmatrix}
0 & 1 \\ -\lambda^{-1} & 0
\end{pmatrix}$.  If $\lambda = +1$, then the matrix defines a skew-symmetric form, and so $M_{1}(\omega)$ is a $\epsilon$-representation if and only if $\epsilon=-1$.   Dually, if $\lambda = -1$, then $M_{-1}(\omega)$ is an $\epsilon$-representation if and only if $\epsilon=1$.   In other words, $M_{-\epsilon}(\omega)$ is always a 1-sided $\epsilon$-indecomposable representation, whereas $M_{\epsilon}(\omega)$ is a self-dual $\Lambda$-module which gives rise to a ramified $\epsilon$-indecomposable whose underlying module is $M_{\epsilon}(\omega) \oplus M_{\epsilon}(\omega)$.  

Note that none of the string modules of the Kronecker algebra are self-dual, so every curve with endpoints on the annulus give rises to a split $\epsilon$-indecomposable representation.
\end{example}

\end{comment}

\subsection{Curves as indecomposable symmetric representations}

Throughout this subsection, we fix the following notation. Let $\pi:\wti{\SM}:=(\wti{\S},\wti{\M}) \xrightarrow{2:1} \SM$ denote the orientable double cover of a non-orientable marked unpunctured surface $\wti{\SM}$, and let $\sigma_\S$ denote the associated orientation-reversing automorphism of $\wti{\S}$.  Fix a triangulation $T$ on $\SM$ and let $\wti{T}$ be the triangulation on $\wti{\SM}$ so that $\pi(\wti{T})=T$.
Let $(Q,W,\sigma)$ be the gentle symmetric QP associated to $\wti{T}$, $J=J_{Q,W}$ the associated Jacobian algebra, and $\C=\C_{\wti{T}}$ the associated cluster category.

\begin{lemma}\label{lem:nabla on strings}
Let $\gamma\in \bfC_{nc}\wti{\SM}$ be a non-closed curve on $\wti{\SM}$.
Then $\nabla(\gamma)\cong\sigma_\S(\gamma)$ as object on $\C$.
In particular, $M(\gamma)\oplus M(\sigma_\S(\gamma))$ has a structure of an indecomposable split $\epsilon$-representation.
\end{lemma}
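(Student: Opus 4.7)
The plan is to match the duality $\nabla$ on $\C$ with the topological involution $\sigma_\S$ on curves under the bijection of Section \ref{sec:cluster cat basic}, via a direct string-level computation. Recall that $\gamma$ has a string $w=w_1w_2\cdots w_c$ read off from its crossing sequence with $\wti{T}$, giving the string module $M(\gamma)=M(w)$ with basis $v_0,\ldots,v_c$, where $v_i$ sits at some vertex $x_i$ of $Q=Q_{\wti{T}}$.

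First I would compute the string of the translated curve $\sigma_\S(\gamma)$. Since $\sigma_\S$ is a homeomorphism of $\wti{\S}$ preserving $\wti{T}$, the crossings of $\sigma_\S(\gamma)$ with $\wti{T}$ are the $\sigma_\S$-images of those of $\gamma$, and the vertex visited at position $i$ becomes $\sigma(x_i)$. The crucial observation is that $\sigma_\S$ is \emph{orientation-reversing}: it flips the sense of clockwise rotation around each marked point, and since arrows of $Q$ are defined precisely by such rotations, a short case analysis on whether $w_i$ is directed or inverse shows that the walk letter of $\sigma_\S(\gamma)$ between positions $i-1$ and $i$ is $\sigma(w_i)^{-1}$.

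Next I would compute $\nabla M(\gamma)$ using the formula $(\nabla M)_{\sigma(\alpha)}=-M_\alpha^*$ from the end of Section \ref{subsec:duality functor}. The underlying space has dual basis $v_0^*,\ldots,v_c^*$ with $v_i^*$ sitting at vertex $\sigma(x_i)$ (using that $\sigma$ is an anti-automorphism). A parallel case analysis --- keeping track of the $-\sigma$-twist --- shows the connecting letter between $v_{i-1}^*$ and $v_i^*$ is again $\sigma(w_i)^{-1}$, up to signs absorbed by rescaling the dual basis. Hence $\nabla M(\gamma)$ and $M(\sigma_\S(\gamma))$ realise the same walk and are isomorphic in $\mod J$. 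To lift this to $\C$, if $\gamma\notin\wti{T}$ then neither object lies in $\add(\Gamma[1])$, so the isomorphism transports through the equivalence $\C/[\Gamma[1]]\xrightarrow{\sim}\mod J$ combined with Proposition \ref{prop:lift nabla}; if instead $\gamma=\tau\in\wti{T}$, then $\tau$ is a summand of $\Gamma[1]\in\C$, and the restriction of $\nabla$ to $\add(\Gamma[1])$ permutes these summands according to the vertex involution $\sigma$, yielding $\nabla(\tau)\cong\sigma_\S(\tau)$ directly.

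For the ``in particular'' statement, we may assume $\gamma\neq \sigma_\S(\gamma)$ so that $M(\gamma)\ncong\nabla M(\gamma)$, and then Proposition \ref{prop:e-indec} provides a split indecomposable $\epsilon$-representation structure on $M(\gamma)\oplus\nabla M(\gamma)$; explicitly, one equips it with the hyperbolic form pairing each $v_i\in M(\gamma)$ with its corresponding dual vector in $\nabla M(\gamma)\cong M(\sigma_\S(\gamma))$. The main obstacle is the combinatorial bookkeeping in the two string computations: one must correctly translate how the orientation reversal of $\sigma_\S$ interacts with the clockwise convention defining $(Q_{\wti{T}},W_{\wti{T}})$, and simultaneously ensure that on the module side the $-\sigma$-twist produces the matching letter $\sigma(w_i)^{-1}$ after rescaling.
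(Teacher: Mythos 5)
Your proposal takes essentially the same route as the paper: handle $\gamma\in\wti{T}$ directly via the idempotent-summand description of $\nabla$ on $\add(\Gamma[1])$, and for $\gamma\notin\wti{T}$ carry out the string-level identification $\nabla M(\gamma)\cong M(\sigma_\S(\gamma))$ and lift through Proposition~\ref{prop:lift nabla}. The one small improvement over the paper's wording is your explicit remark that the spurious signs from the $-\sigma$-twist must be absorbed by an alternating rescaling of the dual basis; the paper's phrase ``multiplies all basis vectors by $-1$'' is a misstatement of the same intended $(-1)^i$ rescaling (which reappears correctly as $\psi_{M(\gamma)}$ later in the paper), since a uniform sign change is central and does nothing.
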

\begin{proof}
Suppose $\gamma\in \wti{T}$.  Then $\gamma=\pi(e\Gamma)$, where $\Gamma$ is the Ginzburg dga associated to $(Q,W)$, $e$ is the primitive idempotent corresponding to $\gamma\in Q_0$, and $\pi:\mathsf{per}(\Gamma)\to \C$ is the canonical projection.  It follows from the definition of $\nabla$ and $\sigma_\S$ that $\nabla(e\Gamma)\cong \sigma(e)\Gamma$ and $\sigma(e)$ is the primitive idempotent corresponding to $\sigma_\S(\gamma)\in Q_0$.
This immediate implies that $\nabla(\gamma)=\pi(\nabla(e\Gamma))=\sigma_\S(\gamma)$.

For any non-closed $\gamma\notin \wti{T}$, we show that $\nabla(M(\gamma))\cong M(\sigma_\S(\gamma))$ and the claim follows by using Proposition \ref{prop:lift nabla}.  Indeed, first recall that the underlying vector space of $M(\gamma)$ is $\bigoplus_{i=0}^c\Bbbk \gamma(t_i)$ where $\gamma(t_i)$'s are the crossings of $\gamma$ with $\wti{T}$.  Let $\tau_i\in \wti{T}$ be the arc containing $\gamma(t_i)$.  By definition of $\sigma_\S$, the underlying vector space of $M(\sigma_\S(\gamma))$ is given by $\bigoplus_{i=0}^c\Bbbk \sigma_\S(\gamma)(1-t_i)$ with $\sigma_\S(\gamma)(1-t_i)\in \sigma_\S(\tau_i)$.  Hence, as vector spaces $M(\sigma_\S(\gamma))$ agrees with $\nabla(M(\gamma))$.  Consider now the arrow $\alpha_i$ determined by $\gamma|_{(t_{i-1},t_i)}$ and denote by $M_{\alpha_i}$ the action of $\alpha_i$ on $\Bbbk\gamma(t_j)\subset M(\gamma)$ with an appropriate $j\in\{i,i-1\}$.  Then $\sigma(\alpha_i)$-action on $\nabla(M(\gamma))$ is given by $-M_{\alpha_i}^*$, which is just the negative of the identity map.  Hence $\nabla(M(\gamma))\cong M(\sigma_\S(\gamma)$ via the map that multiplies all basis vector by $-1$.
\end{proof}

We will now determine the $\epsilon$-indecomposables arising from $M_\lambda(\omega)$.
Let us fix some notations and terminologies first.
From now on until further notice, $\omega$ will always be a primitive closed curve on $\wti{\SM}$.
We will identify $\omega$ with its band form $\omega=\omega_1\omega_2\cdots \omega_c$.
As in Section \ref{sec:cluster cat basic}, we will always assume without loss of generality that $\omega_1=\alpha_1\in Q_1$ is directed and $\omega_c=\alpha_c^{-1}$ is inverse; otherwise, one can rotate the indices until this criteria is met.
For convenience, we will call the number $c$ the \dfn{length} of $\omega$.  In picture, we can display $\omega$ and $M_\lambda(\omega^n)$ as follows.
\begin{align}
\omega =& \,\,\large(\xymatrix@1@C=38pt{
0 \ar[r]^{\alpha_1} & 1 \ar@{-}[r]^{\alpha_2} & \cdots \ar@{-}[r]^{\alpha_{c-1}} & c-1 & 0 \ar[l]_{\alpha_c}}\large)\label{eq:band quiver}\\
M_\lambda(\omega^n)=&\large(\xymatrix@1@C=35pt{
V_0 \ar[r]^{1} & V_1 \ar@{-}[r]^{1} & \cdots \ar@{-}[r]^{1} & V_{c-1} & V_0 \ar[l]_{J_{n}(\lambda)}}\large).\label{eq:band mod struc}
\end{align}
For simplicity, we let $v_i^j$ be the basis vector $\omega(z_i)^{(j)}$ of $V_i$, and take also $v_i^0:=0$, so that $\alpha_c$-action on $V_0$ is given by $v_0^j\alpha_c = v_{c-1}^{j-1}+\lambda v_{c-1}^j$.  In the case when $n=1$, we will further omit the superscript index as long as there is no confusion.  All arithmetic operations on the subscript index $v_i^j$ will be taken modulo $c$.

\begin{lemma}\label{lem:sigma rotates 1-scc}
If $\omega=\sigma_\S(\omega)$, then the length $c$ of $\omega$ is even, say, $c=2r$, and for all $i\in\{1,2,\ldots,c\}$, we have $\sigma(\alpha_i)=\alpha_{i+r}$ and $\omega_{i+r} = \sigma(\alpha_r)^{-1}$.
\end{lemma}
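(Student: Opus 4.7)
The argument naturally splits into three steps, and the key conceptual point is to separate two competing orientation-type features of $\sigma_\S$.

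First, I would show that although $\sigma_\S$ is orientation-\emph{reversing} on $\wti{\S}$, its restriction $\sigma_\S|_\omega$ is an orientation-\emph{preserving} involution of the circle $\omega$. By the set-up for the orientable double cover (cf.\ Definition \ref{def:double cover}, noting $p^{-1}(\partial\S)=\partial\wti{\S}$ and the absence of punctures), $\sigma_\S$ acts freely on $\wti{\S}$. Hence $\sigma_\S|_\omega$ is a fixed-point-free involution of $S^1\simeq\omega$, and is therefore conjugate to the antipodal map $z\mapsto -z$, which is orientation-preserving. Topologically, this realises $\omega$ as a connected $2$-to-$1$ cover of the $1$-sided closed curve $p(\omega)$ downstairs, consistent with the dichotomy at the end of Section \ref{sec:topology}.

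Second, I would extract the combinatorial shift. Since $\sigma_\S|_\omega$ is an orientation-preserving, fixed-point-free involution and permutes the $c$ crossings $p_0,\dots,p_{c-1}$ in $\mathrm{cross}(\omega,\wti{T})$ cyclically, it acts on $\Z/c$ by a shift of order exactly $2$. Hence $c$ is even, $c=2r$, and $\sigma_\S(p_i)=p_{i+r}$ for all $i$ (mod $c$). Consequently $\sigma_\S$ sends the interval $\omega_i$ bijectively to $\omega_{i+r}$, preserving the parametrisation direction along $\omega$.

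Third, I would translate this to the quiver-involution $\sigma$ on $(Q_{\wti{T}},W_{\wti{T}})$. By the construction of $Q_{\wti{T}}$, the arrow $\alpha_i\in Q_1$ represented by $\omega_i$ is a \emph{clockwise} rotation, with respect to the ambient orientation of $\wti{\S}$, in the triangle containing $\omega_i$. Since $\sigma_\S$ reverses that ambient orientation, clockwise becomes counter-clockwise; viewed inside the image triangle, the arrow corresponding to $\omega_{i+r}=\sigma_\S(\omega_i)$ is therefore $\alpha_{i+r}$ with source and target swapped. This is exactly the defining property of the induced quiver-involution, giving $\sigma(\alpha_i)=\alpha_{i+r}$. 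Finally, combining preservation of the traversal direction along $\omega$ with reversal of the underlying arrow, if $\omega_i=\alpha_i$ is directed then $\omega_{i+r}$ is traversed against $\alpha_{i+r}$, so $\omega_{i+r}=\alpha_{i+r}^{-1}=\sigma(\alpha_i)^{-1}$; dually, if $\omega_i=\alpha_i^{-1}$ then $\omega_{i+r}=\sigma(\alpha_i)$. Either way, the stated relations hold.

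The main obstacle will be precisely this simultaneous bookkeeping: $\sigma_\S|_\omega$ is orientation-preserving on the invariant curve (which produces the shift by $r$ on crossings), whereas $\sigma_\S$ on the ambient surface is orientation-reversing (which flips arrows of $Q$). Conflating the two would give an incorrect index or an incorrect sign in the letter; once they are tracked separately, the lemma is essentially mechanical.
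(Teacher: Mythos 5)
Your proof is correct, but it follows a genuinely different route from the paper's. The paper argues purely combinatorially, working with the band $\omega = \omega_1\cdots\omega_c$ as an abstract $c$-gon: $\sigma$ acts as an order-$2$ element of the dihedral group, and the paper excludes the reflection case (and hence odd $c$) by observing that any reflection of the $c$-gon would fix a vertex of $Q_0$ or an arrow of $Q_1$, contradicting the fixed-point-free property of the symmetric QP. You instead work with the Deck transformation $\sigma_\S$ directly on the closed curve $\omega\subset\wti\S$: freeness of the Deck action forces $\sigma_\S|_\omega$ to be a fixed-point-free involution of $S^1$, which is automatically orientation-preserving, and orientation-preservation on $\omega$ is exactly what forces the induced permutation of the $c$ crossings to be a cyclic shift (by $r=c/2$) rather than a dihedral reflection. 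You then convert ambient orientation-reversal into arrow-reversal to read off $\sigma(\alpha_i)=\alpha_{i+r}$ and the direction of $\omega_{i+r}$.

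What each approach buys: the paper's proof is self-contained within the band/quiver-involution combinatorics, and in particular never needs $\sigma_\S(\omega)=\omega$ to hold as an honest set-wise equality of curves. Your argument does need this: you silently assume $\omega$ can be isotoped to be a genuine $\sigma_\S$-invariant subset in $\sigma_\S$-equivariant minimal position with $\wti T$, so that "$\sigma_\S|_\omega$" and "permutes the crossings" literally make sense. That is a true and standard fact (e.g.\ take the geodesic representative for a $\sigma_\S$-invariant metric), but it is an extra ingredient your write-up should name. In exchange, your route is geometrically more transparent: it exhibits the shift by $r$ as the antipodal map on the connected double cover of a one-sided closed curve, tying the lemma directly back to the $1$-sided/$2$-sided dichotomy set out at the end of Section~\ref{sec:topology}, whereas the paper's dihedral computation gives no hint of why the answer had to be a rotation. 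Incidentally, the final relation as printed in the statement ($\omega_{i+r}=\sigma(\alpha_r)^{-1}$) contains a typo; your reading ($\omega_{i+r}=\sigma(\alpha_i)^{-1}$ when $\omega_i=\alpha_i$, dually when $\omega_i=\alpha_i^{-1}$) is the intended one, and your derivation of it from the clockwise/counterclockwise flip is the right mechanism, which the paper's proof leaves implicit.
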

\begin{proof}
Viewing the picture \eqref{eq:band quiver} as a $c$-gon by forgetting the orientation, $\sigma$ acts as a non-identity element of the dihedral group $\langle \rho,\beta\mid \rho^c=1,\beta^2=1, \beta\rho\beta=\rho^{-1}\rangle$.  Note that $\sigma$ cannot act as the reflection $\beta$ as it will fix at least a vertex of the $c$-gon.  

Suppose on the contrary that $c$ is odd, then $\sigma$ acts as $\beta\rho^k$ for some $k$ as it is of order 2.  But then $\sigma$ will fix an edge of the $c$-gon, contradicting the fixed-point-free property of $\sigma$.

It remains to show that $\sigma$ acts cyclically on the $c$-gon.  Suppose the contrary, i.e. the source and target of the arrow $\alpha_i:=\sigma(\alpha_c)$ are ${i}$ and $i-1$ respectively.  Since the source of $\sigma(\alpha_{c-1})$ is given by applying $\sigma$ on the target of $\alpha_{c-1}$ (and vice versa), this means that $\sigma(\alpha_{c-j})=\alpha_{i-j}$ for all $j$.  Hence, there is some $1\leq k\leq i$ such that $\sigma(\alpha_k)=\alpha_k$, which contradicts the fixed-point-free property of $\sigma$.
\end{proof}

\begin{lemma}\label{lem:cc dual}
Let $\omega$ be a primitive closed curve on $\wti{\SM}$ and $n\geq 1$ be a positive integer.
Then $\nabla M_\lambda(\omega^n) \cong M_\mu(\sigma_\S(\omega^n))$ for some $\mu\in\{\lambda, \lambda^{-1}\}$; the same holds for the corresponding object $(\omega^n,\lambda)\in \C$.  Moreover, the $J$-module $M_\lambda(\omega^n)$ (respectively the coloured closed curve object $(\omega^n,\lambda)\in\C$) is self-dual if and only if $\sigma_\S(\omega)=\omega$ and $\lambda\in\{\pm 1\}$.
\end{lemma}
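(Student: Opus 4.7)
The plan is to use Proposition \ref{prop:lift nabla} to reduce to the module-level statement: once $\nabla M_\lambda(\omega^n)\cong M_\mu(\sigma_\S(\omega^n))$ is established in $\mod J$, the corresponding statement in $\C$ follows automatically.

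For the first claim, I would compute $\nabla M_\lambda(\omega^n)$ directly from (\ref{eq:band quiver})--(\ref{eq:band mod struc}): the underlying space is $\bigoplus_{i=0}^{c-1} V_i^*$ with the vertex $i$ relabelled by $\sigma(i)$, and $\sigma(\alpha_i)$ acts by $-M_{\alpha_i}^*$, which reverses the direction of each segment of the band. Rewriting the result as a band module in standard form requires recognising the reversed band $\sigma_\S(\omega^n)^{-1}$ as representing the same unoriented closed curve $\sigma_\S(\omega^n)$, and applying a diagonal gauge change on the dual vector spaces. Combining the identities $J_n(\lambda)^T\sim J_n(\lambda)$ (via the reversal permutation) and $J_n(\lambda)^{-1}\sim J_n(\lambda^{-1})$, and absorbing the signs produced by $-M_{\alpha_i}^*$ into the gauge (possible because each cycle of $\omega^n$ contains both directed and inverse letters so the signs can be balanced around it), produces a Jordan block of size $n$ with eigenvalue $\mu\in\{\lambda,\lambda^{-1}\}$.

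For the self-duality claim, suppose $\nabla M_\lambda(\omega^n)\cong M_\lambda(\omega^n)$. By the first part and the classification of indecomposable band modules over a gentle algebra (two such are isomorphic iff their underlying primitive bands coincide as unoriented cyclic words and the eigenvalues match up to the $\lambda\leftrightarrow\lambda^{-1}$ symmetry of band inversion), the existence of an isomorphism $M_\mu(\sigma_\S(\omega^n))\cong M_\lambda(\omega^n)$ forces $\sigma_\S(\omega)=\omega$. By Lemma \ref{lem:sigma rotates 1-scc}, the involution $\sigma$ then acts as the cyclic rotation $\alpha_i\mapsto\alpha_{i+r}$ with $c=2r$, so $\sigma_\S(\omega^n)$ and $\omega^n$ agree as oriented bands, and the parameters must satisfy $\mu=\lambda$. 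Since the contravariance of $\nabla$ pins $\mu$ to $\lambda^{-1}$ (this is the outcome of the $J_n(\lambda)^{-1}\sim J_n(\lambda^{-1})$ step), we obtain $\lambda=\lambda^{-1}$, hence $\lambda\in\{\pm 1\}$. Conversely, given $\sigma_\S(\omega)=\omega$ and $\lambda=\pm 1$, I would write down the isomorphism $M_\lambda(\omega^n)\xrightarrow{\sim}\nabla M_\lambda(\omega^n)$ explicitly as the composition of the rotation $i\mapsto i+r$ supplied by Lemma \ref{lem:sigma rotates 1-scc} with the reversal permutation on each $V_i^*$, up to a global sign depending on $\lambda$.

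The main obstacle will be the sign- and gauge-bookkeeping in the first paragraph: one needs to track the minus sign in $-M_{\alpha_i}^*$, the conjugation sign from the reversal permutation that sends $J_n(\lambda)^T$ to $J_n(\lambda)$, and the effect of band inversion that realises the $\lambda\leftrightarrow\lambda^{-1}$ symmetry. Once these are organised, the matrix identity $J_n(\pm 1)^T=PJ_n(\pm 1)P^{-1}$ for the reversal permutation $P$ reduces the converse of the self-duality claim to an elementary verification.
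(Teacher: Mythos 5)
Your overall strategy parallels the paper's: reduce to the module level via Proposition \ref{prop:lift nabla}, and then re-express $\nabla M_\lambda(\omega^n)$ as a band module in standard form. Where the paper reduces from general $n$ to $n=1$ via the exact sequences $0\to M_\lambda(\omega)\to M_\lambda(\omega^{n+1})\to M_\lambda(\omega^n)\to 0$ and then tracks $\lambda$ along the band using the auxiliary modules $N_\lambda^j(\omega)$ and explicit local commutative squares, you instead dualize directly for arbitrary $n$ and rely on Jordan-block identities plus a diagonal gauge change. That route is reasonable in principle, and for the first assertion ($\mu\in\{\lambda,\lambda^{-1}\}$) it should go through once the bookkeeping is done.

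The gap is in the self-duality direction, at the sentence ``the contravariance of $\nabla$ pins $\mu$ to $\lambda^{-1}$.'' In your own first paragraph you observe that after dualizing and gauging the result could have either $\mu=\lambda$ or $\mu=\lambda^{-1}$, depending on how many direction reversals the Jordan block must be dragged through to reach the standard position $\alpha_c$; the identity $J_n(\lambda)^{-1}\sim J_n(\lambda^{-1})$ is only invoked each time you pull $J_n$ through a single reversal (and note that the linear dual gives $J_n(\lambda)^T$, with the same eigenvalue --- the inversion comes entirely from band inversion/direction changes, not from $\nabla$ being contravariant). So ``contravariance'' alone does not determine the parity, and without determining it your two claims ``$\mu=\lambda$'' (from the classification once $\sigma_\S(\omega^n)$ is identified with $\omega^n$) and ``$\mu=\lambda^{-1}$'' (from the computation) are in tension: you need an input that actually forces one of them. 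The paper supplies exactly this input via Lemma \ref{lem:sigma rotates 1-scc}, which shows $\sigma(\alpha_c)$ and $\alpha_c$ point in \emph{opposite} directions; equivalently, $d_1=d_r$ while $d_{2r}\neq d_1$, so the half-period from $\alpha_r$ to $\alpha_{2r}=\alpha_c$ contains an odd number of direction changes, and hence $\mu=\lambda^{-1}$. You do invoke Lemma \ref{lem:sigma rotates 1-scc}, but only for the statement that $\sigma$ is a rotation by $r$; the opposite-direction conclusion, which is the crux, is not used. To close the gap you should extract that parity fact and feed it into your gauge bookkeeping.

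One smaller caution: ``the parameters must satisfy $\mu=\lambda$'' from the classification implicitly assumes that the parameter in $M_\mu(\sigma_\S(\omega^n))$ is already measured with respect to the same walk representative as $M_\lambda(\omega^n)$; since $\sigma_\S(\omega^n)$ is the rotation of $\omega^n$ by $r$, identifying them as walks changes the parameter by precisely the same parity factor, so this step is not independent of the contravariance step --- the two cannot be combined as two separate constraints without first settling the parity.
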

\begin{proof}
By Proposition \ref{prop:lift nabla}, we only need to show for the $J$-module case.
Moreover, we only need to argue the case $n=1$ as there are exact sequences
\[
0\to M_\lambda(\omega) \to M_\lambda(\omega^{n+1})\to M_\lambda(\omega^n)\to 0
\]
for all $n$ that allows us to iteratively apply the exact equivalence $\nabla$ to get the desired result for the case when $n>1$.

Similar to the construction of $M_\lambda(\omega)$.  Let us consider a module $N_\lambda^j(\omega)$ of the form
\[\xymatrix@1{
u_{0} \ar[r]^{1} & u_{1} \ar@{-}[r]^{1} & \cdots \ar@{-}[r]^{1} & u_{j-1} \ar@{-}[r]^{\lambda} & u_{j}\ar@{-}[r]^1  & \cdots \ar@{-}[r]^{1}& u_{c-1} & u_0 \ar[l]_{1} }
\]
where $\alpha_j$ acts by multiplying $\lambda$.
Note that $M_\lambda(\omega)=N_\lambda^{c}(\omega)$.

We claim that, if both $\alpha_j$ and $\alpha_{j-1}$ points in the same direction (i.e. either both $\omega_j$ and $\omega_{j-1}$ are arrows or both are inverses of arrows), then $N_\lambda^j(\omega)\cong N_\lambda^{j-1}(\omega)$; otherwise, $N_\lambda^j(\omega)\cong N_{\lambda^{-1}}^{j-1}(\omega)$.
Indeed, it is enough to see locally that the commutative diagrams
\[
\vcenter{\xymatrix{
\Bbbk \ar[r]^{1}\ar[d]^{1} & \Bbbk\ar[r]^{\lambda}\ar[d]^{\lambda} & \Bbbk \ar[d]^{1} \\
\Bbbk \ar[r]^{\lambda} & \Bbbk\ar[r]^{1} & \Bbbk \\
}}\quad\text{ and }\quad \vcenter{\xymatrix{
\Bbbk \ar[r]^{1}\ar[d]^{1} & \Bbbk\ar[d]^{\lambda^{-1}} & \Bbbk \ar[l]_{\lambda}\ar[d]^{1} \\
\Bbbk \ar[r]^{\lambda^{-1}} & \Bbbk& \Bbbk \ar[l]_{1} \\
}}
\]
induce the required isomorphism.

Following the same argument in the proof of Lemma \ref{lem:nabla on strings} (for the $J$-module case), we have $M_\lambda(\omega)=N_\lambda^c(\omega)\cong N_{\lambda}^j(\sigma_\S(\omega))$ for some $j\in\{1,2,\ldots, c-1\}$.  Note that $j\neq c$ as $\sigma$ is fixed-point-free on arrows.  By the claim in the previous paragraph, we have $N_{\lambda}^j(\sigma_\S(\omega))\cong M_{\mu}(\sigma_\S(\omega))$ for some $\mu\in\{\lambda,\lambda^{-1}\}$.  This finishes the proof of the first assertion.

To show the second part, it remains to show that $M_{\pm 1}(\omega)$ is self-dual when $\sigma_\S(\omega)=\omega$.  By the argument of translating $\lambda$ to a neighbouring arrow above, we only need to show that $\sigma(\alpha_c)$ and $\alpha_c$ points in opposite direction, which is already shown in Lemma \ref{lem:sigma rotates 1-scc}.
\end{proof}

We are going to determine the exact values of $n$ and $\lambda$ so that $M_\lambda(\omega^n)$ has a structure of an $\epsilon$-representation; hence, a 1-sided indecomposable $\epsilon$-representation.
Note that by Proposition \ref{prop:e-indec}, this implies that any other values of $n,\lambda$, the module $M_\lambda(\omega^n)^{\oplus 2}$ has a structure of a ramified indecomposable $\epsilon$-representation.

From now on until the end of the section, unless otherwise specified, we will always assume the primitive closed $\omega=\sigma(\omega)$ and $\lambda\in\{\pm 1\}$; otherwise, as $M_\lambda(\omega)$ is not self-dual and so it must give rise to split $\epsilon$-indecomposable.
We keep the symbol $c$ for the length of $\omega$, and let $r$ be $c/2$, which is an integer as guaranteed by Lemma \ref{lem:sigma rotates 1-scc}.

We will achieve our goal in two steps: Lemma \ref{lem:1-scc e-indec} and Lemma \ref{lem:1scc higher case}.  The first one will tells us which $M_\lambda(\omega^n)$ is an $\epsilon$-representation and the second one tells us that all other cases cannot have $\epsilon$-representation structure.
Both steps will follow same strategy - we start by taking an $\epsilon$-form $\langle -,-\rangle$ (for arbitrary $\epsilon\in\{\pm 1\}$) and use conditions (2) and (3) of Definition \ref{def:e-rep} to obtain a list of equations on the values $b_{i,l}^{j,k} := \langle v_i^j, v_l^k \rangle$, then we use these equations to determine the value of $\epsilon$ and whether the form is non-degenerate form.
For convenience, we also define $b_{i,l}^{j,k}:=0$ whenever one of $j,k$ is zero.

First, condition (2) of Definition \ref{def:e-rep} says that it is sufficient to consider only $b_{i,r+i}^{j,k}$.
Let us now write out the list of equations that condition (3) requires.
Recall from Lemma \ref{lem:sigma rotates 1-scc} that  $\sigma(\alpha_i)=\alpha_{r+i}$ for all $i\in\{1,2\ldots, c\}$.
If we consider an arrow $\alpha_i=\omega_i$, then $\sigma(\alpha_i)=\omega_{r+i}^{-1}=\alpha_{r+i}$, and we have $\langle v_{i-1}^j\alpha_i, v_{r+i}^k\rangle = -\langle v_{i-1}^j, v_{r+i}^k\alpha_{r+i}\rangle$.  Writing down similar equations for the cases when $\alpha_i\neq\omega_i$, and evaluating both sides of all these equations yields the following set of conditions indexed over $i\in\{1,2\ldots, c\}$ and $j,k\in\{1,2,\ldots, n\}$:
\begin{align}
\langle v_{i}^j, v_{r+i}^k\rangle & = -\langle v_{i-1}^j, v_{r+i-1}^k\rangle\quad \forall i\in\{1,2,\ldots, c-1\}\setminus\{r\} \tag{Eq:$i,j,k$} \\
\langle v_{r}^j, v_{0}^k\rangle & = -\langle v_{r-1}^j, v_{c-1}^{k-1}\rangle - \lambda\langle v_{r-1}^j, v_{c-1}^{k}\rangle \tag{Eq:$r,j,k$}\\
\langle v_{0}^j, v_{r}^k\rangle & = -\langle v_{c-1}^{j-1}, v_{r-1}^{k}\rangle - \lambda\langle v_{c-1}^j, v_{r-1}^{k}\rangle \tag{Eq:$c,j,k$}
\end{align}

For each $j,k$, going through (Eq:$i,j,k$) from $i=1$ to $i=c-1$ says that $(b_{i,r+i}^{j,k})_{i\in \Z/c\Z}$ is dependent only on $b^{j,k}:=b_{0,r}^{j,k}$, namely, 
\begin{align}
b_{i,r+i}^{j,k} = \begin{cases}
(-1)^{i}b^{j,k}, & \text{if $0\leq i< r$;}\\
(-1)^{i}( b^{j,k}\lambda + b^{j,k-1} ), & \text{if $r\leq i< c$.}\\
\end{cases} \label{eq:only b_1}
\end{align}
Putting this into (Eq:$c,j,k$) and rearranging the equation (also using $\lambda^2=1$ and $c\in 2\Z$) yields
\begin{align}
b^{j-1,k-1} &= \lambda(b^{j-1,k}+b^{j,k-1}). \label{eq:jk} \tag{Eq:$j,k$}
\end{align}
Note that this is a null condition in the case when $j=k=1$, as (Eq:$c,1,1$) says only $b_{0,r}^{1,1}=-\lambda b_{c-1,r-1}^{1,1}$, which is already guaranteed by \eqref{eq:only b_1}.

Summarising what we have so far:
\begin{lemma}\label{lem:condition 3 renewed}
$M_\lambda(\omega^n)$ is a 1-sided indecomposable $\epsilon$-representation if and only if there is a non-degenerate $\epsilon$-form $\langle-,-\rangle$ such that  $b_{i,l}^{j,k}:=\langle v_i^j, v_l^k\rangle=0$ for all $l\neq r+i$, and the equations \eqref{eq:only b_1} and \eqref{eq:jk} hold for all $i,j,k$.
\end{lemma}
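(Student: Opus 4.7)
The plan is to prove the lemma as a straightforward repackaging of the work already done above: essentially, conditions (1), (2), (3) of Definition \ref{def:e-rep} translate, in the presence of the specific module structure \eqref{eq:band mod struc} and the involution description from Lemma \ref{lem:sigma rotates 1-scc}, into exactly the conditions listed in the statement. I would argue both directions, but the bulk of the content is already contained in the derivations preceding the lemma, so the proof will really amount to bookkeeping.

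For the forward direction, assume $(M_\lambda(\omega^n),\langle-,-\rangle)$ is a 1-sided indecomposable $\epsilon$-representation. Non-degeneracy is condition (1). Using Lemma \ref{lem:sigma rotates 1-scc} to identify the involution on vertices as $i \mapsto r+i$, condition (2) forces $b_{i,l}^{j,k} = 0$ whenever $l \neq r+i$. To extract the remaining relations, I apply condition (3) arrow-by-arrow along the band quiver \eqref{eq:band quiver}. The arrows $\alpha_i$ with $i \notin \{r,c\}$ yield the telescoping family (Eq:$i,j,k$), whose solution forces every $b_{i,r+i}^{j,k}$ to be determined by $b^{j,k} := b_{0,r}^{j,k}$ via formula \eqref{eq:only b_1}. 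The two special arrows $\alpha_r$ and $\alpha_c$ (whose paired arrow involves the Jordan block action) give (Eq:$r,j,k$) and (Eq:$c,j,k$); substituting \eqref{eq:only b_1} into (Eq:$c,j,k$) and using $\lambda^2=1$ together with $c$ being even produces \eqref{eq:jk}, as already carried out in the text, and the same substitution into (Eq:$r,j,k$) is automatically consistent.

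For the backward direction, suppose we have a non-degenerate $\epsilon$-form whose support lies in the claimed bidegrees and which satisfies \eqref{eq:only b_1} and \eqref{eq:jk}. Conditions (1) and (2) of Definition \ref{def:e-rep} are given by hypothesis; condition (3) is verified by running the reduction above in reverse, rebuilding each of (Eq:$i,j,k$), (Eq:$r,j,k$), (Eq:$c,j,k$) from \eqref{eq:only b_1} and \eqref{eq:jk}. Since $M_\lambda(\omega^n)$ is an indecomposable $\Lambda$-module, Proposition \ref{prop:e-indec} then forces the resulting $\epsilon$-representation to be of 1-sided type (the split and ramified types would have underlying module of the form $L \oplus \nabla L$, which $M_\lambda(\omega^n)$ is not). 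I do not anticipate any genuine obstacle: the whole derivation has already been set up, and the only subtlety to watch is making sure that the reduction of (Eq:$c,j,k$) to \eqref{eq:jk} and the redundancy of (Eq:$r,j,k$) are both consequences of \eqref{eq:only b_1}, so that no further constraints have been silently dropped.
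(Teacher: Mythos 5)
Your argument reproduces the paper's own reasoning, which appears in the discussion immediately preceding the lemma (the lemma itself is stated as a ``Summarising what we have so far'' with no separate proof block): translate Definition~\ref{def:e-rep}(1)--(3) via Lemma~\ref{lem:sigma rotates 1-scc} into the vanishing of $b_{i,l}^{j,k}$ for $l\ne r+i$ together with the families (Eq:$i,j,k$), (Eq:$r,j,k$), (Eq:$c,j,k$), consolidate these into \eqref{eq:only b_1} and \eqref{eq:jk}, and for the backward direction invoke Proposition~\ref{prop:e-indec} together with indecomposability of $M_\lambda(\omega^n)$ as a $J$-module to pin down the 1-sided type. So the approach is the same.

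One small bookkeeping imprecision: you attribute \eqref{eq:only b_1} to the telescoping of (Eq:$i,j,k$) alone (for $i\notin\{r,c\}$) and then claim (Eq:$r,j,k$) is ``automatically consistent.'' In fact (Eq:$i,j,k$) alone leaves the two strands $0\le i<r$ and $r\le i<c$ disconnected — they give $b_{i,r+i}^{j,k}=(-1)^ib^{j,k}$ for $i<r$ and $b_{i,r+i}^{j,k}=(-1)^{i-r}b_{r,0}^{j,k}$ for $i\ge r$, but nothing links $b_{r,0}^{j,k}$ to $b^{j,k}$. It is precisely (Eq:$r,j,k$) that supplies that link and produces the second branch $(-1)^i(b^{j,k}\lambda+b^{j,k-1})$; so (Eq:$r,j,k$) is an input to \eqref{eq:only b_1}, not a consequence of it. This does not affect the validity of either direction of your argument, since the lemma already takes \eqref{eq:only b_1} in full as a hypothesis and you correctly verify that all of (Eq:$i,j,k$), (Eq:$r,j,k$), (Eq:$c,j,k$) are recoverable from \eqref{eq:only b_1} and \eqref{eq:jk}, but the attribution should be corrected when writing out the proof.
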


\begin{lemma}\label{lem:1-scc e-indec}
The following hold.
\begin{enumerate}[(a)]
\item The indecomposable module $M_{\lambda}(\omega)$ is an $\epsilon$-representation if and only if $\epsilon=(-1)^r\lambda$.
\item The indecomposable module $M_{\lambda}(\omega^2)$ is an $\epsilon$-representation if and only if $\epsilon=(-1)^{r+1}\lambda$.
\end{enumerate}
\end{lemma}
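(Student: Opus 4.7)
The plan is to apply the criterion of Lemma~\ref{lem:condition 3 renewed}: we seek a non-degenerate $\epsilon$-form specified by the scalars $b^{j,k} := b_{0,r}^{j,k}$ (with all other Gram entries determined by~\eqref{eq:only b_1}), subject to~\eqref{eq:jk} and the $\epsilon$-symmetry condition $b_{l,i}^{k,j} = \epsilon\, b_{i,l}^{j,k}$. Observe first that, because the form is nonzero only between $M_i$ and $M_{r+i}$, the Gram matrix is block-anti-diagonal over the $r$ pairs $\{i, r+i\}$ with $0 \le i < r$, so non-degeneracy reduces to non-singularity of each of these $n \times n$ blocks.

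For part~(a), with $n=1$ there is a single parameter $b := b^{1,1}$, and~\eqref{eq:jk} is vacuous. Since $r+r = c \equiv 0 \pmod c$, evaluating~\eqref{eq:only b_1} at $i = r$ gives $b_{r,0}^{1,1} = (-1)^r \lambda b$; hence $\epsilon$-symmetry reads $b = \epsilon\,(-1)^r \lambda\, b$. Non-degeneracy forces $b \ne 0$, so we obtain $\epsilon = (-1)^r \lambda$ (using $\lambda^2 = 1$); conversely, for this value of $\epsilon$ any $b \ne 0$ yields a valid $\epsilon$-form.

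For part~(b), with $n=2$ the parameters are $b^{j,k}$ for $j,k \in \{1,2\}$. Applying~\eqref{eq:jk} at $(j,k) = (2,1)$ (equivalently $(1,2)$) forces $b^{1,1} = 0$, and then at $(j,k) = (2,2)$ yields $b^{1,2} + b^{2,1} = 0$; the remaining independent parameters are $b^{1,2} = -b^{2,1}$ and $b^{2,2}$. A direct computation of the $2 \times 2$ Gram block on $M_i \oplus M_{r+i}$ via~\eqref{eq:only b_1} shows its determinant equals $(b^{1,2})^2$ up to sign, so non-degeneracy is equivalent to $b^{1,2} \ne 0$. Unfolding $\epsilon$-symmetry at $b_{0,r}^{1,2} = \epsilon\, b_{r,0}^{2,1}$ using~\eqref{eq:only b_1} yields $b^{1,2} = -\epsilon (-1)^r \lambda\, b^{1,2}$, forcing $\epsilon = (-1)^{r+1}\lambda$. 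With this choice, $\epsilon$-symmetry at $(j,k) = (2,2)$ uniquely determines $b^{2,2}$ in terms of $b^{1,2}$, and one then verifies the remaining symmetry relations hold automatically.

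The main work is careful bookkeeping around the index wrap at $i = r$ together with the zero-padding convention $b^{j,0} = b^{0,k} = 0$. The key observation is that~\eqref{eq:jk} at $(j,k) = (2,1)$ already forces $b^{1,1} = 0$ in case~(b), which is precisely what produces the sign flip of $\epsilon$ between the two parts of the statement.
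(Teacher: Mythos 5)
Your proof is correct and follows the paper's strategy — reduce to the scalars $b^{j,k}$ via Lemma~\ref{lem:condition 3 renewed}, then extract $\epsilon$ from the $\epsilon$-symmetry relation. Where the paper handles sufficiency by exhibiting an explicit form and stating that verification is "straightforward," you organize both necessity and sufficiency around the wrap-around comparison $b_{0,r}^{j,k}$ versus $b_{r,0}^{k,j}$ and the block-determinant observation; this is a cleaner packaging of the same computation, and in particular your note that non-degeneracy for $n=2$ reduces to $b^{1,2}\neq 0$ via the determinant $(b^{1,2})^2$ makes explicit what the paper leaves implicit. Your final assertion that "the remaining symmetry relations hold automatically" is also correct, since the $i$-dependence in $b_{i,r+i}^{j,k}$ versus $b_{r+i,i}^{k,j}$ cancels, so checking $\epsilon$-symmetry at $i=0$ suffices.
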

\begin{proof}
(a) We define a bilinear form so that its values on the basis is given by
\[
\langle v_i, v_j \rangle := \begin{cases}
(-1)^{i}, & \text{if $i\in 0\leq i<r$ and $j=r+i$;}\\
(-1)^{i}\lambda, & \text{if $r\leq i< c$ and $j=r+i$;}\\
0,  &\text{otherwise.}
\end{cases}
\]
It is straightforward to check that this is a non-degenerate $\epsilon$-form for $\epsilon=(-1)^r\lambda$.
Every bilinear form satisfying \eqref{eq:omega cross T} will be a scalar multiple of the one here (which has $b^{1,1}=1$).  The assertion follows.

(b) We first show the only-if direction.  
By \eqref{eq:only b_1}, we have $b_{r+i,i}^{2,1}=(-1)^{r+i}\lambda b^{2,1}$ and $b_{i,r+i}^{1,2}=(-1)^{i}b^{1,2}$ for all $0\leq i< r$.
If $j\neq k$, then \eqref{eq:jk} says that $\lambda b^{1,1}=0$.
Now substituting $b^{1,1}=0$ into \eqref{eq:jk} for the case when $j=k=2$, we get that $b^{2,1}=-b^{1,2}$.
Hence, we have $b_{r+i,i}^{2,1}=(-1)^{r+i+1}\lambda b^{1,2}$ for $0\leq i< r$.  

By definition, $b_{i,l}^{j,k}$ defines an $\epsilon$-form if and only if $b_{i,r+i}^{1,2}=\epsilon b_{r+i,i}^{2,1}$.  Substituting the equations from the previous paragraph yields
\[
(-1)^{i}b^{1,2} = b_{i,r+i}^{1,2} = \epsilon b_{r+i,i}^{2,1} = \epsilon (-1)^{r+i+1}\lambda b^{1,2},
\]
i.e. $\epsilon=(-1)^{r+1}\lambda$ as claimed.

For the converse, define a bilinear form given by
\[
\langle v_i^j, v_l^k \rangle = b_{i,l}^{j,k} :=\begin{cases}
(-1)^{i+j}, & \text{if $j\neq k$ and $1\leq i\leq r$ and $l=r+i$;}\\
(-1)^{i+j}\lambda, & \text{if $j\neq k$ and $r< i\leq c$ and $l=r+i$;}\\
(-1)^{i+1}\lambda/2, & \text{if $j=k=2$ and $1\leq i\leq r$ and $l=r+i$;}\\
(-1)^{i+1}/2, & \text{if $j=k=2$ and $r< i\leq c$ and $l=r+i$;}\\
0,  &\text{otherwise.}
\end{cases}
\]
Note that here we have $b^{1,1}=0$, $b^{1,2}=1$, $b^{2,1}=-1$, and $b^{2,2}=\lambda/2$.

Again, it is straightforward to check that \eqref{eq:only b_1} holds.
As argued before for the only-if direction, \eqref{eq:jk} yields $b^{1,1}=0$ and $b^{2,1}=-b^{1,2}$ - both of which are satisfied in our case.

Finally, as $\epsilon = (-1)^{r+1}\lambda$, we have $ b_{i,r+i}^{2,2} = (-1)^{r+1}\lambda b_{r+i,i}^{2,2}$ for all $0\leq i< r$.  Apply \eqref{eq:only b_1} to both sides yields
\[
(-1)^{i}b^{2,2} = b_{i,r+i}^{2,2} = (-1)^{r+1}\lambda b_{r+i,i}^{2,2} = (-1)^{r+1+r+i}\lambda(\lambda b^{2,2} + b^{2,1}).
\]
Since $\lambda^2=1$ and $b^{2,1}=-b^{1,2}$, this equation rearranges to $b^{2,2} = b^{1,2}\lambda/2$.  Hence, our bilinear form defines an $\epsilon$-representation on $M_\lambda(\omega^2)$.
\end{proof}

Now, we look at higher $n$.

\begin{lemma}\label{lem:bjk up to antidiag}
The equations \eqref{eq:eps j,k} forces $b^{j,k}=0$ for all $j+k\leq n$, and $b^{j,n+1-j}=(-1)^{j+1}b^{1,n}$ for all $1\leq j\leq n$.
\end{lemma}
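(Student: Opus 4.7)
The plan is to exploit the fact that each instance of \eqref{eq:jk}, after the substitution $k\mapsto k+1$, relates two entries on antidiagonal $j+k=s$ to a single entry on antidiagonal $s-1$. This lets a one-variable induction on $s$ reduce each two-term recursion to a one-term antidiagonal recurrence.

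First I would harvest the boundary data. Substituting $(1,k)$ into \eqref{eq:jk} for $2\leq k\leq n$ and using the convention $b^{0,\cdot}=b^{\cdot,0}=0$ yields $0=\lambda\,b^{1,k-1}$, so $b^{1,\ell}=0$ for all $1\leq\ell\leq n-1$; the symmetric substitution $(k,1)$ gives $b^{\ell,1}=0$ on the same range.

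Next I would induct on $s$ to prove $b^{j,k}=0$ for every $(j,k)$ with $j,k\geq 1$ and $j+k=s\leq n$. Substituting $(j,k+1)$ into \eqref{eq:jk} (valid because $k\leq s-1\leq n-1$) gives
\[
b^{j-1,k}\;=\;\lambda\bigl(b^{j-1,k+1}+b^{j,k}\bigr).
\]
For $j\geq 2$ the left-hand side lies on antidiagonal $s-1$ and vanishes by the inductive hypothesis, so $b^{j,k}=-b^{j-1,k+1}$. Iterating this relation along antidiagonal $s$ collapses it into $b^{j,s-j}=(-1)^{j-1}b^{1,s-1}$, and the boundary step forces $b^{1,s-1}=0$ since $s-1\leq n-1$. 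Hence the whole antidiagonal $s$ vanishes.

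Finally, for the antidiagonal formula at $s=n+1$, I would run the same manipulation one more time. Writing $a_j:=b^{j,n+1-j}$ for $1\leq j\leq n$ and substituting $(j,k+1)$ into \eqref{eq:jk} with $j+k=n+1$ and $j\geq 2$, the left-hand side $b^{j-1,k}$ now sits on antidiagonal $n$ and vanishes by the preceding case. This yields $a_j=-a_{j-1}$ for $2\leq j\leq n$, and a trivial recursion produces $a_j=(-1)^{j-1}a_1=(-1)^{j+1}b^{1,n}$. The whole argument is mechanical bookkeeping; I do not anticipate any genuine obstacle beyond spotting that the correct substitution in \eqref{eq:jk} is $k\mapsto k+1$ rather than $j\mapsto j+1$, which is the asymmetry that makes the inductive hypothesis on antidiagonal $s-1$ reduce a two-term relation to a one-term one.
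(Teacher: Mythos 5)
Your argument is correct and is essentially the paper's own proof, just reorganized as an explicit induction on the antidiagonal index $s$ rather than the paper's iteration over $\ell=j+k$; the two framings are the same mechanism (use the recurrence to push vanishing from antidiagonal $s-1$ to $s$, anchored by the boundary rows $b^{1,\cdot}=b^{\cdot,1}=0$). You also correctly read through the paper's mislabel in the lemma statement (it cites \eqref{eq:eps j,k} but the relevant recurrence is \eqref{eq:jk}, which the paper's proof itself uses).
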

\begin{proof}
For $j=1$, the equation \eqref{eq:jk} says that $b^{1,k-1}=0$ for all $1\leq k\leq n$; likewise, taking $k=1$ in equation \eqref{eq:jk} yields $b^{j-1,k}=0$ for all $1\leq j\leq n$.
Now we go through the equations \eqref{eq:jk} starting from the $j+k=4$ (the equation for $j+k=2$ is null and the equations for $j+k=3$ are included in the previous sentence) to $j+k=n+1$.
For each fixed $\ell:=j+k\in \{4,\ldots,n+1\}$, if we iterate the equations \eqref{eq:jk} from $j=1$ to $j=\ell-1$, then in each iteration, we obtain $b^{j,k-1} = 0$.  Hence we have the first part of the assertion.  In a similar way, taking $j+k=n+2$ in \eqref{eq:jk} yields $b^{j,n+1-j} = -b^{j-1,n+2-j}$, and the second part of the assertion follows.
\end{proof}

\begin{lemma}\label{lem:1scc higher case}
If $M_\lambda(\omega^n)$ is an $\epsilon$-representation, then the following hold.
\begin{enumerate}[(a)]
\item $b^{1,n}\neq 0$.
\item $\epsilon=\lambda (-1)^{n+r+1}$.
\item $n\leq 2$.
\end{enumerate}
In particular, $M_\lambda(\omega^n)\oplus \nabla(M_\lambda(\omega^n))$ is a ramified $\epsilon$-indecomposable for all $n\geq 3$ and $\lambda\in \{\pm 1\}$.
\end{lemma}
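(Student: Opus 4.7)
The plan is to translate the defining conditions of an $\epsilon$-representation structure on $M_\lambda(\omega^n)$ into linear equations on the scalars $b^{j,k} = \langle v_0^j, v_r^k \rangle$, and to derive a contradiction for $n \ge 3$ by combining these with the recurrence \eqref{eq:jk} and the antidiagonal structure of Lemma \ref{lem:bjk up to antidiag}. The crucial extra ingredient not yet used is the $\epsilon$-symmetry of $\langle -, -\rangle$; combining $b_{i, r+i}^{j, k} = \epsilon \, b_{r+i, i}^{k, j}$ with \eqref{eq:only b_1} yields, for all $j, k$, the symmetry equation
\[
\lambda b^{j,k} + b^{j, k-1} = \epsilon (-1)^r b^{k, j}. \qquad (\mathrm{S})
\]

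For part (a), non-degeneracy of $\langle -, -\rangle$ restricted to $V_0 \times V_r$ is equivalent to invertibility of the $n \times n$ matrix $B := (b^{j, k})_{j,k}$. By Lemma \ref{lem:bjk up to antidiag}, $B$ vanishes strictly below the antidiagonal $j + k = n+1$ and has entries $(-1)^{j+1} b^{1, n}$ on that antidiagonal, so $\det B = \pm (b^{1, n})^{n}$; hence $b^{1, n} \ne 0$. For part (b), applying $(\mathrm{S})$ at each $(j, k) = (j, n+1-j)$ kills the middle term $b^{j, n-j}$ (which lies on antidiagonal $n$, hence is zero by Lemma \ref{lem:bjk up to antidiag}), while the two non-vanishing terms are multiples of $b^{1, n}$ with signs $(-1)^{j+1}$ and $(-1)^{n-j}$ respectively. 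Solving the resulting identity for $\epsilon$ (valid since $b^{1,n} \ne 0$) yields $\epsilon = \lambda (-1)^{r + n + 1}$.

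The heart of the argument is part (c). I would first apply $(\mathrm{S})$ at both $(j, k)$ and $(k, j)$ and, using $\epsilon (-1)^r = \lambda (-1)^{n+1}$ from part (b), eliminate $b^{j, k}$ and $b^{k, j}$ between them to obtain the ``twisted transposition'' identity
\[
b^{k, j-1} = (-1)^n b^{j, k-1} \quad \text{for all } j, k \in \{1, \dots, n\}. \qquad (\mathrm{T})
\]
For $n \ge 3$, I would combine $(\mathrm{T})$ with $(\mathrm{S})$ and the recurrence \eqref{eq:jk} applied at ``corner'' indices around $(j,k) = (n, n)$ to force $b^{1, n} = 0$, contradicting part (a). The template is a direct generalisation of the computation in the proof of Lemma \ref{lem:1-scc e-indec}(b): $(\mathrm{S})$ at $(n, n)$ together with \eqref{eq:jk} at $(n, n)$ pins down $b^{n-1, n}, b^{n, n-1}, b^{n-1, n-1}$ as explicit scalar multiples of $b^{n, n}$; subsequent instances of $(\mathrm{S})$ at $(n-1, n), (n, n-1), (n-1, n-1), \ldots$, combined with further instances of \eqref{eq:jk}, propagate cancellations diagonally inwards until the resulting zero values clash with the Lemma-prescribed values $(-1)^{j+1} b^{1, n}$ on the antidiagonal $j + k = n + 1$.

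The ``in particular'' statement then follows from Proposition \ref{prop:e-indec}: by Lemma \ref{lem:cc dual}, $M_\lambda(\omega^n) \cong \nabla M_\lambda(\omega^n)$ when $\sigma_\S(\omega) = \omega$ and $\lambda \in \{\pm 1\}$, so by the trichotomy $M_\lambda(\omega^n)$ yields either a 1-sided or a ramified $\epsilon$-indecomposable, and part (c) rules out the former. The main obstacle I anticipate is giving a transparent uniform argument for (c) valid for all $n \ge 3$, since the parity of $n$ affects the sign of $\epsilon (-1)^r$ and hence the precise shape of the corner equations. A cleaner reformulation might be to substitute $c^{j, k} := \lambda^{j+k} b^{j, k}$, which turns \eqref{eq:jk} into the reverse-Pascal identity $c^{j, k} = c^{j, k+1} + c^{j+1, k}$; the recurrence then propagates the antidiagonal values $c^{j, n+1-j} = \lambda^{n+1}(-1)^{j+1} b^{1, n}$ outwards into the region $j + k \ge n + 2$, and one shows that the further constraints imposed by $(\mathrm{T})$ are incompatible with $b^{1, n} \ne 0$.
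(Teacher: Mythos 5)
Your treatments of (a) and (b) are correct and essentially coincide with the paper's argument. For (a), you compute $\det B=\pm(b^{1,n})^n$ from the anti-triangularity coming from Lemma~\ref{lem:bjk up to antidiag}; the paper phrases the same fact as the degeneracy of the pairing on the row spanned by $\{v_i^{(1)}\}$ when $b^{1,n}=0$. For (b), both approaches evaluate the $\epsilon$-symmetry constraint along the antidiagonal $j+k=n+1$ where the extra term drops out and both surviving entries are $\pm b^{1,n}$.

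The genuine gap is in part (c), which in the proposal remains an outline rather than a proof. You correctly derive the ``twisted transposition'' identity (T), i.e.\ $b^{k,j-1}=(-1)^n b^{j,k-1}$, by eliminating $b^{j,k}$ and $b^{k,j}$ between the $\epsilon$-symmetry relations at $(j,k)$ and $(k,j)$. But you then only gesture at ``propagating cancellations diagonally inwards'' around $(n,n)$, or alternatively at the substitution $c^{j,k}:=\lambda^{j+k}b^{j,k}$, and you explicitly flag an unresolved ``obstacle'' (the parity dependence of $\epsilon(-1)^r$). Neither route is carried to a contradiction, so part (c) is not proved. The paper's argument is, by contrast, a short and finite computation: it rearranges \eqref{eq:eps j,k} after substituting the value of $\epsilon$ from (b), evaluates the resulting relation at just the two index pairs $(j,k)=(2,n)$ and $(j,k)=(n,2)$ (distinct precisely because $n>2$), eliminates $b^{2,n}$ and $b^{n,2}$ by scaling and subtracting, and then plugs the antidiagonal values from Lemma~\ref{lem:bjk up to antidiag} into the surviving relation between $b^{n,1}$ and $b^{2,n-1}$ to force $2b^{1,n}=0$. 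No inward propagation is needed; a single well-chosen pair of equations suffices. Your (T) is precisely the output of such an elimination, so the missing step is simply to evaluate it (or the two symmetry equations directly) at specific indices and combine with the antidiagonal values from Lemma~\ref{lem:bjk up to antidiag}, being careful to track the overall signs — that bookkeeping, not a conceptual reformulation, is what closes part (c). The ``in particular'' clause you handle correctly.
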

\begin{proof}
(a) Suppose the contrary.  Then combining Lemma \ref{lem:bjk up to antidiag} yields $b^{1,k}=0$ for all $k\in\{1,\ldots, n\}$.  By \eqref{eq:only b_1}, we then have $\langle v_{i}^{(1)}, -\rangle =0$ for all $i$.  This contradicts the non-degeneracy of $\langle-,-\rangle$.

(b) Since $\langle-,-\rangle$ is an $\epsilon$-form, we have $b_{i,r+i}^{j,k}=\epsilon b_{r+i,i}^{k,j}$.  By \eqref{eq:only b_1}, the case $0\leq i< r$ yields
\begin{align}
(-1)^{i}b^{j,k} &= \epsilon (-1)^{r+i}(\lambda b^{k,j}+b^{k,j-1}), \notag\\
\text{hence, }\quad b^{j,k} &= \epsilon (-1)^r(\lambda b^{k,j}+b^{k,j-1}).\label{eq:eps j,k}
\end{align}
Note that if $j+k=n+1$, then $b^{j,k-1}$ vanishes, so by Lemma \ref{lem:bjk up to antidiag} the equation \eqref{eq:eps j,k} in these cases become $(-1)^{j+1}b^{1,n} = \epsilon(-1)^r (\lambda (-1)^{n-j}b^{1,n})$, rearranging yields
\begin{align*}
b^{1,n} &= \epsilon \lambda (-1)^{n+r+1}b^{1,n}.
\end{align*}
Hence, by (a) and the assumption that $\epsilon,\lambda\in\{\pm 1\}$ we get $\epsilon=\lambda(-1)^{n+r+1}$ as required.

(c) By rearranging equation \eqref{eq:eps j,k} after substituting $\epsilon=\lambda(-1)^{n+r+1}$ from (b), we obtain 
\[
(-1)^{n+1}b^{j,k}+b^{k,j}+\lambda b^{k,j-1}=0.\]
Now consider the equation with $(j,k)=(2,n)$ and with $(j,k)=(n,2)$.\footnote{
Note that when $n=2$ the two equations are the same, which is why the remaining of the proof does not work in this case.}

When $n>2$, we can multiply the latter equation by $(-1)^{n+1}$ and subtract it from the first equation, which yields
\[
(-1)^nb^{2,n-1}+b^{n,1}=0.\]
On the other hand, by Lemma \ref{lem:bjk up to antidiag}, we can rewrite $b^{2,n-1}=-b^{1,n}$ and $b^{n,1}=(-1)^{n+1}b^{1,n}$ in terms of $b^{1,n}$, which means that we have $2(-1)^{n+1}b^{1,n}=0$.  Hence, we deduce that $b^{1,n}=0$ - a contradiction.
\end{proof}

Let us summarise our investigation so far. Recall (from the discussion after Lemma \ref{lem:nabla on strings}) that the length $\mathrm{len}(\omega)$ of a closed curve $\omega$ is the number of intersections of $\omega$ with the initial triangulation.

\begin{theorem}\label{thm:indec e-reps}
Let $(J=J_{Q,W}, \sigma)$ be a FF-symmetric gentle Jacobian algebra.
If $M$ is an indecomposable $\epsilon$-representation over $(J,\sigma)$, then exactly one of the following hold.
\begin{itemize}
\item $M$ is a split $\epsilon$-indecomposable with underlying $J$-module being either one of the following:
\begin{itemize}
\item[(SS)] $M(\gamma)\oplus M(\sigma(\gamma))$ for some curve $\gamma$ with endpoints.

\item[(SB)] $M_\lambda(\delta)$ for some (not necessarily primitive) closed curve $\delta$ with $\sigma(\omega)\neq\omega$ or $\lambda \neq\pm 1$.
\end{itemize}

\item $M$ is a 1-sided $\epsilon$-indecomposable with underlying $J$-module being $M_\lambda(\omega^n)$ with $\omega=\sigma(\omega)$ a primitive closed curve, $\lambda\in \{\pm 1\}$, $n\leq 2$, and $\epsilon=(-1)^{\mathrm{len}(\omega)/2+n-1}\lambda$.

\item $M$ is a ramified $\epsilon$-indecomposable.
\end{itemize}
\end{theorem}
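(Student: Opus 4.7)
The strategy is to assemble the preceding lemmas into the classification through Proposition \ref{prop:e-indec}, which partitions indecomposable $\epsilon$-representations into three mutually exclusive types based on the underlying module $L$: split ($L \oplus \nabla L$ with $L \ncong \nabla L$), ramified ($L \oplus \nabla L$ with $L \cong \nabla L$), or 1-sided (a self-dual $L$ equipped with a non-degenerate $\epsilon$-form). Since $J$ is gentle by Proposition \ref{prop:FFS-QP corresp}, every indecomposable $J$-module is either a string module $M(\gamma)$ for some curve $\gamma \in \Cnc\wti{\SM}$, or a band module $M_\lambda(\omega^n)$ for some primitive closed curve $\omega$, integer $n \geq 1$, and $\lambda \in \Bbbk^\times$. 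The work is then to decide, for each such indecomposable, in which of the three boxes it lives.

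For the non-self-dual cases, Lemma \ref{lem:nabla on strings} gives $\nabla M(\gamma) \cong M(\sigma_\S(\gamma))$, and Lemma \ref{lem:cc dual} gives $\nabla M_\lambda(\omega^n) \cong M_\mu(\sigma_\S(\omega^n))$ for some $\mu \in \{\lambda, \lambda^{-1}\}$; moreover the latter module is self-dual iff $\sigma_\S(\omega) = \omega$ and $\lambda \in \{\pm 1\}$. In the non-self-dual subcase, Proposition \ref{prop:e-indec} produces a split $\epsilon$-indecomposable whose underlying module is $L \oplus \nabla L$, matching either (SS) (for strings with $\sigma_\S(\gamma) \not\simeq \gamma$) or (SB) (for bands with $\sigma_\S(\omega) \neq \omega$ or $\lambda \neq \pm 1$).

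For the self-dual bands, Lemmas \ref{lem:1-scc e-indec} and \ref{lem:1scc higher case} combine to determine exactly which $M_\lambda(\omega^n)$ carry a non-degenerate $\epsilon$-form: precisely those with $n \leq 2$ and $\epsilon = (-1)^{r+n-1}\lambda$, where $r = \mathrm{len}(\omega)/2$. These contribute all the 1-sided $\epsilon$-indecomposables in the statement. Self-dual bands with $n \geq 3$, together with any self-dual strings (after verifying they also admit no non-degenerate $\epsilon$-form), then fall into the ramified class of Proposition \ref{prop:e-indec}. With these assignments, the three bullets of the theorem are matched one-for-one with the three types of Proposition \ref{prop:e-indec}.

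The main obstacle is already encapsulated by Lemma \ref{lem:1scc higher case}, whose careful analysis of the bilinear-form equations \eqref{eq:only b_1} and \eqref{eq:eps j,k} is what forces both the sharp bound $n \leq 2$ and the explicit parity of $\epsilon$. The remaining subtlety is to rule out 1-sided $\epsilon$-representations with string underlying modules; this requires a short analogue of the band computation, showing that the linear system imposed by Definition \ref{def:e-rep}(3) on the candidate pairings $\langle x_i, x_{c-i} \rangle$ along a symmetric string has no non-degenerate solution. Once this piece is in hand, the classification is read off directly.
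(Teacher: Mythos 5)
Your proposal follows the paper's proof in essentially the same way: reduce via Proposition \ref{prop:e-indec} to the trichotomy (split, ramified, 1-sided) on the underlying indecomposable $J$-module, then use Lemmas \ref{lem:nabla on strings} and \ref{lem:cc dual} to identify the non-self-dual string and band modules (yielding the split case with underlying module (SS) or (SB)), and use Lemmas \ref{lem:1-scc e-indec} and \ref{lem:1scc higher case} to decide which self-dual band modules carry a non-degenerate $\epsilon$-form (yielding the 1-sided case), with all remaining self-dual indecomposables falling into the ramified bucket.

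The one place you go slightly off-script is the closing remark, where you flag the need to rule out 1-sided $\epsilon$-representations with a string underlying module and propose an $\epsilon$-form computation on a ``symmetric string.'' The paper handles this differently, and more directly: Lemma \ref{lem:nabla on strings} already asserts that $M(\gamma)\oplus M(\sigma_\S(\gamma))$ is a \emph{split} $\epsilon$-indecomposable for every non-closed $\gamma$, i.e.\ that no string module is ever self-dual. This is automatic from fixed-point-freeness: if $\gamma$ were a self-dual string (a finite walk $w_1\cdots w_\ell$ with $\nabla(\gamma)\simeq\gamma$), then matching the vertex sequences of $\nabla(w)$ and $w^{-1}$ forces $\sigma(v_i)=v_{\ell-i}$; for $\ell$ even the central vertex $v_{\ell/2}$ would be $\sigma$-fixed, for $\ell$ odd the central letter would be $\sigma$-fixed, and for $\ell=0$ the single vertex would be $\sigma$-fixed — all contradicting the FF hypothesis. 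So the verification you propose is vacuous rather than incorrect: if you tried to set up the linear system you would discover there is no symmetric string to compute over. With that observation in place of your proposed computation, your argument coincides with the paper's.
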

\begin{proof}
Proposition \ref{prop:e-indec} that $M$ is a split $\epsilon$-indecomposable if and only if $M=N\oplus \nabla N$ for some indecomposable non-self-dual $J$-module $N\ncong \nabla N$.
Thus, the first case follows from Lemma \ref{lem:nabla on strings} and Lemma \ref{lem:cc dual}, as they combine to say that non-self-dual indecomposable $J$-modules are precisely those that satisfy the condition (SS) or (SB).  

On the other hand, Proposition \ref{prop:e-indec} says that 1-sided $\epsilon$-indecomposable are given by self-dual indecomposable $J$-module that can be equipped with an $\epsilon$-representation structure.  By Lemma \ref{lem:1scc higher case}, such an indecomposable $J$-module must be of the form given in Lemma \ref{lem:1-scc e-indec}.  Now the rest of the claim follows.
\end{proof}

\section{Categorifying quasi-triangulations}\label{sec:categorify}

In this section, we lift our work from Section \ref{sec:symm repn} to the cluster category.  Recall that when $\SM$ is orientable with triangulation $T$, triangulations are categorified by \emph{cluster-tilting objects} in the cluster category $\C_{T}$.  We will formulate a symmetric representation theoretic analogue of cluster-tilting to categorify quasi-triangulations for the case when $\SM$ is non-orientable.

We will use the same notations as in the previous section, that is:
\begin{itemize}
\item  $\SM$ is a non-orientable unpunctured marked surface with $T$ a triangulation on it.

\item  $\wti{\SM}$ is the double cover of $\SM$ with Deck transformation group $\langle \sigma_\S \rangle$ and $\wti{T}$ be the induced double cover of $T$.

\item $(Q,W,\sigma)$ is the symmetric QP associated to $(\wti{T},\sigma_\S)$, $\C:=\C_{(Q,W)}$ is the associated cluster category, and $M(\cdot):\C\to \mod J$ be the projection on the module category of $J=J_{Q,W}\cong \End_\C(\Gamma)$, where $\Gamma$ is the cluster-tilting object of $\C$ given by $\wti{T}[-1]:=\bigoplus_{\gamma\in \wti{T}}\gamma[-1]$.

\item $\epsilon\in\{\pm1\}$ and by $\epsilon$-representation we always mean $\epsilon$-representation of $J$ with respect to the involution $\sigma$.
\end{itemize}

\subsection{Lifting \texorpdfstring{$\epsilon$}{e}-representation theory to the cluster category}

Before going into the definitions, note that an $\epsilon$-representation $M$ comes with a canonical isomorphism $\psi_M:\nabla M\xrightarrow{\sim} M$ such that $\nabla(\psi_M)=\epsilon \psi_M$, and conversely, specifying such an isomorphism on an ordinary representation $M$ is equivalent to specifying an $\epsilon$-representation structure; see \cite[Sec 2.3]{BCI21}.  We can use this idea to lift symmetric representations to the cluster category $\C$ as follows.

\begin{definition}\label{def:e-obj}
Let $X\in \C=\C_{(Q,W)}$ and write $X=T'[1]\oplus Y$ a decomposition with $T'[1]$ the maximal direct summand of $X$ in $\add(\wti{T}[1])$.
\begin{enumerate}[(i)]
\item We say that $X$ is an \dfn{$\epsilon$-object} if there is an isomorphism $\psi_X: \nabla X\xrightarrow{\sim} X$ such that $\nabla \psi_X = \epsilon \psi_X$.  If, moreover, either one of the following cases occur:
\begin{itemize}
\item $T'=0$ and $M(Y)\in \mod J$ is an indecomposable $\epsilon$-representation of $J$, 
\item $Y=0$ and $T'=\nabla(\alpha)\oplus \alpha$ for some arc $\alpha\in \wti{T}$,
\end{itemize}
then we call $X$ an \dfn{indecomposable $\epsilon$-object} (or $\epsilon$-indecomposable for short).  To distinguish the two cases, we will call them \dfn{non-initial} and \dfn{initial} respectively.

\item Suppose $X$ is an indecomposable $\epsilon$-object.  The notion \dfn{split}, \dfn{1-sided}, \dfn{ramified} in Proposition \ref{prop:e-indec} extends naturally to $\epsilon$-indecomposable objects by regarding the case $X=T'=\nabla(\alpha)\oplus\alpha$ as split.  

\item For an $\epsilon$-indecomposable $X$, we define \dfn{$\epsilon$-factors} of $X$ as follows.
\begin{itemize}
\item If $X=\gamma\oplus\nabla(\gamma)$ is split or ramified, then an $\epsilon$-factor of $X$ is either $\gamma$ or $\nabla(\gamma)$.

\item If $X$ is one-sided with $X=(\omega^2,\lambda)$ for some primitive closed curve $\omega = \omega_1\cdots \omega_c$, then an $\epsilon$-factor of $X$ is $(\omega,\lambda)$.

\item If $X$ is one-sided with $X=(\omega,\lambda)$ for some primitive closed curve $\omega=\omega_1\cdots \omega_{2r}$, then an $\epsilon$-factor of $X$ is a non-closed curve $\alpha = \omega_{i+1}\cdots \omega_{i+r-1}$ so that $\omega_i$ is an inverse arrow (or equivalently $\omega_{i+r}$ is a direct arrow).
\end{itemize}
\end{enumerate}
\end{definition}

\begin{example}\label{eg:initial arc nabla}
Consider an initial arc $\alpha\in \wti{T}$.
Under the correspondence \eqref{eq:curve corresp}, we have an object $X:=\alpha\oplus \sigma_\S(\alpha)\in \C$, and the involution $\sigma$ on $(Q,W)$ induces an isomorphism $\psi_\alpha:\nabla(\alpha)\to \sigma_\S(\alpha)$.
Then we have
\[
\psi_X:= \begin{pmatrix}
0 & \psi_{\alpha}\\
\epsilon \psi_{\sigma\alpha} & 0 
\end{pmatrix} : \underbrace{\alpha\oplus \nabla\alpha}_{\nabla X} \to \underbrace{\sigma_\S(\alpha)\oplus\alpha}_{X}
\]
that satisfies $\nabla(\psi_X)=\epsilon\psi_X$.
Every initial $\epsilon$-object arise this way.
\end{example}

Let us give a more concrete example as well.
\begin{example}
Consider the quiver from Example \ref{example:qpeasy}.
Let $S_x$ denotes the simple module corresponding to vertex $x\in Q_0$.
Since $\nabla(S_1)\cong S_{1'}$, $S_1 \oplus S_{1'}$ is a split $\epsilon$-indecomposable representation, and we have an isomorphism $\psi_{M(X)}:S_1\oplus S_{1'}\to S_1\oplus S_{1'}$ that satisfies $\nabla(\psi_{M(X)})=\epsilon\psi_{M(X)}$.

Let $X\in \C$ be the lift of $S_1\oplus S_{1'}$, i.e. $M(X)=S_1\oplus S_{1'}$, and $\psi_X\in \C$ be the lift of $\psi_{M(X)}$.  Then $(X,\psi_X)$ defines a split $\epsilon$-indecomposable of string type.

Consider the band module $M_\lambda(\omega)$ in Example \ref{eg:band eg} associated to the primitive band $\omega$.
By Theorem \ref{thm:indec e-reps}, this is a 1-sided $\epsilon$-indecomposable if $\lambda=\epsilon$; ramified if $\lambda=-\epsilon$.  Hence, $(\omega,\epsilon)$ is a 1-sided $\epsilon$-indecomposable object in $\C$ and $(\omega, -\epsilon)\oplus (\omega,-\epsilon)$ is a ramified $\epsilon$-indecomposable object in $\C$.
\end{example}
As an application Theorem \ref{thm:indec e-reps}, we have the following correspondences between curves on $\SM$ with $\epsilon$-indecomposables in $\C$.

\begin{theorem}\label{thm:curve corresp} 
For every curve $\gamma\in \bfC\SM$, fix a lift $\wti{\gamma}\in \bfC\wti{\SM}$.
\begin{enumerate}
\item If $\gamma\in \Cnc\SM$, then correspondence \eqref{eq:curve corresp} induces the following bijection
\begin{center}
\begin{tikzpicture}
\begin{scope}[shift={(0,4.5)}]
\node[align=center] (L1)at (0,0) {$\left\{\begin{array}{c}\text{curves connecting}\\\text{marked points}\end{array}\right\}$};
\node[right] (R1) at (3.5,0) {$\left\{\begin{array}{c}\epsilon\text{-indecomposable objects} \\ \text{of split string type}\end{array}\right\}$};
\draw[<->] (L1) -- node[midway,above]{1:1} (R1);
\node (l1) at (1.3,-1) {$\gamma$};
\node[right] (r1) at (3.7,-1) {$\wti{\gamma}\oplus\nabla\wti{\gamma}$.};
\draw[|->] (l1)--(r1);
\end{scope}
\end{tikzpicture}
\end{center}

\item If $\omega \in \Ccc^1\SM = \Cocc^1\SM\sqcup\Ctcc^1\SM$ is a primitive closed curve, then there are the following bijections
\begin{center}
\begin{tikzpicture}
\begin{scope}[shift={(0,2.25)}]
\node[align=center] (L1)at (0,0) {$\left\{\begin{array}{c}\text{primitive 1-sided}\\\text{closed curves}\end{array}\right\}$};
\node[right] (R1) at (3.5,0) {$\left\{\begin{array}{c}\epsilon\text{-indecomposable objects} \\ \text{of 1-sided primitive band type}\end{array}\right\}$};
\draw[<->] (L1) -- node[midway,above]{1:1} (R1);
\node (l1) at (1.3,-1) {$\omega$};
\node[right] (r1) at (3.7,-1) {{$(\wti{\omega},\epsilon(-1)^{\mathrm{len}(\wti{\omega})/2})$,}};
\draw[|->] (l1)--(r1);
\end{scope}

\begin{scope}[shift={(0,0)}]
\node[align=center] (L1)at (0,0) {$\left\{\begin{array}{c}\text{primitive 2-sided}\\\text{closed curves}\end{array}\right\}$};
\node[right] (R1) at (3.5,0) {$\left\{\begin{array}{c}\epsilon\text{-indecomposable objects}\\ \text{of ramified or split}\\\text{primitive band type}\end{array}\right\}$};
\node at (8.7,-0.3) {$\Big/\sim$};
\draw[<->] (L1) -- node[midway,above]{1:1} (R1);
\node (l1) at (1.3,-1) {$\omega$};
\node[right] (r1) at (3.7,-1) {$[\, (\wti{\omega},\lambda)\oplus\nabla(\wti{\omega},\lambda) \,],$};
\draw[|->] (l1)--(r1);
\end{scope}
\end{tikzpicture}
\end{center}
where the equivalence $\sim $ in the last row is defined by $(\omega,\lambda)\oplus\nabla(\omega,\lambda) \sim (\omega',\lambda')\oplus\nabla(\omega',\lambda')$ if $\omega'$ is one of $\omega$ or $\nabla(\omega)$.
\end{enumerate}
\end{theorem}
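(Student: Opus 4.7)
The plan is to combine Theorem \ref{thm:indec e-reps}'s classification of indecomposable $\epsilon$-representations over $J$ with the covering-space dictionary relating $\sigma_\S$-orbits of curves on $\wti{\SM}$ to curves on $\SM$, and then lift everything from $\mod J$ to $\C$ via Proposition \ref{prop:lift nabla} and the projection $M(-)$. The correspondence \eqref{eq:curve corresp} of \cite{BZ11} supplies the bijection between curves on $\wti{\SM}$ (with a parameter in the closed case) and indecomposable objects of $\C$, and Definition \ref{def:e-obj} tells us how an $\epsilon$-representation structure on $M(X)$ together with its compatible lift determines an $\epsilon$-object structure on $X$ (with the initial case handled as in Example \ref{eg:initial arc nabla}).

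For part (1), I would observe that the deck transformation $\sigma_\S$ acts freely on $\wti{\M}$, so for any $\gamma\in \Cnc\SM$ the two lifts $\wti{\gamma}$ and $\sigma_\S(\wti{\gamma})$ are genuinely distinct elements of $\Cnc\wti{\SM}$. Lemma \ref{lem:nabla on strings} identifies $\nabla \wti{\gamma}$ with $\sigma_\S(\wti{\gamma})$ in $\C$, so $\wti{\gamma}\oplus \nabla \wti{\gamma}$ carries a canonical split $\epsilon$-indecomposable structure by Proposition \ref{prop:e-indec}. Injectivity follows because distinct $\sigma_\S$-orbits yield distinct unordered pairs $\{\wti{\gamma},\sigma_\S(\wti{\gamma})\}$, and surjectivity is the classification of split-string type $\epsilon$-indecomposables in Theorem \ref{thm:indec e-reps}(SS) combined with the fact that every $\sigma_\S$-orbit of non-closed curves descends to a curve on $\SM$.

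For part (2), a 1-sided primitive closed curve $\omega\in\Cocc^1\SM$ has a unique $\sigma_\S$-fixed primitive lift $\wti\omega$, and Theorem \ref{thm:indec e-reps} restricted to $n=1$ says the 1-sided $\epsilon$-indecomposable of primitive band type associated to $\wti\omega$ exists precisely for $\lambda=\epsilon(-1)^{\mathrm{len}(\wti\omega)/2}$, which exactly matches the formula displayed in the statement. Conversely, every 1-sided primitive band $\epsilon$-indecomposable comes from such an $\wti\omega$ by Theorem \ref{thm:indec e-reps}, so the map is surjective; injectivity is again just the uniqueness of the lift.

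For part (3), I would treat 2-sided primitive closed curves in two cases according to the end of Section \ref{sec:topology}: either $\wti\omega\neq\sigma_\S(\wti\omega)$, in which case Lemma \ref{lem:cc dual} gives $\nabla(\wti\omega,\lambda)\cong(\sigma_\S\wti\omega,\mu)$ for $\mu\in\{\lambda,\lambda^{-1}\}$ and the direct sum is split for every $\lambda\in\Bbbk^\times$; or the lift of $\omega$ is two copies of a $\sigma_\S$-fixed $\wti\omega$, which by Theorem \ref{thm:indec e-reps} yields a ramified $\epsilon$-indecomposable exactly for the sign $\lambda\in\{\pm1\}$ not realised in part (2), and split otherwise. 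The equivalence $\sim$ is exactly what remembers only the unordered pair $\{X,\nabla X\}$, so the induced map is well-defined independently of which lift $\wti\omega$ of $\omega$ one picks. The main obstacle I expect is a careful bookkeeping in this last part: tracking how $\nabla$ interchanges $\lambda\leftrightarrow\lambda^{-1}$ across the Lemma \ref{lem:cc dual} dichotomy, verifying that the $\lambda\in\Bbbk^\times$ fibre on the right corresponds to the choice of colouring on $\SM$, and confirming that the ramified and split primitive-band $\epsilon$-indecomposables together exhaust the image once one quotients by $\sim$.
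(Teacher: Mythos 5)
Your proposal is correct and follows essentially the same route as the paper: reduce to the classification in Theorem \ref{thm:indec e-reps} of indecomposable $\epsilon$-representations over $J$, lift to $\C$ via Proposition \ref{prop:lift nabla} and $M(-)$ (handling initial arcs as in Example \ref{eg:initial arc nabla}), and then match the three $\epsilon$-indecomposable types against the $\sigma_\S$-orbit dictionary at the end of Section \ref{sec:topology}, using Lemma \ref{lem:nabla on strings} for strings and Lemma \ref{lem:cc dual} for bands. Your more explicit subdivision of part (3) into the $\sigma_\S$-fixed and non-$\sigma_\S$-fixed lift cases is exactly the dichotomy the paper invokes implicitly via $\Ccc^1\wti{\SM}/\sigma = \Cocc^1\SM\sqcup\Ctcc^1\SM$.
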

Note that the equivalence is effectively forgetting the colouring on the closed curves.
\begin{proof}
Suppose $\overline{X}$ is an indecomposable object in $\C$.
If $\overline{X}$ is initial, then the $\overline{X}\oplus\nabla(\overline{X})$ is an $\epsilon$-object as discussed in Example \ref{eg:initial arc nabla}.
If $\overline{X}$ is non-initial, then $M(\overline{X})$ is an indecomposable $J$-module.  This gives rise to an indecomposable $\epsilon$-representation $M(X)$ for some $X\in \C$, and hence an isomorphism $\psi_{M(X)}$ in $\mod J$ satisfying $\nabla(\psi_{M(X)})=\epsilon \psi_{M(X)}$.  Thus, by Proposition \ref{prop:lift nabla}, this lifts to an isomorphism $\psi_X$ in $\C$ satisfying $\nabla(\psi_X)=\epsilon\psi_X$.
Hence, non-initial $\epsilon$-indecomposable objects in $\C$ are in bijection with indecomposable $\epsilon$-representations of $J$.

For indecomposable $\epsilon$-representations of $J$ that come from string modules,  Theorem \ref{thm:indec e-reps} says that they are always of the form $M(\gamma) \oplus M(\sigma(\gamma))$ for some non-initial $\gamma\in \Cnc\wti{\SM}\setminus\wti{T}$.
Since $\sigma$-orbits of $\Cnc\wti{\SM}\setminus\wti{T}$ is just $\Cnc\SM\setminus T$, we get the bijection in (1).

Using the classification of one-sided indecomposable $\epsilon$-representations in Theorem \ref{thm:indec e-reps} and restricts to primitive bands, and the fact that $\Cocc\SM$ is the set of fixed points under $\sigma$ in $\Ccc^1\wti{\SM}$, the bijection for $\Cocc^1\SM$ follows.

Finally, since $\Ccc^1\wti{\SM}/\sigma = \Cocc^1\SM\sqcup\Ctcc^1\SM$, the correspondence of the claim follows from the classification of ramified and split indecomposable $\epsilon$-representations of the form $M_\lambda(\omega)$ for primitive $\omega$ in Theorem \ref{thm:indec e-reps}.
\end{proof}

Recall that $\Ext_\C^1(X,Y)$ is defined as $\Hom_\C(X,Y[1])$.  Therefore, for each $f\in \Hom_\C(X,Y[1])$, we have a triangle
\[
Y \to C_f \to X \xrightarrow{f} Y[1]
\]
in $\C$.  So far as context is clear, we call the morphism $f$, the cocone $C_f$, as well as any isomorphic triangle an \emph{extension from $X$ to $Y$}.  Recall also that triangulations are in bijection with cluster-tilting (i.e. maximal rigid) objects of $\C$, and rigid objects are those with only split self-extension.

In the following, we give a `symmetric analogue' of these notions.

\begin{definition}\label{def:e-ext}
Let $X,Y$ be $\epsilon$-objects in $\C$ and $Y'$ be an $\epsilon$-factor of $Y$.
We call $f\in \Hom_\C(X,Y'[1])$ an \dfn{$\epsilon$-extension} from $X$ to $Y$ if $f\circ\psi_X\circ \nabla(f)=0$.  
The case when $f=0$ is called a \dfn{trivial $\epsilon$-extension}.

We call $X$ \dfn{$\epsilon$-rigid} if every $\epsilon$-extension from $X$ to itself is trivial, i.e. for any $\epsilon$-factor $X'$ of $X$, a morphism $f:X\to X'[1]$ satisfies $f\circ\psi_X\circ\nabla(f)=0$ implies that $f=0$.
\end{definition}
\begin{remark}
$\epsilon$-extensions are closed under scalar multiple, but unlike ordinary morphisms, they are not necessarily closed under addition.
\end{remark}

Note that, if we consider the triangle $C_f\to X\xrightarrow{f} Y'[1]\to $ with $C_f$ being the cocone of $f$, and consider also the long exact sequence induced by applying $\Hom_\C(\nabla(Y')[-1],-)$ to this triangle, then we can see that $f\circ\psi_X\circ \nabla(f)=0$ is equivalent to saying that $\psi_X\circ\nabla(f)$ factors through $C_f$.  In particular, we have the following commutative diagram from the octahedral axiom of triangulated categories
\[
\xymatrix{
& \nabla (Y')[-1]\ar[d] \ar@{=}[r] & \nabla (Y')[-1]\ar[d]^{\psi_X\nabla(f)} & \\
Y'\ar[r]\ar@{=}[d] & C_f\ar[r]\ar[d] & X \ar[r]^{f}\ar[d] & Y'[1]\ar@{=}[d] \\
Y'\ar[r]& E\ar[r] \ar[d] & D^f\ar[r]\ar[d] & Y'[1]\\
& \nabla(Y')\ar@{=}[r]& \nabla(Y')& \\
}
\]
where every row and every column is a triangle in $\C$.
We call this diagram the \dfn{$\epsilon$-extension diagram associated to $f$}, and $E$ the \dfn{$\epsilon$-extension from $X$ to $Y$} by abusing terminology (as in the classical case).

Since $M(-):\C\to \mod J$ is a cohomological functor, to find $\epsilon$-extensions from $X$ to $Y$ in $\C$, it suffices to find a commutative diagram 
\[\xymatrix{
 & & 0\ar[d] & 0\ar[d] & \\
0\ar[r] & M(Y')\ar[r]\ar@{=}[d] & M(C_f) \ar[d]\ar[r] & M(X)\ar[d]\ar[r] & 0 \\
0\ar[r] & M(Y')\ar[r] & M(E) \ar[d]\ar[r]& M(D^f)\ar[r]\ar[d] &0 \\
& & M(\nabla(Y'))\ar[d]\ar@{=}[r] & M(\nabla(Y'))\ar[d] & \\
 & & 0 & 0 & 
}
\]
in $\mod J$ where all rows and columns are exact, with the first row being equivalent (as a short exact sequence) to the $\nabla$-dual of the right hand column.  This is the strategy we will use to determine (non-)$\epsilon$-rigidity throughout.

\subsection{\texorpdfstring{$\epsilon$}{e}-rigidity for split and ramified \texorpdfstring{$\epsilon$}{e}-indecomposables}

\begin{lemma}\label{lem:split ramified rigid}
Consider an $\epsilon$-indecomposable object $X\in \C$.
\begin{enumerate}[(i)]
\item In the case when $X$ is a split $\epsilon$-indecomposable, we have that $X$ is $\epsilon$-rigid if and only if it is rigid.

\item If $X$ is a ramified $\epsilon$-indecomposable, then $X$ is never $\epsilon$-rigid.
\end{enumerate}
\end{lemma}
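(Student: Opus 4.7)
The strategy is to work with $\psi_X$ in its anti-diagonal (hyperbolic) form $\psi_X = \left(\begin{smallmatrix} 0 & \phi \\ \epsilon\nabla\phi & 0 \end{smallmatrix}\right)$, where $\phi\colon \gamma \to \gamma$ is an isomorphism. By the classification of symmetric representations on $\gamma \oplus \nabla\gamma$ in \cite{DW02, BCI21}, both split and ramified $\epsilon$-structures are isometric to this hyperbolic form, and since $\epsilon$-rigidity is clearly an isometry invariant, there is no loss in assuming this shape. The direction ``rigid $\Rightarrow$ $\epsilon$-rigid'' in (i) is immediate, since $\Ext_\C^1(X, X) = 0$ gives $\Hom_\C(X, X'[1]) = 0$ for every $\epsilon$-factor $X'$, leaving no morphism to test; the initial case $X = \alpha \oplus \nabla\alpha$ with $\alpha \in \wti{T}$ is automatically rigid because arcs of a triangulation are pairwise non-crossing.

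The key computation for the remaining direction is as follows: for any $g \in \Hom_\C(\gamma, \gamma[1])$ and $f = (g, 0)\colon X \to \gamma[1]$, one has $\nabla(f) = \left(\begin{smallmatrix}\nabla g \\ 0\end{smallmatrix}\right)$, hence $\psi_X \circ \nabla(f) = \left(\begin{smallmatrix}0 \\ \epsilon\nabla\phi \circ \nabla g\end{smallmatrix}\right)$, and therefore $f \circ \psi_X \circ \nabla(f) = 0$ automatically. The same cancellation occurs for $f = (0, g')$ with $g' \in \Hom_\C(\nabla\gamma, \gamma[1])$, and symmetrically when $\nabla\gamma$ is chosen as the $\epsilon$-factor instead of $\gamma$. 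Thus any such $f$ is a non-trivial $\epsilon$-extension whenever the chosen $g$ (or $g'$) is non-zero.

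With this, (i)($\Rightarrow$) follows by contrapositive: if $X$ is not rigid, at least one of the four summands of $\Ext_\C^1(X, X) = \Ext_\C^1(\gamma, \gamma) \oplus \Ext_\C^1(\gamma, \nabla\gamma) \oplus \Ext_\C^1(\nabla\gamma, \gamma) \oplus \Ext_\C^1(\nabla\gamma, \nabla\gamma)$ is non-zero, and the above construction yields a non-trivial $\epsilon$-extension, contradicting $\epsilon$-rigidity. For (ii), Theorem \ref{thm:indec e-reps} combined with the observation that no non-closed curve on $\wti{\SM}$ is $\sigma_\S$-invariant --- since $\sigma_\S$ acts freely on $\wti{\S}$, such a curve would be forced to have a midpoint fixed by $\sigma_\S$ --- implies that any ramified $\epsilon$-indecomposable has band-type underlying module $L = M_\lambda(\omega^n)$. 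Such band modules satisfy $\Ext_\C^1(L, L) \neq 0$ (for example, $\tau_\C L \cong L$ for bands gives $\Ext_\C^1(L, L) \cong D\End_\C(L) \neq 0$), so the same $f = (g, 0)$ construction with a non-zero $g \in \Hom_\C(L, L[1])$ produces a non-trivial $\epsilon$-extension.

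The main technical point to handle carefully is justifying the anti-diagonal form of $\psi_X$ in the ramified case, where $\gamma \cong \nabla\gamma$ admits potentially non-zero diagonal blocks; this either requires constructing an explicit isometry killing the diagonal entries or a direct appeal to the canonical-form result in \cite{DW02, BCI21}. A secondary point in (ii) is the exclusion of $\sigma_\S$-invariant non-closed curves, which rests on the free action of $\sigma_\S$ on $\wti{\S}$.
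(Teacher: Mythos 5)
Your proposal follows essentially the same route as the paper: the hyperbolic (anti-diagonal) form of $\psi_X$ makes $(f\pi)\psi_X\nabla(f\pi)=0$ automatic for any morphism factoring through a single summand, so a nonzero such morphism gives a nontrivial $\epsilon$-extension, and ramified types are bands, hence never rigid. The one small slip is in your justification that $\Ext^1_\C(L,L)\neq0$ for band objects: $\tau_\C$ in the 2-CY category $\C$ is $[1]$, not the identity, and $\Ext^1_\C(L,L)\cong D\End_\C(L)$ is not a valid formula there; what you want is that $\tau M(L)\cong M(L)$ in $\mod J$ for band modules, so Proposition \ref{prop::Homs}(b) gives a surjection $\Ext^1_\C(L,L)\twoheadrightarrow\Hom_J(M(L),\tau M(L))\cong\End_J(M(L))\neq0$.
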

\begin{proof}
(1) Write $X=\gamma\oplus\nabla(\gamma)$ for an indecomposable object $\gamma$ and so $\Ext_\C^1(X,X)$ is a direct sum of $\Ext_\C^1(\delta,\delta')$ over all $\delta,\delta'\in\{\gamma,\nabla(\gamma)\}$.  If $X$ is rigid, then all of these individual spaces is necessarily zero, so $X$ is clearly $\epsilon$-rigid.
Conversely, we have a non-zero morphism $f:\delta\to \delta'[1]$ for some $\delta,\delta'\in\{\gamma,\nabla(\gamma)\}$ which yields $(f\pi)\psi_X\nabla(f\pi)=0$ for $\pi:X\to \delta$ the natural projection, and so $X$ cannot be $\epsilon$-rigid.

(2) If $X$ is ramified, then $X\cong(\omega,\lambda)^{\oplus 2}$ for some closed curve $\omega$ and some $\lambda\in\Bbbk^\times$.  But there is no rigid band object $(\omega,\lambda)$, so we must have a non-zero $f:(\omega,\lambda)\to (\omega,\lambda)[1]$, and apply the same argument as the converse part of (1) to see that $X$ is not $\epsilon$-rigid.
\end{proof}

\begin{lemma}\label{lem:1sided non-eps-rigid}
Suppose $\omega=\sigma(\omega)$ is a primitive closed curve such that $(\omega^2,\lambda)\in\C$ is a 1-sided $\epsilon$-indecomposable object.  Then $X$ is not $\epsilon$-rigid.
\end{lemma}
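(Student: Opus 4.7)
The plan is to produce a non-zero morphism $f\colon X \to Y'[1]$ with $f\circ\psi_X\circ\nabla(f)=0$, where $Y':=(\omega,\lambda)$ is the $\epsilon$-factor of $X:=(\omega^2,\lambda)$, and to verify the condition via the module-level criterion stated immediately after Definition~\ref{def:e-ext}. By Theorem~\ref{thm:indec e-reps} applied to the 1-sidedness hypothesis, $\sigma(\omega)=\omega$ and $\lambda\in\{\pm 1\}$; hence by Lemma~\ref{lem:cc dual} every $M_\lambda(\omega^n)$ is $\nabla$-self-dual, and so is the corresponding object of $\C$.

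I would take $f$ to be the morphism in $\C$ corresponding under $M$ to the natural non-split short exact sequence
\[
0\longrightarrow M_\lambda(\omega)\longrightarrow M_\lambda(\omega^3)\longrightarrow M_\lambda(\omega^2)\longrightarrow 0
\]
of $J$-modules, lifted via the embedding $\Ext^1_J(M(X),M(Y'))\hookrightarrow\Ext^1_\C(X,Y')$ from Proposition~\ref{prop::Homs}(b). Then $f\neq 0$ and $M(C_f)\cong M_\lambda(\omega^3)$. To realise the $\epsilon$-extension diagram, I would set $E:=M_\lambda(\omega^4)$ with its natural filtration $F_1\subset F_2\subset F_3\subset F_4=E$, $F_i\cong M_\lambda(\omega^i)$, and identify $Y'=F_1$, $C_f=F_3$, $D^f=E/F_1$. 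The four subquotient short exact sequences
\begin{align*}
&0\to Y'\to C_f\to X\to 0, && 0\to Y'\to E\to D^f\to 0,\\
&0\to C_f\to E\to \nabla Y'\to 0, && 0\to X\to D^f\to \nabla Y'\to 0
\end{align*}
assemble into a commutative square in $\mod J$. Applying $\nabla$ to the right column and using $\nabla D^f\cong D^f\cong M_\lambda(\omega^3)$ produces a non-split extension of $X$ by $Y'$, and the top row is another such extension. Since $\dim_\Bbbk\Ext^1_J(M(X),M(Y'))=1$ (by Auslander--Reiten duality combined with the standard Hom-space count for rank-one band modules), these two coincide up to a non-zero scalar, which can be absorbed into the scalar ambiguity of $\psi_X$ in the $\epsilon$-indecomposable structure. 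The module-level criterion then yields $f\circ\psi_X\circ\nabla(f)=0$, so $f$ is a non-trivial $\epsilon$-extension and $X$ fails to be $\epsilon$-rigid.

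The main technical obstacle is the scalar-matching in the previous paragraph: one must check that the $\nabla$-dual of the right-column SES agrees with the top-row SES as actual extension classes and not merely as non-split extensions with isomorphic middle term. Rather than tracking scalars through $\psi_X$, a cleaner route is to note that the push-forward $\pi_*\colon\Ext^1_J(M(\nabla Y'), M(C_f))\to\Ext^1_J(M(\nabla Y'), M(X))$ induced by $\pi\colon C_f\twoheadrightarrow X$ carries the class of $E$ to a non-zero element (hence is surjective, as both sides are $1$-dimensional), and then to check that $\psi_X\circ\nabla(f)$ lies in the $\Ext^1_J$-subspace of $\Ext^1_\C(\nabla Y',X)$ -- which holds because $\psi_X$ and $\nabla(f)$ both come from module-level data via Proposition~\ref{prop:lift nabla}.
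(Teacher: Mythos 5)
Your proof is correct and takes essentially the same approach as the paper: both assemble the same $3\times3$ commutative diagram in $\mod J$ with middle term $M_\lambda(\omega^4)$ from the canonical short exact sequences in the rank-one tube through $M_\lambda(\omega)$, differing only in that the paper obtains the maps via the explicit functor $G_\omega$ from $\Bbbk[x,x^{-1}]$-modules (and the sequence $\rho_3$ for the top-right commutation) while you work directly with the uniserial filtration of $M_\lambda(\omega^4)$. You also correctly flag, and sketch a surjectivity-of-$\pi_*$ argument for, the verification that the top row is $\nabla$-dual to the right column --- a point the paper's own proof leaves implicit.
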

\begin{proof}
Recall that there is a functor $G_\omega: \mod\Bbbk[x,x^{-1}]\to \mod J$ so that the full subcategory of modules of the form $M_\lambda(\omega^n)$ can be understood from the representation theory of $\Bbbk[x,x^{-1}]$-module; see for example from \cite[II.3, II.4]{Erd}.
We have $M_\lambda(\omega^n)=G_{\omega}(V_\lambda^n)$ corresponds to the indecomposable $\Bbbk[x,x^{-1}]$-module $V_\lambda^n$ where $x$ acts as the $\lambda$-Jordan block of size $n$.
For any $n\geq 1$, let $v_n$ be the generator of $V_\lambda^n$, and so $V_\lambda^n$ have basis $\{(x-\lambda)^kv_n\}_{1\leq k\leq n}$.
Define $\iota_n:V_\lambda^n \to V_\lambda^{n+1}$ to be the map $v_n\mapsto (x-\lambda)v_{n+1}$ and $\pi_n:V_\lambda^{n+1} \to V_\lambda^n$ to be the map $v_{n+1}\mapsto v_n$.
Let $\iota_{n,1}:=\iota_n\iota_{n-1}\cdots\iota_1$ and $\pi_{1,n}:=\pi_1\pi_2\cdots \pi_n$.
Then we have short exact sequences
\[
\xymatrix@R=2pt@C=45pt{
\xi_{n}:\;\; 0 \ar[r] & V_\lambda^1 \ar[r]^{\iota_{n,1}} & V_\lambda^{n+1} \ar[r]^{\pi_{n}} & V_\lambda^{n} \ar[r] & 0, \\
\zeta_{n}:\;\; 0 \ar[r] & V_\lambda^n \ar[r]^{\iota_{n}} & V_\lambda^{n+1} \ar[r]^{\pi_{1,n}} & V_\lambda^{1} \ar[r] & 0, \\
\rho_n:\;\; 0 \ar[r] & V_\lambda^n \ar[r]^{(\pi_{n-1},\iota_n)^\top\quad} & V_\lambda^{n-1}\oplus V_\lambda^{n+1} \ar[r]^{\quad(\iota_n,\pi_{n+1})} & V_\lambda^n \ar[r] & 0;
}
\]
see, for example, \cite[Lemma II.4.2]{Erd}.
This induces the following commutative diagram
\[
\begin{tikzcd}
    & &  0 \arrow[d] & 0 \arrow[d] \\
    0 \arrow[r] 
      & V_\lambda^1 \arrow[r] \arrow[d, equal]
      & V_\lambda^3 \arrow[r] \arrow[d]
      & V_\lambda^2 \arrow[r] \arrow[d]
      & 0 \\
    0 \arrow[r]
      & V_\lambda^1 \arrow[r]
      & V_\lambda^4 \arrow[r] \arrow[d]
      & V_\lambda^3 \arrow[d] \arrow[r]
      & 0\\
    & & V_\lambda^1 \arrow[r, equal] \arrow[d] & V_\lambda^1 \arrow[d] \\
    & & 0 & 0,
\end{tikzcd}\]
where the first row is $\xi_2$, second row is $\xi_3$, third right-hand column is $\zeta_2$, and left-hand column is $\zeta_3$.  Note that the commutation on the top-right square comes from the sequence $\rho_3$.

Consider applying $G_\omega$ to $\xi_1=\zeta_1=\rho_1$, we have a non-split short exact sequence
\[
G(\xi_1):\quad 0\to M_\lambda(\omega) \to M_\lambda(\omega^2) \to M_\lambda(\omega)\to 0,
\]
which says that we can take $(\omega,\lambda)$ to be an $\epsilon$-factor of $(\omega^2,\lambda)$.

By applying $G_\omega$ to the commutative diagram of $\Bbbk[x,x^{-1}]$-modules above yields a commutative diagram
\[
\begin{tikzcd}
    & &  0 \arrow[d] & 0 \arrow[d] \\
    0 \arrow[r] 
      & M_\lambda(\omega) \arrow[r] \arrow[d, equal]
      & M_\lambda(\omega^3) \arrow[r] \arrow[d]
      & M_\lambda(\omega^2) \arrow[r] \arrow[d]
      & 0 \\
    0 \arrow[r]
      & M_\lambda(\omega) \arrow[r]
      & M_\lambda(\omega^4) \arrow[r] \arrow[d]
      & M_\lambda(\omega^3) \arrow[d] \arrow[r]
      & 0\\
    & & M_\lambda(\omega) \arrow[r, equal] \arrow[d] & M_\lambda(\omega) \arrow[d] \\
    & & 0 & 0
\end{tikzcd}\]
with all rows and columns exact, and the commutation of the top-right square comes from $G_\omega(\rho_3)$. Therefore, we can lift this diagram to an $\epsilon$-extension diagram (as in Definition \ref{def:e-ext}) with $C_f = (\omega^3,\lambda)\oplus t$, $D^f=(\omega^3,\lambda)\oplus t'$, $E = (\omega^6,\lambda)\oplus t''$ for some (possibly zero) $t,t',t''\in \add(\wti{T})$, we have that $(\omega^2,\lambda)$ is not $\epsilon$-rigid.
\end{proof}

\begin{example}
Let $J$ be the Kronecker algebra and $\omega$ be the unique closed curve of the annulus.
Then $M_\lambda(\omega^n) = \xymatrix@C=30pt{U_n \ar@<0.7ex>[r]^{1}\ar@<-0.3ex>[r]_{J_n(\lambda)} & V_n }$, where $U_n = \langle u_1, \dots, u_n\rangle$ and $V_n = \langle v_1, \dots, v_n\rangle$ over $\Bbbk$. We demonstrate that $M_\lambda(\omega^2)$ is not $\varepsilon$-rigid by showing the maps involved in the commutative diagram. 
The first row $0\to M_\lambda(\omega) \to M_\lambda(\omega^3) \to M_\lambda(\omega^{2})\to 0$ is given by
\[
\xymatrix@C=50pt{
0 \ar[r]\ar@<0.7ex>[d]\ar@<-0.7ex>[d] & U_1 \ar[r]^{{\begin{bsmallmatrix}0\\0\\1\end{bsmallmatrix}}} \ar@<0.7ex>[d]^{\lambda}\ar@<-0.7ex>[d]_{1} & U_3 \ar[r]^{{\begin{bsmallmatrix}1&0&0\\0&1&0\end{bsmallmatrix}}} \ar@<0.7ex>[d]^{J_3(\lambda)}\ar@<-0.7ex>[d]_{1} & U_2\ar@<0.7ex>[d]^{J_2(\lambda)}\ar@<-0.7ex>[d]_{1} \ar[r] & 0\ar@<0.7ex>[d]\ar@<-0.7ex>[d] \\
0 \ar[r] & V_1 \ar[r]_{{\begin{bsmallmatrix}0\\0\\1\end{bsmallmatrix}}} & V_3 \ar[r]_{{\begin{bsmallmatrix}1&0&0\\0&1&0\end{bsmallmatrix}}} & V_2 \ar[r] & 0. \\
}
\]
The second row is similar with the one-column matrices having an extra zero entry on top and the 2-by-3 matrices is enlarged to 3-by-4 matrices of the form $[I,0]$ for a 3x3 identity matrix $I$.

The rightmost column of the commutative diagram is given by the short exact sequence $0\to M_\lambda(\omega^2) \to M_\lambda(\omega^3) \to M_\lambda(\omega)\to 0$ which can be written explicitly in the following form: 
\[
\xymatrix@C=50pt{
0 \ar[r]\ar@<0.7ex>[d]\ar@<-0.7ex>[d] & U_2 \ar[r]^{{\begin{bsmallmatrix}0 & 0\\1&0\\0&1\end{bsmallmatrix}}} \ar@<0.7ex>[d]^{J_2(\lambda)}\ar@<-0.7ex>[d]_{1} & U_3 \ar[r]^{{\begin{bsmallmatrix}1 & 0 & 0\end{bsmallmatrix}}} \ar@<0.7ex>[d]^{J_3(\lambda)}\ar@<-0.7ex>[d]_{1} & U_1\ar@<0.7ex>[d]^{\lambda}\ar@<-0.7ex>[d]_{1} \ar[r] & 0\ar@<0.7ex>[d]\ar@<-0.7ex>[d] \\
0 \ar[r] & V_2 \ar[r]_{{\begin{bsmallmatrix}0 & 0\\1&0\\0&1\end{bsmallmatrix}}} & V_3 \ar[r]_{{\begin{bsmallmatrix}1 & 0 & 0\end{bsmallmatrix}}} & V_1 \ar[r] & 0. \\
}
\]
The left column can be described similarly by enlarging the matrices similar to the previous paragraph.
Then one can check the commutative of the symmetric extension diagram explicitly.
\end{example}

\begin{definition}\label{def:almost crossing}
An (oriented) \emph{almost crossing from $\gamma$ to $\delta$} is an overlap (underlined and encapsulated by the $|$-separators) of the form 
\[
\crosswn{\gamma}{[\gamma_L a_L^{-}]}{\kappa}{[a_R \gamma_R]}{\delta'}{[\delta_L b_L]}{[b_R^{-} \delta_R]}
\] 
where $\delta'\in\{\delta,\delta^-\}$, each of the bracketed parts can possibly be empty, $\kappa$ can be a trivial string, and $a_L, a_R, b_L, b_R$ are arrows. 
We denote by $ac(\gamma, \delta)$ the set of almost positive crossings from $\gamma$ to $\delta$. 
\end{definition}

If all $a_L, a_R, b_L, b_R$ exists, then we get a genuine topological crossing between the corresponding curves; see Figure \ref{fig:crossing}.
If, for example, $a_L$ (hence $\gamma_L$ as well) does not exist in the almost crossing, then we should modify Figure \ref{fig:crossing} so that the curve $\gamma$ starts from the marked point $m$ instead.
In particular, if both $a_L$ and $b_L$ do not exist, then we get only an intersection of the curves at the marked point $m$.

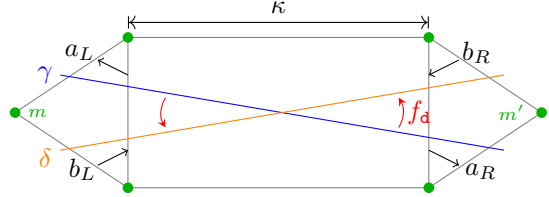
\begin{figure}[!htbp]
\centering
\begin{tikzpicture}
%triangulation
\draw[gray]  (0,2) rectangle (4,0);
\draw[gray] (0,2) -- (-1.5,1) -- (0,0);
\draw[gray] (4,2) -- (5.5,1) -- (4,0);
\fill[darkgreen] (-1.5,1) circle (2pt);\fill[darkgreen] (0,2) circle (2pt);\fill[darkgreen] (0,0) circle (2pt);
\fill[darkgreen] (5.5,1) circle (2pt);\fill[darkgreen] (4,0) circle (2pt);\fill[darkgreen] (4,2) circle (2pt);
%curves
\draw[blue] (-0.9,1.5) --(5,0.5);
\node[blue] at (-1.1,1.5) {$\gamma$};
\draw[orange] (-0.9,0.5) --(5,1.5);
\node[orange] at (-1.1,0.4) {$\delta$};
%marked points
\node[darkgreen] at (-1.2,1) {\scriptsize $m$};
\node[darkgreen] at (5.1,1) {\scriptsize $m'$};
%labels
\draw[|<->|] (0,2.2) -- node[midway,above]{$\kappa$} (4,2.2);

\draw[-to] (0,1.5) -- node[left,pos=0.8,yshift={4}] {$a_L$} (-0.4,1.7);
\draw[-to] (-0.4,0.3) -- node[left,pos=0.25,yshift={-3}] {$b_L$} (0,0.5);

\draw[-to] (4,0.5) -- node[right,pos=0.9,yshift={-3}] {$a_R$} (4.4,0.3);
\draw[-to] (4.4,1.7) -- node[right,pos=0.2,yshift={4}] {$b_R$} (4,1.5);

\draw[-stealth,red] (3.6,0.8) to [bend right] node[midway,right,xshift={-2}]{$f_{\mathtt{d}}$} (3.6,1.2);
\draw[-stealth,red] (0.5,1.2) to [bend right] (0.5,0.8);
\end{tikzpicture}\caption{An oriented (almost) crossing $\mt{d}$ from $\gamma$ to $\delta$}\label{fig:crossing}
\end{figure}

We remark that not every intersection (up to isotopy) between $\gamma,\delta$ is encoded in $ac(\gamma,\delta)\cup ac(\delta,\gamma)$; the missing ones are called `crossing in a an arrow' and `crossing in a 3-cycle' in \cite{CS17}.

\subsubsection{Homomorphisms between string modules}

Let $\{x_i\}_{i=0,1,\ldots,c}$ be the canonical basis of $M(\gamma)$, i.e. $x_i=\gamma(t_i)$ in the notation of Section \ref{sec:cluster cat basic}.

Likewise, let $\{y_j\}_{j\in J}$ (for some finite indexing set $J$) be the basis elements of $M(\delta$).

If there is an overlap $\kappa=\gamma_a\cdots \gamma_{a+\ell}=\delta_b\cdots \delta_{b+\ell}$ between the (string form of) $\gamma$ and $\delta$, then we have $x_{a-1},y_{b-1}\in s(\gamma_a)=s(\delta_b), x_{a+i},y_{b+i}\in t(\gamma_{a+i})=t(\delta_{b+i})$.
If $\kappa$ is the overlap specified by $\mt{d}\in ac(\gamma,\delta)$, we write $x_i\sim_{\mt d}y_j$ when $x_i = x_{a+k}$ and $y_j = y_{b+k}$ for some $k\in \{-1,0,\ldots,\ell\}$.  In this case, we have a module homomorphism
\begin{align*}
f_{\mt d}: M(\gamma) &\to M(\delta)\\
x_i & \mapsto  \begin{cases}
y_{j} & \text{ if $x_i\sim_{\mt d} y_j$;}\\
0  & \text{ otherwise.}
\end{cases} 
\end{align*}
We can regard $f_{\mt d}$ as a counter-clockwise turning of subinterval $\omega$ of the curve $\gamma$; see Figure \ref{fig:crossing}.

The following result of Crawley-Boevey says that $f_{\mt d}$ defines a \dfn{canonical basis} of Hom-space between string modules.

\begin{theorem}{\rm\cite{CB89}} \label{thm:Crawley Boevey string map} 
For two (possibly the same) strings $\gamma$ and $\delta$, the set $\{f_{\mt{d}} \mid \mt{d} \in ac(\gamma, \delta)\}$ is a basis of  $\Hom_{J}(M(\gamma), M(\delta))$.
\end{theorem}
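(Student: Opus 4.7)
The plan is to establish linear independence and spanning of the set $\{f_{\mt d} : \mt d \in ac(\gamma,\delta)\}$ separately, exploiting the combinatorial rigidity of string modules over the gentle algebra $J$. In both parts the key tool is the fact that on the canonical bases $\{x_i\}$ and $\{y_j\}$, each arrow $\alpha\in Q_1$ acts as either zero or as translation to a uniquely determined neighbouring basis vector.

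For linear independence, I would first observe that for $\mt d$ specified by overlap $\kappa=\gamma_a\cdots\gamma_{a+\ell}=\delta_b\cdots\delta_{b+\ell}$, the map $f_{\mt d}$ sends the ``left anchor'' $x_{a-1}$ to $y_{b-1}$ and vanishes on all basis vectors outside the overlap. Ordering $ac(\gamma,\delta)$ lexicographically by the pair $(a,b)$, a hypothetical linear relation $\sum_{\mt d}\lambda_{\mt d}f_{\mt d}=0$ evaluated at the left anchor of the minimal $\mt d$ picks up only the contribution of the minimal term at $y_{b-1}$, forcing $\lambda_{\mt d}=0$. Iterating along the lexicographic order yields the independence.

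For spanning, I would take an arbitrary $f\in \Hom_J(M(\gamma),M(\delta))$ and expand $f(x_i)=\sum_j c_{ij} y_j$ in the canonical bases. The module-map condition $f(x_i\cdot\alpha)=f(x_i)\cdot\alpha$ translates, via the dichotomy in the previous paragraph, into strong propagation rules: whenever both strings continue compatibly through the same arrow, the coefficient $c_{ij}$ must be carried unchanged to the neighbouring pair $(x_{i'},y_{j'})$; whenever they branch, the coefficient must be zero. Consequently, the set of basis pairs with $c_{ij}\neq 0$ carves out a disjoint union of matching substrings of $\gamma$ and $\delta$. Subtracting a scalar multiple of $f_{\mt d}$ for one such overlap strictly reduces the support; iterating exhausts $f$ and expresses it as a combination of the $f_{\mt d}$'s.

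The main obstacle will be verifying that any maximal matching substring produced by this propagation is genuinely an \emph{almost crossing} in the sense of Definition \ref{def:almost crossing}, i.e., that its boundary configuration has the asymmetric form $[\gamma_L a_L^-]\,\kappa\,[a_R\gamma_R]$ versus $[\delta_L b_L]\,\kappa\,[b_R^-\delta_R]$, rather than some other boundary behaviour. This reduces to a local case analysis at each end of the overlap: propagation stops either because a string terminates at a marked point, or because the next arrow would require composing along a relation of $J$. The gentle hypothesis on $J$ guarantees that at every vertex of $Q$ the at most two incoming and at most two outgoing arrows are paired so that any given arrow composes non-trivially with exactly one incoming and one outgoing arrow, and trivially with the other; this strict dichotomy is precisely what forces the two ``feet'' of $\gamma$ and $\delta$ at each boundary to travel in opposite directions, matching Definition \ref{def:almost crossing} and completing the argument.
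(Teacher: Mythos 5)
The paper does not actually prove this theorem; it cites it as a known classification of Crawley-Boevey \cite{CB89} (see also Butler--Ringel and Krause). So there is no proof in the paper to compare against. Your sketch captures the standard argument, and the overall strategy --- linear independence by evaluating at anchors, spanning by propagation plus subtraction --- is sound. Two points where the write-up is imprecise.

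For linear independence, the lexicographic-minimum device isn't quite why the argument works: a non-minimal $\mt{d}'$ can perfectly well have $f_{\mt{d}'}(x_{a-1})\neq 0$, but it lands on $y_{b'-a'+a-1}$, and since the shifts $b'-a'$ and $b-a$ differ (or, if equal, the two overlaps have disjoint ranges, since a common alignment has a unique maximal overlap), this is never $y_{b-1}$. So reading off the $y_{b-1}$-coefficient of $\left(\sum \lambda_{\mt{d}'}f_{\mt{d}'}\right)(x_{a-1})$ already isolates $\lambda_{\mt{d}}$ with no ordering needed. One should also account for the possibility $\delta'=\delta^-$ in Definition~\ref{def:almost crossing}, which reverses the indexing of $y_j$ and produces an anti-diagonal pattern, but the same shift/disjointness reasoning applies.

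For the "main obstacle" paragraph: the dichotomy you need is not really produced by the gentle pairing of arrows at a vertex, but by the module-map condition together with the string axiom (no $ww^{-1}$). Concretely, if the matching component has a left boundary at $(a-1,b-1)$ and $\gamma_{a-1}$ exists as a \emph{direct} arrow $\alpha$, the relation $f(x_{a-2})\alpha = f(x_{a-1})$ forces any nonzero $c_{a-1,b-1}$ to have $y_{b-1}$ in the image of $(\cdot\,\alpha)$; the string axiom on $\gamma$ then forces $\delta_{b-1}=\alpha$, which contradicts being at the boundary of the matching component. So $\gamma_{a-1}$ must be inverse; the dual computation forces $\delta_{b-1}$ to be direct, exactly as Definition~\ref{def:almost crossing} requires. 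Also, "subtracting a scalar multiple of $f_{\mt{d}}$ strictly reduces the support" should be phrased as: the coefficients are constant on each maximal matching component, every such component on which $f$ is nonzero is an almost crossing by the boundary analysis, and subtracting all of them at once kills $f$.
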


\subsubsection{ Homomorphism between a string and a band module}

A similar result exists for Hom-spaces involving band modules.  In the scope of this article, we only need maps involving primitive band $\omega$, where the associated Hom-spaces have much simpler descriptions than the non-primitive ones.  We refer the interested reader to the original paper of Krause \cite{Kra91}, or to \cite{CD20} for which the following formulation is based on.

Suppose $\omega=\omega_1\omega_2\cdots \omega_c$ is a primitive band.
Let $\{x_i\}_{i=0,2,\ldots,c-1}$ be the basis of $M_\lambda(\omega)$ where $x_i:=\omega(t_{i+1})^{(1)} \in s(\omega_{i+1})$ in the notation of Section \ref{sec:cluster cat basic}.
We repeatedly self-concatenate $\omega$ to form a bi-infinite string $\biinf{\omega} := \cdots \omega\omega\omega\cdots $.
This yields an infinite-dimension string module $M(\biinf{\omega})=\bigoplus_{i\in \Z}\Bbbk \hat{x}_i$ and a surjection
\begin{align*}
\pi_\omega: M(\biinf{\omega}) & \to  M_\lambda(\omega)\\
\hat{x}_{i-cm} & \mapsto \lambda^{m}x_i \quad \text{
for all $0\leq i< c$ and $m\in \Z$.}
\end{align*}

The map $p:\omega_i\mapsto \omega_{i-c}$ defines a $\langle p\rangle\cong\Z$-action called \dfn{period-shifting} on $\biinf{\omega}$.
The definition of almost crossing extends naturally to bi-infinite string, in which case, the $\Z$-action on $\biinf{\omega}$ induces a  $\Z$-action on $ac(\biinf{\omega},\delta)$ and on $ac(\delta,\biinf{\omega})$ for any (finite) string $\delta$.  

As before, we take $\{y_j\}_{j\in J}$ to be the basis of $M(\delta)$ for a (finite) string $\delta$.
For an almost crossings $\mt d\in ac(\delta,\biinf{\omega})$, we have a homomorphism of modules
\begin{align*}
g_{\mt d}^{\mathrm{bs}}: M(\delta)&\to M_\lambda(\omega)\\
y_j & \mapsto \pi_\omega f_{\mt d}(y_j)=\lambda^m x_i \text{ where } y_j\sim_{\mt d}\hat{x}_{i-cm} \text{ with } 1\leq i\leq c \text{ and }m\in\Z.
\end{align*}
We can write the $g_{\mt d}^{\mathrm{bs}}$ in matrix form with respect to the canonical bases:
\begin{align}\label{eq:g-bs matrix}
g_{\mt d}^{\mathrm{bs}}=\begin{pmatrix}
0 & \lambda^d f_{\mt{d},0} & \lambda^{d-1} f_{\mt{d},1} & \cdots & \lambda^{d-l} f_{\mt{d},l} & 0
\end{pmatrix}
\end{align}
where each $f_{\mt{d},m}$ for $0\leq m\leq l$ is (the $K$-linear map corresponding to) a $c$-by-$c$ matrix and $f_{\mt{d},m}$ for $0<m<l$ are the identity matrices.  Here the exponent $a$ is determined by the smallest index $i-dc$ with $0\leq i<c$ and $x_{i-dc}\in \Image(g_{\mt d}^{\mathrm{bs}})$.

One checks that $g_{p(\mt d)}^{\mathrm{bs}}=\lambda g_{\mt d}^{\mathrm{bs}}$ for all $k\in \Z$.  This means that the $\Z$-orbit $[\mt{d}]\in ac(\delta,\biinf{\omega})/\Z$ determines the same map up to scalar multiple.  Define $g_{[\mt{d}]}^{\mathrm{bs}}:=g_{\mt{d'}}^{\mathrm{bs}}$ where $\mt{d'}$ is determined by $y_j\sim_{\mt{d'}} x_i$ for some $0\leq i< c$. 

%%%%% \todo{example} %%%%% 

Dually, for an almost crossings $\mt d\in ac(\biinf{\omega},\delta)$, we have a homomorphism $g_{\mt d}^{\mathrm{sb}}: M_\lambda(\omega) \to M(\delta)$ of modules so that $g_{\mt d}^{\mathrm{sb}}\pi_\omega = \sum_{m\in \Z}\lambda^m f_{p^m(\mt d)}$, i.e. explicitly $g_{\mt d}^{\mathrm{sb}}$ is given by
\begin{align*}
g_{\mt d}^{\mathrm{sb}}: M_\lambda(\omega) &\to M(\delta)\\
x_i & \mapsto  \sum_{m\in \Z} \lambda^m f_{\mt d}(\hat{x}_{i+cm}) = \sum_{\substack{m\in \Z \\ \hat{x}_{i+cm}\sim_{\mt d}y_j}} \lambda^m y_j.
\end{align*}
Note that this is a finite sum as the overlap in $\mt{d}$ is of finite length.
We can also write $g_{\mt d}^{\mathrm{sb}}$ in matrix form:
\[
g_{\mt d}^{\mathrm{sb}}=\begin{pmatrix}
0 & \lambda^d f_{\mt{d},0} & \lambda^{d+1} f_{\mt{d},1} & \cdots & \lambda^{d+l} f_{\mt{d},l} & 0
\end{pmatrix}^\top,
\]
where each $f_{\mt{d},m}$ for $0\leq m\leq l$ are $c$-by-$c$ matrix and $f_{\mt{d},m}$ for $0<m<l$ are the identity matrices.

It is straightforward to check that $g_{p(\mt d)}^{\mathrm{sb}}=\lambda^{-1} g_{\mt d}^{\mathrm{sb}}$, so for a $\Z$-orbit $[\mt{d}]\in ac(\biinf{\omega},\delta)/\Z$, define $g_{[\mt{d}]}^{\mathrm{sb}}:=g_{\mt{d}' }^{\mathrm{sb}}$ where $\mt{d}'$ is determined by $y_j\sim_{\mt{d'}} x_i$ for some $0\leq i< c$.  

\begin{theorem}{\rm\cite{Kra91}} \label{thm:Krause band map} 
For a string $\delta$, a band $\omega$, and $\lambda\in\Bbbk^\times$, we have
\begin{enumerate}[(a)]
\item $\{g_{[\mt{d}]}^{\mathrm{sb}} \mid [\mt{d}] \in ac(\biinf{\omega}, \delta) / \Z\}$ is a basis of $\Hom_J(M_\lambda(\omega), M(\delta))$;

\item $\{g_{[\mt{d}]}^{\mathrm{bs}} \mid [\mt{d}] \in ac(\delta, \biinf{\omega}) / \Z\}$ is a basis of $\Hom_J(M(\delta), M_\lambda(\omega))$.
\end{enumerate}
\end{theorem}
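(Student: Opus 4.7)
The plan is to reduce the band case to Crawley-Boevey's classical result (Theorem \ref{thm:Crawley Boevey string map}) for string modules, by passing through the bi-infinite string module $M(\biinf{\omega})$ and using the surjection $\pi_\omega$. The key observation is that every homomorphism from or to $M_\lambda(\omega)$ arises from one involving $M(\biinf{\omega})$ that is equivariant under the period-shift action $p$ up to multiplication by $\lambda$; grouping the corresponding almost crossings by $\Z$-orbits absorbs exactly this redundancy.

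For part (a), given $\phi \in \Hom_J(M_\lambda(\omega), M(\delta))$, form the composite $\wti{\phi} := \phi \circ \pi_\omega : M(\biinf{\omega}) \to M(\delta)$. Since $M(\delta)$ is finite-dimensional, $\wti{\phi}$ is determined by its restriction to each finite substring of $\biinf{\omega}$. An adaptation of Theorem \ref{thm:Crawley Boevey string map} to the infinite-string setting expresses $\wti{\phi}$ uniquely as a locally finite $\Bbbk$-linear combination $\sum_{\mt{d} \in ac(\biinf{\omega}, \delta)} c_{\mt{d}} f_{\mt{d}}$. The factorisation through $\pi_\omega$ translates to the equivariance $\wti{\phi} \circ p = \lambda \wti{\phi}$, which forces $c_{p(\mt{d})} = \lambda c_{\mt{d}}$ for every $\mt{d}$. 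Regrouping the sum by $\Z$-orbits then yields $\phi = \sum_{[\mt{d}]} c_{[\mt{d}]} g_{[\mt{d}]}^{\mathrm{sb}}$. Linear independence is immediate from the explicit matrix description in \eqref{eq:g-bs matrix}: distinct orbits have disjoint supports on the canonical basis.

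For part (b), we appeal to $\Bbbk$-linear duality $D = \Hom_{\Bbbk}(-,\Bbbk) : \mod J \to \mod J^{\op}$, which preserves the classes of string and band modules and reverses the direction of arrows. Since $\Hom_J(M(\delta), M_\lambda(\omega)) \cong \Hom_{J^{\op}}(D M_\lambda(\omega), D M(\delta))$, applying the argument of part (a) over $J^{\op}$ --- together with the natural bijection between almost crossings $ac(\delta, \biinf{\omega})$ and $ac(\biinf{\omega^{-1}}, \delta^{-1})$ induced by reversal under $D$ --- yields the claimed basis.

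The main technical subtlety is justifying the bi-infinite analogue of Crawley-Boevey's theorem: defining almost crossings between an infinite string and a finite one, and verifying uniqueness of the locally finite decomposition of $\wti{\phi}$. This is precisely where Krause's original proof in \cite{Kra91} invests most of its effort; in particular the decomposition must be compatible with the action of $p$ on $ac(\biinf{\omega}, \delta)$ so that grouping by $\Z$-orbits is well-defined, which relies on gentleness (equivalently, the special biserial structure) of $J$.
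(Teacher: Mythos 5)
This theorem is cited from Krause's paper \cite{Kra91} and the present paper gives no proof of it, so there is no internal argument to compare your proposal against. Your sketch does outline the standard strategy --- factor every morphism through the bi-infinite string module $M(\biinf{\omega})$ via $\pi_\omega$, use a Crawley-Boevey-type decomposition there, and identify the factoring-through condition with $\lambda$-equivariance under the period shift $p$ --- and this is indeed how one reduces the band case to the string case. A few caveats, however. First, you call the passage to bi-infinite strings ``an adaptation'' of Theorem \ref{thm:Crawley Boevey string map}, but that adaptation (existence and uniqueness of the locally finite decomposition of a map out of a bi-infinite string module, and well-definedness of the grouping by $\Z$-orbits) is in fact the substantive content of Krause's theorem; so the proposal is closer to a reduction plan than a proof. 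Second, your claim that linear independence of the $g^{\mathrm{sb}}_{[\mt{d}]}$ is ``immediate from the explicit matrix description'' because ``distinct orbits have disjoint supports'' is not right as stated: distinct almost crossings between $\biinf{\omega}$ and $\delta$ can well have nested or overlapping overlaps, so their canonical maps do not have disjoint supports and independence requires the same filtration/ordering argument as in Crawley-Boevey's original proof. Third, your route to part (b) via $\Bbbk$-linear duality is legitimate, but you would then need to track how the almost-crossing combinatorics transform under $D$ and under passage to $J^{\op}$ (reversal of walks, exchange of directed and inverse letters, and the interaction with the choice of $\lambda$ versus $\lambda^{-1}$ for the band module $D M_\lambda(\omega)$); a direct symmetric argument mirroring part (a) using $f_{\mt{d}} : M(\delta)\to M(\biinf{\omega})$ and post-composition with $\pi_\omega$, as the paper sets up in its definition of $g^{\mathrm{bs}}_{\mt{d}}$, is cleaner and avoids this bookkeeping.
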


\subsubsection{Composition of homomorphism}

Suppose we have three strings $\gamma,\delta,\eta$ and two almost crossings $\mt{d}\in ac(\gamma,\delta), \mt{e}\in ac(\delta,\eta)$.  If the intersection of the overlaps of $\mt{d}$ and $\mt{e}$ is non-empty, then we have an almost crossing $\mt{e}\cdot\mt{d}\in ac(\gamma,\delta)$; otherwise, we define $\mt{e}\cdot\mt{d}:=\emptyset$.  It is straightforward from the definition of the canonical maps that $f_{\mt e}f_{\mt d}=f_{\mt{e}\cdot\mt{d}}$, where $f_{\emptyset}:=0$.

Consider $g_{\mt{e}}^{\mathrm{sb}}:M_\lambda(\omega)\to M(\delta)$ and $g_{\mt{d}}^{\mathrm{bs}}:M(\gamma)\to M_\lambda(\omega)$.  Then we have
\begin{align}\label{eq:compose band string maps}
g_{\mt{e}}^{\mathrm{sb}}g_{\mt{d}}^{\mathrm{bs}} = g_{\mt{e}}^{\mathrm{sb}} \pi_\omega f_{\mt{d}} = \big(\sum_{m\in \Z}\lambda^m f_{p^m(\mt{e})}\big)f_{\mt{d}} = \sum_{m\in \Z} \lambda^m f_{p^m(\mt{e})\cdot\mt{d}}.
\end{align}

\subsubsection{Interaction with duality}

We explain the effect of applying the duality $\nabla$ to the canonical maps. 
We will use the following convention.  For a curve $\gamma$ with string form $\gamma = a_1^{\epsilon_1}a_2^{\epsilon_2}\cdots a_\ell^{\epsilon_\ell}$ where $a_i$'s are all arrows and $\epsilon_i\in\{+,-\}$ for all $i$, we define
\[
\nabla(\gamma):=\sigma(a_1)^{-\epsilon_1}\cdots \sigma(a_\ell)^{-\epsilon_\ell}.
\]
 
Let us first describe the isomorphism from $\nabla(M)$ of an indecomposable string-or-band module $M$ to the canonical basis of the resulting string-or-band module.  For a string module $M(\gamma)$, we have defined the combinatorial operation $\nabla$ on the string form of $\gamma$ in a way so that there is an isomorphism $\psi_{M(\gamma)}:\nabla(M(\gamma))\xrightarrow{\sim} M(\nabla(\gamma))$.
Recall from subsection \ref{subsec:string intro} that the canonical basis of $M(\gamma)$ is given by $\{u_i:=\gamma(t_i)\}_{0\leq i\leq c}$.  Therefore, the module $\nabla(M(\gamma))$ has a natural basis $\{u_i^*\}_{0\leq i\leq c}$ dual to $\{u_i\}_i$.  On the other hand, by design of $\nabla$ (on strings), the canonical basis of $M(\nabla(\gamma))$ is $\{v_i:=\sigma(\gamma)(t_i)\}$.  Under this setup, we have
\[
\psi_{M(\gamma)}: \nabla(M(\gamma))\xrightarrow{\sim} M(\nabla(\gamma)) \text{ given by } u_i^* \mapsto (-1)^iv_i\;\;\forall 0\leq i\leq c.
\]
Let $E(\gamma):=M(\gamma)\oplus M(\nabla(\gamma))$, then we have an isomorphism
\[
\psi_{E(\gamma)} = \begin{bmatrix} 0 &\psi_{M(\gamma)} \\ \epsilon \psi_{M(\nabla(\gamma))} & 0\end{bmatrix} : \nabla(M(\gamma))\oplus \nabla(M(\nabla(\gamma))) \xrightarrow{\sim} M(\gamma)\oplus M(\nabla(\gamma)).
\]
This isomorphism is associated to the bilinear map defining the $\epsilon$-representation structure on $E(\gamma)$ (see discussion in \cite[Sec 2.3]{BCI21}), that is, $\nabla(\psi_{E(\gamma)}) = \psi_{E(\gamma)}$.

For a 1-sided indecomposable $\epsilon$-representation $M_\lambda(\omega)$ where $\omega=\sigma(\omega)$ is a primitive closed curve (note that $\lambda=(-1)^{\mathrm{len}(\omega)/2}\epsilon$ by Lemma \ref{lem:1-scc e-indec}), we have a similar isomorphism $\psi_{M_\lambda(\omega)}: \nabla(M_\lambda(\omega))\to M_{\lambda^{-1}}(\omega)$ that satisfies $\nabla(\psi_{M_\lambda(\omega)})=\epsilon \psi_{M_\lambda(\omega)}$ given as follows.

\begin{lemma}\label{lem:self-dual band e-isom}
Suppose that $M_\lambda(\omega)$ is a 1-sided indecomposable $\epsilon$-representation for some primitive $\omega=\sigma(\omega)$.
Let $\{u_i\}_{0\leq i< 2r}$ and $\{v_i\}_{0\leq i< 2r}$ be the canonical basis of $M_{\lambda^{-1}}(\omega)$ and $M_\lambda(\omega)$ respectively, i.e. $u_i, v_i$ are symbols given by $\omega(z_i)$ in the notation of subsection \ref{subsec:band intro}.
Let $\{v_i^*\}_{0\leq i< c=2r}$ be the standard basis of $\nabla(M_\lambda(\omega))$ (whose underlying space is the $\Bbbk$-linear dual of $M_\lambda(\omega)$) dual to the canonical basis $\{v_i\}_{0\leq i< c=2r}$.
Then we have an isomorphism 
\[\psi_{M_\lambda(\omega)}: \nabla(M_\lambda(\omega)) \xrightarrow{\sim} M_{\lambda^{-1}}(\omega) \text{ given by } \begin{cases}
v_i^*\mapsto (-1)^{i}u_{r+i}\\
v_{r+i}^*\mapsto (-1)^{i}\lambda u_i
\end{cases}\text{ for all $0\leq i< r$}\]
such that $\nabla(\psi_{M_\lambda(\omega)})=\epsilon\psi_{M_\lambda(\omega)}$.
\end{lemma}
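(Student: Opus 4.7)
The plan is to verify the formula directly, leveraging the bijection between $\epsilon$-representation structures on $M$ and isomorphisms $\psi:\nabla M\to M$ satisfying $\nabla(\psi)=\epsilon\psi$ (see \cite[Sec 2.3]{BCI21}). Under this bijection, the bilinear form $\langle -,-\rangle$ constructed in the proof of Lemma~\ref{lem:1-scc e-indec}(a) already determines a canonical candidate for $\psi$: translating $\psi(v_j^*)=\sum_k\langle v_j,v_k\rangle v_k$ and using $\lambda^{-1}=\lambda$ for $\lambda\in\{\pm1\}$, one recovers, up to convention, the formula claimed in the statement. So the substantive content is to make this identification rigorous and to verify the three conditions below.

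First, I would check that $\psi_{M_\lambda(\omega)}$ is a $J$-module map. By Proposition~\ref{prop:lift nabla} and the discussion preceding it, the action of an arrow $\alpha\in Q_1$ on $\nabla M$ is $(\nabla M)_{\sigma(\alpha)}=-M_\alpha^*$, and by Lemma~\ref{lem:sigma rotates 1-scc} we have $\sigma(\alpha_i)=\alpha_{i+r}$ with $\alpha_i$ and $\alpha_{i+r}$ pointing in opposite directions along $\omega$. Hence for each $i$, verifying $\psi\circ(\nabla M)_{\alpha_i}=(M_{\lambda^{-1}}(\omega))_{\alpha_i}\circ\psi$ on basis vectors $v_j^*$ reduces to a finite case analysis parallel to the derivation of \eqref{eq:only b_1} in Lemma~\ref{lem:condition 3 renewed}: the ``regular'' equations (Eq:$i,j,k$) propagate the sign $(-1)^i$ through the first half of $\omega$, and (Eq:$c,j,k$) at the wrap-around arrow $\alpha_c$ produces the factor $\lambda$ in the second half.

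Second, that $\psi_{M_\lambda(\omega)}$ is an isomorphism is immediate, as it sends the basis $\{v_i^*\}_{0\leq i<c}$ to a non-zero scalar multiple of each $u_{r+i}$ or $u_i$, hence to a basis of $M_{\lambda^{-1}}(\omega)$.

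Third, for the symmetry $\nabla(\psi_{M_\lambda(\omega)})=\epsilon\,\psi_{M_\lambda(\omega)}$, I would apply $\nabla$ to the explicit formula and use the natural isomorphism $\nabla^2\cong\mathrm{id}$ to compare $\nabla(\psi)$ with $\psi$ on each $v_i^*$ and $v_{r+i}^*$. The index swap $i\leftrightarrow r+i$ forced by transposition, combined with the signs coming from $\nabla$ on morphisms and from the ${}_1\Lambda_{-\sigma}$-twist, gives exactly the ratio $(-1)^r\lambda$, matching $\epsilon$ by Lemma~\ref{lem:1-scc e-indec}(a).

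The main obstacle is sign bookkeeping: the bimodule twist $-\sigma$ contributes $(-1)^\ell$ on paths of length $\ell$, the identification $\nabla^2\cong\mathrm{id}$ absorbs further signs, and Lemma~\ref{lem:sigma rotates 1-scc} swaps the direct/inverse status of $\alpha_i$ and $\alpha_{i+r}$; these must align precisely to produce $(-1)^r\lambda$ rather than $(-1)^{r\pm 1}\lambda$. Once conventions are fixed, the verification in Steps 1 and 3 is a direct (if tedious) matrix computation on basis elements, essentially re-playing the equations \eqref{eq:only b_1} and the computation of $\epsilon$ from the proof of Lemma~\ref{lem:1-scc e-indec}(a).
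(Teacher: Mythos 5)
Your proposal follows essentially the same route as the paper: the paper's proof is a two-sentence citation stating that $\psi_{M_\lambda(\omega)}$ is the isomorphism ``naturally induced by'' the bilinear form from the proof of Lemma~\ref{lem:1-scc e-indec}(a), and that the property $\nabla(\psi)=\epsilon\psi$ follows from \cite[Sec 2.3]{BCI21}; you unpack exactly this dictionary and propose to verify the three conditions (module map, isomorphism, $\nabla$-equivariance) directly. That is more work than the paper does on the page, but it is the same underlying argument, just made explicit rather than cited. One small caution: the translation rule you write, $\psi(v_j^*)=\sum_k\langle v_j,v_k\rangle v_k$, reproduces the stated formula on the first half of the basis but gives $(-1)^{r+j}\lambda v_j$ rather than $(-1)^j\lambda u_j$ on $v_{r+j}^*$ ($0\le j<r$), so the two differ by $(-1)^r$ on only half the basis vectors --- not a global rescaling. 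You flag this yourself (``up to convention''), but it is worth noting that this is precisely the sort of discrepancy the dictionary in \cite[Sec 2.3]{BCI21} must reconcile, and a clean write-up would fix a single convention for passing from $\langle-,-\rangle$ to $\psi$ and then confirm once, say on the Kronecker band, that the resulting $\psi$ intertwines the $\alpha_c$-action with the expected sign before claiming it matches the displayed formula.
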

\begin{proof}
The isomorphism $\psi_{M_\lambda(\omega)}$ is the one naturally induced by the bilinear form defining the $\epsilon$-representation on $M_\lambda(\omega)$, using the data from the proof of Lemma \ref{lem:1-scc e-indec}.  The fact that it satisfies $\nabla(\psi_{M_\lambda(\omega)})=\epsilon \psi_{M_\lambda(\omega)}$ can be found in \cite[Sec 2.3]{BCI21}.
\end{proof}

For any almost crossing $\mt{d}\in ac(\gamma,\eta)$, say
\[
\crosswn{\gamma}{[\gamma_L a_L^{-}]}{\kappa}{[a_R \gamma_R]}{\eta^\pm}{[\eta_L b_L]}{[b_R^{-} \eta_R]},
\]
its dual $\nabla(\mt{d})\in ac(\nabla(\gamma),\nabla(\eta))$ is just the natural
\[
\crosswn{\nabla(\eta^\pm)}{[\nabla(\eta_L) \sigma(b_L)^-]}{\nabla(\kappa)}{[\sigma(b_R) \nabla(\eta_R)]}{\nabla(\gamma)}{[\nabla(\gamma_L) \sigma(a_L)]}{[\sigma(a_R)^- \nabla(\gamma_R)]}.
\]
In the case when $\eta=\biinf{\omega}=\cdots \omega_i\omega_{i+1}\cdots$ for a $\sigma$-stable closed curve $\sigma(\omega)=\omega$, we identify $\nabla(\biinf{\omega})$ with the `half-period shift' of $\biinf{\omega}$, i.e. if $\kappa\subset\biinf{\omega}$ starts at the $i$-th letter, then $\nabla(\kappa)\subset \nabla(\biinf{\omega})$ starts with the $(\mathrm{len}(\omega)/2+i)$-th letter.

Fix a $\mt{d}\in ac(\gamma,\eta)$ as above.
Let $a,b,\ell$ be integers such that 
\[
\kappa = \gamma_{a+1}\gamma_{a+2}\cdots \gamma_{a+\ell} = \eta_{b+1}\eta_{b+2}\cdots \eta_{b+\ell}.
\]
This means that, if we let $\{x_i\}_{i}$ and $\{y_j\}_j$ be the canonical basis of the indecomposable modules of $\gamma,\eta$ respectively, then the canonical map sends $x_i$ for $a\leq i\leq a+\ell$ to a linear combination of $y_j$ with $b\leq j\leq b+\ell$; here the indices are taken modulo $c:=\mathrm{len}(\omega)$ when any one of $\gamma,\eta$ is $\biinf{\omega}$.

\begin{proposition}\label{prop:dual map}
For strings $\gamma,\delta$, and primitive band whose underlying closed curve satisfies $\omega=\sigma(\omega)$, the following hold.

\begin{enumerate}[\rm(a)]
\item For any $\mt{d}\in ac(\gamma,\delta)$, we have $\nabla(f_{\mt d}) = (-1)^{a+b}\psi^{-1}_{M(\delta)}f_{\nabla(\mt{d})}\psi_{M(\gamma)}$.

\item For any $\mt{d}\in ac(\gamma, \biinf{\omega})$, we have $\nabla(g_{\mt d}^{\mathrm{bs}}) = (-1)^{a+b} \psi_{M(\delta)}^{-1}g_{\nabla(\mt{d})}^{\mathrm{sb}}\psi_{M_\lambda(\omega)}$.

\item For any $\mt{d}\in ac(\biinf{\omega}, \delta)$, we have $\nabla(g_{\mt d}^{\mathrm{sb}}) = (-1)^{a+b} \psi_{M_\lambda(\omega)}^{-1}g_{\nabla(\mt{d})}^{\mathrm{bs}}\psi_{M(\delta)}$.
\end{enumerate}
\end{proposition}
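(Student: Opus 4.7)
The strategy is a direct computation that pushes everything down to underlying vector spaces. The key observation is that $\nabla=D\circ(-\otimes_J {}_1J_{-\sigma})$ and that tensoring with ${}_1J_{-\sigma}$ only twists the module structure, so for any $J$-linear morphism $f\colon M\to N$ the map $\nabla(f)\colon\nabla(N)\to\nabla(M)$ coincides with the $\Bbbk$-linear dual $f^*$ on underlying vector spaces. Consequently, every sign appearing in (a), (b), (c) will be produced by the isomorphisms $\psi_{M(\gamma)}, \psi_{M(\delta)}, \psi_{M_\lambda(\omega)}$ introduced just before Proposition \ref{prop:dual map} and in Lemma \ref{lem:self-dual band e-isom}.

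For (a) I would fix canonical bases $\{x_i\}_{0\leq i\leq \mathrm{len}(\gamma)}$ and $\{y_j\}_{0\leq j\leq \mathrm{len}(\delta)}$ of $M(\gamma)$ and $M(\delta)$, together with the corresponding canonical bases $\{\tilde x_i\},\{\tilde y_j\}$ of $M(\nabla\gamma),M(\nabla\delta)$. By definition of $f_{\mt d}$, it sends $x_{a+k}\mapsto y_{b+k}$ for the relevant range of $k\in\{-1,0,\ldots,\ell\}$, so $\nabla(f_{\mt d})=f_{\mt d}^*$ sends $y_{b+k}^*\mapsto x_{a+k}^*$. Precomposing with $\psi_{M(\delta)}^{-1}(\tilde y_{b+k})=(-1)^{b+k}y_{b+k}^*$ and postcomposing with $\psi_{M(\gamma)}(x_{a+k}^*)=(-1)^{a+k}\tilde x_{a+k}$ yields the factor $(-1)^{a+b+2k}=(-1)^{a+b}$. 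The residual map $\tilde y_{b+k}\mapsto \tilde x_{a+k}$ is exactly $f_{\nabla(\mt d)}$, since the overlap of $\nabla(\mt d)$ sits at the same positions $a+1,\ldots,a+\ell$ of $\nabla\gamma$ and $b+1,\ldots,b+\ell$ of $\nabla\delta$ by the construction of $\nabla$ on walks. This finishes (a).

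Parts (b) and (c) follow the same template, using the matrix presentation \eqref{eq:g-bs matrix} of $g_{\mt d}^{\mathrm{bs}}$ and its transpose analogue for $g_{\mt d}^{\mathrm{sb}}$. The extra input is Lemma \ref{lem:self-dual band e-isom}: $\psi_{M_\lambda(\omega)}$ sends $v_i^*\mapsto(-1)^i u_{r+i}$ for $0\le i<r$ and $v_{r+i}^*\mapsto(-1)^i\lambda\, u_i$. The shift $i\mapsto r+i$ is precisely the identification made between $\nabla(\biinf\omega)$ and the half-period shift of $\biinf\omega$ recorded immediately after Lemma \ref{lem:self-dual band e-isom}, so after conjugation the dual of a $g^{\mathrm{bs}}$ lands on a $g^{\mathrm{sb}}$ whose underlying almost crossing is $\nabla(\mt d)$ (and vice versa for (c)). The $\lambda$-twist built into $\psi_{M_\lambda(\omega)}$ on the second half of the basis combines with the $\lambda^{d-k}$ (respectively $\lambda^{d+k}$) that appears in block $k$ of the matrix of $g_{\mt d}^{\mathrm{bs}}$ (resp.\ $g_{\mt d}^{\mathrm{sb}}$) to produce exactly the exponent required in $g_{\nabla(\mt d)}^{\mathrm{sb}}$ (resp.\ $g_{\nabla(\mt d)}^{\mathrm{bs}}$). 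The alternating signs collapse as in (a) to the universal constant $(-1)^{a+b}$.

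The main obstacle is bookkeeping: one must track in parallel (i) the sign $(-1)^i$ present on both sides of $\psi_{M(-)}$ and $\psi_{M_\lambda(\omega)}$, (ii) the scalar $\lambda$ attached to the ``second half'' of the basis of $\nabla M_\lambda(\omega)$, and (iii) the period-shift relations $g^{\mathrm{sb}}_{p(\mt d)}=\lambda^{-1}g^{\mathrm{sb}}_{\mt d}$ and $g^{\mathrm{bs}}_{p(\mt d)}=\lambda\,g^{\mathrm{bs}}_{\mt d}$ under conjugation by $\psi_{M_\lambda(\omega)}$, whose underlying shift is exactly one half-period. Lemma \ref{lem:sigma rotates 1-scc}, which says $\sigma$ acts on a $\sigma$-fixed primitive closed curve as rotation by $r=\mathrm{len}(\omega)/2$, is what guarantees that these shifts match. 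Once the indexing and scalars are aligned, the degenerate cases (where one or more of $a_L,a_R,b_L,b_R$ is missing) require no separate treatment, because the overall sign $(-1)^{a+b}$ is insensitive to the length or the truncation of the overlap.
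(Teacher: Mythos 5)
Your proposal is correct and follows essentially the same route as the paper: reduce $\nabla(f)$ to the $\Bbbk$-linear transpose on underlying vector spaces (the $-\sigma$ twist only affects the module structure, not the transpose of a morphism), then unwind the explicit formulas for $\psi_{M(\gamma)}$, $\psi_{M(\delta)}$, $\psi_{M_\lambda(\omega)}$ together with the block-matrix form \eqref{eq:g-bs matrix} to track the signs and $\lambda$-powers. The only minor divergence is that the paper obtains (c) by applying $\nabla$ to both sides of (b), whereas you argue it directly; both are fine.
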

\begin{proof}
(a) This is straightforward to check from the definition of the $\psi$ maps and $\nabla(\mt{d})$.

(b) Consider $g_{\mt d}^{\mathrm{bs}}$ in its matrix form as in \eqref{eq:g-bs matrix} but with each submatrix $f_{\mt{d},i}$ decomposed further into a 2-by-2 block $\left(\begin{smallmatrix}
f_{\mt{d},i}^L & 0 \\ 0 & f_{\mt{d},i}^R\end{smallmatrix}\right)$, each of size $r$-by-$r$ for $r:=\mathrm{len}(\omega)/2$, that is, 
\[
g_{\mt d}^{\mathrm{bs}} = \begin{pmatrix}
0&  \lambda^d f_{\mt{d},0}^L & 0 &\lambda^{d-1} f_{\mt{d},1}^L & 0 & \cdots & \lambda^{d-l} f_{\mt{d},l}^L & 0 & 0  \\
0&0 & \lambda^d f_{\mt{d},0}^R & 0 & \lambda^{d-1} f_{\mt{d},1}^R & \cdots & 0& \lambda^{d-l} f_{\mt{d},l}^R & 0
\end{pmatrix}.
\]
Since the overlap of $\nabla(\mt{d})$ starts $r$ places later than that of $\mt{d}$ in $\biinf{\omega}$, the matrix form of $g_{\nabla(\mt{d})}^{\mathrm{sb}}$ will be of the form:
\[
g_{\nabla(\mt{d})}^{\mathrm{sb}} = \begin{pmatrix}
0 & 0\\
0 & \lambda^d f_{\nabla(\mt{d})_R,0} \\
\lambda^{d-1} f_{\nabla(\mt{d})_L,1} & 0\\
0 & \lambda^{d-1} f_{\nabla(\mt{d})_R,1} \\
\vdots & \vdots \\
0& \lambda^{d-l} f_{\nabla(\mt{d})_R,l} \\
\lambda^{d-l-1} f_{\nabla(\mt{d})_R,l+1} & 0\\
0 & 0
\end{pmatrix}.
\]
Note that here, for each $0\leq i\leq l$, $f_{\nabla(\mt{d}),i}^R$ is the transpose of $f_{\mt{d},i}^L$  and $f_{\nabla(\mt{d}),i+1}^L$ is the transpose of $f_{\nabla(\mt{d}),i}^RX$.  Also, the domain of $g_{\mt{d}}^{\mathrm{sb}}$ is $M_{\lambda^{-1}}(\omega)$, and so the exponent on $\lambda$ decreases as we go down the rows.

Now we compare the above matrix with $\psi_{M(\gamma)} \nabla(g_{\mt{d}}^{\mathrm{bs}})\psi_{M_\lambda(\omega)}^{-1}$.
As maps of vector spaces, $\nabla(g_{\mt{d}})^{\mathrm{bs}})$ is just $\Bbbk$-linear dual of $g_{\mt{d}}^{\mathrm{bs}}$, so the corresponding matrix is just taking the transpose of $g_{\mt{d}}^{\mathrm{bs}}$.  By the definition of $\psi_{M_\lambda(\omega)}^{-1}$, it swaps the two (block-)columns of the tranposed matrix, and then multiply $\lambda^{-1}$ to the first column, and multiply a further $(-1)^b$ to the resulting matrix.  The effect of $\psi_{M(\gamma)}$ further multiplies a factor of $(-1)^a$ but does not permute any entries of the matrix.  This results in the same matrix as $(-1)^{a+b}g_{\nabla(\mt{d})}^{\mathrm{sb}}$, as claimed.

(c) Follows from (b) by applying $\nabla$ on both sides.
\end{proof}

\begin{corollary}\label{cor:compose g with dual}
For any $\mt{d}\in ac(\biinf{\omega},\delta)$, we have $g_{[\mt{d}]}\psi_{M_\lambda(\omega)}\nabla(g_{[\mt {d}]}^{\mathrm{bs}})\neq 0$ if $p^m(\mt{d})\cdot\nabla(\mt{d})\neq \emptyset$ for some $m\in\Z$.
\end{corollary}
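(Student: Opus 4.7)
The plan is to reduce the composition to a sum of canonical maps between string modules and invoke Crawley--Boevey's linear independence. First, I would use Proposition~\ref{prop:dual map}(c) to rewrite $\nabla(g_{[\mt{d}]}^{\mathrm{sb}})$ as $(-1)^{a+b}\,\psi_{M_\lambda(\omega)}^{-1}\circ g_{\nabla(\mt{d})}^{\mathrm{bs}}\circ \psi_{M(\delta)}$, where $a,b$ record the positions of the overlap of $\mt{d}$. Pre-composing with $\psi_{M_\lambda(\omega)}$ cancels the inverse, and then pre-composing on the left with $g_{[\mt{d}]}^{\mathrm{sb}}$ reduces the composition of interest to
\[
g_{[\mt{d}]}^{\mathrm{sb}} \circ \psi_{M_\lambda(\omega)} \circ \nabla(g_{[\mt{d}]}^{\mathrm{sb}}) = (-1)^{a+b}\, g_{[\mt{d}]}^{\mathrm{sb}} \circ g_{\nabla(\mt{d})}^{\mathrm{bs}} \circ \psi_{M(\delta)}.
\]
Since $\psi_{M(\delta)}$ is an isomorphism, it will suffice to prove $g_{[\mt{d}]}^{\mathrm{sb}}\circ g_{\nabla(\mt{d})}^{\mathrm{bs}}\neq 0$.

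Next, I would apply the composition formula \eqref{eq:compose band string maps} to obtain the expansion
\[
g_{[\mt{d}]}^{\mathrm{sb}}\circ g_{\nabla(\mt{d})}^{\mathrm{bs}} = \sum_{m\in\Z} \lambda^m\, f_{p^m(\mt{d})\cdot \nabla(\mt{d})} \;\in\; \Hom_J(M(\nabla(\delta)),M(\delta)).
\]
By the hypothesis, at least one of the summands $f_{p^m(\mt{d})\cdot\nabla(\mt{d})}$ is a non-zero canonical map; so one only needs to rule out cancellations among the finitely many non-zero summands, after which Theorem~\ref{thm:Crawley Boevey string map} closes the argument.

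The main obstacle is precisely this cancellation issue: one must verify that the non-empty composite almost crossings $p^m(\mt{d})\cdot\nabla(\mt{d})$ for different $m\in\Z$ give pairwise distinct elements of $ac(\nabla(\delta),\delta)$. I expect this to follow from primitivity of $\omega$ together with the canonical convention fixing a unique representative $\mt{d}'$ of $[\mt{d}]$ via $y_j\sim_{\mt{d}'}x_i$ with $0\le i<c$: a full-period shift $p$ strictly translates the overlap inside the bi-infinite word $\biinf{\omega}$, so different $m$'s producing non-empty intersections yield overlaps at strictly different positions in $\biinf{\omega}$, which project down to distinct almost crossings between $\nabla(\delta)$ and $\delta$.
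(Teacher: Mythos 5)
Your proof follows the same route as the paper's own (one-line) argument: Proposition~\ref{prop:dual map}(c) reduces the composition to $g_{[\mt{d}]}^{\mathrm{sb}}\circ g_{\nabla\mt{d}}^{\mathrm{bs}}$ (up to the isomorphism $\psi_{M(\delta)}$ and a sign), and then \eqref{eq:compose band string maps} expands this as $\sum_{m\in\Z}\lambda^m f_{p^m(\mt{d})\cdot\nabla\mt{d}}$, so by Theorem~\ref{thm:Crawley Boevey string map} the result is nonzero as long as the nonzero summands do not cancel. (You also correctly read the ``bs'' superscript in the corollary statement as a typo for ``sb''.)

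The cancellation concern you flag is real and the paper's proof glosses over it, but your sketch of the resolution is slightly off the mark. An element of $ac(\nabla(\delta),\delta)$ is determined by where the overlap sits in $\nabla(\delta)$ and in $\delta$, \emph{not} by where it sits in $\biinf{\omega}$; two composite crossings with distinct overlaps in $\biinf{\omega}$ could in principle still give the same positions in the two strings. The correct argument is that the position in $\nabla(\delta)$ is governed by the starting point $\alpha(m)$ of the intersected overlap in $\biinf{\omega}$ (because $\nabla\mt{d}$'s placement is $m$-independent), while the position in $\delta$ is governed by $\alpha(m)+mc$ (because $p^m(\mt{d})$'s overlap has been translated by $-mc$ along $\biinf{\omega}$ while its placement in $\delta$ is fixed). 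Equality of both positions forces $\alpha(m_1)=\alpha(m_2)$ and $\alpha(m_1)+m_1c=\alpha(m_2)+m_2c$, hence $m_1=m_2$. So the nonempty composites $p^m(\mt{d})\cdot\nabla\mt{d}$ are pairwise distinct elements of $ac(\nabla(\delta),\delta)$ and the Crawley--Boevey basis theorem gives nonvanishing. Note that this particular step does not actually use primitivity of $\omega$ (which you invoked), only the translation structure of the period shift; primitivity matters elsewhere in the setup.
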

\begin{proof}
This follows from combining Proposition \ref{prop:dual map} with \eqref{eq:compose band string maps}.
\end{proof}

\begin{corollary}\label{lem:nabla composition commute}
For any strings $\gamma,\delta,\eta$, and almost crossings $\mt{d}\in ac(\gamma,\delta), \mt{e}\in ac(\delta,\eta)$, we have $\nabla(\mt{e}\cdot\mt{d})=\nabla\mt{d} \cdot \nabla\mt{e}$.
In particular, we have \[\nabla(f_{\mt{e}}\circ f_{\mt{d}})=\pm \psi_{M(\eta)}^{-1} f_{\nabla\mt{e}} f_{\nabla\mt{d}} \psi_{M(\gamma)}.\]
\end{corollary}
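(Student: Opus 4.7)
The plan is to first establish the combinatorial identity $\nabla(\mt{e}\cdot\mt{d})=\nabla\mt{d}\cdot\nabla\mt{e}$, and then deduce the morphism statement by combining Crawley-Boevey's description of canonical maps (Theorem \ref{thm:Crawley Boevey string map}) with the duality formula of Proposition \ref{prop:dual map}(a) and the contravariance of $\nabla$.

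For the combinatorial part, recall that $\mt{d}\in ac(\gamma,\delta)$ is determined by an overlap $\kappa_{\mt{d}}$ visible as a common substring of $\gamma$ and $\delta$, and similarly $\mt{e}\in ac(\delta,\eta)$ by an overlap $\kappa_{\mt{e}}$ in $\delta$ and $\eta$. By definition, $\mt{e}\cdot\mt{d}\neq\emptyset$ precisely when the two overlaps meet in $\delta$, in which case its overlap is $\kappa_{\mt{d}}\cap_\delta\kappa_{\mt{e}}$. Now the dual operation $\nabla$ on strings reverses letter order and replaces each letter $\alpha^{\pm}$ by $\sigma(\alpha)^{\mp}$, so it carries $\mt{d}$ to an almost crossing $\nabla\mt{d}\in ac(\nabla\delta,\nabla\gamma)$ with overlap $\nabla(\kappa_{\mt{d}})$, and similarly $\nabla\mt{e}\in ac(\nabla\eta,\nabla\delta)$ with overlap $\nabla(\kappa_{\mt{e}})$. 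Because $\nabla$ is a bijection between substrings of $\delta$ and substrings of $\nabla\delta$ that preserves inclusion, the intersection in $\nabla\delta$ of $\nabla(\kappa_{\mt{d}})$ and $\nabla(\kappa_{\mt{e}})$ is exactly $\nabla(\kappa_{\mt{d}}\cap_\delta\kappa_{\mt{e}})$, which is the overlap of both $\nabla\mt{d}\cdot\nabla\mt{e}$ and $\nabla(\mt{e}\cdot\mt{d})$. Matching up the bracketed boundary data in Definition \ref{def:almost crossing} on the two sides is then immediate from the definition of $\nabla$ on strings, yielding the claimed identity; in the degenerate case where the overlaps are disjoint both sides are $\emptyset$.

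For the morphism statement, apply Crawley-Boevey's composition rule $f_{\mt{e}}\circ f_{\mt{d}}=f_{\mt{e}\cdot\mt{d}}$ (with the convention $f_{\emptyset}=0$). Then $\nabla(f_{\mt{e}}\circ f_{\mt{d}})=\nabla(f_{\mt{e}\cdot\mt{d}})$, and Proposition \ref{prop:dual map}(a) rewrites this as $\pm\psi^{-1}_{M(\eta)}f_{\nabla(\mt{e}\cdot\mt{d})}\psi_{M(\gamma)}$. Using the combinatorial identity just proved together with Crawley-Boevey once more, we have $f_{\nabla(\mt{e}\cdot\mt{d})}=f_{\nabla\mt{d}\cdot\nabla\mt{e}}=f_{\nabla\mt{d}}\circ f_{\nabla\mt{e}}$, which gives the desired expression up to a sign.

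The only mildly subtle point I anticipate is bookkeeping of the starting indices $a,b$ in Proposition \ref{prop:dual map}(a) when proving the identity directly via contravariance of $\nabla$, i.e.\ checking that the sign $(-1)^{a_{\mt{d}}+b_{\mt{d}}+a_{\mt{e}}+b_{\mt{e}}}$ obtained by applying the proposition twice (together with contravariance $\nabla(f_{\mt{e}}\circ f_{\mt{d}})=\nabla(f_{\mt{d}})\circ\nabla(f_{\mt{e}})$) matches the sign $(-1)^{a_{\mt{e}\cdot\mt{d}}+b_{\mt{e}\cdot\mt{d}}}$ coming from a single application to $\mt{e}\cdot\mt{d}$, and that the middle pair $\psi_{M(\delta)}\psi^{-1}_{M(\delta)}$ cancels cleanly. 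Since the statement only claims equality up to sign this is not essential, but it is the one computation that will need care; it reduces to an even-parity check on the length of the portion of $\delta$ strictly between the two overlaps.
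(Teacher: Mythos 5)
Your proposal is correct and follows essentially the same route as the paper, whose proof is exactly the two-step argument you outline: the combinatorial identity $\nabla(\mt{e}\cdot\mt{d})=\nabla\mt{d}\cdot\nabla\mt{e}$ is read off directly from the definitions of composition of almost crossings and of $\nabla$ on strings, and the morphism statement then follows by a single application of Proposition~\ref{prop:dual map}(a) together with the Crawley--Boevey composition rule $f_{\mt{e}}f_{\mt{d}}=f_{\mt{e}\cdot\mt{d}}$. One small inaccuracy: the paper's $\nabla$ on strings does \emph{not} reverse letter order (it is $\nabla(a_1^{\epsilon_1}\cdots a_\ell^{\epsilon_\ell})=\sigma(a_1)^{-\epsilon_1}\cdots\sigma(a_\ell)^{-\epsilon_\ell}$); the swap of source and target of the almost crossing comes from $\sigma$ reversing arrow direction, not from reversing the string. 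This does not affect your argument, since all you use is that $\nabla$ is an inclusion-preserving bijection on subwords. Also note that your primary route (apply Proposition~\ref{prop:dual map}(a) once to $\mt{e}\cdot\mt{d}$, then rewrite $f_{\nabla(\mt{e}\cdot\mt{d})}$ via the combinatorial identity and Crawley--Boevey) entirely sidesteps the ``sign parity'' subtlety you flag in the last paragraph; that bookkeeping only arises in the alternative derivation via two applications of the proposition, which the stated ``up to sign'' form makes unnecessary anyway.
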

\begin{proof}
First part is straightforward from definition of composition and $\nabla$-operation on almost crossings.  The second part then follows by applying Proposition \ref{prop:dual map} (a).
\end{proof}

\subsection{\texorpdfstring{$\epsilon$}{e}-rigidity for 1-sided \texorpdfstring{$\epsilon$}{e}-indecomposables}

\begin{lemma}\label{lem:1-sided e-rigid}
Let $\omega=\sigma(\omega)$ be a primitive closed curve and $(\omega,\lambda)$ a 1-sided $\epsilon$-indecomposable object.
If $\omega$ is simple, then is $\epsilon$-rigid.
\end{lemma}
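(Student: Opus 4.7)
The plan is to show that any morphism $f: X \to \alpha[1]$ satisfying $f \circ \psi_X \circ \nabla(f) = 0$ must vanish, where $X = (\omega, \lambda)$ and $\alpha$ is any $\epsilon$-factor of $X$. By Definition \ref{def:e-obj}(iii), $\alpha$ is a non-closed curve given by the substring $\omega_{i+1}\cdots \omega_{i+r-1}$ of $\omega$ (with $r = \mathrm{len}(\omega)/2$), i.e., a ``half-arc'' of $\omega$ ending at marked points of $\wti{\SM}$.

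The main approach is to use the almost-crossing machinery developed above to describe both $f$ and $\nabla(f)$ explicitly. First, I would use Proposition \ref{prop::Homs} to decompose $\Ext^1_\C(X, \alpha)$ into a part coming from the non-initial-arc piece of $\alpha$ (which by Theorem \ref{thm:Krause band map} has a canonical basis indexed by almost-crossings between $\biinf{\omega}$ and $\alpha$, together with their duals) and a part coming from any trivial-string summand of $\alpha$ (the case $r=1$, where the extra piece is a direct sum of Hom-spaces $\Hom_\Lambda(P_v, M_\lambda(\omega))$). Any non-zero $f$ admits an expansion in this combined basis.

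Next I would use Proposition \ref{prop:dual map} to write $\nabla(f)$ in the dual canonical basis up to signs and the isomorphisms $\psi_{M(\alpha)}$, $\psi_{M_\lambda(\omega)}$ (the latter described explicitly in Lemma \ref{lem:self-dual band e-isom}). The composition $f \circ \psi_X \circ \nabla(f)$ can then be unfolded via the composition rule \eqref{eq:compose band string maps} and Corollary \ref{cor:compose g with dual} as a sum of canonical maps indexed by composed almost-crossings of the form $p^m(\nabla\mt{d}) \cdot \mt{d}$. The key geometric claim, using the simplicity of $\omega$, is that such composed crossings always have non-empty overlap; combined with the half-period behaviour from Lemma \ref{lem:sigma rotates 1-scc}, this forces the resulting element to be non-vanishing in the Hom-space. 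Hence the $\epsilon$-extension condition forces every coefficient of $f$ in the canonical basis to be zero, giving $f = 0$.

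The main obstacle will be carrying out the sign- and overlap-bookkeeping in the composition rule, in particular in the edge case $r = 1$ (e.g., the Kronecker/M\"{o}bius-strip-with-one-marked-point case). There $\Ext^1_\C(X, \alpha)$ is genuinely non-zero --- indeed $\Hom_\Lambda(P_v, M_\lambda(\omega))$ is one-dimensional --- so one cannot conclude abstractly and must verify by a direct matrix calculation that the interaction between the unique non-zero $f$ and the explicit $\psi_X$ from Lemma \ref{lem:self-dual band e-isom} produces a non-vanishing composition. This contrasts sharply with Lemma \ref{lem:1sided non-eps-rigid}, where the analogous computation for $(\omega^2,\lambda)$ produces a non-trivial $\epsilon$-extension thanks to the self-extension $0 \to M_\lambda(\omega) \to M_\lambda(\omega^2) \to M_\lambda(\omega) \to 0$, which has no counterpart for $(\omega,\lambda)$; that is precisely the difference simplicity of $\omega$ makes.
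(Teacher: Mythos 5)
Your proposal follows the paper's general strategy — use the almost-crossing bases from Theorem \ref{thm:Krause band map}, Proposition \ref{prop:dual map}, the composition rule, and Corollary \ref{cor:compose g with dual} to show that a non-zero morphism $f$ cannot satisfy $f\circ\psi_X\circ\nabla(f)=0$ — but it has a genuine gap in the middle.

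The missing ingredient is the paper's \emph{Claim 1}: that $\dim_{\Bbbk}\Hom_J(M(\rho), M_\lambda(\omega))=0$ and $\dim_{\Bbbk}\Hom_J(M_\lambda(\omega), \tau M(\rho))=1$, so that $\Ext_\C^1((\omega,\lambda),\rho)$ is \emph{one-dimensional}. This is proved using the simplicity of $\omega$ (any almost crossing from $\rho$ to $\biinf{\omega}$ would lift, via the embedded copy $\alpha_i^-\rho\alpha_{r+i}\subset\biinf{\omega}$, to a self-crossing of $\omega$; and multiple crossings from $\biinf{\omega}$ to $\tau(\rho)$ would likewise force a self-crossing). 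The reason the paper needs this is precisely the cancellation issue your argument elides: if $f$ had an expansion $\sum_i c_i g_{\mt{d}_i}$ over several basis elements, then $f\circ\psi_X\circ\nabla(f)$ would be a double sum $\sum_{i,j} c_i\bar{c}_j\, g_{\mt{d}_i}\psi_X\nabla(g_{\mt{d}_j})$, and nonemptiness of the composed crossings in each term does \emph{not} preclude the sum vanishing. You assert ``this forces the resulting element to be non-vanishing'' and that ``every coefficient of $f$ must be zero'', but without the dimension count there is no way to rule out cancellation. Once the $\Ext$-space is one-dimensional, $\epsilon$-extensions being closed under scalars reduces the whole thing to the single composition $\mt{d}\cdot\nabla(\mt{d})\neq\emptyset$, which the paper carries out explicitly using the description of $\tau(\rho)$ and the half-period-shift identity $\nabla(\omega_i)=\omega_{i+r}$.

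A secondary issue is your treatment of the case $r=1$. You worry about an extra $\Hom_\Lambda(P_v, M_\lambda(\omega))$ contribution from a ``trivial-string summand of $\alpha$''. This misreads Proposition \ref{prop::Homs}(c): the extra $\Hom$-terms there arise only when $X$ or $Y$ has a direct summand in $\add(\wti{T}[1])$, i.e.\ a shifted initial arc. When $r=1$ the $\epsilon$-factor $\rho$ is a trivial string $e_x$, but a trivial string corresponds to a curve crossing $T$ once at arc $x$, which is \emph{not} the arc $x$ itself — so $\rho$ is still non-initial, $M(\rho)=S_x$ is a simple module, and there is no extra Hom-space to handle. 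The paper's argument runs uniformly in $r$ with no separate $r=1$ case; your contrast with Lemma \ref{lem:1sided non-eps-rigid} at the end is the right intuition (the self-extension of $M_\lambda(\omega)$ is what makes $(\omega^2,\lambda)$ non-$\epsilon$-rigid, while $(\omega,\lambda)$ lacks it), but that is only intuition, not a substitute for Claim 1.
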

\begin{proof}
We need to show that every non-zero $f\in \Ext_\C^1((\omega,\lambda),\rho)$ has $f\psi_{(\omega,\lambda)}\nabla(f)$ for all $\epsilon$-factor $\rho$ of $(\omega,\lambda)$.  Note that by Proposition \ref{prop::Homs} and $\tau$-invariance of band modules, $\Ext_\C^1((\omega,\lambda),\rho) \cong \Hom_J(M_\lambda(\omega), \tau M(\rho)\oplus \Hom_J(M(\rho),M_\lambda(\omega))$ as vector space.

Write the underlying walk of $\omega$ as $\omega_1\cdots \omega_r\cdots \omega_{2r}$ and $\omega_i=\alpha_i^{\epsilon_i}$ with $\alpha_i\in Q_1$ and $\epsilon_i\in\{\pm1\}$ for all $1\leq i\leq 2r$.
For the indices of letters or arrows in $\omega$, we will also use arithmetic modulo $2r$ taking values in $\{1,\ldots, 2r\}$.
Recall from Lemma \ref{lem:sigma rotates 1-scc} that $\sigma(\alpha_i)=\alpha_{i+r}$ for all $1\leq i\leq 2r$.
Let us fix an $i\in\{1,\ldots,r\}$ so that $\rho = \omega_{i+1}\omega_{i+2}\cdots \omega_{i+r-1}$.  
%Note that $\omega_{i+r}=\alpha_{i+r}$ (i.e. $\epsilon_i=+1$) by the definition of $\rho$ being an $\epsilon$-factor of $\omega$.
The form of the curves $\rho, \sigma(\rho), \omega$ are shown in Figure \ref{fig:e-factor curves}, where the dashed lines are supposed to be identified, the shaded parts represent the `outside' of $\partial\S$, and two triangles are part of the triangulation.

\begin{figure}[!htbp]
\centering
\begin{tikzpicture}[
	bdry/.style={postaction={draw,decorate,decoration={border,angle=-45,amplitude=2.5mm,segment length=1.2mm}}}
	]
%identify
\draw[dashed,->] (0,-2) -- (0,2);\draw[dashed,->] (12,-2) -- (12,2);
%1-sided closed curve
\draw[blue] (0,0) -- node[pos=.1,above]{$\omega$} (12,0);
% factor curves
\draw[magenta] (0,0) -- node[pos=.2,below left]{$\sigma(\rho)$}  (4,-2) (12,0) -- (8,2);
\draw[orange] (2,-2) --  node[pos=.35,below right]{$\rho$} (10,2);
\draw[red] (1.4,-1.4) -- (10.6,1.4);

%surface
\draw[bdry] (1.2,-1.2) -- +(1.2,-1.2);
\draw[bdry] (3.5,-2.5) -- (4.5,-1.5);
\draw[gray] (2,-2) -- (4,-2) -- (3,2) -- cycle;\mkpt{2,-2}\mkpt{4,-2}\mkpt{3,2}
\draw (3.2,1) edge[->,bend left] node[below]{$\sigma(\alpha)$} (2.8,1);
\mkpt{1.4,-1.4}
\draw[-latex] (2,-1.8) -- node[right,pos=0.7]{$\tau$} +(-0.4, 0.4);
\begin{scope}[scale=-1,xshift=-12cm]
\draw[bdry] (1.2,-1.2) -- +(1.2,-1.2);
\draw[bdry] (3.5,-2.5) -- (4.5,-1.5);
\draw[gray] (2,-2) -- (4,-2) -- (3,2) -- cycle;\mkpt{2,-2}\mkpt{4,-2}\mkpt{3,2}
\draw (3.2,1) edge[->,bend left] node[midway,above]{$\alpha$} (2.8,1);
\mkpt{1.4,-1.4}
\draw[-latex] (2,-1.8) -- node[left,pos=0.7]{$\tau$} +(-0.4, 0.4);
\end{scope}
\end{tikzpicture}\caption{The curves $\omega, \rho, \sigma(\rho), \tau(\rho)$ on $\SM$}\label{fig:e-factor curves}
\end{figure}
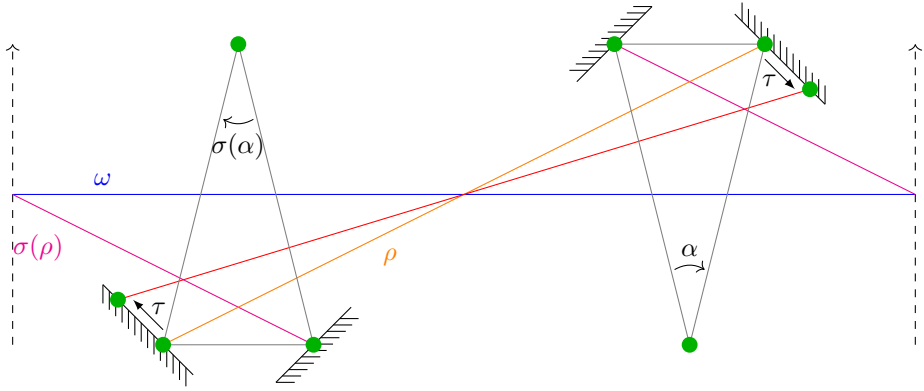

The first thing we claim is that $\dim_{\Bbbk} \Ext_\C^1((\omega,\lambda),\rho) = 1$.  It is a folklore that dimension of $\Ext_\C^1$ counts geometric intersection number (see \cite{CS17} for related result for the case of intersections between two curves with endpoints), so the claim can be easily seen from picture (Figure \ref{fig:doublecover}).  Since the exact statement is not shown in the literature, we will do it explicitly here.

\textbf{Claim 1}: $\dim_{\Bbbk}\Hom_J(M(\rho), M_\lambda(\omega))=0$ and $\dim_{\Bbbk}\Hom_J( M_\lambda(\omega), \tau M(\rho))=1$.

\textit{Proof of Claim}: 
By Theorem \ref{thm:Krause band map}, it suffices to show $ac(\rho,\biinf{\omega}) =\emptyset$ and $|ac(\biinf{\omega},\tau(\rho))/\Z|=1$, where $\tau(\rho)$ is the string so that $\tau M(\rho)\cong M(\tau(\rho))$.

Since $\alpha_{i}^-\rho\alpha_{r+i}$ is a subwalk of $\biinf{\omega}$, any almost crossing from $\rho$ to $\biinf{\omega}$ will induce an almost crossing from $\biinf{\omega}$ to itself, which then means that the closed curve $\omega$ has a self-crossing.  Thus, the set $ac(\rho,\biinf{\omega})$ is empty.

For $ac(\biinf{\omega}, \tau(\rho))$, first recall from \cite[Sec 3]{BZ11} that $\tau(\rho)$ can be described by moving the endpoints of $\rho$ to the `next' marked point on the boundary; see Figure \ref{fig:e-factor curves}.  In terms of strings, $\tau(\rho) = p_L \alpha_i^{-}\rho\alpha_{r+i} p_R^{-}$ for some (maximal or trivial) paths $p_L,p_R$; see Figure \ref{fig:tau rho}.  Hence, there are some $j,k\in\{1,\ldots, 2r\}$ so that we have an almost crossing
\[
\mt{d}=\crosswn{\biinf{\omega}}{\omega_L \alpha_j^- }{p_L \alpha_i^-\rho \alpha_{i+r}p_R^-}{\alpha_{k}\omega_R}{\tau(\rho)}{\phantom{.}}{\phantom{.}}
\]
in $ac(\biinf{\omega},\tau(\delta))$.
This is the only almost crossing (up to $\Z$-action) as otherwise we will have a almost self-crossing in $\omega$, which contradicts the simplicity assumption. $\blacksquare$

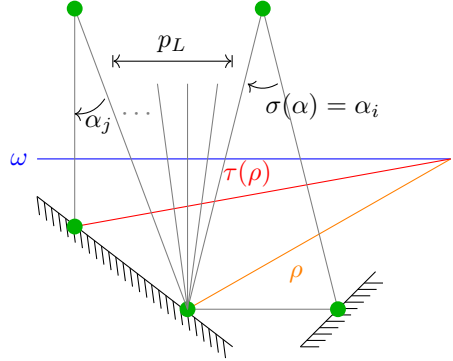
\begin{figure}[!htbp]
\centering
\begin{tikzpicture}[
	bdry/.style={postaction={draw,decorate,decoration={border,angle=-45,amplitude=2.5mm,segment length=1.2mm}}}
	]
%1-sided closed curve
\draw[blue] (0,0) -- node[pos=0,left]{$\omega$} (5.5,0);
% factor curves
\draw[orange] (2,-2) --  node[pos=0.35,below right]{$\rho$} (5.5,0);
\draw[red] (0.5,-0.9) -- node[pos=0.46,above]{$\tau(\rho)$} (5.5,0);
%surface
\draw[bdry] (0,-0.5) -- (2.6,-2.5);
\draw[bdry] (3.5,-2.5) -- (4.5,-1.5);
\draw[gray] (2,-2) -- (4,-2) -- (3,2) -- cycle;\mkpt{2,-2}\mkpt{4,-2}\mkpt{3,2}
\draw[gray] (2,-2) -- (2.4,1) (2,-2) -- (2,1) (2,-2) -- (1.6,1);\node[gray] at (1.35,0.6) {$\cdots$};
\draw[gray] (2,-2) -- (0.5,2) -- (0.5,-0.9);
\mkpt{0.5,2}\mkpt{0.5,-0.9}
% labels
\draw (3.2,1) edge[->,bend left]  (2.8,1);
\node at (3.8,0.7) {$\sigma(\alpha)=\alpha_{i}$};
\draw[|<->|] (1,1.3) -- node[midway,above] {$p_L$} (2.6,1.3);
\draw (0.9,0.8) edge[->,bend left] node[pos=0.25,below] {$\alpha_j$} (0.5,0.6);
\end{tikzpicture}\caption{Understanding the string form of $\tau(\rho)$}\label{fig:tau rho}
\end{figure}

By Claim 1 and Proposition \ref{prop::Homs}, $\Ext_\C^1((\omega,\lambda),\rho)$ is one-dimensional, and since the space of $\epsilon$-extensions is closed under scalar multiple, it is enough to show for an arbitrary non-zero morphism $f\in \Ext_\C^1((\omega,\lambda),\rho)$ satisfies $f\psi_{(\omega,\lambda)} \nabla(f)$.  Since there is an equivalence $M(-):\C/[\wti{T}] \xrightarrow{\sim} \mod J$, it suffices to show that the canonical basis element $g_{\mt d}\in \Hom_J(M_{\lambda}(\omega), \tau M(\rho))$ for $\mt{d}$ the unique (equivalence class of) almost crossing in $ac(\biinf{\omega},\tau(\rho))/\Z$ that $g_{\mt d}\psi_{M_\lambda(\omega)}\nabla(g_{\mt d})\neq 0$.  By Corollary \ref{cor:compose g with dual}, it suffices to show that for the unique almost crossing $\mt{d}\in ac(\biinf{\omega}, \tau(\rho))$, there is some $m\in \Z$ so that $\mt{d}\cdot \nabla(\mt{d})\neq \emptyset$.

If we index $\omega = (\omega_t)_{t\in \Z}$ so that $\omega_t = \alpha_{\overline{t}}^{\epsilon_{\overline{t}}}$, where $\overline{t}\in \{1,\ldots, 2r\}$ is $t$ modulo $2r$, then the effect of applying $\nabla$ to $\biinf{\omega}$ takes $\omega_t=\alpha_{\overline{t}}^{\epsilon_{\overline{t}}}$ to $\omega_{r+t} = \alpha_{\overline{r+t}}^{-\epsilon_{\overline{t}}}$.
Now the dual almost crossing is:
\[
\nabla(\mt{d})=\crosswn{\nabla(\tau(\rho))}{}{\nabla(p_L) \alpha_{r+i}\nabla(\rho) \alpha_{i}\nabla(p_R)}{}{\biinf{\omega}}{\nabla(\omega_L) \alpha_{r+j}}{\alpha_{r+k}^-\nabla(\omega_R).}
\]

Note that the $\alpha_i$ appearing in $\biinf{\omega}$ in the second line is $\omega_{2r+i}$, i.e. this $\alpha_i$ appears in the `next copy of $\omega$ in $\biinf{\omega}$' relative to the $\alpha_i$ in $\mt{d}$.
On the hand, $\alpha_{r+i}$ in both $\mt{d}$ and $\nabla(\mt{d})$ are the same letter in $\biinf{\omega}$ (namely, $\omega_{r+i}$).
Also, observe from Figure \ref{fig:tau rho} that $p_R^-$ is a prefix of $\nabla(\rho)$, and so $\nabla(\rho)=p_R^-\alpha_k q$ for some walk $q$.
Dually, we have $\rho=q'\alpha_{r+j}\nabla(p_L)$.

Now combine all the information we have
\[
\mt{d}\cdot\nabla(\mt{d})= \crosswn{\nabla(\tau(\rho))}{}{\nabla(p_L)\alpha_{r+i}p_R^-}{\alpha_k q \alpha_i\nabla(p_R^-)}{\tau(\rho)}{p_L\alpha_i^-q'\alpha_{r+j}}{},
\]
a well-defined almost crossing as required.
\end{proof}

The rest of this subsection is to show the following converse of Lemma \ref{lem:1-sided e-rigid}:

\begin{lemma}\label{lem:1-sided with self-intersection non-rigid}
Let $\omega=\sigma(\omega)$ be a primitive closed curve and $(\omega,\lambda)$ be a 1-sided $\epsilon$-indecomposable object.
If $\omega$ has self-intersection, then $(\omega,\lambda)$ is not $\epsilon$-rigid.
\end{lemma}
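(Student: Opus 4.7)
The plan is to produce, under the assumption that $\omega$ self-intersects, a non-zero morphism $f \in \Hom_\C((\omega,\lambda), \rho[1])$ for some $\epsilon$-factor $\rho$ of $(\omega,\lambda)$ satisfying $f \circ \psi_{(\omega,\lambda)} \circ \nabla(f) = 0$. Philosophically, this reverses Lemma \ref{lem:1-sided e-rigid}: whereas in the simple case the unique (up to scalar and $\Z$-shift) extension class $\mt{d} \in ac(\biinf{\omega}, \tau(\rho))/\Z$ had the property $\mt{d} \cdot \nabla(\mt{d}) \neq \emptyset$, the self-intersection produces extra extension classes whose overlap structures will be incompatible with those of their $\nabla$-duals.

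The first step is to translate the topological hypothesis into combinatorics: a self-intersection of $\omega$ on $\wti{\SM}$ corresponds to an essential, non-periodic almost crossing $\mt{c} \in ac(\biinf{\omega}, \biinf{\omega})/\Z$, i.e.\ a common overlap word $\kappa$ appearing in two genuinely distinct positions of $\biinf{\omega}$ with non-matching flanking arrows. Using that $\sigma_\S$ rotates $\omega$ by $r$ (Lemma \ref{lem:sigma rotates 1-scc}), I would select an $\epsilon$-factor $\rho = \omega_{i+1}\cdots\omega_{i+r-1}$ so that the two branches of $\mt{c}$ interact with $\rho$ or $\tau(\rho)$ asymmetrically. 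Concretely, this should produce either an almost crossing in $ac(\rho, \biinf{\omega})$ (which was empty in the simple case), or a second equivalence class in $ac(\biinf{\omega}, \tau(\rho))/\Z$ beyond the boundary-extension one of Lemma \ref{lem:1-sided e-rigid}. Via Theorem \ref{thm:Krause band map} and Proposition \ref{prop::Homs}, such an almost crossing $\mt{d}$ yields a non-zero $f \in \Ext_\C^1((\omega,\lambda), \rho)$ of the form $g_{\mt{d}}^{\mathrm{bs}}$ or $g_{\mt{d}}^{\mathrm{sb}}$.

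The key computation is then verifying $f \circ \psi_{(\omega,\lambda)} \circ \nabla(f) = 0$. By Corollary \ref{cor:compose g with dual} together with formula \eqref{eq:compose band string maps}, this reduces to showing $p^m(\mt{d}) \cdot \nabla(\mt{d}) = \emptyset$ for every $m \in \Z$. Since $\nabla$ acts on $\biinf{\omega}$ as the half-period shift by $r$, the overlap of $\nabla(\mt{d})$ sits in the $r$-shifted copy of $\kappa$ inside $\biinf{\omega}$; the expectation is that when $\mt{d}$ arises from a genuinely local self-intersection with overlap of length less than $r$, its overlap lies entirely within a half-period, and is therefore disjoint from every period-translate of the $r$-shifted (i.e.\ $\nabla$-dual) overlap, forcing the relevant compositions to be empty.

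The main obstacle will be the case analysis for how the self-intersection sits relative to $\sigma$ and to the arrows $\alpha_i, \alpha_{r+i}$ used to extend $\rho$ to $\tau(\rho)$ (as in Figure \ref{fig:tau rho}): the self-crossing may be $\sigma$-invariant (its overlap $\kappa$ stable under $\sigma$), it may lie entirely inside one half-period or straddle both, and it may be near or far from the boundary-extension window; for each configuration one has to identify an appropriate $\epsilon$-factor and almost crossing. As a backup argument, the existence of the self-intersection forces $\dim_\Bbbk \Ext_\C^1((\omega,\lambda), \rho) \geq 2$ for some $\rho$, so the quadratic map $f \mapsto f \circ \psi_{(\omega,\lambda)} \circ \nabla(f)$ is defined on a space of dimension at least two over an algebraically closed field of characteristic $\neq 2$; after bounding its codomain via 2-Calabi--Yau duality, a nontrivial isotropic vector--yielding the required non-trivial $\epsilon$-extension--can be extracted by a dimension count.
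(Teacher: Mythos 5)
Your approach diverges genuinely from the paper's and, as written, has a gap. The paper does not prove non-$\epsilon$-rigidity by showing that a module-level composition of canonical maps vanishes. Instead, following the strategy set out just after Definition~\ref{def:e-ext}, it normalises the self-crossing (Lemmas~\ref{lem:1-scc self-int, far apart} and \ref{lem:1-scc self-int, first half}) so that both copies of the overlap $\omega_C$ sit inside one $\epsilon$-factor $\rho$, defines auxiliary strings $\eta=\omega\rho$ and $\delta$, and then explicitly builds the $3\times3$ commutative grid of short exact sequences in $\mod J$ realising the $\epsilon$-extension diagram. Crucially, the compositions do \emph{not} vanish: the upper-right square commutes because $g_{\nabla\mt{h}}^{\mathrm{sb}}g_{\mt{h}}^{\mathrm{bs}}=\lambda^{-1}f_{\mt{v}}+f_{\mt{u}}$ matches the composite through $M(\delta)\oplus M(\nabla\delta)$, namely $\lambda f_{\mt{v}}+f_{\mt{u}}$, and these coincide only because $\lambda\in\{\pm1\}$. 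So the geometric intuition driving your computation --- that a self-crossing forces the overlap to be disjoint from its $\nabla$-translate, killing all compositions --- is not what happens in the paper; the almost crossings $\mt{a},\ldots,\mt{h}$ interact non-trivially and are carefully assembled into a commuting diagram.

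The more serious logical issue is the reduction step. The condition $f\circ\psi_{(\omega,\lambda)}\circ\nabla(f)=0$ lives in $\C$, but the test you propose via \eqref{eq:compose band string maps} and Corollary~\ref{cor:compose g with dual} certifies vanishing only after applying $M(-)=\Hom_\C(\Gamma,-)$, which induces an equivalence $\C/[\Gamma[1]]\xrightarrow{\sim}\mod J$ and hence kills any morphism factoring through $\Gamma[1]=\wti{T}$. Thus $M(f\psi\nabla(f))=0$ does not imply $f\psi\nabla(f)=0$ in $\C$; the asymmetry is exactly why Lemma~\ref{lem:1-sided e-rigid} could check non-vanishing at the module level, but the converse direction needed here cannot be concluded the same way. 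The paper sidesteps this by producing the full diagram of short exact sequences, which lifts to the octahedral $\epsilon$-extension diagram in $\C$. Finally, your backup dimension count is not tight: the map $f\mapsto f\psi\nabla(f)$ takes values in $\Hom_\C(\nabla\rho[-1],\rho[1])\cong D\Hom_\C(\rho,\nabla\rho)$, whose dimension is not bounded above by $1$ in general, so a nontrivial zero of this vector-valued quadratic map is not automatic even when $\dim_\Bbbk\Ext_\C^1((\omega,\lambda),\rho)\geq 2$ over an algebraically closed field.
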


We write the $\omega = \omega_1\omega_2\cdots \omega_{2r}$ where $\omega_i$ are arrows or formal inverse of arrows.  The fact that the length $2r$ of $\omega$ is even comes from Lemma \ref{lem:sigma rotates 1-scc}, which also says that $\nabla(\omega_i)=\omega_{i+r}$.
Note that throughout, indices appearing in $\omega$ (and its rotations and reflection) are always taken modulo $2r$.

An self-intersection of $\omega$ is given by a(n almost) crossing:
\begin{align}\label{eq:self crossing}
\crosswn{\omega'}{\xi'_Lb_L^-}{\omega_C}{b_R\xi'_R}{\omega}{\xi_La_L}{a_R^-\xi_R}
\end{align}
where $\omega'$ is a primitive band obtained from some rotation of $\omega^{\kappa}$ for some $\kappa\in\{+1,-1\}$, the symbols $a_L, a_R, b_L, b_R$ are arrows, and all other symbols are (possibly trivial) subwords.
Note that we do not need to work $\biinf{\omega}$ and only focus on the primitive band since we are looking at self-intersection.

For all $k\in \Z$, write $\omega_k = a_k^{\epsilon_i}$ for an arrow $a_i$ and a sign $\epsilon_k\in\{+1,-1\}$.
Let $i,i',l$ be integers so that 
\begin{align*}
b_L^-\omega_C b_R &= \omega_{i'}\omega_{i'+\kappa}\cdots \omega_{i'+\kappa l} = a_{i'}^{\epsilon_j} a_{i'+\kappa}^{\epsilon_{i'+\kappa}} \cdots a_{i'+\kappa l}^{\epsilon_{i'+\kappa l}}, \\
 \text{and}\quad a_L\omega_C a_R^- &= \omega_i\omega_{i+1}\cdots \omega_{i+l} = a_i^{\epsilon_i} a_{i+1}^{\epsilon_{i+1}} \cdots a_{i+l}^{\epsilon_{i+l}}.
\end{align*}
In particular, we have $\epsilon_i = +1 = \epsilon_{i'+\kappa l}$, and $\epsilon_{i'}=-1=\epsilon_{i+l}$.

The following result says that we can always assume the overlap of the crossing comes from two different copies of the subinterval $\omega_C$ in $\omega$.

\begin{lemma}\label{lem:1-scc self-int, far apart}
Suppose that $\omega_0$ is of minimal possible length.

Then $I':=\{i'+\kappa, \ldots, i'+\kappa l-1\}$ and $I:=\{i+1, i+2, \ldots, i+l-1\}$ are distinct intervals of $\Z/2r\Z$.
\end{lemma}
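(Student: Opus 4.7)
The plan is to argue by contradiction: assume $I = I'$ as subsets of $\Z/2r\Z$, and show this forces either a non-genuine crossing or a strictly shorter self-intersection, contradicting the minimality of $\omega_C$.

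First I would handle the case $\kappa = +1$. Here $\omega'$ is a rotation of $\omega$ itself, and the equality $I = I'$ forces $i \equiv i' \pmod{2r}$. Then both extended sub-walks $a_L \omega_C a_R^{-}$ (appearing in $\omega$) and $b_L^{-} \omega_C b_R$ (appearing in $\omega'$) occupy the same positions of $\omega$, so they must coincide letter-by-letter. Matching at positions $i$ and $i+l$ yields $a_L = b_L^{-1}$ (and $a_R^{-1} = b_R$) as formal letters. But in the defining pattern of an almost crossing from \eqref{eq:self crossing}, both $a_L$ and $b_L$ are genuine directed arrows of $Q_1$ representing the two transverse incoming directions at the crossing point; in particular, neither is the formal inverse of the other. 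This contradiction rules out $\kappa = +1$.

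Next I would handle the case $\kappa = -1$, where $\omega'$ is a rotation of $\omega^{-1}$. The equality $I = I'$ now forces the interior sub-walk $\omega_{i+1}\cdots \omega_{i+l-1}$ to equal its own formal inverse when read from $\omega'$'s direction, i.e.\ $\omega_{i+j} = \omega_{i+l-j}^{-1}$ for all $1 \leq j \leq l-1$. This is a strong palindromic constraint on $\omega_C$, and it singles out a natural midpoint of the overlap. Using the symmetry together with the fixed-point-free involution structure from Lemma~\ref{lem:sigma rotates 1-scc}, I would restrict the almost crossing in \eqref{eq:self crossing} to a sub-overlap centred at this midpoint. The palindromic relations guarantee that letters of the correct shape (two arrows, two inverse arrows) bound the shorter overlap, yielding a strictly shorter almost self-crossing of $\omega$ and contradicting the minimality of $\omega_C$.

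The main obstacle is making the midpoint construction in the $\kappa = -1$ case fully rigorous. One must verify that the new boundary letters at the midpoint satisfy the directional conventions demanded by Definition~\ref{def:almost crossing} (and respect the gentle relations in $R$ inherited from $(Q_{\widetilde{T}}, W_{\widetilde{T}})$), and one must separately treat small-$l$ edge cases (notably $l = 1$, where $\omega_C$ is a single letter and the palindromic identity forces either $a_L = a_R$ up to inversion or an immediate degeneracy of the crossing pattern). Once these boundary cases and letter-matching details are pinned down, the contradiction with minimality follows at once.
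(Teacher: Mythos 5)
The lemma's name (``far apart'') and the way it is used to derive \eqref{eq:omega form2} show that ``distinct intervals'' is meant in the sense of \emph{non-overlapping} segments: the two extended occurrences $a_L\omega_C a_R^-$ and $b_L^-\omega_C b_R$ must occupy positions of $\omega$ that do not partially overlap (shared boundary letters being permitted). The paper's proof makes this explicit: after dismissing the identity case $I=I'$ in a single sentence (``otherwise $\omega_C$ is not the overlap of a crossing''), all of the actual work goes into showing that $i',\,i'+\kappa l\notin I$ and $i,\,i+l\notin I'$, i.e.\ ruling out \emph{partial} overlap. Your proposal argues only that $I\neq I'$. You never show the endpoints avoid the interiors, so even if your argument were completed, it would not justify the decomposition \eqref{eq:omega form2} that the lemma exists to provide.

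Beyond this reading error, the structure of your argument does not match what is needed. The $\kappa=+1$ identity case should not require the arrow-direction contradiction you outline; the observation is simply that if the two copies of $\omega_C$ sit at the same positions, the ``crossing'' degenerates, which is why the paper treats it as immediate. The substantive $\kappa=+1$ work is instead: assume $i'\in I$, observe that the resulting overlap pattern of $\omega$ with $\omega'$ forces a second, strictly shorter almost self-crossing of $\omega$ (read off from the segments lying between $a_i^+$, $a_{i'}^-$ in $\omega$ and $a_{i+l}^-$, $a_{i'+l}^+$ in $\omega'$), contradicting minimality of $\omega_C$. Your palindromic analysis under $\kappa=-1$ is likewise aimed at the wrong target; the paper's $\kappa=-1$ case is a short sign argument: if $i'\in I$, then aligning $a_{i'}^{-\epsilon_{i'}}\subset\omega$ with $a_i^{-\epsilon_i}\subset\omega'$ forces $\epsilon_i=\epsilon_{i'}$, which is impossible since $\epsilon_i=+1$ and $\epsilon_{i'}=-1$ by the definition of the crossing. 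You would need to recast your argument so that the hypothesis is $i'\in I$ (and its three companions), not $I=I'$.
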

\begin{proof}
Clear the two intervals cannot be identical; otherwise, $\omega_0$ is not an overlap of a crossing. This means that it is sufficient to show that $i',i'+\kappa l\notin I$ and $i,i+l\notin I'$.  We can further reduce to showing $i'\notin I$ and $i\notin I'$, as the other case follows by inverting both $\omega$ and $\omega'$.  We present the argument for $i'\notin I$; the argument for $i\notin I'$ is analogous.

Consider first the case when $\kappa=1$.
We have that $a_{i'}^-$ is a substring in $\omega$ aligned with $a_{2i'-i}$ in $\omega'$, and $a_{i+l}$ is a substring in $\omega'$ aligned with $a_{i+l-i'}$ in $\omega$.
This means that the crossing is either of the form
\[
\left\{\begin{array}{rr|ccccc|l}
\omega' = &  \cdots a_{i'}^- & \cdots & a_{i+l}^- &\cdots& a_{2i'-i}^- &\cdots  & a_{i'+l}^+\cdots \\
\omega = & \cdots a_i^+ &\cdots & a_{i+l-i'}^- &\cdots& a_{i'}^- & \cdots  & a_{i+l}^- \cdots ,
\end{array}\right. 
\]
or of a similar form where the column indexed by $(i+l,i+l-i')$ appears on the right of the column indexed by $(2i'-i,i')$.

This implies that the substring $a_{i'+1}^{\epsilon_{i'+1}}\cdots a_{i+l-1}^{\epsilon_{i+l-1}}$ of $\omega$ coincides with the substring $a_{i+1}^{\epsilon_{i+1}}\cdots a_{i+l-i'}^{\epsilon_{i+l-i'}}$, and so we can find a new crossing:
\begin{align*}
\crosswn{\omega'}{\cdots a_{2i'-i}^-}{\cdots}{a_{i'+l}^+\cdots}{\omega}{\cdots a_i^+}{a_{i+l-i'}^- \cdots,}
\end{align*}
where the overlap is of short length than the original one.  
Hence, this contradicts the minimality assumption on $\omega_0$.

Consider now the case when $\kappa=-1$. If on the contrary that $i'\in I$, then as indices decrease as we go right on $\omega'$, the substring $a_{i'}^{-\epsilon_{i'}}\subset \omega_C \subset \omega$ will align with $a_i^{-\epsilon_{i}}\subset \omega_C\subset \omega'$.  But this means that $\epsilon_i=\epsilon_{i'}$, a contradiction.
\end{proof}

Lemma \ref{lem:1-scc self-int, far apart} says that, possibly after inverting $\omega$, we can write
\begin{align}
\omega = b_U^- \omega_C^{\kappa} b_V \, \theta  \, a_L \omega_C a_R^-\, \theta', \label{eq:omega form2}
\end{align}
for some substring $\theta,\theta'$ such that $(U,V)=(L,R)$ if $\kappa=1$; otherwise, $(U,V)=(R,L)$.
Note that the part $b_V\theta a_L$ can possibly contract to a single letter $b_V=a_L$; likewise $a_R^-\theta'b_U$ may contract to a single letter, i.e. $\omega = b_U^-\omega_C^\kappa b_V \theta a_L\omega_C$.

Since $\omega$ is self-dual, applying $\nabla$ to \eqref{eq:self crossing} yields a new self-crossing
\begin{align}\label{eq:dual self crossing}
\crosswn{\omega}{\nabla(\xi_L)\sigma(a_L)^-}{\nabla(\omega_C)}{\sigma(a_R)\nabla(\xi_R)}{\omega'}{\nabla(\phi_L)\sigma(b_L)}{\sigma(b_R)^-\nabla(\phi_R).}
\end{align}
From this, one should expect that we can arrange the two copies of $\omega_C$ in only half of $\omega$ (and the other two copies of $\nabla(\omega_C)$) in the other half; in particular, we should only have at most one of $b_V\theta a_L$ and $a_R^-\theta ' b_U$ contracting to a single letter.

\begin{lemma}\label{lem:1-scc self-int, first half}
There is a walk $\omega''$ that is equivalent to $\omega$ as a band, so that we can arrange both copies of $\omega_C$ in $a_L\omega_C a_R^-$ and in $b_U^-\omega_C^\kappa b_V$ to lie in an $\epsilon$-factor $\rho$ of $\omega$, namely, that 
\begin{align}
\omega'' &= b_U^-\rho\sigma(b_U)\nabla(\rho) \notag \\
 &= b_U^- \underbrace{\omega_C^\kappa b_V \theta a_L \omega_C a_R^- \theta'}_{\rho} \sigma(b_U) \underbrace{\nabla(\omega_C^\kappa) \sigma(b_V)^- \nabla(\theta) \sigma(a_L)^- \nabla(\omega_C) \sigma(a_R) \nabla(\theta')}_{\nabla(\rho)}, \label{eq:omega rho}
\end{align}
for some substring $\theta,\theta'$ with the possibility that $b_V\theta a_L$ contracts to $b_V = a_L$.
\end{lemma}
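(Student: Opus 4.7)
The plan is to exploit the $\nabla$-symmetry of the self-dual band $\omega$ together with the freedom to rotate and invert bands, arranging both copies of $\omega_C$ in one half of a suitably rotated walk $\omega''$, after which the other half is automatically determined by $\nabla$.

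First I would apply $\nabla$ to the self-crossing \eqref{eq:self crossing} to obtain the dual crossing \eqref{eq:dual self crossing}, which exhibits two further copies of $\nabla(\omega_C)$ inside $\omega$. By Lemma \ref{lem:sigma rotates 1-scc}, $\nabla$ acts on $\omega$ as the cyclic shift of positions by $r = \mathrm{len}(\omega)/2$ followed by letter-wise $\sigma$-relabelling, so the starting positions in $\Z/2r\Z$ of the two $\nabla(\omega_C)$-copies are the half-period translates of the starting positions $p_1, p_2$ of the two $\omega_C$-copies from \eqref{eq:omega form2}.

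Next I would let $d$ denote the minimal cyclic distance between $p_1$ and $p_2$ in $\Z/2r\Z$. A key observation is that $d = r$ is impossible: it would force $p_2 = p_1 + r$, so one $\omega_C$-copy would positionally coincide with one $\nabla(\omega_C)$-copy, giving $\omega_C = \nabla(\omega_C)$ as walks. Using the formula $\nabla(a_1^{\epsilon_1}\cdots a_k^{\epsilon_k}) = \sigma(a_1)^{-\epsilon_1}\cdots\sigma(a_k)^{-\epsilon_k}$ together with the fixed-point-freeness of $\sigma$ on vertices and arrows (from the FF-symmetric assumption), no non-trivial walk can be $\nabla$-invariant, which gives a contradiction. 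Hence $d < r$, so the two $\omega_C$-copies together with the short gap between them fit into a contiguous cyclic arc of length at most $r - 1$ in $\omega$. Rotating (and possibly inverting) $\omega$ to a walk $\omega''$ in which this arc occupies the positions immediately after the inverse arrow $b_U^-$, the walk $\omega''$ begins with $b_U^- \rho$ where $\rho := \omega_C^\kappa b_V \theta a_L \omega_C a_R^- \theta'$ has length $r - 1$ (with the possible contraction $b_V = a_L$ precisely when the two $\omega_C$-copies are adjacent). The remaining length-$r$ segment of $\omega''$ is identified letter-by-letter, via the $\nabla$-symmetry from the first step, with $\sigma(b_U)\nabla(\rho)$, yielding exactly \eqref{eq:omega rho}. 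Since $\rho$ has length $r-1$ and is preceded by the inverse arrow $b_U^-$, Definition \ref{def:e-obj} confirms that $\rho$ is an $\epsilon$-factor of $\omega$.

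The main obstacle will be the careful bookkeeping required to ensure the rotation/inversion of the band can always be chosen compatibly with the sign $\kappa \in \{\pm 1\}$, which flips the orientation of the first overlap block $\omega_C^\kappa$ and so affects whether one should rotate $\omega$ or instead rotate $\omega^{-1}$. A secondary subtlety is verifying that the junction $a_R^- \theta' \sigma(b_U)$ really does not contract, so that $\sigma(b_U)$ remains a genuine separator between $\rho$ and $\nabla(\rho)$; this should follow from $\sigma(b_U)$ being the $\nabla$-image of $b_U^-$ and hence distinct from $a_R^-$ by the fixed-point-freeness of $\sigma$ on arrows.
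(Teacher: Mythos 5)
Your argument takes a genuinely different route from the paper's. Where the paper simply cases on whether $a_L$ sits in the first or second half of the walk (position $i<r$ versus $i\geq r$), and in the latter case passes to $\omega^{-}$ and rotates to start at $\beta^{-}$, you instead try to prove directly that the cyclic distance $d$ between the two $\omega_C$-copies satisfies $d<r$, and then rotate. Your observation that $d=r$ would force $\omega_C=\nabla(\omega_C)$ (when $\kappa=+1$), contradicting fixed-point-freeness, is a nice use of Lemma \ref{lem:sigma rotates 1-scc}, but the paper does not need it.

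However, there is a genuine gap in your next step. From $d<r$ you conclude that the two $\omega_C$-copies \emph{together with the gap} fit into a contiguous arc of length at most $r-1$, but this does not follow: the copies have nonzero length, so the span from the start of the first copy to the end of the second is $d+|\omega_C|$, which can exceed $r-1$ even when $d\leq r-1$ (e.g.\ $p_1=0$, $p_2=r-1$, $|\omega_C|\geq 2$). The correct resolution is exactly what the paper does and what your argument is missing: when the naive pair overflows, you must instead pair the first copy with the $\nabla$-dual of the second (or equivalently invert $\omega$ and rotate), using that by Lemma \ref{lem:sigma rotates 1-scc} the $\nabla$-dual lies at a half-period shift, and that by Lemma \ref{lem:1-scc self-int, far apart} the segment $a_L\omega_C a_R^{-}$ does not wrap past the end of the walk. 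Your secondary concern about $\sigma(b_U)$ as a separator is a non-issue; the real work is in justifying which of the four $\nabla$-related copies of $\omega_C$ to place in $\rho$. A further minor issue: your ``$d=r$ impossible'' argument implicitly assumes $\kappa=+1$; for $\kappa=-1$ the coincidence would give $\omega_C=\nabla(\omega_C)^{-1}$, which is not immediately excluded by fixed-point-freeness alone and needs a separate (parity) argument.
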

\begin{proof}
Assume $\omega$ is in the form of \eqref{eq:omega form2}.  Recall that $a_L$ is positioned at the $i$-th letter in the walk $\omega$.  If $i<r$, then we can just take $\omega''=\omega$.

Suppose that $i\geq r$, by Lemma \ref{lem:sigma rotates 1-scc}, we have $\sigma(b_U)\nabla(\omega_C^{\kappa})\sigma(b_V)^- = \omega_{r}\cdots \omega_{r+l}$ and also $\sigma(a_L)^-\nabla(\omega_C)\sigma(a_R) = \omega_{i-r}\cdots \omega_{i-r+l}$.
Note that by Lemma \ref{lem:1-scc self-int, far apart}, $a_R$ must lie before the end of the walk $\omega$ (i.e. $i+l<2r$).

For ease of reading, let $\beta:=b_U^-\omega_C b_V$ and $\alpha:=a_L\omega_C a_R^-$, so $\omega$ is of the form
\[
\omega = \beta \phi \nabla(\alpha) \phi'\nabla(\beta)\nabla(\phi)\alpha\nabla(\phi')
\]
for some substring $\phi,\phi'$.
By rotate $\omega^{-}$ to a new primitive band that starts with $\beta^-$, we obtain
\[
\omega'' = \beta^- \nabla(\phi')^- \alpha^- \nabla(\phi)^- \nabla(\beta)^- (\phi')^-  \nabla(\alpha)^- \phi^-.
\]
The resulting band is of the form as claimed by swapping the role of $b_U, \omega_C, a_L$ with $b_V, \omega_C^{-}, a_R$ respectively.
\end{proof}

Let us rotate $\omega$ again so that it takes the form
\begin{align}\label{eq:omega final}
\omega = \begin{cases}
\omega_C a_R^- \theta' \nabla(b_L^-\rho)b_L^-\omega_C b_R\theta a_L, &\text{if }\kappa=+1;\\
a_R^- \theta' \nabla(b_R^-\rho) b_R^-\omega_C^-b_L\theta a_L\omega_C, &\text{if }\kappa=-1.
\end{cases}
\end{align}

Consider the following string
\begin{align}
\eta &:= \begin{cases}
\omega\rho, & \text{if }\kappa=+1;\\
\rho^-\omega & \text{if }\kappa=-1.
\end{cases} 
\end{align}

We define another string $\delta$ from $\nabla(\eta)$ as follows
\begin{align}
\nabla(\eta) &= \begin{cases}
\nabla(\omega_Ca_R^-\theta')b_L^-\omega_Cb_R\theta a_L \overbrace{\omega_Ca_R^-\theta'\nabla(b_L^-\omega_C b_R\theta a_L\rho)}^{\delta} & \text{if }\kappa=+1;\\
& \\
\underbrace{\nabla(\rho^-a_R^-\theta')b_R^-\omega_C b_L\theta a_L\omega_C}_{\delta} a_R^- \theta'\nabla(b_R^-\omega_C^-b_L\theta a_L\omega_C) & \text{if }\kappa=-1.
\end{cases}
\end{align}

We have now completed the setup needed.

\begin{proof}[Proof of Lemma \ref{lem:1-sided with self-intersection non-rigid}]

We prove the claim for the case when $\kappa=+1$ by constructing the following commutative diagram where all rows and columns are exact sequences; for the case when $\kappa=-1$ one just needs to modify the crossings $\mt{a}, \mt{b}, \mt{c}, \mt{d}, \mt{h}$.
\[
\xymatrix@C=45pt@R=35pt{
 & & 0\ar[d] & 0\ar[d] & \\
0\ar[r] & M(\rho)\ar[r]^{f_{\mt{a}}-\lambda f_{\mt{b}}}\ar@{=}[d] & M(\eta) \ar[d]_{{\left(\begin{smallmatrix}
f_{\mt c}\\ f_{\mt d}\end{smallmatrix}\right)}}\ar[r]^{g_{\mt h}^{\mathrm{bs}}} & M_\lambda(\omega)\ar[d]^{ g_{\nabla\mt{h}}^{\mathrm{sb}}} \ar[r] & 0 \\
0\ar[r] & M(\rho)\ar[r]^{{\left(\begin{smallmatrix}
-\lambda f_{\mt x} \\ f_{\mt y}
\end{smallmatrix}\right)}\phantom{aaa}} & {\begin{array}{c}
M(\delta)\\ \oplus \\ M(\nabla(\delta))
\end{array}} \ar[d]_{\lambda(f_{\nabla\mt{y}}, f_{\nabla\mt{x}})}\ar[r]^{(\lambda f_{\nabla\mt{d}},f_{\nabla\mt{c}})}& M(\nabla(\eta)) \ar[r]\ar[d]^{f_{\nabla\mt{a}}-\lambda f_{\nabla\mt{b}}} &0 \\
& &  M(\nabla(\rho))\ar[d]\ar@{=}[r] & M(\nabla(\rho))\ar[d] & \\
 & & 0 & 0 & 
}
\]

For ease of reading, we use $\cdots$ instead of writing the full walks as long as there is no confusion, and use a cross $\times$ to denote an empty entry.  To further reduce complication, we take  
\[\phi_A:=\omega_C a_R^-\theta' \;\;\text{ and }\;\; \phi_B:=\omega_Cb_R\theta a_L,
\]
which means that 
\[\rho = \phi_B\phi_A,\;\;\; \omega = \phi_A \sigma(b_L) \nabla(\phi_B\phi_A) b_L^- \phi_B,\;\;\; \text{ and } \delta = \phi_A\sigma(b_L)\nabla(\phi_B\rho).
\]
The almost crossings we needed are
\begin{align*}
\mt{a} &:= \crosswn{\phantom{\nabla()}\rho}{\times}{\rho}{\times}{\eta}{\cdots b_R\theta a_L}{\cdots\phantom{a},} \\
\mt{b} &:= 
\crosswn{\phantom{\nabla()}\rho}{\times}{\omega_C}{b_R\theta a_L\phi_A }{\eta}{\times}{a_R^-\cdots\rho,}\\
\mt{c} &:= \crosswn{\phantom{\nabla()}\eta}{\times}{\phi_A\sigma(b_L)\nabla(\phi_B)\nabla(\omega_C)}{\sigma(a_R)\cdots}{\delta}{\times}{\sigma(b_R)^-\cdots,}\\
\mt{d} &:= \crosswn{\eta}{\cdots \sigma(a_L)^-}{\nabla(\delta)}{\times}{\nabla(\delta)}{\times}{\times}\\
\mt{h} &:= \crosswn{\eta}{\times}{\omega\omega_C}{b_R\cdots}{\biinf{\omega}}{{}^\infty\omega \omega_C \cdots a_L}{a_R^-\cdots a_L \omega^\infty,}\\
\end{align*}

These yield the following compositions:
\begin{align*}
\mt{c}\cdot\mt{a} &=  0  = \mt{d}\cdot\mt{b},\\
\mt{c}\cdot\mt{b} =: \mt{x} &= \crosswn{\phantom{\nabla()}\rho}{\times}{\omega_C}{b_R\cdots}{\delta}{\times}{a_R^-\cdots,}\\
\mt{d}\cdot\mt{a} =: \mt{y} &= \crosswn{\rho}{\times}{\rho}{\times}{\nabla(\delta)}{\cdots a_L}{\times,}\\
\nabla\mt{c}\cdot\mt{y} =: \mt{w} &= \crosswn{\rho}{\times}{\omega_C}{b_R^-\cdots}{\nabla(\eta)}{\nabla(\phi_A)b_L^-\omega_Cb_R\theta a_L}{a_R^-\theta'\nabla(b_L^-\phi_B \rho)}\\
&= \nabla\mt{d}\cdot\mt{x},\\
\mt{h}\cdot\mt{a} &= \crosswn{\rho}{\times}{\omega_C}{b_R\theta a_L \phi_A}{\biinf{\omega}}{{}^\infty\omega\omega_C\cdots a_L}{a_R^-\cdots a_L\omega^\infty}\\
&=p^{-1}(\mt{h}\cdot\mt{b}),\\
\nabla\mt{h}\cdot \mt{h}=:\mt{u} &= \crosswn{\eta}{\phi_A\sigma(b_L)\nabla(\phi')}{\nabla(\phi_A)b_L^-\phi_B\omega_C}{b_R\cdots}{\nabla(\eta)}{\times}{a_R^-\cdots}\\
&= \nabla\mt{c}\cdot\mt{d},
\end{align*}
\begin{align*}
\mt{v}&:=p^{-1}(\nabla\mt{h})\cdot \mt{h}\\
&=\crosswn{\eta}{\times}{\phi_A\sigma(b_L)\nabla(\phi_B)\nabla(\omega_C)}{\sigma(a_R)\cdots}{\nabla(\eta)}{\cdots a_L}{\sigma(b_R)^-\nabla(\theta a_L)\nabla(\phi_A)}\\
&=\nabla\mt{d}\cdot\mt{c}.
\end{align*}

From these datum (and that $\lambda=\lambda^{-1}$), it is easy to verify the commutation of the left-hand square and the bottom square, as well as the exactness of the second row and the left-hand column.

For the exactness of the first row, since $p(\mt{h}\cdot\mt{a})=\mt{h}\cdot\mt{b}$, we have \[
g_{\mt{h}}^{\mathrm{bs}}f_{\mt{b}} = g_{\mt{h}\cdot\mt{b}}^{\mathrm{bs}} = \lambda g_{\mt{h}\cdot\mt{a}}^{\mathrm{bs}} = \lambda g_{\mt{h}}^{\mathrm{bs}}f_{\mt{a}}
\]
which yields the required exactness.  The exactness of the right-hand column then follows by applying $\nabla$.

Finally, for the commutation of the upper right squares, by \eqref{eq:compose band string maps} we have $g_{\nabla\mt{h}}^{\mathrm{sb}}g_{\mt{h}}^{\mathrm{bs}} = \lambda^{-1} f_{\mt{v}}+f_{\mt{u}}$, whereas the composition through $M(\delta)\oplus M(\nabla(\delta))$ yields $\lambda f_{\mt{v}}+ f_{\mt{u}}$.  This completes the proof.
\end{proof}

\subsection{\texorpdfstring{$\epsilon$}{e}-cluster-tilting object}

\begin{definition}
Let $X\in\C$ be an $\epsilon$-object with $\epsilon$-indecomposable decomposition $X=\bigoplus_{i=1}^n X_i$.
We say that $X$ is an \dfn{$\epsilon$-cluster-tilting object} in $\C$ if 
\begin{itemize}
\item each $X_i$ is an $\epsilon$-rigid object, 
\item $\Ext_\C^1(X_i,X_j)=0$,
\item $X$ is maximal with respect to the above properties, i.e. if $Y$ is an indecomposable $\epsilon$-rigid object $Y$ satisfying $\Ext_\C^1(Y,X_i)=0=\Ext^1(X_i,Y)$ for all $X_i\ncong Y$, then there must be some $j\in\{1,\ldots,n\}$ so that $Y\cong X_j$.
\end{itemize}
\end{definition}

Now we can collect everything we have in the section to obtain our main result.

\begin{theorem}\label{thm:categorification}
The bijections in Theorem \ref{thm:curve corresp} restricts to a correspondence
\begin{align*}
\bfA^\otimes\SM & \leftrightarrow \{\epsilon\text{-rigid objects of }\C_{\wti{\SM}}\}.
\end{align*}
This induces a correspondence
\begin{align*}
\{\text{quasi-triangulations of }\SM\} & \leftrightarrow \{\epsilon\text{-cluster-tilting objects of }\C_{\wti{\SM}}\},
\end{align*}
which restricts to a correspondence
\[
\left\{\begin{matrix}
\sigma\text{-stable triangulations}\\\text{of }\widetilde{\SM}
\end{matrix}\right\} \leftrightarrow \left\{\begin{matrix}
\text{triangulations}\\\text{of }\SM
\end{matrix}\right\} \leftrightarrow \left\{\begin{matrix}
\nabla\text{-stable cluster-tilting}\\\text{objects of }\C_{\wti{\SM}}\end{matrix}\right\}.
\]
\end{theorem}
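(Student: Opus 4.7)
My plan is to build the proof on the bijections in Theorem \ref{thm:curve corresp} together with the three lemmas that classify $\epsilon$-rigidity among $\epsilon$-indecomposables (Lemmas \ref{lem:split ramified rigid}, \ref{lem:1-sided e-rigid}, \ref{lem:1-sided with self-intersection non-rigid}), and then upgrade this indecomposable-level statement to the cluster-tilting level by translating non-crossing of arcs on $\SM$ into vanishing of $\Ext^1_\C$ on $\wti{\SM}$.

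For the first correspondence I would proceed type-by-type against Theorem \ref{thm:curve corresp}. A curve with endpoints $\gamma \in \Cnc\SM$ lifts to a pair $\wti{\gamma}, \sigma_\S(\wti{\gamma})$ on $\wti{\SM}$ giving the split $\epsilon$-indecomposable $\wti{\gamma}\oplus \nabla\wti{\gamma}$; by Lemma \ref{lem:split ramified rigid}(i) this is $\epsilon$-rigid iff it is rigid, and by Proposition \ref{prop::Ext vanishing}(a) this occurs iff $\wti{\gamma}\in \bfA\wti{\SM}$, i.e. iff $\gamma\in \bfA\SM$. A primitive 2-sided $\omega \in \Ctcc^1\SM$ gives a ramified (or split band) $\epsilon$-indecomposable, which by Lemma \ref{lem:split ramified rigid}(ii) (and the fact that band objects never have $\Ext^1=0$) is never $\epsilon$-rigid — consistent with the fact that 2-sided closed curves are never in $\bfA^\otimes\SM$. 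Finally, a primitive 1-sided $\omega\in \Cocc^1\SM$ corresponds to a 1-sided $\epsilon$-indecomposable $(\wti{\omega}, \lambda)$, which by Lemmas \ref{lem:1-sided e-rigid} and \ref{lem:1-sided with self-intersection non-rigid} is $\epsilon$-rigid iff $\wti{\omega}$ is simple — equivalently iff $\omega$ is a quasi-arc. Combining these cases gives the claimed bijection $\bfA^\otimes\SM \leftrightarrow \{\text{indec.\ }\epsilon\text{-rigid objects}\}$.

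For the second correspondence I would first set up the dictionary between non-crossing on $\SM$ and $\Ext^1$-vanishing in $\C_{\wti{\SM}}$. Given arcs $\alpha,\beta\in\bfA^\otimes\SM$, write $p^{-1}(\alpha) = A_\alpha$ and $p^{-1}(\beta) = A_\beta$ (disjoint unions of one or two curves on $\wti{\SM}$). The covering map is a local homeomorphism away from the boundary, so $\alpha,\beta$ are non-crossing on $\SM$ iff every pair of lifts in $A_\alpha\times A_\beta$ is non-crossing on $\wti{\SM}$; by Proposition \ref{prop::Ext vanishing}(b) this is equivalent to $\Ext^1_\C(X_\alpha,X_\beta)=0$ between the corresponding summands of the associated $\epsilon$-indecomposables $X_\alpha, X_\beta$. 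Feeding a quasi-triangulation $T = \{\alpha_1,\ldots,\alpha_n\}$ into the first correspondence therefore produces a direct sum $X_T := \bigoplus_i X_{\alpha_i}$ of $\epsilon$-rigid indecomposables with pairwise vanishing $\Ext^1$, and conversely. Maximality transfers along the same dictionary: any additional $\epsilon$-rigid indecomposable $Y$ compatible with $X_T$ corresponds to an arc (or quasi-arc) $\beta\in\bfA^\otimes\SM$ non-crossing with every $\alpha_i$, which by maximality of $T$ forces $\beta\in T$, hence $Y$ is already a summand of $X_T$. This yields the claimed bijection with $\epsilon$-cluster-tilting objects.

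For the induced restriction to triangulations, note that a triangulation of $\SM$ is precisely a quasi-triangulation containing no quasi-arc. Under the above dictionary, its image $X_T$ consists solely of split $\epsilon$-indecomposables $\wti{\gamma}\oplus \nabla\wti{\gamma}$, so $X_T$ is $\nabla$-stable and, when considered as an object of $\C_{\wti{\SM}}$ without the $\epsilon$-structure, is a cluster-tilting object whose summand set is $\sigma_\S$-invariant. Conversely, a $\nabla$-stable cluster-tilting object decomposes as a sum of $\nabla$-orbits of length 1 or 2; length-1 orbits force 1-sided $\epsilon$-indecomposable band summands, which by the first correspondence come from quasi-arcs, so their absence is exactly the condition that the lift $\wti{T}$ is a $\sigma_\S$-stable triangulation on $\wti{\SM}$ descending to a triangulation of $\SM$.

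The main obstacle I anticipate is the careful bookkeeping in the non-crossing/$\Ext^1$-vanishing dictionary for mixed types, especially when one of the arcs is a quasi-arc: its lift is a single primitive 1-sided closed curve $\wti{\omega}$, whereas the associated $\epsilon$-indecomposable in $\C$ is $(\wti{\omega},\lambda)$ for a \emph{specific} $\lambda$ given by Theorem \ref{thm:curve corresp}(2). One must check that the Ext-vanishing statement between such band objects and other split or 1-sided objects depends only on crossings of the underlying curves and not on the chosen $\lambda$, which is exactly the content of Proposition \ref{prop::Ext vanishing}(b) applied within $\wti{\SM}$. Once this translation is clean, the rest is a direct translation along Theorem \ref{thm:curve corresp}.
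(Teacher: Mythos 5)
Your overall strategy matches the paper's proof: combine the classification of $\epsilon$-rigid $\epsilon$-indecomposables (Lemma \ref{lem:split ramified rigid}, Lemma \ref{lem:1-sided e-rigid}, Lemma \ref{lem:1-sided with self-intersection non-rigid}) with the curve-to-object dictionary from Theorem \ref{thm:curve corresp}, then use Proposition \ref{prop::Ext vanishing}(b) to transfer non-crossing to $\Ext^1$-vanishing at the cluster-tilting level. However, there is a genuine gap in your treatment of the first correspondence.

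The gap is that you establish ``$\bfA^\otimes\SM$ surjects onto the indecomposable $\epsilon$-rigid objects'' by going type-by-type through the bijections in Theorem \ref{thm:curve corresp} --- but those bijections do not cover \emph{all} indecomposable $\epsilon$-objects. In particular, by Theorem \ref{thm:indec e-reps}, there is a second family of $1$-sided $\epsilon$-indecomposables, namely the objects $(\wti{\omega}^2,\lambda)$ coming from the $n=2$ case, and these are deliberately excluded from Theorem \ref{thm:curve corresp}(2) (which only covers the $n=1$ primitive band type). Without eliminating these, you cannot conclude the first bijection: a priori, some $(\wti{\omega}^2,\lambda)$ could be $\epsilon$-rigid without corresponding to anything in $\bfA^\otimes\SM$. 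The missing ingredient is precisely Lemma \ref{lem:1sided non-eps-rigid}, which shows $(\wti{\omega}^2,\lambda)$ is never $\epsilon$-rigid; the paper's proof lists this lemma alongside the three you cite. (Similarly, the $\epsilon$-indecomposables arising from non-primitive closed curves of ramified or split band type also need to be excluded, but those are handled by Lemma \ref{lem:split ramified rigid} since ramified objects are never $\epsilon$-rigid and band objects are never rigid; you do cover those.)

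A secondary concern: in the last restriction, your converse argument is garbled. You write that length-$1$ $\nabla$-orbits ``force $1$-sided $\epsilon$-indecomposable band summands,'' but a $\nabla$-stable \emph{ordinary} cluster-tilting object can only have rigid summands, and band objects are never rigid, so length-$1$ orbits of band type cannot occur in the first place. The cleaner statement the paper is implicitly using is that $\sigma_\S$ is fixed-point-free on $\bfA\wti{\SM}$ (see Proposition \ref{prop:FFS-QP corresp} and its proof), hence $\nabla$ fixes no rigid indecomposable, and therefore a $\nabla$-stable cluster-tilting object always decomposes into $\nabla$-orbit pairs $\gamma\oplus\nabla\gamma$. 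Your conclusion is correct but the reasoning needs to be tightened.
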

\begin{proof}
The first bijection follows from combining Lemma \ref{lem:split ramified rigid}, Lemma \ref{lem:1sided non-eps-rigid}, Lemma \ref{lem:1-sided with self-intersection non-rigid}, and Lemma \ref{lem:1-sided e-rigid}.
Now the second one follows from Proposition \ref{prop::Ext vanishing}.
In particular, when restricting to triangulations, we only get split $\epsilon$-indecomposables appearing and so $\epsilon$-rigid is just usual rigidity and $\epsilon$-cluster-tilting is the usual cluster-tilting; these yields the last correspondence.
\end{proof}

\begin{example} \label{ex::cluscat}
Consider the fan triangulation $T$ of $\mathcal{M}_2$ from Example \ref{eg:band eg} which is associated to $\mathbb{A}_{2,2}$-quiver.  Fix $\epsilon\in\{\pm 1\}$.
It is well-known that $\C_{\wti{T}}$ has infinitely many rigid objects (corresponding to arcs on $\wti{\SM}$) and cluster tilting objects (corresponding to triangulations on $\wti{\SM}$), but not many of them are give rise to indecomposable $\epsilon$-rigid object.  For example, if we consider the arc $\alpha\in \bfA\wti{\SM}$ corresponding to the string $b_2$, then $\alpha\oplus \nabla(\alpha)$ non-rigid in $\C$.
Note that $\alpha,\sigma(\alpha)$ are the lifts of the self-intersecting curve shown in Figure \ref{fig:eg1}; see Figure \ref{fig:cluster cat eg crossing} where we also display the structure of $M(\alpha)$ and $M(\sigma\alpha)$.

\begin{figure}[!htbp]
\centering
\scalebox{0.8}{
\begin{tikzpicture}[scale=0.7]
    \draw[blue, very thick]  plot[smooth, tension=.7] coordinates {(0,2) (-1.5,0.2) (0,-1.5) (1,-0.5) (1,0.5) (0,0.75)};
    \draw[red, very thick]  plot[smooth, tension=.7] coordinates {(0,-2) (-1.5,-1) (-1.5,1) (0,1.5) (1.5,.8) (1.5,-1) (0,-0.75)};
\Atwotwo{0,0}
\node[red,thick] at (2.5,0.5) {$\sigma(\alpha)$};
\node[blue,thick] at (-1,0) {$\alpha$};

\begin{scope}[shift={(-5,0)}]
\draw[very thick,blue] (0,-2) .. controls (-0.2,-1.5) and (-1,-0.5) .. (0,0.1) 
	.. controls (1,0.5) and (1,0) .. (1,-0.5) 
	.. controls (1,-1.5) and (-1,-1.5) .. (-1,0) node[blue,left] {$\overline{\alpha}$} 
	.. controls (-1,0.5) and (-1,1) .. (0,2);
\Mtwofan{0,0}
\end{scope}
\begin{scope}[shift={(1.5,-0.25)},scale=.9]
\node at (3.5,2) {$M(\alpha)=$};\node at (3.5,-1) {$M(\sigma\alpha)=$};
\node at (8,2) {$=\;\begin{matrix}1\\2\end{matrix}$};
\node at (8,-1) {$=\;\begin{matrix}2'\\1'\end{matrix}$};

\node (v1) at (5,2) {$\Bbbk$};\node (v2) at (6,1) {$\Bbbk$};\node (v3) at (6,3) {$0$};\node (v4) at (7,2) {$0$};
\draw[->] (v1) --node[pos=.05,below] {\footnotesize id} (v2); \draw[->] (v1) -- (v3); \draw[->] (v2) -- (v4); \draw[->] (v3) -- (v4);

\node (v1) at (5,-1) {$0$};\node (v2) at (6,-2) {$0$};\node (v3) at (6,0) {$\Bbbk$};\node (v4) at (7,-1) {$\Bbbk$};
\draw[->] (v1) -- (v2); \draw[->] (v1) -- (v3); \draw[->] (v2) -- (v4); \draw[->] (v3) --node[pos=.7,above] {\footnotesize\; id} (v4);
\end{scope}
\end{tikzpicture}}
\caption{Self-crossing arc, its lift, and corresponding non-$\epsilon$-rigid object}\label{fig:cluster cat eg crossing}
\end{figure}
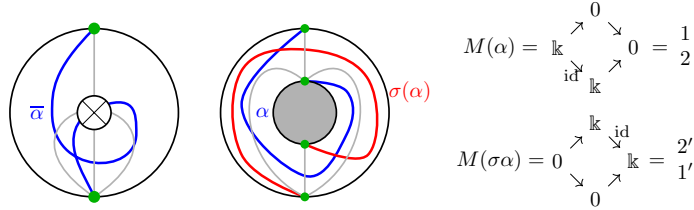

There are only finitely many indecomposable object $X\in \C_{\wti{T}}$ such that $X\oplus\nabla(X)$ is rigid, which means that there are only finitely many $\epsilon$-rigid objects of split string type.  The two obvious one are the initial arcs in $\wti{T}\subset \wti{\SM}$, which correspond to the initial arcs in $T\subset \SM$.  The remaining ones are given in Figure \ref{fig:M2fan eps-obj}, where we display from top to bottom their structures as indecomposable $\epsilon$-representations over $J$, the corresponding arcs in $\wti{\SM}$, and the corresponding arcs in $\SM$.
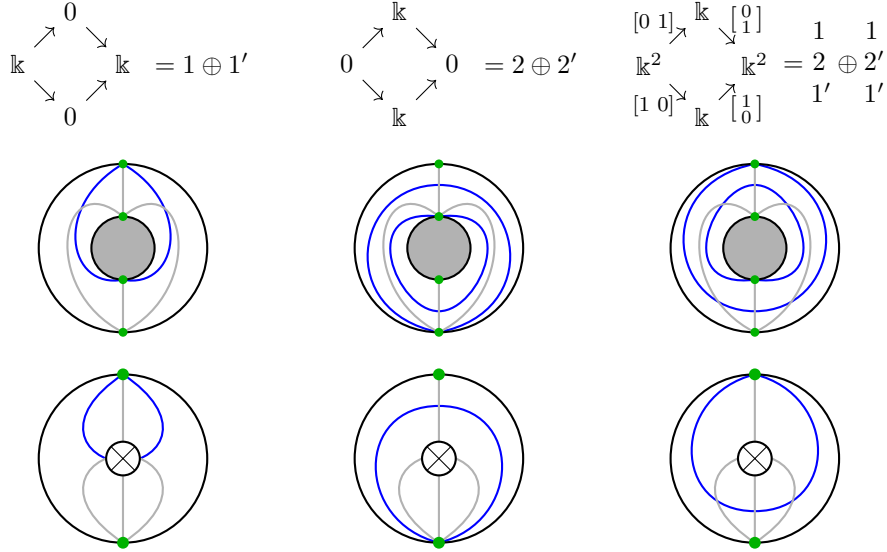
\begin{figure}[!htbp]
\centering
\begin{tikzpicture}[scale=0.7]
\begin{scope}[shift={(-12,0.5)},scale=0.8]
\draw[thick, blue] (0,2) .. controls (1.7,1) and (1.3,-1) .. (0,-0.75);
\draw[thick, blue] (0,2) .. controls (-1.7,1) and (-1.3,-1) .. (0,-0.75);
\Atwotwo{0,0}
\draw[thick,blue,scale=-1,shift={(0,5)}] (0,-2) .. controls (-1.5,-1) and (-1,0) .. (0,0.1) .. controls (1,0) and (1.5,-1) .. (0,-2);
\Mtwofan{0,-5}
\end{scope}

\begin{scope}[shift={(-6,0.5)},scale=0.8]
\draw[thick,blue] (0,-2) .. controls (-2.5,-1.5) and (-2,1.5) .. (0,1.5) .. controls (2,1.5) and (2.5,-1.5) .. (0,-2);
\draw[thick,blue] (0,.75) .. controls (-2,1) and (-1,-1.5) .. (0,-1.5) 
						.. controls (1,-1.5) and (2,1) .. (0,.75);
\Atwotwo{0,0}
\draw[thick,blue,shift={(0,-5)}] (0,-2) .. controls (-2,-1.5) and (-2,1.25) .. (0,1.25) .. controls (2,1.25) and (2,-1.5) .. (0,-2);
\Mtwofan{0,-5}
\end{scope}

\begin{scope}[shift={(0,0.5)},scale=0.8]
\draw[thick,blue,scale=-1] (0,-2) .. controls (-2.5,-1.5) and (-2,1.5) .. (0,1.5) .. controls (2,1.5) and (2.5,-1.5) .. (0,-2);
\draw[thick,blue,scale=-1] (0,.75) .. controls (-2,1) and (-1,-1.5) .. (0,-1.5) 
						.. controls (1,-1.5) and (2,1) .. (0,.75);
\Atwotwo{0,0}
\draw[thick,blue,scale=-1,shift={(0,5)}] (0,-2) .. controls (-2,-1.5) and (-2,1.25) .. (0,1.25) .. controls (2,1.25) and (2,-1.5) .. (0,-2);
\Mtwofan{0,-5}
\end{scope}

\node (u1) at (-14,4) {$\Bbbk$};\node (u2) at (-13,3) {$0$};\node (u3) at (-13,5) {$0$};\node (u4) at (-12,4) {$\Bbbk$};
\draw[->] (u1) -- (u2); \draw[->] (u1) -- (u3); \draw[->] (u2) -- (u4); \draw[->] (u3) -- (u4);
\node at (-10.5,4) {$=1\oplus 1'$};

\node (v1) at (-7.75,4) {$0$};\node (v2) at (-6.75,3) {$\Bbbk$};\node (v3) at (-6.75,5) {$\Bbbk$};\node (v4) at (-5.75,4) {$0$};
\draw[->] (v1) -- (v2); \draw[->] (v1) -- (v3); \draw[->] (v2) -- (v4); \draw[->] (v3) -- (v4);
\node at (-4.25,4) {$=2\oplus 2'$};

\node (w1) at (-2,4) {$\Bbbk^2$};\node (w2) at (-1,3) {$\Bbbk$};\node (w3) at (-1,5) {$\Bbbk$};\node (w4) at (0,4) {$\Bbbk^2$};
\draw[->] (w1) --node[pos=.1,below]{\footnotesize $[1~0]\quad\;\;$} (w2); 
\draw[->] (w1) --node[pos=.1,above]{\footnotesize $[0~1]\quad\;\;$} (w3); 
\draw[->] (w2) --node[pos=.9,below]{\quad$\begin{bsmallmatrix}1\\0\end{bsmallmatrix}$} (w4); 
\draw[->] (w3) --node[pos=.9,above]{\quad$\begin{bsmallmatrix}0\\1\end{bsmallmatrix}$} (w4);
\node at (1.5,4) {$={\begin{matrix}1\,\\2\,\\1'\end{matrix}}\oplus {\begin{matrix}1\,\\2'\\1'\end{matrix}}$};
\end{tikzpicture}\caption{Non-initial $\epsilon$-rigid objects of split string types on $\mathcal{M}_2$}\label{fig:M2fan eps-obj}
\end{figure}

There is one more indecomposable $\epsilon$-rigid object, namely the unique 1-sided $\epsilon$-indecomposable $(\omega,\epsilon)$ corresponding to the quasi-arc; see Figure \ref{fig:doublecover}.
\end{example}

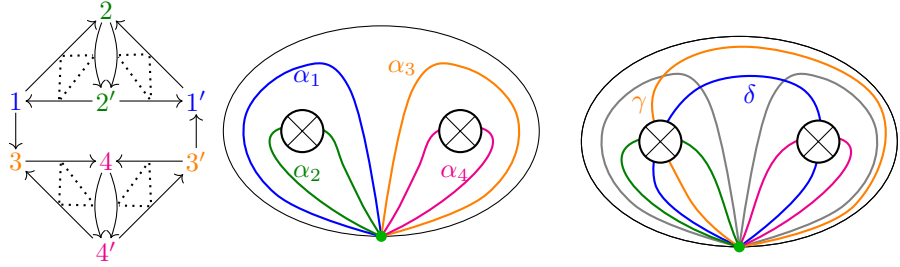
\begin{figure}[!htbp]
\centering
\begin{tikzpicture}[
	thicDotted/.style={dotted, line width=.8pt},
	midArrow/.style={midway, inner sep=0}
]
\begin{scope}[scale=0.7, shift={(-2.2,0)}]
\draw  (0,0) ellipse (3 and 2);

\draw[blue,thick]  plot[smooth, tension=.7] coordinates {(0,-2) (-2,-1.2) (-2.6,0.2) (-1.6,1.2) (-0.6,0.8) (0,-2)};
\node[blue] at (-1.4,1) {$\alpha_1$};
\draw[green!50!black,thick] (0,-2) .. controls (-1.4,-1.4) and (-3,0.1) .. (-1.55,0.1)
					.. controls (-0.8,0.1) and +(-.2,.4) .. (-0.7,-0.7) 
					.. controls (-0.5,-1.1) and (-0.4,-1.4) .. (0,-2);
\node[green!50!black] at (-1.4,-0.8) {$\alpha_2$};
\xcap{-1.5,0}

\begin{scope}[xscale=-1]
\draw[orange,thick]  plot[smooth, tension=.7] coordinates {(0,-2) (-2,-1.2) (-2.6,0.2) (-1.6,1.2) (-0.6,0.8) (0,-2)};
\node[orange] at (-0.4,1.2) {$\alpha_3$};
\draw[magenta,thick] (0,-2) .. controls (-1.4,-1.4) and (-3,0.1) .. (-1.55,0.1)
					.. controls (-0.8,0.1) and +(-.2,.4) .. (-0.7,-0.7) 
					.. controls (-0.5,-1.1) and (-0.4,-1.4) .. (0,-2);
\node[magenta] at (-1.4,-0.8) {$\alpha_4$};
\end{scope}
\xcap{1.5,0}
\fill[darkgreen] (0,-2) circle (3pt);
\end{scope}

\node[inner sep=1pt,blue] (v1) at (-6.4,0.4) {$1$};\node[inner sep=1pt,blue] (v1') at (-4,0.4) {$1'$};
\node[inner sep=1pt,green!50!black] (v2') at (-5.2,0.4) {$2'$};\node[inner sep=1pt,green!50!black] (v2) at (-5.2,1.6) {$2$};
\node[inner sep=1pt,orange] (v3) at (-6.4,-0.4) {$3$};\node[inner sep=1pt,orange] (v3') at (-4,-0.4) {$3'$};
\node[inner sep=1pt,magenta] (v4) at (-5.2,-0.4) {$4$};\node[inner sep=1pt,magenta] (v4') at (-5.2,-1.6) {$4'$};
\draw[->]  (v1) edge node[midArrow](a12){} (v2); 
\draw[->]  (v2') edge node[midArrow](a2'1'){} (v1');
\draw[->]  (v2) edge[bend right=20] node[midArrow](a22'l){} (v2'); 
\draw[->]  (v2) edge[bend left=20]  node[midArrow](a22'r) {} (v2');
\draw[->] (v2') edge node[midArrow](a2'1){} (v1); 
\draw[->] (v1')edge node[midArrow](a1'2){}(v2);
\draw[thicDotted] (a12) -- (a22'l) -- (a2'1) -- (a12) (a2'1')--(a22'r)--(a1'2)--(a2'1');
\draw[->]  (v1) edge (v3); \draw[->] (v3') edge (v1');
\draw[->]  (v3) edge node[midArrow](a34){} (v4); \draw[->] (v4') edge node[midArrow](a4'3'){} (v3');
\draw[->]  (v4') edge node[midArrow](a4'3){} (v3); \draw[->]  (v3') edge node[midArrow](a3'4){} (v4); 
\draw[->]  (v4) edge[bend right=20] node[midArrow](a44'r){} (v4'); \draw[->]  (v4) edge[bend left=20] node[midArrow](a44'l){} (v4');
\draw[thicDotted] (a4'3) -- (a34) -- (a44'r) -- (a4'3) (a3'4) -- (a4'3') -- (a44'l) -- (a3'4);

\begin{scope}[scale=0.7, shift={(4.6,-0.2)}]
\draw  (0,0) ellipse (3 and 2);

\draw[gray,thick]  plot[smooth, tension=0.7] coordinates {(0,-2) (-2,-1.2) (-2.6,0.2) (-1.6,1.2) (-0.6,0.8) (0,-2)};
\begin{scope}[xscale=-1]
\draw[gray,thick]  plot[smooth, tension=0.7] coordinates {(0,-2) (-2,-1.2) (-2.6,0.2) (-1.6,1.2) (-0.6,0.8) (0,-2)};
\end{scope}
%%%%%%%%%%%%%%%%%%%%%%%%%%%%%%%%%%%%%%%%%%%

\draw  (0,0) ellipse (3 and 2);
\draw[thick,blue]  plot[smooth, tension=0.7] coordinates {(0,-2) (-1.6,-0.8) (-0.8,1) (1.2,1) (1.4,-0.6) (0,-2)};
%\node[blue] at (-0.6,0.8) {$2$};

\draw[orange,thick]  plot[smooth, tension=0.7] coordinates {(0,-2) (-1,-1) (-1.6,1) (0.2,1.8) (2.6,0.8) (2.2,-1.2) (0,-2)};
%\node[orange] at (-1.8,1) {$3$};
\draw[green!50!black,thick] (0,-2) .. controls (-2,-1.4) and (-3,0) .. (-1.55,0.1)
					.. controls (-0.4,0.1)  and (-0.4,-1.2) .. (0,-2);
%\node[green!50!black] at (-2.4,-0.4) {$1$};
\xcap{-1.5,0}

\begin{scope}[xscale=-1]
%\draw[orange,thick]  plot[smooth, tension=.7] coordinates {(0,-2) (-2,-1.2) (-2.6,0.2) (-1.6,1.4) (-0.6,1) (0,-2)};
%\node[orange] at (-0.4,1.2) {$3$};
\draw[magenta,thick] (0,-2) .. controls (-1.4,-1.4) and (-3,0.1) .. (-1.55,0.1)
					.. controls (-0.4,0.1) and (-0.4,-1.2) .. (0,-2);
%\node[magenta] at (-0.4,-0.4) {$4$};
\end{scope}
\xcap{1.5,0}
\fill[darkgreen] (0,-2) {} circle (3pt);

\node[orange] at (-1.9,0.7) {$\gamma$};
\node[blue] at (0.2,.9) {$\delta$};
\end{scope}
\end{tikzpicture}
\caption{Triangulations involved in Example \ref{eg:N2eg}}\label{fig:N2eg}
\end{figure}
\begin{example}\label{eg:N2eg}
We consider $\SM$ a genus 2 non-orientable surface with 1 boundary component and 1 marked point.  
Let $T=\{\alpha_1,\alpha_2,\alpha_3,\alpha_4\}$  be a triangulation on $\SM$ as shown in the middle of Figure \ref{fig:N2eg}.
We display the gentle algebra as a quiver with relation on the left.
Here $i,i'$ for each $1\leq i\leq 4$ are the vertices corresponding to the lifts $\widetilde{\alpha_i},\sigma(\widetilde{\alpha_i})$ of $\alpha_i$ to $\widetilde{\SM}$, and the dotted line connecting two arrows represents a monomial quadratic relation given by the composition of the two arrows.

Consider the curves $\gamma, \delta$ on $\SM$ as shown on the right.  Let $\widetilde{\gamma},\widetilde{\delta}$ be a lift to $\widetilde{\SM}$.  
Then \[
M(\widetilde{\gamma})\oplus \nabla M(\widetilde{\gamma}) = 1\oplus 1',\text{ and } M(\widetilde{\delta})\oplus \nabla M(\widetilde{\gamma}) = \begin{matrix}
1 \\ 3\end{matrix}\oplus \begin{matrix}
3' \\ 1'\end{matrix}.\]
The set $\{\gamma,\delta,\alpha_2,\alpha_4\}$ form a triangulation of $\SM$, and so the object
\[
\widetilde{\gamma}\oplus \widetilde{\delta}\oplus\widetilde{\alpha_2}\oplus\widetilde{\alpha_4} \oplus \nabla(\widetilde{\gamma}\oplus \widetilde{\delta}\oplus\widetilde{\alpha_2}\oplus\widetilde{\alpha_4})
\]
is an $\epsilon$-cluster tilting object in $\C_{\widetilde{\SM}}$.
\end{example}

\bibliographystyle{unsrt} 
\bibliography{biblio}

\end{document}